\newtheorem{theorem}{Theorem}[section]  
\newtheorem{corollary}[theorem]{Corollary} 
\newtheorem{lemma}[theorem]{Lemma}  
\newtheorem{proposition}[theorem]{Proposition} 
\newtheorem{df-pr}[theorem]{Definition-Proposition}
\newtheorem{remark}[theorem]{Remark}
\theoremstyle{definition} 
\newtheorem{definition}[theorem]{Definition}
\newtheorem{example}[theorem]{Example}
\newcommand{\lmd}{\lambda}
\newcommand{\CC}{{\mathbb C}}
\newcommand{\ZZ}{{\mathbb Z}}
\newcommand{\sfp }{{pr_k}}
\newcommand{\sfu }{{u}}
\newcommand{\sfv }{{v}}
\newcommand{\frakt}{{\mathfrak  t}}
\newcommand{\calA}{{\mathcal A}}
\newcommand{\calI}{{\mathcal I}}
\newcommand{\calP}{{\mathcal P}}
\newcommand{\calR}{{\mathcal R}}
\newcommand{\ee }{\pmb{e}}
\newcommand{\Span}{\operatorname{Span}}
\newcommand{\Map}{\operatorname{Map}}
\newcommand{\inv}{{\operatorname{inv}}}
\newcommand{\Loc}{{\Phi}}
\newcommand{\frakC}{{\mathfrak{C}}}
\newcommand{\Det}{{\operatorname{Det}}}
\newcommand{\Pf}{{\operatorname{Pf}}}
\newcommand{\vt}{{\vartheta}}
\newsavebox{\savepar}
\numberwithin{equation}{section}
\newcounter{labelflag} \setcounter{labelflag}{0}
\newcommand{\labelon}{\setcounter{labelflag}{1}}
\newcommand{\Label}[1]{\ifnum\thelabelflag=1\ifmmode
\makebox[0in][l]{\qquad\fbox{\rm#1}} \else
\marginpar{\vspace{0.7\baselineskip} \hspace{-1.1\textwidth}
\fbox{\rm#1}} \fi \fi \label{#1} } \labelon
\newcommand{\Maya}
{\put(0,0){\line(1,0){40}}
  \put(0,10){\line(1,0){40}}
  \multiput(0,0)(10,0){5}{\line(0,1){10}}
}
\begin{document}

\title{
Pfaffian sum formula for the symplectic Grassmannians
}
\author{Takeshi Ikeda}
\address{Department of Applied Mathematics\\
Okayama University of Science,
Okayama 700-0005, Japan}
\email{ike@xmath.ous.ac.jp}     
\author{Tomoo Matsumura}
\address{Department of Mathematical Sciences\\
KAIST\\
Daejeon, South Korea}
\email{tomoomatsumura@kaist.ac.kr}

%\author{Tara Holm}
%\address{Department of Mathematics\\
%Cornell University\\
%Ithaca, NY 14853, USA}
%\email{tsh@math.cornell.edu} 

%

%\institute{Takeshi Ikeda \at
  %            Department of Applied Mathematics,
%Okayama University of Science,
%Okayama 700-0005, Japan\\
  %            \email{ike@xmath.ous.ac.jp}           %  \\
%             \emph{Present address:} of F. Author  %  if needed
 %          \and
  %         Tomoo Matsumura \at
  %            Department of Mathematical Sciences,
%KAIST,
%Daehak-ro Yuseong-gu, Daejeon 305-701, South Korea\\
%\email{tomoomatsumura@kaist.ac.kr}
%}

%\date{Received: date / Accepted: date}
%\dedication{Dedicated to Hiroshi Naruse for his sixties birthday}
\maketitle 

\begin{abstract}
We study the torus equivariant Schubert classes of the Grassmannian of non-maximal isotropic subspaces in a symplectic vector space. We prove a formula that expresses each of those classes as a \emph{sum} of multi Schur-Pfaffians, whose entries are equivariantly modified special Schubert classes. Our result gives a proof to Wilson's conjectural formula, which generalizes the Giambelli formula for the ordinary cohomology proved by Buch-Kresch-Tamvakis, given in terms of Young's raising operators. Furthermore we show that the formula extends to a certain family of Schubert classes of the symplectic partial isotropic flag varieties. 
\keywords{Schubert classes \and Symplectic Grassmannians \and Torus equivariant cohomology \and Giambelli type formula \and Wilson's conjecture \and Double Schubert polynomials}
%\subclass{Primary 14M15, Secondary 05E05}
\end{abstract}

%\tableofcontents
%%%%%%%%%%%%%%%%%%%%%%%%%%%%%%%%%% 
%%%%%%%%%%%%%%%%%%%%%%%%%%%%%%%%%%
%%%%%%%%%%%%%%%%%%%%%%%%%%%%%%%%%%
%%%%%%%%%%%%%%%%%%%%%%%%%%%%%%%%%%
%%%%%%%%%%%%%%%%%%%%%%%%%%%%%%%%%%
%%%%%%%%%%%%%%%%%%%%%%%%%%%%%%%%%%
%%%%%%%%%%%%%%%%%%%%%%%%%%%%%%%%%%
%%%%%%%%%%%%%%%%%%%%%%%%%%%%%%%%%%
%%%%%%%%%%%%%%%%%%%%%%%%%%%%%%%%%%
\section{Introduction}
%%%%%%%%%%%%%%%%%%%%%%%%%%%%%%%%%%
%%%%%%%%%%%%%%%%%%%%%%%%%%%%%%%%%%
%%%%%%%%%%%%%%%%%%%%%%%%%%%%%%%%%%
%%%%%%%%%%%%%%%%%%%%%%%%%%%%%%%%%%
%%%%%%%%%%%%%%%%%%%%%%%%%%%%%%%%%%
%%%%%%%%%%%%%%%%%%%%%%%%%%%%%%%%%%
The classical Giambelli formula \cite{G} expresses a general Schubert class of the Grassmannian as the determinant of a matrix whose entries are the so-called \emph{special Schubert classes}. A special Schubert class is defined by the locus of of subspaces having excess intersection with a fixed subspace. These classes also coincide with the Chern classes of the universal quotient bundle over the Grassmannian. Various extensions of the formula have been obtained (see for example \cite{FP}, \cite{T} and the references therein). The \emph{Giambelli problem} consists in finding a ``closed formula'' for a Schubert class in terms of those special classes.
Note that, in the torus equivariant setting, the problem is closely related to the theory of degeneracy loci of vector bundles (\textit{cf.} \cite{And}, \cite{FP}, \cite{T}). 

For the symplectic or orthogonal Grassmannians, there is a natural notion of special Schubert classes, which takes into account the isotropic condition. For the Grassmannian of maximal isotropic subspaces, the Giambelli formula, first found by Pragacz \cite{Pr}, expresses  a general Schubert class as a Pfaffian whose entries are appropriate quadratic polynomials in the special Schubert classes. Its natural equivariant version was obtained by Kazarian \cite{Ka} in the context of degeneracy loci (see \cite{KT}, \cite{PR} for other versions, and also the survey articles \cite{FP}, \cite{T} and references therein).
The Kazarian's formula was rediscovered by Ikeda in \cite{Ik} and Ikeda-Naruse in \cite{IN} and was interpreted in the context of the torus equivariant cohomology.
For the non-maximal isotropic Grassmannians, an answer to the (non-equivariant) Giambelli problem was given by  Buch, Kresch, and Tamvakis (\cite{BKT}, \cite{BKT2}). Their formula expresses an arbitrary Schubert class 
by means of \emph{Young's raising operators}. We can regard this polynomial expression as a certain ``combinatorial  interpolation'' between the Jacobi-Trudi determinant and the Schur Pfaffian. 

This paper is concerned with the \emph{equivariant} Giambelli problem for the non-maximal isotropic Grassmannians in the symplectic case. In \cite{W} , Wilson employed the raising operators to define {\em double theta polynomials\/} $\Theta_\lambda$, and proved that these polynomials satisfy the equivariant Chevalley formula for the non-maximal symplectic Grassmannian. In \cite{W}, it was further conjectured that $\Theta_\lambda$ would equal to the \emph{double Schubert polynomial} introduced in [10] (see \S \ref{ssec:DSP}).
 %In a survey paper \cite[\S 7.2]{T}, Tamvakis announced  that he and Wilson succeeded to prove the conjecture.

The main result of this paper provides a formula expressing each double Schubert polynomial associated to the isotropic Grassmannians as a sum of Pfaffians whose entries are Wilson's double theta polynomials corresponding to the special Schubert classes. This immediately leads to a proof of Wilson's conjecture, because the raising operator formula can be rewritten as a Pfaffian sum by a formal computation (see \S \ref{sec: IM-TW} for details). Note that the equivalence of the two formulas in the non-equivariant situation was already included in the proof of Proposition 2 of \cite{BKT3}. In this sense, the non-equivariant version of the Pfaffian sum formula for the symplectic Grassmannian was first obtained in \cite{BKT3}.

Our method for the proof of the main result is to use
the \emph{left divided difference operators}. 
These operators are essential in the theory of (double) Schubert polynomials, and exist only in the equivariant setup. This technique allows us to completely avoid using the raising operators, and more importantly, the technique is applicable to more general contexts. In particular, we extend the Pfaffian sum formula beyond Grassmannians, namely to some part of the Schubert classes of the partial isotropic flag varieties. In this extended Pfaffian sum formula, each entry can be regarded as equivariantly modified special Schubert classes arising from the symplectic Grassmannians of various dimensional subspaces.

Below we summarize our results in more details.

%%%%%%%%%%%%%%%%%%%%%%%%%%%%%%%
%%%%%%%%%%%%%%%%%%%%%%%%%%%%%%%
\subsection{Symplectic Grassmannian and its Schubert varieties}
%%%%%%%%%%%%%%%%%%%%%%%%%%%%%%%
%%%%%%%%%%%%%%%%%%%%%%%%%%%%%%%
Throughout the paper, we fix a non-negative integer $k.$ For any positive integer $n\geq k$, let  ${SG}_n^k$ denote the Grassmannian of $(n-k)$-dimensional isotropic subspaces in $\CC^{2n}$ equipped with a symplectic form. There is  a maximal parabolic subgroup $P_k$ of the symplectic group $G:={Sp}_{2n}(\CC)$ such that ${SG}_n^k$ can be realized as the homogeneous space $G/P_k$.

A partition $\lambda$ is $k$-{\it strict\/} if no part greater than $k$ is repeated. The Schubert varieties of ${SG}_n^k$ are indexed by the $k$-strict partitions whose Young diagrams fit in the $(n-k)\times(n+k)$ rectangle. We denote the set of such partitions  by $\mathcal{P}_n^{(k)}.$ Given $\lambda\in \mathcal{P}_n^{(k)}$ and a complete flag of subspaces $0=F_0\subset F_1\subset \cdots \subset F_{2n}=\CC^{2n}$ such that $F_{n+i}=(F_{n-i})^{\perp}$ for $0\leq i\leq n$, the corresponding Schubert variety is defined as 
%%%%%%%%%%%%%%%%%%%%%%%%%%%%%%%
\begin{equation}\label{def:Omega_lambda1}
\Omega_\lambda=\{V\in {SG}_n^k\;|\;
\dim(V\cap F_{p_j(\lambda)})\geq j, \quad 1\leq \forall j\leq 
\ell(\lambda)\},
\end{equation}
%%%%%%%%%%%%%%%%%%%%%%%%%%%%%%%
where $\ell(\lambda)$ denotes the number of non-zero parts of $\lambda$, and 
\[
p_j(\lambda)=n+k+j-\lambda_j -\#\{i\;|\;i<j,\;
\lambda_i+\lambda_j>2k+j-i\}.
\]
See \cite{BKT}, for example. The codimension of $\Omega_\lambda$ is 
$|\lambda|=\sum_{i}\lambda_i$.
The corresponding class $[\Omega_\lambda]\in H^*({SG}_n^k)$ is the Schubert
class. 
The {\it special Schubert varieties\/} are the ones associated to one-line $k$-strict partitions:
\[
\Omega_r=\{V\in {SG}_n^k\;|\;
\dim(V\cap F_{n+k+1-r})\geq 1
\},
\]
for $1\leq r\leq n+k$. Their classes $[\Omega_r]$ are called the {\it special Schubert classes\/.} They are equal to the $r$-th Chern classes $c_r(\mathcal{Q})$ of the universal  quotient bundle $\mathcal{Q}$ over ${SG}_n^k.$
%%%%%%%%%%%%%%%%%%%%%%%%%%%%%%%
%%%%%%%%%%%%%%%%%%%%%%%%%%%%%%%
\subsection{Double Schubert polynomials of type $C$}\label{ssec:DSP}
%%%%%%%%%%%%%%%%%%%%%%%%%%%%%%%
%%%%%%%%%%%%%%%%%%%%%%%%%%%%%%%
We recall the results in \cite{DSP}, where the double Schubert polynomials of type $C$ were introduced. Let $B$ be a Borel subgroup of $G={Sp}_{2n}(\CC),$ and $T$ the maximal torus  contained in $B$. The Weyl group $N_G(T)/T$ is denoted by $W_n$ and identified with the  group of the  signed permutations of  $\{\pm 1,\ldots,\pm n\}.$ We often denote $-i$ by $\overline{i}.$ The flag variety $\mathcal{F}l_n$ is  defined as the quotient space $G/B.$ For each $w\in W_n$, the Schubert variety $X_w$ is defined as the Zariski closure of $B^{-}$-orbit of  the corresponding point $e_w \in \mathcal{F}l_n$, where $B^{-}$ is the Borel subgroup such that $B\cap B^{-}=T$. The codimension of $X_w$ is precisely the length $\ell(w)$ of $w$ as a Weyl group element of $W_n$. We denote by $[X_w]_T$ the corresponding $T$-equivariant  Schubert class in $H_T^*(\mathcal{F}l_n)$.

Let $\Gamma$ be the  ring generated over $\ZZ$ by the Schur $Q$-functions $Q_r(x)\;(r\geq 1),$ where $x$ is an infinite sequence of variables  $x_1,x_2,\ldots$.  Let $\mathcal{R}_\infty$ be the  polynomial ring $\Gamma[z,t]$ in  the variables $z_i,t_i\;(i\geq 1)$ with coefficients  in $\Gamma.$  Let $W_\infty$ be the Weyl group of type $C_\infty$, where we can regard it as the union of $W_n\;(n\geq 1).$ There are two commuting actions of $W_\infty$ on  the ring $\mathcal{R}_\infty$ (see \S \ref{sec:DSP}).  The {\it double Schubert polynomials\/} are defined as elements of a distinguished $\ZZ[t]$-basis $\{\mathfrak{C}_w(z,t;x)\;|\;w\in W_\infty\}$ of $\mathcal{R}_\infty$. They are characterized by two series of operators $\{\delta_i\;|\;i\geq 0\}$ and $\{\partial_i\;|\;i\geq 0\}$ called the left and the right divided difference operators respectively (See Theorem \ref{DSPTheorem} in Section \ref{sec:DSP}).   

The integral $T$-equivariant cohomology ring $H_T^*(\mathcal{F}l_n)$ of $\mathcal{F}l_n$ has an $H_T^*(pt)$-algebra structure given by the pullback of $\mathcal{F}l_n\rightarrow pt$. Together with an appropriate identification $H_{T}^*(pt)=\ZZ[t_1,\ldots,t_n]$, there is a canonical homomorphism
\[
\pi_n: 
\mathcal{R}_\infty
\longrightarrow H_T^*(\mathcal{F}l_n)
\]
of $H_{T}^*(pt)$-algebras such that $\pi_n$ sends $\mathfrak{C}_w(z,t;x)$ to $[X_w]_T $ if $w\in W_n$ and to zero  if $w\not\in W_n.$
%%%%%%%%%%%%%%%%%%%%%%%%%%%%%%%
%%%%%%%%%%%%%%%%%%%%%%%%%%%%%%%
\subsection{Equivariant Schubert classes  of $H_T^*({SG}_n^k)$} 
%%%%%%%%%%%%%%%%%%%%%%%%%%%%%%%
%%%%%%%%%%%%%%%%%%%%%%%%%%%%%%%
Let $s_i\;(i\geq 0)$ be the standard (Coxeter) 
generators of $W_\infty$, usually referred to as the simple reflections (see \S  \ref{sec:DSP}).
Let $W_{(k)}$ be the subgroup of $W_\infty$ generated by  $s_i\;(i\geq 0,\;i\neq k).$ Let $\mathcal{R}_\infty^{(k)}$ denote the  invariant subring of $\mathcal{R}_\infty$ with respect to  the 2nd (``right'') action of $W_{(k)}.$ There is the following commutative diagram 
\[ 
\xymatrix{ \mathcal{R}_\infty^{(k)}
\ar[d]^{{\pi}_n^{(k)}} \ar[rr]^{} &&  \mathcal{R}_\infty
\ar[d]^{{\pi}_n} \\ 
H_T^*({SG}_n^k) \ar[rr]_{\ \ \ \ \ \ \sfp^*} && 
H_T^*(\mathcal{F}l_n), }
\]
where the horizontal arrow $\sfp^*$ in the  second row 
is %the inclusion defined by 
the pullback of  the natural projection $\sfp: \mathcal{F}l_n\rightarrow {SG}_n^k$, and $\pi_n^{(k)}$ is obtained by restricting $\pi_n$ to $\mathcal{R}_\infty^{(k)}$. 

Let $W^{(k)}_\infty$ be the set of  the minimum-length coset representatives for $W_\infty/W_{(k)}.$ 
We denote the set of all $k$-strict partitions by $\mathcal{P}^{(k)}_\infty=\bigcup_{n\geq k}
\mathcal{P}_n^{(k)}$.
There is a natural 
bijection
$$
\mathcal{P}^{(k)}_\infty\longrightarrow 
W^{(k)}_\infty,\quad \lambda\mapsto w_\lambda^{(k)},
$$
such that $|\lambda|=\ell(w_\lambda^{(k)})$ and the image of $\mathcal{P}_n^{(k)}$ is $W_n^{(k)}:=W_n\cap W^{(k)}_\infty$. If $\lambda\in  \mathcal{P}_n^{(k)}$, we have
${\sfp}^*[\Omega_\lambda]_T=[X_{w_\lambda^{(k)}}]_T$ and 
$$
\pi_n^{(k)}(
\mathfrak{C}_{w_{\lambda}^{(k)}}(z,t;x))=
[\Omega_\lambda]_T. 
$$
In particular, the special Schubert class $[\Omega_r]_T$ of degree $r$ is the image of $\mathfrak{C}_{w_r^{(k)}}(z,t;x),$ where $w_r^{(k)}$ is the element of $W^{(k)}_\infty$ corresponding to the partition with $r$ boxes in one row. The set of the functions $\mathfrak{C}_{w}(z,t;x),\;w\in W^{(k)}_\infty$ forms a $\ZZ[t]$-basis of  $\mathcal{R}_\infty^{(k)}$
(Proposition \ref{prop:basisR^k}).  
%%%%%%%%%%%%%%%%%%%%%%%%%%%%%%%
%%%%%%%%%%%%%%%%%%%%%%%%%%%%%%%
\subsection{Main results} 
%%%%%%%%%%%%%%%%%%%%%%%%%%%%%%%
%%%%%%%%%%%%%%%%%%%%%%%%%%%%%%%
Our goal is to  give an explicit closed formula describing $\mathfrak{C}_{w}(z,t;x)\;(w\in W^{(k)}_\infty)$ as a polynomial in terms of the  double Schubert polynomials 
%$\mathfrak{C}_{w_r^{(k)}}(z,t;x)$ 
corresponding to the special classes $[\Omega_r]_T\;(r\geq 1)$. 
%%%%%%%%%%%%%%%%%%%%%%%%%%%%%%%
\begin{definition}\label{def-theta}
Define ${}_k\vt_r^{(l)}(x,z|t)$ for $l, r\geq 0$ by
\begin{eqnarray*}
\sum_{r=0}^{\infty} \ _k\vt_r^{(l)}(x,z|t) \cdot u^r &=& \prod_{i=1}^{\infty}  \frac{1+x_i u }{1-x_i u }  \prod_{i=1}^k (1 + z_i u ) \prod_{i=1}^l (1 - t_i u), \\
\sum_{r=0}^{\infty}\ _k\vt_r^{(-l)}(x,z|t) \cdot u^r &=& \prod_{i=1}^{\infty}  \frac{1+x_i u }{1-x_i u }  \prod_{i=1}^k (1 + z_i u ) \prod_{i=1}^l \frac{1}{1 + t_i u}.\\
\end{eqnarray*} 
For $r < 0$, we set ${}_k\vt_r^{(l)}(x,z|t)=0$. We omit $k$ when it is made clear by the context. Under $\pi_n$, the variables $z_i$ correspond to the Chern roots of the tautological bundles and the theta polynomials ${}_k\vt_r^{(l)}(x,z|t)$ map to the Chern classes of certain virtual bundles (see Proposition \ref{theta geometry}). Although the above definition of the double theta polynomials appears slightly different from the one in Wilson's thesis \cite{W} (also mentioned  in \cite{T}), one recovers Wilson's definition after applying appropriate changes of indices. See Remark \ref{theta of TW and IM} for details.
\end{definition}
%%%%%%%%%%%%%%%%%%%%%%%%%%%%%%%
We show that 
$\mathcal{R}_\infty^{(k)}$ contains ${}_k\vt_r^{(l)}(x,z|t)$ (see \S \ref{ssec:InvariantRing}) and can be described as follows (Corollary \ref{rem:generated by theta}):
\begin{equation}
\mathcal{R}_\infty^{(k)}=
\ZZ[t][\vt_1,\vt_2,\ldots],\quad
\vt_r:={}_k\vt_r^{(0)}.
\end{equation}
Wilson proved the following fact.
\begin{theorem}[\cite{W}]We have
\begin{equation} 
\mathfrak{C}_{w_r^{(k)}}(z,t;x)={}_k\vartheta_{r}^{(r-k-1)}(x,z|t).
\label{eq:vtr}
\end{equation}
\end{theorem}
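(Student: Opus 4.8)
The plan is to prove the identity by induction on $k$, showing that the difference
\[
h \;:=\; \mathfrak{C}_{w_r^{(k)}}(z,t;x)\;-\;{}_k\vt_r^{(r-k-1)}(x,z|t)
\]
is zero. Both terms lie in $\mathcal{R}_\infty^{(k)}$: the first because $w_r^{(k)}\in W_\infty^{(k)}$, the second by the membership ${}_k\vt_r^{(l)}\in\mathcal{R}_\infty^{(k)}$ recalled in \S\ref{ssec:InvariantRing}. The case $r=0$ is trivial, since $w_0^{(k)}=e$ and ${}_k\vt_0^{(-k-1)}=1=\mathfrak{C}_e$, so I assume $r\ge 1$. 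For the base case $k=0$ the statement $\mathfrak{C}_{w_r^{(0)}}={}_0\vt_r^{(r-1)}$ is the equivariant Giambelli formula for the special Schubert classes of the maximal isotropic (Lagrangian) Grassmannian, i.e.\ the one-row factorial Schur $Q$-function, established in \cite{Ka},\cite{Ik},\cite{IN}; I would cite this directly.

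For the inductive step I would pass from level $k-1$ to level $k$ using the single right divided difference $\partial_k$, which is the only one not forced to vanish on $\mathcal{R}_\infty^{(k)}$. On the Weyl-group side, $w_r^{(k)}$ is a minimal coset representative, so its unique right descent is $s_k$, and the dictionary of \cite{BKT},\cite{BKT2} gives $w_r^{(k)}s_k=w_{r-1}^{(k-1)}$ with the length dropping by one; hence $\partial_k\mathfrak{C}_{w_r^{(k)}}=\mathfrak{C}_{w_{r-1}^{(k-1)}}$ by the characterization in Theorem \ref{DSPTheorem}. On the theta side, writing the generating function of Definition \ref{def-theta} as $\bigl(\prod_i\tfrac{1+x_iu}{1-x_iu}\prod_{i=1}^{l}(1-t_iu)\bigr)\prod_{i=1}^k(1+z_iu)$ and using $\partial_k\bigl[\prod_{i=1}^k(1+z_iu)\bigr]=u\prod_{i=1}^{k-1}(1+z_iu)$, a one-line telescoping yields $\partial_k\,{}_k\vt_r^{(l)}={}_{k-1}\vt_{r-1}^{(l)}$. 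Taking $l=r-k-1=(r-1)-(k-1)-1$, the inductive hypothesis applied at level $k-1$ to $r-1$ gives $\partial_k\,{}_k\vt_r^{(r-k-1)}=\mathfrak{C}_{w_{r-1}^{(k-1)}}$ as well, so $\partial_k h=0$.

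It remains to upgrade $\partial_k h=0$ to $h=0$. Since $h\in\mathcal{R}_\infty^{(k)}$ is invariant under the right action of $W_{(k)}=\langle s_i : i\ne k\rangle$, it is already annihilated by $\partial_i$ for every $i\ne k$ (for $i\ge 1$ this is $s_i$-invariance, and for $i=0$, using $k\ge 1$, it is $s_0$-invariance); together with $\partial_k h=0$ and the fact that $\langle W_{(k)},s_k\rangle=W_\infty$, this places $h$ in the common kernel $\bigcap_{i\ge 0}\ker\partial_i=\ZZ[t]=\ZZ[t]\,\mathfrak{C}_e$. Finally I would read off the coefficient of $\mathfrak{C}_e$ through the localization at the base point $e$ (under which $\mathfrak{C}_w\mapsto\delta_{w,e}$, while on the generating function $x\mapsto 0$ and $z_i\mapsto t_i$): the term $\mathfrak{C}_{w_r^{(k)}}$ contributes $0$ because $w_r^{(k)}\ne e$, and the specialized series $\prod_{i=1}^k(1+t_iu)\prod_{i=1}^{r-k-1}(1-t_iu)$ (with the evident modification $\prod_{i=|r-k-1|+1}^{k}(1+t_iu)$ when $r\le k$) has degree $<r$ in $u$, so the coefficient of $u^r$ — that is, the $\mathfrak{C}_e$-coefficient of ${}_k\vt_r^{(r-k-1)}$ — also vanishes. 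Hence $h=0$, closing the induction.

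The main obstacle is the interface between the generating-function calculus and the Weyl-group combinatorics. One must verify carefully that $\partial_i\,{}_k\vt_r^{(l)}=0$ for all $i\ne k$ (the cases $1\le i\le k-1$ and $i\ge k+1$ are immediate from symmetry or absence of the relevant $z_i$, but $i=0$ genuinely relies on the right $s_0$-invariance of ${}_k\vt_r^{(l)}$ from \S\ref{ssec:InvariantRing}), and that the descent identity $w_r^{(k)}s_k=w_{r-1}^{(k-1)}$ holds in all regimes, including $r\le k$ where $w_r^{(k)}$ is an honest permutation. The truly type-$C$ input — the action of the node-$0$ operator on the Schur $Q$-function part — is quarantined entirely inside the base case $k=0$, which is why invoking the known Lagrangian formula is essential rather than cosmetic; an entirely self-contained treatment would instead require the parallel left-divided-difference computation, whose left descents on $w_r^{(k)}$ are less uniform.
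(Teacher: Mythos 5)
Your proof is correct, but it takes a genuinely different route from the paper, which in fact offers no proof of this statement at all: the theorem is quoted from Wilson's thesis \cite{W}, where it is obtained inside her raising-operator/theta-polynomial framework, and the paper's own machinery only recovers it a posteriori as the one-row case $\lambda=(r)$ of the Pfaffian sum formula (Theorem \ref{thm:Theta=C}), whose proof runs through \emph{left} divided difference operators applied to the top class together with the uniqueness lemma (Lemma \ref{uniqueness}). You instead induct on $k$ using the single \emph{right} operator $\partial_k$, and each step holds up: the descent identity $w_r^{(k)}s_k=w_{r-1}^{(k-1)}$ is valid in both regimes (from the reduced words $w_r^{(k)}=s_{k-r+1}\cdots s_k$ for $r\le k$ and $w_r^{(k)}=s_{r-k-1}\cdots s_1s_0s_1\cdots s_k$ for $r>k$, right multiplication by $s_k$ just deletes the last letter, and the unique right descent of a $k$-Grassmannian element is indeed at $s_k$); the telescoping $\partial_k\,{}_k\vt_r^{(l)}={}_{k-1}\vt_{r-1}^{(l)}$ is a one-line generating-function computation that also covers $l<0$, which you need when $r\le k$; the identification $\bigcap_{i\ge 0}\ker\partial_i=\ZZ[t]\,\frakC_e$ follows from the $\ZZ[t]$-basis statement of Theorem \ref{DSPTheorem}, since every $w\neq e$ has some right descent; and evaluation at the empty partition (Definition \ref{loc def}) kills the remaining $\ZZ[t]$-coefficient by your degree count, exactly as in the paper's Lemma \ref{localization at empty}. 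As for what each approach buys: your induction is short, uses only the right action, and quarantines the type-$C$ node-$0$ phenomena in the Lagrangian base case, but it is tailored to one-row shapes --- right multiplication by $s_k$ has no comparably clean effect on a general $w_\lambda^{(k)}$, so this argument cannot be pushed to the paper's main theorem, whereas the left-divided-difference scheme of the paper is built for arbitrary $\lambda$ and yields the present statement as a by-product. One small correction: your base case is best cited from \cite{DSP} rather than \cite{Ka}, \cite{Ik}, \cite{IN}, since what you need is the identity for the algebraically defined polynomial, namely $\frakC_{w_\lambda^{(0)}}=Q_\lambda(x|t)$ together with $Q_r(x|t)={}_0\vt_r^{(r-1)}$ (cf.\ Remark \ref{rem: rel to Ik and Ka}); the geometric references give the Lagrangian Schubert-class/degeneracy-locus formula, which translates into a statement about $\frakC_w$ only through \cite{DSP}.
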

%%%%%%%%%%%%%%%%%%%%%%%%%%%%%%%
Let $\lambda$ be a $k$-strict partition in $\mathcal{P}_n^{(k)}$.  In the one-line notation of signed permutations (see \S \ref{sec:DSP}), we can write the corresponding $k$-Grassmannian element $w_{\lambda}^{(k)}$ of $W_n^{(k)}$ as  
\begin{eqnarray}
w_\lambda^{(k)}=\sfv_1\sfv_2\ldots \sfv_k | \overline{\zeta_1} \cdots \overline{\zeta_s}
\sfu_{1}\cdots \sfu_{n-k-s} ,\nonumber\\
0<\sfv_1<\cdots<\sfv_k,\quad
\overline{\zeta_1}<\cdots<\overline{\zeta_s} <0<\sfu_{1}<\cdots<\sfu_{n-k-s},\label{eq:Wseq}
\end{eqnarray}
where $s$ is a non-negative integer. Let $\chi_\lambda=(\chi_1,\ldots,\chi_{n-k})$ be the following sequence
\begin{equation}
\chi_\lambda:=(\zeta_1-1,\ldots,\zeta_s-1,-\sfu_{1},\ldots, -\sfu_{n-k-s})\in \ZZ^{n-k}.\label{eq:upperindex}
\end{equation}
We call $\chi_\lambda$ the {\it characteristic index\/} of $\lambda$. For a positive integer $m$, let $\Delta_m:=\{ (i,j) \ |\ 1\leq i < j \leq m\}.$
Define a subset $D(\lambda)$ of $\Delta_{n-k}$ by
\begin{equation}
D(\lambda) :=\{(i,j)\in \Delta_{n-k}\;|\;\chi_i+\chi_j < 0\}.
\end{equation}
%%%%%%%%%%%%%%%%%%%%%%%%%%%%%%%
We use the {\it multi Schur-Pfaffian\/} due to Kazarian \cite{Ka}, which is a natural variation of  the Schur Pfaffian appearing in \cite{Schur}. Let $(c^{(1)}, c^{(2)}, \dots ,c^{(m)})$ be an $m$-tuple such that each $c^{(i)}$
is an infinite sequence of variables $c_r^{(i)}\;(r\in \ZZ)$. The multi Schur-Pfaffian $\Pf[c_{r_1}^{(1)}c_{r_2}^{(2)}\cdots c_{r_m}^{(m)}]$ is defined in \S \ref{sec:Pf}. 
This is a finite $\ZZ$-linear combination of $c_{s_1}^{(1)}\cdots c_{s_m}^{(m)}, \; (s_1,\cdots,s_m) \in \ZZ^m$. For each $l=(l_1,\ldots,l_m) \in \ZZ^m$, the substitution of $\vt_{s_i}^{(l_i)}$ to $c_{s_i}^{(i)}$ in this linear combination is denoted by
\[
\Pf[\vt_{r_1}^{(l_1)}\vt_{r_2}^{(l_2)}\cdots \vt_{r_m}^{(l_m)}].
\]
%%%%%%%%%%%%%%%%%%%%%%%%%%%%%%%
The main result of this paper is the following. 
%%%%%%%%%%%%%%%%%%%%%%%%%%%%%%%
\begin{theorem}[Pfaffian sum formula, Theorem 6.6 below]\label{thm:Theta=C} 
Let $\lambda$ be a $k$-strict partition in $\mathcal{P}_n^{(k)}$, and $\chi$ the corresponding characteristic index.
We have 
\begin{equation}\label{eq:C=Theta}
\mathfrak{C}_{w_\lambda^{(k)}}= \sum_{I\subset {D}(\lambda)} \Pf\left[\vt_{\lambda_1+a^I_1}^{(\chi_1)} \cdots  \vt_{\lambda_{n-k}+a^I_{n-k}}^{(\chi_{n-k})}\right],%
\end{equation}
where  $I$ runs over all subsets  of $D(\lambda)$ and  $a_s^I= \#\{j\;|\;(s,j)\in I\} -\#\{i\;|\;(i,s)\in I\}.$
\end{theorem}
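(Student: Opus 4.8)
The plan is to prove the identity by showing that both sides obey the same recursion under the left divided difference operators $\delta_i$ and agree at a base case, and to run this induction not inside $\mathcal{R}_\infty^{(k)}$ alone but in the larger family that also records the partial isotropic flag varieties. The reason is structural: each $\delta_i$ acts on the equivariant parameters $t$ and commutes with the right $W_{(k)}$-action, hence preserves $\mathcal{R}_\infty^{(k)}$, yet the element $s_i w_\lambda^{(k)}$ produced by left multiplication generally leaves $W^{(k)}_\infty$. Thus the natural inductive closure of the statement is the extended Pfaffian sum formula, in which the upper indices of the theta entries come from the characteristic index of a general (not necessarily Grassmannian) signed permutation; formula \eqref{eq:C=Theta} is then recovered as the special case $w\in W^{(k)}_\infty$. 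Accordingly I would first phrase that extended statement and prove it.

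First I would record the action of the left operators on the building blocks. From the generating-function definition of ${}_k\vt_r^{(l)}$, the factor $\prod_{j=1}^l(1-t_ju)$ is symmetric in $t_1,\dots,t_l$, so $\delta_i\,{}_k\vt_r^{(l)}=0$ for every interior index $1\le i\le l-1$ (and for $i\ge l+1$); the action is concentrated at the boundary index $i=l$, where the elementary identity $\bigl((1-t_lu)-(1-t_{l+1}u)\bigr)/(t_l-t_{l+1})=-u$ produces the one-step shift sending the pair $(r,l)$ to $(r-1,l-1)$, while $\delta_0$ governs the sign change $t_1\mapsto -t_1$; the negative-superscript polynomials are treated identically using $\prod 1/(1+t_ju)$. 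The upshot, which is the first key lemma, is that for a fixed $i$ the operator $\delta_i$ touches only those theta entries whose upper index $\chi_s$ equals $i$ (with $\delta_0$ handling the entries of nonpositive superscript). Next, since each $\delta_i$ is a twisted derivation, $\delta_i(fg)=(\delta_i f)\,g+(s_i f)(\delta_i g)$, and the multi Schur-Pfaffian is a fixed $\ZZ$-linear combination of monomials in its entries, I would derive a rule expressing $\delta_i\Pf[\cdots]$ as a signed sum of Pfaffians in which one entry has been differentiated by the first lemma. This second lemma is where the multilinearity and antisymmetry of $\Pf$ must be used with care.

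The combinatorial heart is then to show that, after these substitutions, $\delta_i$ applied to the full sum $\sum_{I\subset D(\lambda)}\Pf[\cdots]$ collapses to the corresponding sum for $s_i w_\lambda^{(k)}$. Concretely I would match, term by term, the effect of $\delta_i$ on the upper indices $\chi$, on the set $D(\lambda)$, and on the correction vector $a^I$, against the change of characteristic index produced by the left multiplication $w_\lambda^{(k)}\mapsto s_i w_\lambda^{(k)}$ read off from the one-line notation \eqref{eq:Wseq}. I expect this bookkeeping — reconciling the index shift $(r,l)\mapsto(r-1,l-1)$ with the reindexing of the entries and of the summation set $D$ caused by the left reflection, and verifying that the many Pfaffian summands reorganize with no leftover terms — to be the main obstacle, and precisely the step that the raising-operator approach bypasses by a formal generating-function manipulation. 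The argument splits according to whether $i=0$, whether $i$ is an interior index (so $\delta_i$ annihilates both sides), and whether $s_i$ lowers the length.

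Finally I would close the induction. The base case is the empty partition, where $w_\emptyset^{(k)}=e$, $\mathfrak{C}_e=1$, and the right-hand side is the single Pfaffian $\Pf[\vt_0^{(\chi_1)}\cdots\vt_0^{(\chi_{n-k})}]=1$ since ${}_k\vt_0^{(l)}=1$; Wilson's identity \eqref{eq:vtr}, together with the collapse of a Pfaffian all but one of whose entries equals $1$, simultaneously checks the one-row (special class) case as a consistency test. For the inductive conclusion I would invoke the characterization of the double Schubert polynomials by the divided difference operators (Theorem \ref{DSPTheorem}) together with the basis property (Proposition \ref{prop:basisR^k}): having shown that the extended right-hand side satisfies the same left-divided-difference recursion as $\mathfrak{C}_w$ and agrees at $e$, the two families coincide, and restriction to $w\in W^{(k)}_\infty$ yields \eqref{eq:C=Theta}. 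Should the left operators alone fail to separate the relevant graded piece — their common kernel contains positive-degree $t$-invariants — I would supplement the argument by specializing $t=0$ to compare with the non-equivariant Pfaffian sum formula of \cite{BKT3} and then induct on the $t$-degree, but the expectation is that the left recursion plus the base case, organized through Theorem \ref{DSPTheorem}, suffices.
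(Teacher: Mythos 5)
Your high-level template---left divided difference operators, a base case, and a uniqueness principle---is the same as the paper's, and your two computational lemmas (the boundary action of $\delta_i$ on ${}_k\vt_r^{(l)}$ and the Leibniz-type rule for $\delta_i$ on multi Schur-Pfaffians) are exactly the paper's Lemma~\ref{action-of-delta} and Proposition~\ref{delta-i-on-Pf}. But the proposal has concrete gaps at both ends. First, its structural premise is false: if $w\in W^{(k)}_\infty$ and $\ell(s_iw)<\ell(w)$, then $s_iw\in W^{(k)}_\infty$ automatically (if $s_iw$ had a right descent $s_j$, $j\neq k$, then $\ell(ws_j)\le \ell(s_iws_j)+1=\ell(w)-1$, contradicting $w\in W^{(k)}_\infty$); this is recorded in Lemma~\ref{LGLEMMA}. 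When $\ell(s_iw)>\ell(w)$ the recursion only demands $\delta_iF_w=0$, so $s_iw$ is never needed. Hence the induction closes inside $W^{(k)}_\infty$, and the ``extended'' family you propose to induct over is not only unnecessary but does not exist: as noted in \S\ref{sec: beyond grassmannian}, already $\frakC_{\bar32\bar1}$, $\frakC_{\bar321}$, $\frakC_{\bar231}$, $\frakC_{\bar132}$ are not sums of Pfaffians (and $\bar32\bar1$ \emph{is} a minimal coset representative, for $J=\{0,2\}$). Second, your case split ($i=0$, interior index, length-lowering) omits the hardest case: indices with $\ell(s_iw_\lambda^{(k)})>\ell(w_\lambda^{(k)})$ for which $\pm i$ nevertheless occurs among the $\chi_s$, i.e.\ the sets $\calI_+(w),\calI_0(w)$ of \S\ref{sec:dual}. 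There $\delta_i$ moves individual entries nontrivially, yet the whole sum must vanish; e.g.\ for $k=1$, $w=3|\bar2\bar1$ ($\lambda=(3,2)$, $\chi=(1,0)$) and $i=1\in\calI_0(w)$ one needs $\delta_1\Pf[\vt_3^{(1)}\vt_2^{(0)}]=\Pf[\vt_2^{(0)}\vt_2^{(0)}]=0$, which is precisely the square identity of Lemma~\ref{square-relation0} and Proposition~\ref{prop square-relation0}---identities your plan never invokes. The paper organizes all such vanishing through a device absent from your proposal: $\Theta_\lambda^{(n,k)}:=\delta_{(w_\lambda^{(k)})^\vee}\Theta_{max}^{(n,k)}$ is built top-down from the \emph{single} Pfaffian of the top class, so every vanishing reduces via duality (Proposition~\ref{prop:delta_w2}) to $\delta_j\Theta_{max}^{(n,k)}=0$ for $j\neq k$ (Lemma~\ref{lem:VanishDelta}).

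The concluding step is also not justified. Theorem~\ref{DSPTheorem} characterizes $\{\frakC_w\}$ by the right operators $\partial_i$ \emph{and} the left operators $\delta_i$ jointly; you verify only the left relations, and neither it nor Proposition~\ref{prop:basisR^k} says that the left relations plus agreement at $e$ determine the family. The obstruction you yourself flag is real: for $k\ge 1$ the element $z_1^2\cdots z_k^2$ lies in $\calR_\infty^{(k)}$, has no constant term, and is killed by every $\delta_i$, so in any finite truncation $W_n^{(k)}$ one may replace $F_{w_{max}}$ by $F_{w_{max}}+z_1^2\cdots z_k^2$ without violating the left recursion or the value at $e$. A correct argument must therefore use an input beyond the recursion, and this is exactly what the paper supplies: the parabolic uniqueness Lemma~\ref{uniqueness}, proved via injectivity of the localization map $\Loc$ (Lemma~\ref{injectivity}) and induction on $|\mu|$, with the strictly stronger normalization $F_w|_\emptyset=\delta_{w,e}$ \emph{for all} $w$---a condition whose verification for the Pfaffian sums is itself a separate degree argument (Lemma~\ref{localization at empty}). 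Your fallback (specialize $t=0$, compare with \cite{BKT3}, induct on $t$-degree) is a genuinely different route, but it is too vague to repair the gap: no mechanism is given for how the operators $\delta_i$, which mix $t$-degrees, would propagate the non-equivariant identity to the positive $t$-degree part.
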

%%%%%%%%%%%%%%%%%%%%%%%%%%%%%%%
Note that the right hand side does not depend on $n$, \textit{i.e.}, it depends only  on $\lambda\in \mathcal{P}_\infty^{(k)}$. See Remark \ref{rem:n-indep} for a more precise  statement.
%%%%%%%%%%%%%%%%%%%%%%%%%%%%%%%
\begin{example} Let $k=1,n=5$. Let 
$\lambda=(5,3,2,1)$ be a $k$-strict partition. Then 
$w_\lambda^{(1)}=5|\bar{4}\bar{2}\bar{1}3$
and 
${D}(\lambda)=\{(2,4),(3,4)\}$.
We have 
\[
\frakC_{w_{(5,3,2,1)}^{(1)}}=
\Pf[\vt^{(3)}_5\vt^{(1)}_3\vt^{(0)}_2\vt_1^{(-3)}]
+
\Pf[\vt^{(3)}_5\vt^{(1)}_4\vt^{(0)}_2\vt_0^{(-3)}]
+
\Pf[\vt^{(3)}_5\vt^{(1)}_3\vt^{(0)}_3\vt_0^{(-3)}].
\]
\end{example}
%%%%%%%%%%%%%%%%%%%%%%%%%%%%%%%
 
Note that, even if $D(\lambda)\not=\emptyset$, it is possible that the double Schubert polynomial is a single Pfaffian in the formula. For example, we have 
$
\frakC_{13|\bar5\bar42} = \Pf[\vt_7^{(4)}\vt_5^{(2)}\vt_0^{(-4)}] + \Pf[\vt_7^{(4)}\vt_4^{(2)}\vt_{-1}^{(-4)}] $
according to the formula, but the second term is zero (\textit{cf.} Remark \ref{rem: n-ind theta}), and therefore
$\frakC_{13|\bar5\bar42}=\Pf[\vt_7^{(4)}\vt_5^{(2)}\vt_0^{(-4)}].$

\bigskip

Once we read the formula in terms of raising operators, the following corollary is immediate.

%%%%%%%%%%%%%%%%%%%%%%%%%%%%%%%%%%%%%%%%%
\begin{corollary}\label{Pf-Det}
If $D(\lambda)=\Delta_{n-k}$,  in particular, if $\lambda$ is contained in the $(n-k)\times k$ rectangle, then $\frakC_{w_{\lambda}^{(k)}}$ is a single determinant %(\cite[\S 1]{T})
\begin{equation}
\mathrm{Det}[\vt_{\lambda_1}^{(\chi_1)}\cdots
\vt_{\lambda_{n-k}}^{(\chi_{n-k})}]:=
\det(\vt_{\lambda_i+j-i}^{(\chi_i)})_{1\leq i,j\leq n-k}.
\label{eq:small}
\end{equation}
If $D(\lambda)=\emptyset$, in particular, if $\lambda$ is a strict partition containing the $(n-k)\times k$ rectangle, then $\frakC_{w_{\lambda}^{(k)}}$ is a single Pfaffian
\[
\mathrm{Pf}[\vt_{\lambda_1}^{(\chi_1)}\cdots
\vt_{\lambda_{n-k}}^{(\chi_{n-k})}].\]
\end{corollary}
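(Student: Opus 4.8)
The plan is to rewrite the right-hand side of \eqref{eq:C=Theta} as a single product of raising operators applied to one multi Schur-Pfaffian, and then to specialize. Let $R_{ij}$ ($i<j$) denote Young's raising operator that increases the $i$-th subscript by one and decreases the $j$-th subscript by one, leaving every superscript fixed. Since a raising operator acts on the formal symbols and the multi Schur-Pfaffian is linear in them, expanding $\prod_{(i,j)\in D(\lambda)}(1+R_{ij})$ over the commuting operators $R_{ij}$ indexes its terms by the subsets $I\subset D(\lambda)$, and the net shift that $\prod_{(i,j)\in I}R_{ij}$ produces on the $s$-th subscript is exactly $a^I_s$. Hence
\[
\prod_{(i,j)\in D(\lambda)}(1+R_{ij})\cdot\Pf\!\left[\vt_{\lambda_1}^{(\chi_1)}\cdots\vt_{\lambda_{n-k}}^{(\chi_{n-k})}\right]=\sum_{I\subset D(\lambda)}\Pf\!\left[\vt_{\lambda_1+a^I_1}^{(\chi_1)}\cdots\vt_{\lambda_{n-k}+a^I_{n-k}}^{(\chi_{n-k})}\right].
\]
As the multi Schur-Pfaffian is by definition $\prod_{i<j}\tfrac{1-R_{ij}}{1+R_{ij}}$ applied to $\vt_{\lambda_1}^{(\chi_1)}\cdots\vt_{\lambda_{n-k}}^{(\chi_{n-k})}$ (see \S\ref{sec:Pf}), Theorem~\ref{thm:Theta=C} is equivalent to
\[
\frakC_{w_\lambda^{(k)}}=\prod_{(i,j)\in D(\lambda)}(1-R_{ij})\prod_{(i,j)\in\Delta_{n-k}\setminus D(\lambda)}\frac{1-R_{ij}}{1+R_{ij}}\cdot\bigl(\vt_{\lambda_1}^{(\chi_1)}\cdots\vt_{\lambda_{n-k}}^{(\chi_{n-k})}\bigr),
\]
the raising-operator reformulation discussed in \S\ref{sec: IM-TW}.

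From this the two conclusions are immediate. When $D(\lambda)=\Delta_{n-k}$ every factor equals $1-R_{ij}$, so that the classical Jacobi-Trudi identity $\prod_{i<j}(1-R_{ij})\cdot\bigl(\vt_{\lambda_1}^{(\chi_1)}\cdots\vt_{\lambda_{n-k}}^{(\chi_{n-k})}\bigr)=\det\bigl(\vt_{\lambda_i+j-i}^{(\chi_i)}\bigr)_{1\le i,j\le n-k}$ yields the single determinant \eqref{eq:small}. When $D(\lambda)=\emptyset$ every factor equals $\tfrac{1-R_{ij}}{1+R_{ij}}$, and the whole product is by definition the single multi Schur-Pfaffian $\Pf[\vt_{\lambda_1}^{(\chi_1)}\cdots\vt_{\lambda_{n-k}}^{(\chi_{n-k})}]$.

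It then remains to check that the stated geometric hypotheses force $D(\lambda)=\Delta_{n-k}$, respectively $D(\lambda)=\emptyset$, and this is where genuine input from the combinatorics of $w_\lambda^{(k)}$ is needed. Reading off \eqref{eq:Wseq}--\eqref{eq:upperindex}, the barred entries $\overline{\zeta_i}$ of $w_\lambda^{(k)}$ are exactly those accounting for the parts greater than $k$ and contribute $\chi_i=\zeta_i-1\ge 0$, while the unbarred entries $\sfu_j$ account for the parts at most $k$ and contribute negative values; moreover the full sequence $\chi$ is strictly decreasing. I would therefore observe that the largest pairwise sum is $\chi_1+\chi_2$ and the smallest is $\chi_{n-k-1}+\chi_{n-k}$, so that $D(\lambda)=\Delta_{n-k}\Leftrightarrow\chi_1+\chi_2<0$ and $D(\lambda)=\emptyset\Leftrightarrow\chi_{n-k-1}+\chi_{n-k}\ge 0$. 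If $\lambda$ is contained in the $(n-k)\times k$ rectangle then $\lambda_1\le k$, so $w_\lambda^{(k)}$ has no barred entry and every $\chi_i$ is negative, giving the first condition; if instead every part of $\lambda$ exceeds $k$ then every entry is barred and every $\chi_i\ge 0$, giving the second. The main obstacle is precisely this extremal-pair reduction: pinning down, from the explicit bijection $\lambda\mapsto w_\lambda^{(k)}$, the correspondence barred/unbarred $\leftrightarrow$ parts $>k$/$\le k$ together with the sign and strict monotonicity of $\chi$. Everything upstream is the formal raising-operator bookkeeping of the first paragraph and the two standard collapse identities for $\prod(1-R_{ij})$ and $\prod\tfrac{1-R_{ij}}{1+R_{ij}}$.
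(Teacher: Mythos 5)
Your proposal takes the same route as the paper: the paper's own proof of this corollary is precisely ``read Theorem \ref{thm:Theta=C} through the raising-operator dictionary,'' and your first two paragraphs reproduce the content of Section \ref{sec: IM-TW} (Proposition \ref{prop:R-Pf}, Lemma \ref{product operator}, and the final proposition of that section) together with the two standard collapse identities. One point of care: the identity expressing a multi Schur-Pfaffian as $\prod_{i<j}\tfrac{1-R_{ij}}{1+R_{ij}}$ applied to a monomial is \emph{not} the definition (Section \ref{sec:Pf} defines $\Pf$ recursively); it is Proposition \ref{prop:R-Pf}, and it holds only after the specialization $|_{\geq 0}$ killing negative subscripts. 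Accordingly, raising operators should be applied to the formal monomials $c^{(i)}_r$ and the substitution $c=\vt$ performed afterwards: formally $R_{12}\Pf[c_{r_1}^{(1)}c_{r_2}^{(2)}]$ and $\Pf[c_{r_1+1}^{(1)}c_{r_2-1}^{(2)}]$ differ by a term involving $c_{-1}^{(2)}$, which vanishes only upon substitution since $\vt_r^{(l)}=0$ for $r<0$. This is a presentational repair, not a mathematical one; the paper's final proposition of Section \ref{sec: IM-TW} is organized exactly so as to avoid this issue.

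The substantive discrepancy is in your last paragraph. You verify the second ``in particular'' clause under the hypothesis ``every part of $\lambda$ exceeds $k$,'' silently identifying this with the stated hypothesis ``$\lambda$ is a strict partition containing the $(n-k)\times k$ rectangle.'' These are not equivalent: the stated hypothesis allows $\lambda_{n-k}=k$, and then the implication $D(\lambda)=\emptyset$ can fail. Concretely, for $(n,k)=(5,3)$ and $\lambda=(4,3)$, which is strict and contains the $2\times 3$ rectangle, Lemma \ref{lem:chi-lambda} gives $\lambda_1+\lambda_2=7\not>7=2k+2-1$, so $(1,2)\in D(\lambda)$, i.e.\ $D(\lambda)=\Delta_2\neq\emptyset$; consistently, the paper's own table in Section \ref{sec:exm} lists $\frakC_{345|\bar{1}2}$ as the determinant $\mathrm{Det}[\vt_4^{(0)}\vt_3^{(-2)}]$, not a single Pfaffian. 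So your condition (all parts $>k$, equivalently containment of the $(n-k)\times(k+1)$ rectangle, which for a $k$-strict partition automatically forces strictness) is the correct sufficient condition, and your verification of it is sound --- but be aware that you have thereby corrected the parenthetical hypothesis rather than derived it as written; as a proof of the literal statement the last paragraph does not (and cannot) close that gap.
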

%%%%%%%%%%%%%%%%%%%%%%%%%%%%%%%%%%%%%%%%%

The case when $\lambda$ is contained in the $(n-k)\times k$ rectangle
was considered by Wilson. She calls such partition a ``small'' partition
and proved in \cite[Section 5.8]{W} that the corresponding double 
theta polynomial, written as the determinant (\ref{eq:small}), 
satisfies an appropriate vanishing property.

It is straightforward to apply our results to the problem of degeneracy loci formulas of vector bundles (\textit{cf.} \cite{And}, \cite{AF}, \cite{T}).  As the simplest example, we provide a Chern class interpretation of ${}_k\vt_r^{(l)}(z,x|t)$ in \S \ref{sec:Chern}. It is worth mentioning that the special cases appearing in Corollary \ref{Pf-Det} look precisely the same as the classical Kempf-Laksov determinantal formula for type A degeneracy loci \cite{KeLa} and the Pfaffian formula for Lagrangian degeneracy loci \cite{Ik}, \cite{Ka} (see also Remark \ref{rem: rel to Ik and Ka}), although the functions are associated to the isotropic Grassmannians.  

In the proof of Theorem \ref{thm:Theta=C}, the left  divided difference operators $\delta_i$ play an essential role.  By a direct calculation, we show that  the right hand side of (\ref{eq:C=Theta}) satisfies the part of the defining properties  of the double Schubert polynomials corresponding to the left divided difference operators. To finish the proof, we then make use of a uniqueness lemma (Lemma \ref{uniqueness}) which is available for the parabolic case.

%%%%%%%%%%%%%%%%%%%%%%%%%%%%%%%
%%%%%%%%%%%%%%%%%%%%%%%%%%%%%%%
\subsection{Beyond Grassmannians}
%%%%%%%%%%%%%%%%%%%%%%%%%%%%%%%
%%%%%%%%%%%%%%%%%%%%%%%%%%%%%%%
The method described above to derive the Pfaffian sum formula for double Schubert polynomials works beyond the $k$-Grassmannian elements. Namely, we first derive a single Pfaffian formula for the top class for symplectic partial flag variety (Theorem \ref{top block k-Pf}). Then we introduce a certain family of signed permutations (\emph{pseudo $k$-Grassmannian elements}, see Definition \ref{def: pseudo Gr}) and derive the Pfaffian sum formula for them (Theorem \ref{NEW THEOREM INTRO}). Those signed permutations form a subset of the minimum-length coset representatives for non-maximal parabolic cases, \textit{i.e.} they correspond to certain Schubert varieties of the isotropic  partial flag varieties. In this case, one can regard the entries of the Pfaffians  as special classes arising from various isotropic Grassmannians. 

Furthermore, we can go beyond the pseudo $k$-Grassmannian elements. 
For example, %We also remark that in the case $n=3$, 
among all 48 signed permutations in $W_3$, 16 of them are not pseudo $k$-Grassmannian elements. It turns out that those 16 are also written as (sums of) Pfaffians, \emph{except} $\bar32\bar1,  \bar321,  \bar 231$ and $\bar132$. 
It would be interesting to study in general to what extent the double Schubert polynomials are written as sums of Pfaffians, and also the geometric or combinatorial conditions for the polynomials to be Pfaffian sums.

%%%%%%%%%%%%%%%%%%%%%%%%%%%%%%%
%%%%%%%%%%%%%%%%%%%%%%%%%%%%%%%
\subsection{Related results}
%%%%%%%%%%%%%%%%%%%%%%%%%%%%%%%
%%%%%%%%%%%%%%%%%%%%%%%%%%%%%%%
Anderson and Fulton \cite{AF} defined a notion of vexillary signed permutation in type B,C, and D. They showed that the double Schubert polynomials associated to vexillary signed permutations are given by explicit Pfaffian formulas. 
Naruse \cite{N} also independently proved a formula that expresses the corresponding double Schubert polynomials as a specialization of the factorial $Q$- and $P$-functions. Since our formula also express some Schubert classes as single Pfaffians, there is an overlap between our results and the results of \cite{AF}, \cite{N}. However, not all (pseudo) $k$-Grassmannian permutations are vexillary and there are non-vexillary $k$-Grassmannian permutations whose corresponding classes are written as single Pfaffians, \textit{e.g.} $13|\bar5\bar42$ is not vexillary but $\frakC_{13|\bar5\bar42}$ is a single Pfaffian as above. 

Tamvakis \cite{T2} proved a combinatorial formula which expresses an arbitrary (equivariant) Schubert class of any classical $G/P$  space  as a polynomial in the special Schubert classes (see also \cite{T}).  The formula involves some combinatorial data related to the reduced decomposition of Weyl group elements as well as theta polynomials and Schur $S$-functions.

Beside the possibility of extending our methods and results to type D,  it is also natural to ask if our formula can be derived by using Kazarian's pushforward formula. If it is possible, there will arise a new perspective hopefully applicable to K-theory case. We hope to address these problems elsewhere.  

Note added. -- After the submission of this manuscript, 
we have received from Harry Tamvakis a preprint \cite{TW} written by him and Wilson, where they provided a presentation of the equivariant cohomology ring of the symplectic Grassmannian.  
Some of their arguments use our main result in an essential way.
%%%%%%%%%%%%%%%%%%%%%%%%%%%%%%%%%%
%%%%%%%%%%%%%%%%%%%%%%%%%%%%%%%%%%
\subsection{Organization}
%%%%%%%%%%%%%%%%%%%%%%%%%%%%%%%
%%%%%%%%%%%%%%%%%%%%%%%%%%%%%%%
This paper is organized as follows.
In Section \ref{sec:DSP}, we review the double Schubert polynomials (DSP) following \cite{DSP}.
In Section \ref{PrelimOnGr}, we give 
some preliminary discussions on the symplectic Grassmannian. 
In Section \ref{sec:Pf}, we introduce the multi Schur-Pfaffian used by Kazarian in a slightly generalized form.  
In Section \ref{sec:theta}, we introduce 
the double theta polynomials and establish 
some of their basic properties. 
Section \ref{sec:proof} is devoted to the proof
of Theorem \ref{thm:Theta=C}. 
In Section \ref{sec: IM-TW}, we introduce the raising operators and their action on formal power series to prove the equivalence of our main theorem and the conjecture appearing in Wilson's thesis \cite{W}. 
In Section \ref{sec: beyond grassmannian}, we prove the Pfaffian sum formula for pseudo $k$-Grassmannian permutations.
In Section \ref{sec:exm} for the cases $(n,k)=(5,2), (5,3)$, we provide the expressions for the $k$-Grassmannian elements.

\

\noindent{\bf Acknowledgements:} We are especially grateful to Hiroshi Naruse for explaining his results, and also to Harry Tamvakis for valuable comments to an earlier version of this manuscript. We thank Dave Anderson, Anders Skovsted Buch, Andrew Kresch, Changzheng Li, Leonardo Mihalcea, Masaki Nakagawa for the helpful conversations and their comments. We thank the anonymous referee and Harry Tamvakis for independently pointing out an error of an argument in proving Theorem 4 in a previous version. 
We also thank Thomas Hudson for carefully reading the manuscript. This paper was written for the most part during the first named author's stay at KAIST in 2013. The hospitality and perfect working conditions there are gratefully acknowledged.

%%%%%%%%%%%%%%%%%%%%%%%%%%%%%%%%%%%%%%%%%%%%
%%%%%%%%%%%%%%%%%%%%%%%%%%%%%%%%%%%%%%%%%%%%
%%%%%%%%%%%%%%%%%%%%%%%%%%%%%%%%%%%%%%%%%%%%
%%%%%%%%%%%%%%%%%%%%%%%%%%%%%%%%%%%%%%%%%%%%
%%%%%%%%%%%%%%%%%%%%%%%%%%%%%%%%%%%%%%%%%%%%
%%%%%%%%%%%%%%%%%%%%%%%%%%%%%%%%%%%%%%%%%%%%
\section{Double Schubert polynomials of type $C$}\label{sec:DSP}
%%%%%%%%%%%%%%%%%%%%%%%%%%%%%%%%%%%%%%%%%%%%
%%%%%%%%%%%%%%%%%%%%%%%%%%%%%%%%%%%%%%%%%%%%
%%%%%%%%%%%%%%%%%%%%%%%%%%%%%%%%%%%%%%%%%%%%
%%%%%%%%%%%%%%%%%%%%%%%%%%%%%%%%%%%%%%%%%%%%
%%%%%%%%%%%%%%%%%%%%%%%%%%%%%%%%%%%%%%%%%%%%
%%%%%%%%%%%%%%%%%%%%%%%%%%%%%%%%%%%%%%%%%%%%
In this section, we review the construction of the double Schubert polynomials, following \cite{DSP}.
The expository article \cite{T} by Tamvakis will be also helpful to grasp more geometric backgrounds of this construction. 

\bigskip

Let $W_\infty$ be the group defined by the generators  
$\{s_i\;|\;i=0,1,\ldots\}$
and the relations $$
s_i^2=e\;(i\geq 0) ,\quad
s_0s_1s_0s_1=s_1s_0s_1s_0,\quad
s_is_{i+1}s_i=s_{i+1}s_is_{i+1}\;(i\geq 1),
$$
$$
s_is_j=s_js_i\;(|i-j|\geq 2).
$$
The corresponding Dynkin diagram is the following. %depicted by
\setlength{\unitlength}{0.4mm}
\begin{center}
  \begin{picture}(200,35)
  \thicklines
    \put(0,15){$\circ$}
  \put(60,15){$\circ$}
  \put(4,16.5){\line(1,0){12}}
  \put(4,18.5){\line(1,0){12}}
    \put(4.9,15){$>$}
  \put(-50,15){\small{$C_\infty$}}
\multiput(15,15)(15,0){4}{
  \put(0,0){$\circ$}
  \put(4,2.4){\line(1,0){12}}}
  \put(75,15){$\circ$}
  \put(90,15){$\circ$}
  \put(0,8){\tiny{$s_0$}}
  \put(15,8){\tiny{$s_1$}}
  \put(30,8){\tiny{$s_2$}}
  \put(60,8){\tiny{$s_{k}$}}
  \put(41,8){{$\cdots$}}
  \put(71,8){{$\cdots$}}
  \put(87,8){\tiny{$s_{n-1}$}}
    \put(79.5,17.4){\line(1,0){11.5}}
    \put(94.5,17.4){\line(1,0){12}}
    \put(110,15){$\cdots$}
  \end{picture}
\end{center}
The group $W_\infty$  is identified with the set of all permutations $w$ of the set $\{1,2,\ldots\}\cup \{\bar{1},\bar{2},\ldots\}$ such that $w(i)\ne i$ for only finitely many $i$, and $\overline{w(i)}=w(\bar{i})$ for all $i$. The simple reflections are identified with the transpositions $s_{0}=(1,\bar{1})$ and $s_{i}=(i+1,i)(\overline{i},\overline{i+1})$ for $i\geq 1.$ The Weyl group $W_n$ is identified with the subgroup of $W_{\infty}$ consisting of $w$ such that $w(i)=i$ for all $i>n$. In one-line notation, we often denote an element $w\in W_{n}$ by the finite sequence $(w(1),\ldots,w(n)).$

The function $Q_r(x)$ is defined by the generating function
\begin{equation}\label{Q_r}
\sum_{r=0}^\infty Q_r(x)u^r=
\prod_{i=1}^{\infty}\frac{1+x_iu}{1-x_iu}
\end{equation}
\textit{i.e.}, $Q_r(x)={}_0\vt_r^{(0)}$. Let $\Gamma$
be the ring
\footnote{
It is well-known that $\Gamma$ can be defined as the quotient of the polynomial ring $\ZZ[Q_1,Q_2,Q_3,\ldots]$ of the variables $Q_1,Q_2,\ldots$  by the ideal generated by the following elements
\begin{equation}\label{def of Q_r}
Q_r^2+2\sum_{i=1}^{r}(-1)^i Q_{r+i}Q_{r-i}\quad (r\geq 1),
\end{equation}
with $Q_0=1.$ This fact follows from \cite[(8.6) (ii), $(8.2')$ and Proof of (8.4) in Chap. III]{Mac}. 
} generated by 
$Q_r(x)\;(r\geq 1).$
Let $\mathcal{R}_\infty$ 
 be the polynomial ring 
 $\Gamma[t,z]$ in the variables
 $t=(t_1,t_2,\ldots),$ and $z=(z_1,z_2,\ldots)$
 with coefficients in $\Gamma.$
There are two actions of $W_\infty$ on  the ring $\mathcal{R}_\infty$ defined below. 
We denote the corresponding 
operators on $\mathcal{R}_\infty$ by 
$s_i^z$ (\emph{right} action) and $s_i^t$ (\emph{left} action). 

%%%%%%%%%%%%%%%%%%%%%%%%%%%%%%%%%%%%
For $i\geq 1,$ let  $s_i^z(z_i)=z_{i+1},\;s_i^z(z_{i+1})=z_i,\;s_i^z(z_j)=z_j \;(j\neq i,i+1),$ and $s_i^z(Q_r(x))=Q_r(x).$ There is an automorphism $s_0^z$ of $\ZZ[t]$-algebra  on $\mathcal{R}_\infty$ characterized by the following:
\[
s_0^z(z_1)=-z_1,\quad s_0^z(z_i)=z_i\;(i\geq 1), \quad\sum_{r=0}^\infty s_0^z(Q_r(x))u^r=\frac{1+z_1u}{1-z_1u}\prod_{i=1}^\infty\frac{1+x_iu}{1-x_iu}.
\]
The last equation is equivalently written as 
\[
s_0^z Q_r(x_1,x_2,\ldots)=Q_r(z_1,x_1,x_2,\ldots).
\]
Clearly we can extend $s_i^z$ to $\mathcal{R}_\infty$ as an automorphism of $\ZZ[t]$-algebra and show that $s_i\mapsto s_i^z\;(i\geq 0)$ gives a right action of $W_\infty$ on $\mathcal{R}_\infty.$ Similarly, there are operators $s_i^t\;(i\geq 0)$ on $\mathcal{R}_\infty$ such that $s_i\mapsto s_i^t\;(i\geq 0)$ gives a left  action of $W_\infty$ on $\mathcal{R}_\infty$ as $\ZZ[z]$-algebra automorphisms. In order to define this action, we can  use the following ring automorphism $\omega$:
$$
\omega(t_i)=-z_i,\quad
\omega(z_i)=-t_i,\quad
\omega(Q_r(x))=Q_r(x).
$$
We define $s_i^t=\omega s_i^z \omega$ ($i\geq 0$).
In particular, we have
$$
s_0^tQ_r(x_1,x_2,\ldots)=
Q_r(-t_1,x_1,x_2,\ldots).
$$
%%%%%%%%%%%%%%%%%%%%%%%%%%%%%%%%%%%%%%

Define the {\it simple roots\/} by 
$$
\alpha_0=2t_1,\quad 
\alpha_i=t_{i+1}-t_i\;(i\geq 1).
$$
The right and left divided difference operators
are defined by 
$$
\partial_i f=\frac{f-s_i^z f}{\omega(\alpha_i)},\quad
\delta_i f=\frac{f-s_i^t f}{\alpha_i}\quad (i\geq 0,\;f\in \mathcal{R}_\infty).
$$
%%%%%%%%%%%%%%%%%%%%%%%%%%%%%
\begin{theorem}[\cite{DSP}]\label{DSPTheorem}
\label{existC}
There exists a unique $\ZZ[t]$-free basis  $\{\frakC_w(z,t;x)\;|\;w\in W_{\infty}\} $ of $\calR_\infty$ satisfying the equations
\begin{equation}\label{E:dd} 
\partial_i \frakC_w=
\begin{cases}
\frakC_{ws_i}	&\mbox{if }\  \ell(ws_i)<\ell(w),\\
0			&\mbox{otherwise},
\end{cases} \ \ \ \ \ \ \ 
\delta_i \frakC_{w}=
\begin{cases}
\frakC_{s_iw}	&\mbox{if }\ \ell(s_iw)<\ell(w),\\
0 			&\mbox{otherwise},
\end{cases}
\end{equation}
for all $i\geq 0$, and such that $\frakC_w$ has no constant term except for $\frakC_{e}=1$. 
\end{theorem}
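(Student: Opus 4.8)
The plan is to read Theorem~\ref{DSPTheorem} as a separate existence and uniqueness statement, and to treat the two families $\{\partial_i\}$ and $\{\delta_i\}$ as a pair of commuting ``nil-Coxeter'' calculi on $\calR_\infty$. First I would record the formal properties on which everything rests. Because $f - s_i^z f$ is divisible by $\omega(\alpha_i)\in\ZZ[z]$ and $f - s_i^t f$ by $\alpha_i\in\ZZ[t]$, both $\partial_i$ and $\delta_i$ preserve $\calR_\infty$ and lower degree by one. Direct computation then yields $\partial_i^2=\delta_i^2=0$ and the braid relations mirroring the Coxeter relations of $W_\infty$, so that $\partial_w:=\partial_{i_1}\cdots\partial_{i_\ell}$ and $\delta_w:=\delta_{i_1}\cdots\delta_{i_\ell}$ are independent of the reduced word $w=s_{i_1}\cdots s_{i_\ell}$. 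Most importantly, since the $z$- and $t$-actions commute and each denominator is invariant under the other reflection ($s_j^t$ fixes $\ZZ[z]$ and $s_i^z$ fixes $\ZZ[t]$), one gets $\partial_i\delta_j=\delta_j\partial_i$ for all $i,j$. These verifications are routine but indispensable.

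For existence I would work inside each finite Weyl group $W_n$, whose longest element $w_0^{(n)}=\bar1\bar2\cdots\bar n$ is the maximal isotropic ($k=0$) Grassmannian element attached to the staircase strict partition. I would take as anchor the corresponding factorial Schur $Q$-function $\frakC_{w_0^{(n)}}$ and \emph{define} $\frakC_w:=\partial_{w^{-1}w_0^{(n)}}\,\frakC_{w_0^{(n)}}$ for $w\in W_n$; braid-invariance makes this well posed, and $\partial_i^2=0$ gives the right-hand relations of~\eqref{E:dd} immediately. Two points then demand genuine work. First, \emph{stability}: one must check $\partial_{(w_0^{(n)})^{-1}w_0^{(n+1)}}\frakC_{w_0^{(n+1)}}=\frakC_{w_0^{(n)}}$, i.e.\ compatibility of the staircase factorial $Q$-functions under $W_n\hookrightarrow W_{n+1}$, so that the $\frakC_w$ glue into a family indexed by $W_\infty$. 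Second, the \emph{left} relations: here I would exploit the involutive symmetry exchanging the two actions (built from $\omega$ together with $w\mapsto w^{-1}$, which swaps left and right descents), reducing the left relations for $\frakC_w$ to the already-known right relations for $\frakC_{w^{-1}}$; alternatively one verifies them directly on the anchor and propagates them using $\partial_i\delta_j=\delta_j\partial_i$. That $\{\frakC_w\}$ is a $\ZZ[t]$-free basis then follows from a unitriangularity argument: their leading terms in $\Gamma[z]$ are distinct, so the transition matrix to an evident $\ZZ[t]$-basis of $\calR_\infty$ is invertible.

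For uniqueness I would isolate the key lemma that an element of $\calR_\infty$ annihilated by \emph{every} $\partial_i$ and \emph{every} $\delta_i$ and having no constant term must vanish; equivalently, the simultaneous right- and left-$W_\infty$-invariants of $\calR_\infty$ reduce to $\ZZ$. Granting this, suppose $\{\frakC_w\}$ and $\{\frakC'_w\}$ both satisfy the theorem and set $D_w=\frakC_w-\frakC'_w$. Arguing by induction on $\ell(w)$ with $D_e=0$, each relation in~\eqref{E:dd} forces $\partial_iD_w$ and $\delta_iD_w$ to be either a lower difference $D_{ws_i}$ or $D_{s_iw}$ (zero by induction) or $0$ outright; hence $D_w$ is bi-invariant with no constant term, so $D_w=0$.

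The step I expect to be the real obstacle is making a \emph{single} normalized family satisfy \emph{both} sets of relations at once. The right relations essentially pin the polynomials down once the anchor is fixed, but showing that this same family also obeys the left relations --- and proving the bi-invariance lemma behind uniqueness --- is exactly where the two commuting $W_\infty$-actions must be controlled simultaneously, and is the genuinely non-formal heart of the argument.
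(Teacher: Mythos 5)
This paper does not actually prove Theorem \ref{DSPTheorem}; it imports it from \cite{DSP}, where existence is established geometrically: the equivariant Schubert classes $[X_w]_T\in H_T^*(\mathcal{F}l_n)$ satisfy both strings of relations (the $\partial_i$ from the classical right $W_n$-action on $H_T^*(\mathcal{F}l_n)$, the $\delta_i$ from the second, left action), they are stable along $\mathcal{F}l_n\hookrightarrow\mathcal{F}l_{n+1}$, and the substance is identifying the stable limit ring with $\Gamma[z,t]$ via the universal localization map, whose injectivity also yields uniqueness and the basis property (the latter via the $t=0$ specialization to Billey--Haiman polynomials). Against that background, your uniqueness argument is a genuinely different and viable route: the induction is correct, and the bi-invariance lemma it rests on is true --- invariance under $s_i^z$, $i\geq1$, kills all $z$-dependence, invariance under $s_0^z$ then kills the $\Gamma$-part (using $Q_\lambda(z_1,x)=Q_\lambda(x)+\cdots+2z_1^{\lambda_1}Q_{(\lambda_2,\lambda_3,\ldots)}(x)$ and a top-degree argument), and invariance under $s_j^t$, $j\geq1$, kills the $t$-dependence --- though you assert this lemma rather than prove it.

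The existence half, however, has concrete gaps. First, your anchor is misidentified: $\bar1\bar2\cdots\bar n$ is indeed the longest element of $W_n$ (length $n^2$), but it is \emph{not} a $0$-Grassmannian element (the longest element of $W_n^{(0)}$ is $\bar n\,\overline{n-1}\cdots\bar1$, of length $n(n+1)/2$), and accordingly $\frakC_{w_0^{(n)}}$ is not a factorial $Q$-function: it is $Q_{(2n-1,2n-3,\ldots,1)}(x|t)$ under the interleaving substitution $(t_1,t_2,\ldots)\mapsto(t_1,-z_1,t_2,-z_2,\ldots)$ (Theorem \ref{topDSP} in this paper). In \cite{DSP} that formula is a \emph{consequence} of the theorem you are trying to prove, so anchoring on it obliges you to verify its properties from scratch; and those properties are exactly the two steps you defer, namely the left relations at the anchor (equivalently $\delta_i\frakC_{w_0^{(n)}}=\partial_i\frakC_{w_0^{(n)}}$ for all $i$, using that $w_0s_iw_0=s_i$ in type $C$) and the stability identity $\partial_{(w_0^{(n)})^{-1}w_0^{(n+1)}}\frakC_{w_0^{(n+1)}}=\frakC_{w_0^{(n)}}$. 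Moreover, your preferred device for the left relations is circular: reducing them to the right relations for $\frakC_{w^{-1}}$ presupposes the symmetry $\omega(\frakC_{w^{-1}})=\frakC_w$, which cannot be extracted from the definition $\frakC_w=\partial_{w^{-1}w_0}\frakC_{w_0}$ without already knowing how the $\delta_i$ act on the anchor --- i.e.\ the very relations in question. Your fallback (verify at the anchor, propagate using $\partial_i\delta_j=\delta_j\partial_i$) is the correct mechanism, and the propagation does work by the nil-Coxeter calculus together with $w_0s_iw_0=s_i$; but then the two anchor identities carry the entire content of the existence statement, and nothing in your proposal establishes them. As written, the plan correctly maps where the difficulty lies without crossing it.
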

%%%%%%%%%%%%%%%%%%%%%%%%%%%%%
%%%%%%%%%%%%%%%%%%%%%%%%%%%%%
%%%%%%%%%%%%%%%%%%%%%%%%%%%%%
\section{Preliminaries on the symplectic Grassmannian}\label{PrelimOnGr}
%%%%%%%%%%%%%%%%%%%%%%%%%%%%%
%%%%%%%%%%%%%%%%%%%%%%%%%%%%%
%%%%%%%%%%%%%%%%%%%%%%%%%%%%%
\subsection{$k$-strict partitions}\label{k-strict partitions}
%%%%%%%%%%%%%%%%%%%%%%%%%%%%%
%%%%%%%%%%%%%%%%%%%%%%%%%%%%%
We develop some combinatorics related to the Schubert classes  of ${SG}_n^k.$ The set of the minimum-length coset representatives  for $W_\infty/W_{(k)}$ is given by  
\[
W^{(k)}_\infty=\{w\in W_\infty\;|\; \ell(w)>\ell(ws_i)\;(\forall i\geq 0,\;i\neq k)\}.
\]
We will review the bijection $\mathcal{P}^{(k)}_\infty\rightarrow W^{(k)}_\infty $ $(\lambda\mapsto w_\lambda^{(k)})$ such that $|\lambda|=\ell(w_\lambda^{(k)})$, which is due to \cite{BKT} (\textit{cf.} \cite[Section 4.2]{T}). Each $w \in W^{(k)}_\infty$ is called a $k$-Grassmannian permutation, and if $w \in W_n$, we can write $w$ in the one-line notation 
\[
w=\sfv_1\cdots \sfv_k | \overline{\zeta_1}\cdots \overline{\zeta_s} \sfu_1\cdots \sfu_{n-k-s},
\] 
as in (\ref{eq:Wseq}).
For each $i$ with $1\leq i\leq k$, let $\mu_i$ be the number of the elements of $\{\sfu_1, \dots ,\sfu_{n-k-s}\}$ less than $\sfv_{k+1-i}$. Then  $\mu=(\mu_1,\ldots,\mu_k)$ is a partition whose Young diagram fits inside the $k\times (n-s-k)$ rectangle. Let $\nu$ be the conjugate of $\mu$ (the transpose of the Young diagram).
It is worth noting that
\begin{equation}\label{lambda_i+s}
\nu_i=k - \sharp\{ p \ |\ v_p < u_i\}=\sharp\{p \ |\ v_p>u_i\} \ \ (i=1,\cdots,n-k-s).
\end{equation}
Consider the strict partition $\zeta:=(\zeta_1,\dots, \zeta_s)$ defined by the entries with bars in the one-line notation. The $k$-strict partition $\lambda$ corresponding to the $k$-Grassmannian permutation $w$ is given by  
\begin{equation}\label{lambda_i+s 2}
\lambda_i=\begin{cases}
\zeta_i+k&\mbox{if}\; 1\leq i\leq s,\\ 
\nu_{i-s} &\mbox{if}\; s+1\leq i\leq n-k.
\end{cases}
\end{equation}
Note that the $k$-strict partition $\lambda \in \calP_{\infty}^{(k)}$ defined from $w \in W_{\infty}^{(k)}$ as above is independent of the choice of $n$ such that $w \in W_n$.
%Note that the correspondence is independent of $n$ as far as $w\in W_n$.
%%%%%%%%%%%%%%%%%%%%%%%%%%%%%%%%%%%
\begin{example}\label{MAYA}
The $2$-Grassmannian permutation 
$w=58|\bar{4}\bar{3}\bar{1}267$
corresponds to the 
$2$-strict partition 
$\lambda=(6,5,3,2,1,1).$
\setlength{\unitlength}{0.5mm}
\begin{center}
  \begin{picture}(80,90)
    \thicklines
  \multiput(0,80)(0,-60){2}{\line(1,0){80}}
  \multiput(0,80)(80,0){2}{\line(0,-1){60}}
  \put(20,80){\line(0,-1){30}}
 \multiput(0,50)(0,-10){1}{\line(1,0){20}}
  \put(60,80){\line(0,-1){10}}
  \put(50,70){\line(0,-1){10}}
  \put(30,60){\line(0,-1){10}}
  \put(20,50){\line(1,0){10}}
  \put(30,60){\line(1,0){20}}
    \put(50,70){\line(1,0){10}}
    \put(30,68){$\zeta$}
    \put(4,40){$\nu$}
    \put(8,82){\small{$k$}}
    \put(-5,62){\small{$s$}}
\put(20,50){\line(0,-1){10}}
   \put(10,40){\line(0,-1){20}}
   \put(10,40){\line(1,0){10}}
          \end{picture}
\end{center}
\end{example}
%%%%%%%%%%%%%%%%%%%%%%%%%%%%%%%%%%%
On the other hand, we can reconstruct $w_{\lambda}^{(k)}$ from a $k$-strict partition $\lambda=(\lambda_1,\dots,\lambda_{r}>0)$ in $\calP_{\infty}^{(k)}$. Let $\lambda_1,\dots, \lambda_s>k$ and $\lambda_{s+1}\leq k$. Let $\mu=(\mu_1,\dots, \mu_k)$ be the conjugate of the partition $(\lambda_{s+1},\dots,\lambda_r)$. Define $\zeta_i:=\lambda_i-k$ for $i=1,\ldots,s.$ Define $(v_1,\dots,v_k)$ by
\[
v_{k+1-i}=\mu_i+s + k+1-i - \sharp\{ \zeta_j \ |\ \zeta_j\geq \mu_i+s+ k+1-i  \}.
\]
The signed permutation $w_{\lambda}^{(k)}$ is given by
\[
w_{\lambda}^{(k)}=(v_1\dots v_k|\overline{\zeta_1},\dots\overline{\zeta_s}u_1u_2,\dots) \in W_{\infty}^{(k)}
\]
where $u_1,u_2,\dots$ form an increasing sequence of positive integers, which is determined uniquely by the integers $v_i$ and $\zeta_i$. Note that 
\begin{equation}\label{u_j}
u_{j} = j  + \sharp\{p \ |\ v_p< u_{j}\} + \sharp\{ \zeta_p \ |\ \zeta_p <  \sfu_{j} \}.
\end{equation}
Since $\zeta_i> u_j$ if and only if $\sharp\{\zeta_p \ |\ \zeta_p > u_j\}>i$, (\ref{u_j}) implies that
\begin{equation}\label{u_j 2}
\zeta_i > u_j \ \ \mbox{if and only if } \ \ \ \zeta _ i > j + \sharp\{p \ |\ \sfv_p < \sfu_{j}\} + s -i.
\end{equation}

%%%%%%%%%%%%%%%%%%%%%%%%%%%%%%%%%%%
\begin{remark} If $k>0$, the partial order on 
$\mathcal{P}^{(k)}_\infty$ 
given by the inclusion of the Young diagrams
is not compatible with the one on $W^{(k)}_\infty$
induced from the Bruhat order on $W_\infty$.
%defined by the Bruhat order 
For example, if we let let $k=2$, $\lambda=(3,2),\mu=(5,1)$, then we have $w_\lambda^{(2)}=34\bar{1}2\cdots=s_2s_0s_1s_3s_2$ and $w_\mu^{(2)}=14\bar{3}2\cdots=s_2s_1s_0s_1s_3s_2$. So we have $w_\lambda^{(2)}\leq w_\mu^{(2)}$ in the Bruhat order,
but $\lambda\not\subset \mu.$

It would be an interesting problem to give
a good combinatorial model for $W^{(k)}_\infty$ 
which enable us to see the Bruhat order
manifestly. One candidate is the {\rm Maya
diagram} introduced below. 
\end{remark}
%%%%%%%%%%%%%%%%%%%%%%%%%%%%%%%%%%%
\begin{remark}
We can depict the permutation $w$ as 
the following {\rm ``Maya diagram''}.

\bigskip

\setlength{\unitlength}{0.7mm}
\begin{center}
  \begin{picture}(70,13)
      \thicklines
  \put(0,0){\line(1,0){80}}
  \put(0,10){\line(1,0){80}}
  \multiput(0,0)(10,0){9}{\line(0,1){10}}
  \put(3.5,3.5){$\bullet$}
  \put(23.5,3.5){$\bullet$}
  \put(33.5,3.5){$\bullet$}
    \put(43.5,3.5){$\circ$}
        \put(73.5,3.5){$\circ$}
\end{picture}
\end{center}
%The set 
%$\{\sfv_1,\ldots,\sfv_k\}$ is 
%indicated by the boxes with $\circ,$ while 
%$\{\zeta_1,\ldots,\zeta_s\}$ 
%are the positions of the boxes 
%with $\bullet.$
The integers $v_1,\dots,v_k$ are the positions of the boxes with $\circ$, while $\zeta_1,\dots, \zeta_s$ are the positions of the boxes with $\bullet$.
Then $\mu_i$ is the number of 
the vacant boxes to the left of the $i$th box 
with $\circ$.
In the above diagram, we have $\zeta=(4,3,1)$ and 
$\mu=(3,1)$, so $\nu=(2,1,1).$
\end{remark}
%%%%%%%%%%%%%%%%%%%%%%%%%%%%%%%%%%
We record the following lemmas without proofs  and will use them later in the proof of the main theorem. (\textit{cf.} \cite[Proposition 8.1.1]{BjBr}).
%%%%%%%%%%%%%%%%%%%%%%%%%%%%%%%%%%
\begin{lemma}\label{LGLEMMA} Let $w=\sfv_1\cdots \sfv_k | \overline{\zeta_1}\cdots \overline{\zeta_s} \sfu_1\cdots \sfu_{n-k-s} \in W^{(k)}_n.$
\
Suppose $i \geq 1$.  $\ell(s_iw)<\ell(w)$ if and only if  one of the following holds:
\begin{enumerate}
\item[$(L1)$] $w=( \cdots  | \cdots \overline{i+1} \cdots i\cdots  )$, \textit{i.e.}, $\zeta_p=i+1$ and $\sfu_q=i$ for some $p$ and $q$;
\item[$(L2)$] $w=( \cdots i\cdots  | \cdots \overline{i+1} \cdots  )$, \textit{i.e.}, $\zeta_p=i+1$ and $\sfv_q=i$ for some $p$ and $q$;
\item[$(L3)$] $w=( \cdots i+1 \cdots   | \cdots  i\cdots  )$, \textit{i.e.}, $\sfu_p=i$ and $\sfv_q=i+1$ for some $p$ and $q$;
\end{enumerate}
Note that, in this case, $s_iw \in W^{(k)}_\infty$. 
\end{lemma}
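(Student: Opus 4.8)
The plan is to derive the criterion from the combinatorial length function on $W_n$ and then extract what the $k$-Grassmannian shape (\ref{eq:Wseq}) forces. I would start from the standard formula (Björner--Brenti, Proposition 8.1.1)
\[
\ell(w)=\#\{c<d\mid w(c)>w(d)\}+\#\{c\leq d\mid w(c)+w(d)<0\}.
\]
For $i\geq 1$ the generator $s_i$ acts on \emph{values}, interchanging $i\leftrightarrow i+1$ and $\bar i\leftrightarrow\overline{i+1}$; hence left multiplication $w\mapsto s_iw$ alters the one-line notation only at the two positions $a,b$ with $|w(a)|=i$ and $|w(b)|=i+1$. The first step is to compute $\ell(s_iw)-\ell(w)$ in terms of the order of $a,b$ and the signs $\sigma=\sgn w(a)$, $\tau=\sgn w(b)$.

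The crucial simplification is a \emph{locality} observation: since no integer lies strictly between $i$ and $i+1$ (nor between $-(i+1)$ and $-i$), every term of the two sums involving a third position is unchanged by the substitution, and the diagonal contributions $w(a)+w(a)$, $w(b)+w(b)$ retain their signs. Thus only the single pair $\{a,b\}$ affects $\ell(s_iw)-\ell(w)$, and I would record its contribution in a short table over the two orderings of $a,b$ and the four patterns $(\sigma,\tau)$. The result is that $\ell(s_iw)<\ell(w)$ occurs in exactly four configurations of the one-line notation: (i) $i$ precedes $\overline{i+1}$; (ii) $\bar i$ precedes $\overline{i+1}$; (iii) $i+1$ precedes $i$, both unbarred; (iv) $\overline{i+1}$ precedes $i$.

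It then remains to specialize to the form (\ref{eq:Wseq}), in which $\sfv_1<\dots<\sfv_k$ are positive, the barred entries satisfy $\overline{\zeta_1}<\dots<\overline{\zeta_s}<0$, and $\sfu_1<\dots<\sfu_{n-k-s}$ are positive. Configuration (ii) cannot arise, because inside the barred block the signed values increase, so $\overline{i+1}$ always precedes $\bar i$. The monotonicity within each block matches the other three configurations with the cases of the lemma: (i) places the unbarred $i$ in the $\sfv$-block ahead of $\overline{i+1}$, which is $(L2)$; (iii) places $i+1$ in the $\sfv$-block and $i$ in the $\sfu$-block, which is $(L3)$; and (iv) places $\overline{i+1}$ in the barred block with $i$ in the later $\sfu$-block, which is $(L1)$. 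All three implications reverse, giving the asserted equivalence.

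For the closing remark, I would argue root-theoretically. Being a minimal-length coset representative means $w(\Phi^+_{(k)})\subseteq\Phi^+$, where $\Phi^+_{(k)}$ are the positive roots generated by the $\alpha_j$ with $j\neq k$, while $\ell(s_iw)<\ell(w)$ is equivalent to $w^{-1}(\alpha_i)\in\Phi^-$. For any $\beta\in\Phi^+_{(k)}$ we have $w(\beta)\in\Phi^+$, and $s_i$ keeps a positive root positive unless it equals $\alpha_i$; but $w(\beta)=\alpha_i$ would force $\beta=w^{-1}(\alpha_i)\in\Phi^-$, contradicting $\beta\in\Phi^+$. Hence $(s_iw)(\Phi^+_{(k)})\subseteq\Phi^+$, i.e.\ $s_iw\in W^{(k)}_\infty$. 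The main obstacle is entirely the bookkeeping of the second paragraph---pinning down the pair $\{a,b\}$ and verifying that neither third-position nor diagonal terms ever change sign; once that table is established, the block specialization and the root argument are routine.
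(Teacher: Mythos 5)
Your proof is correct: the locality observation (only the pair of positions carrying $\pm i$ and $\pm(i+1)$ can change the inversion/negative-sum counts), the resulting four descent configurations, the exclusion of configuration (ii) by the monotonicity of the barred block, and the root-theoretic argument that $s_iw$ remains a minimal-length coset representative all check out. The paper states this lemma without proof, citing only Bj\"orner--Brenti Proposition 8.1.1, and your argument is precisely the fleshing-out of that citation (the same length formula the paper invokes elsewhere), so it matches the intended approach.
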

%%%%%%%%%%%%%%%%%%%%%%%%%%%%%%%%%%
\begin{lemma}\label{LGLEMMA0} Let $w=\sfv_1\cdots \sfv_k | \overline{\zeta_1}\cdots \overline{\zeta_s} \sfu_1\cdots \sfu_{n-k-s}\in W^{(k)}_n$.
\begin{itemize}
\item[$(L0)$] $\ell(s_0w)<\ell(w)$ if and only if $w=( \cdots   | \cdots \bar 1 \cdots  )$, \textit{i.e.}, $\zeta_s=1$. 
\end{itemize}
\end{lemma}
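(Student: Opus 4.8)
The plan is to transfer the left-descent question to a right-descent question via the inversion-invariance of the length, where the computation becomes transparent. First I would record the elementary right-descent criterion at $s_0$: for any $v\in W_\infty$, right multiplication by $s_0=(1,\bar1)$ changes only the first entry, namely $(vs_0)(1)=\overline{v(1)}$ while $(vs_0)(i)=v(i)$ for $i\ge 2$. Using the type $C$ length formula $\ell(v)=\#\{1\le i<j\le n\mid v(i)>v(j)\}+\#\{1\le i\le j\le n\mid v(i)+v(j)<0\}$, I would set $a=v(1)$ and compare $\ell(vs_0)$ with $\ell(v)$. All pairs not meeting the index $1$ are unchanged, and the contributions of the pairs $(1,j)$ with $j\ge2$ coming from the two sums cancel each other (the inversion gain in one sum is exactly the loss in the other); only the diagonal term $(1,1)$ survives, giving $\ell(vs_0)-\ell(v)=[a>0]-[a<0]=\sgn(a)$. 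Hence $\ell(vs_0)<\ell(v)$ if and only if $v(1)<0$. That this cancellation is clean is precisely what makes the index $1$, and thus right multiplication, the convenient side to work on.

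Next I would pass from the left side to the right side. Since $s_0^2=e$ we have $(s_0w)^{-1}=w^{-1}s_0$, and as length is invariant under inversion, $\ell(s_0w)<\ell(w)$ holds if and only if $\ell(w^{-1}s_0)<\ell(w^{-1})$. Applying the criterion of the previous paragraph to $v=w^{-1}$, this is equivalent to $w^{-1}(1)<0$.

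It then remains to interpret $w^{-1}(1)<0$ in the one-line notation. Now $w^{-1}(1)<0$ means $w(\bar p)=1$ for some $p>0$, and by $w(\bar p)=\overline{w(p)}$ this says $w(p)=\bar1$ for some $p\in\{1,\dots,n\}$; equivalently, $\bar1$ occurs among the one-line entries of $w$. In the normal form $w=\sfv_1\cdots \sfv_k | \overline{\zeta_1}\cdots \overline{\zeta_s} \sfu_1\cdots \sfu_{n-k-s}$ all the entries $\sfv_i$ and $\sfu_j$ are positive, so $\bar1$ can occur only among the barred entries $\overline{\zeta_1},\dots,\overline{\zeta_s}$; since $\zeta_1>\cdots>\zeta_s>0$, the value $1$ appears among the $\zeta$'s exactly when the smallest satisfies $\zeta_s=1$. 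This is condition $(L0)$, so the argument is complete. I do not anticipate a genuine obstacle: the only nonroutine point is the cancellation in the right-descent computation, and should one prefer, it can be bypassed by quoting the description of right descent sets of signed permutations in \cite[Prop.~8.1.1]{BjBr} directly for $v=w^{-1}$.
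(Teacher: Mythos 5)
Your argument is correct. Note that the paper does not prove Lemma \ref{LGLEMMA0}: it records it (together with Lemma \ref{LGLEMMA}) without proof, pointing to \cite[Proposition 8.1.1]{BjBr}, so your write-up supplies precisely the details that citation is meant to cover. Each step checks out: with $a=v(1)$, right multiplication by $s_0$ only replaces $a$ by $-a$ in position $1$; for $j\geq 2$ the pair $(1,j)$ contributes $[v(j)<a]+[v(j)<-a]$ to the length of both $v$ and $vs_0$ (the inversion count and the negative-sum count trade places rather than literally cancel, but the net change is zero as you claim), so only the diagonal term survives and $\ell(vs_0)-\ell(v)=[a>0]-[a<0]$, giving the right-descent criterion $v(1)<0$. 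Combining with $\ell(u)=\ell(u^{-1})$ turns the left-descent condition into $w^{-1}(1)<0$, i.e.\ $\bar1$ occurring among $w(1),\dots,w(n)$; and since the entries $\sfv_i$ and $\sfu_j$ are positive while $\zeta_1>\dots>\zeta_s>0$, this happens exactly when $\zeta_s=1$. Your closing remark --- that one may instead quote the descent-set description in \cite{BjBr} applied to $w^{-1}$ --- is in effect exactly what the authors do, so your proof and the paper's citation are two renderings of the same standard fact.
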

%%%%%%%%%%%%%%%%%%%%%%%%%%%%%%%%%%

%%%%%%%%%%%%%%%%%%%%%%%%%%%%%%%
%%%%%%%%%%%%%%%%%%%%%%%%%%%%%%%
\subsection{Remarks on the Schubert conditions}
%%%%%%%%%%%%%%%%%%%%%%%%%%%%%%%
%%%%%%%%%%%%%%%%%%%%%%%%%%%%%%%

In this section, we review the definition of Schubert classes of ${SG}_n^k$ for the sake of the precise comparison of our conventions and those in \cite{DSP}. See also \cite[Section 6]{FP} and \cite[Section 0]{BKT}.  It is worth noting that the characteristic index $\chi$ appears in the Schubert conditions in an apparent manner.

Let $\ee_1,\dots, \ee_n, \ee_1^*,\dots, \ee_n^*$ be a standard symplectic basis of $\mathbb{C}^{2n}$. 
Define a symplectic form 
by \[
\langle \ee_i,\ee_j\rangle=
\langle \ee_i^*,\ee_j^*\rangle=0,\quad
\langle \ee_i,\ee_j^*\rangle=\delta_{ij}.
\]
For $1\leq i\leq n$, define a complete flag $F^{\bullet}: F^{n} \subset \cdots \subset F^1 \subset F^{\bar 1} \subset \cdots \subset F^{\bar n}$ by
\[F^i=\langle
\ee_i,\ldots,\ee_n\rangle,\quad
F^{\overline{i}}=\langle
\ee_i^*,\ldots,\ee_1^*\rangle+F^1.
\]
Let $\lambda\in \mathcal{P}_n^{(k)}.$
Then the Schubert variety $\Omega_\lambda$ with respect to $F^{\bullet}$ can be also defined as
\begin{equation}
\Omega_\lambda:=\{V\in {SG}_n^k\;|\;
\dim (V\cap F^{\overline{w_\lambda^{(k)}(k+j)}})\geq j\quad (1\leq j\leq n-k)\}.
\end{equation}
Indeed, if we relabel the above flag by $F_1 \subset \cdots \subset F_{2n}$, \textit{i.e.}, $F^i = F_{n+1 - i}$ and $F^{\bar i} = F_{n+i}$ for $1\leq j \leq n$,  then
\[
F^{\overline{w_\lambda^{(k)}(k+j)}}
=
\begin{cases}
F^{\zeta_j} = F_{n+1 - \zeta_j} =F_{n - \chi_j} &\mbox{if}\;  1\leq j \leq s,\\
F^{\overline{\sfu_{j-s}}} = F_{n+\sfu_{j-s}} = F_{n-\chi_j} &\mbox{if}\;  s+1 \leq j \leq n-k,
\end{cases}
\]
where $\sfv_1\dots\sfv_k|\overline{\zeta_1} \dots \overline{\zeta_s} \sfu_1\dots\sfu_{n-k-s}$ is the one-line notation of $w_{\lambda}^{(k)}$. Therefore the equivalence of the definitions of $\Omega_{\lambda}$ at (\ref{def:Omega_lambda1}) and here follows from 
\begin{equation}\label{eq:p=n-chi}
p_j(\lambda) = n - \chi_j \ \ (1\leq j \leq \ell(\lambda))
\end{equation}
and the fact that the condition is redundant for $j > \ell(\lambda)$. We can prove the equation (\ref{eq:p=n-chi}) as follows. 
If $1 \leq j \leq s$, then the RHS is $n-p_j(\lambda) = \zeta_j - 1=\chi_j$. Suppose that $s+1\leq j \leq \ell(\lambda)$. Then from the correspondence in \S \ref{k-strict partitions}, it is clear that
\begin{eqnarray*}
-\chi_j=\sfu_{j-s} 
&=& j - s + \sharp\{p \ |\ v_p< u_{j-s}\} + \sharp\{ \zeta_i \ |\ \zeta_i < \sfu_{j-s} \}\\
&=& k - \lambda_j + 1 + j - s - 1 + \sharp\{ \zeta_i \ |\ \zeta_i <  \sfu_{j-s} \}\\
&=& k - \lambda_j + 1 + \sharp\{ i \;|\; i<j, \ \chi_i + \chi_j \leq  -1 \}\\
&=& k - \lambda_j + 1 + j-1 - \sharp\{ i \;|\; i<j, \ \chi_i + \chi_j \geq  0 \}\\
&=& k - \lambda_j + 1 + j-1 - \sharp\{ i \;|\; i<j, \ \lambda_i + \lambda_j > 2k + j-i \}\\
&=& p_j(\lambda) - n.
\end{eqnarray*}
%%%%%%%%%%%%%%%%%%%%%%%%%%%%%%%%%%%%%%%%%
The first equality follows from (\ref{u_j}), the second equality follows from (\ref{lambda_i+s}) and (\ref{lambda_i+s 2}), and the second last equality follows from the following lemma. 
%%%%%%%%%%%%%%%%%%%%%%%%%%%%%%%%%%%%%%%%%
\begin{lemma}\label{lem:chi-lambda}
Let $\chi$ be the characteristic index of $\lambda$. Suppose $1\leq i<j \leq n-k$. Then $\chi_i + \chi_j \geq 0$ if and only if $\lambda_i + \lambda_j > 2k + j - i$.
\end{lemma}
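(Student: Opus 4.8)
The plan is to prove the equivalence by a case analysis according to where the indices $i,j$ fall relative to $s$, using the explicit sign pattern of the characteristic index together with the combinatorial identities already established. Recall from (\ref{eq:upperindex}) that $\chi_p=\zeta_p-1\ge 0$ for $1\le p\le s$ while $\chi_p=-\sfu_{p-s}<0$ for $s+1\le p\le n-k$, and from (\ref{lambda_i+s 2}) that $\lambda_p=\zeta_p+k$ for $p\le s$ and $\lambda_p=\nu_{p-s}\le k$ for $p>s$. Note also that $\zeta_1>\cdots>\zeta_s\ge 1$, so that $\zeta_i-\zeta_j\ge j-i$ whenever $i<j$, and that the $\sfv_p$ and $\sfu_q$ are all distinct positive integers. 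Since $i<j$, exactly one of three situations occurs: both indices lie in $\{1,\dots,s\}$, both lie in $\{s+1,\dots,n-k\}$, or $i\le s<j$.

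For the two extreme cases I would check that both sides of the asserted equivalence are simultaneously always true or always false. If $s+1\le i<j\le n-k$, the left inequality reads $-(\sfu_{i-s}+\sfu_{j-s})\ge 0$, which is impossible, while $\lambda_i+\lambda_j=\nu_{i-s}+\nu_{j-s}\le 2k<2k+(j-i)$, so the right inequality also fails; the equivalence holds because both conditions are false. If $1\le i<j\le s$, then $\chi_i+\chi_j=\zeta_i+\zeta_j-2>0$ holds automatically (as $\zeta_i\ge 2$, $\zeta_j\ge 1$), and $\lambda_i+\lambda_j-(2k+j-i)=\zeta_i+\zeta_j-(j-i)>0$ using $\zeta_i-\zeta_j\ge j-i$ and $\zeta_j\ge 1$; thus both conditions are true.

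The substantive case is $1\le i\le s<j\le n-k$. Setting $a:=j-s$, the left condition $\chi_i+\chi_j\ge 0$ becomes $\zeta_i-1-\sfu_a\ge 0$, i.e. $\zeta_i>\sfu_a$. For the right condition I would use $\lambda_i=\zeta_i+k$, $\lambda_j=\nu_a$, and the identity $k-\nu_a=\sharp\{p\mid \sfv_p<\sfu_a\}$, which follows from $\nu_a=\sharp\{p\mid \sfv_p>\sfu_a\}$ in (\ref{lambda_i+s}) together with the distinctness of the $\sfv_p$ from $\sfu_a$. Then $\lambda_i+\lambda_j>2k+j-i$ rearranges to $\zeta_i>(j-i)+\sharp\{p\mid \sfv_p<\sfu_a\}$. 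Since $j=s+a$, the right-hand side here is precisely the criterion in (\ref{u_j 2}) for the statement $\zeta_i>\sfu_a$, so the two conditions coincide and the case is finished.

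The only delicate point, and the step I would flag as the main source of error, is the index bookkeeping when invoking (\ref{u_j 2}): the running index there is the \emph{local} position in the $\sfu$-sequence, so one must substitute $a=j-s$ and use $s+a=j$ to recover the shift $(j-i)$ appearing in the inequality $\lambda_i+\lambda_j>2k+j-i$. Once these indices are matched correctly, everything else is forced by the sign pattern of $\chi$ and the elementary bounds $\zeta_i\ge 1$ and $\nu_a\le k$.
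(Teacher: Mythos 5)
Your proof is correct and follows essentially the same route as the paper: the substantive case $i\le s<j$ is resolved by exactly the same computation, combining (\ref{lambda_i+s}), (\ref{lambda_i+s 2}) and the equivalence (\ref{u_j 2}) after the substitution $a=j-s$. The only difference is that you verify explicitly the two extreme cases (both conditions false when $s<i<j$, both true when $i<j\le s$), which the paper dismisses as trivial.
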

%%%%%%%%%%%%%%%%%%%%%%%%%%%%%%%%%%%%%%%%%
\begin{proof} 
Let $\sfv_1\dots \sfv_k | \overline{\zeta_1}\dots \overline{\zeta_s} \sfu_1\dots \sfu_{n-k-s}$ be the one-line notation for $w_{\lambda}^{(k)}$. The only non-trivial case is when $i \leq s$ and $j \geq s+1$. The equivalence (\ref{u_j 2}) implies that $\chi_i + \chi_j = \zeta_i -1 - \sfu_{j-s} \geq 0$ if and only if
\[
\lambda_i + \lambda_j = 2k + \zeta _ i  - \sharp\{p \ |\ \sfv_p < \sfu_{j-s}\} > 2k + j - i
\]
where the first equality follows from (\ref{lambda_i+s}) and (\ref{lambda_i+s 2}).
\end{proof}
%%%%%%%%%%%%%%%%%%%%%%%%%%%%%%%%%%%%%%%%%
The $T$-fixed point 
of ${SG}_n^k$ corresponding to $\lambda$ is 
$\langle\ee_{w_\lambda^{(k)}(k+1)}^*,\ldots,\ee_{w_\lambda^{(k)}(n)}^*
\rangle,$ which is the image of 
$e_w\in \mathcal{F}l_n$ under the projection $\sfp$
onto ${SG}_n^k.$

%%%%%%%%%%%%%%%%%%%%%%%%%%%%%%%
%%%%%%%%%%%%%%%%%%%%%%%%%%%%%%%
%%%%%%%%%%%%%%%%%%%%%%%%%%%%%%%
\subsection{Invariant subring $\mathcal{R}_\infty^{(k)}$}\label{ssec:InvariantRing}
%%%%%%%%%%%%%%%%%%%%%%%%%%%%%%%
%%%%%%%%%%%%%%%%%%%%%%%%%%%%%%%
%%%%%%%%%%%%%%%%%%%%%%%%%%%%%%%
Let $\mathcal{R}_\infty^{(k)}$ be the subring of
elements in  
$\mathcal{R}_\infty$ which are 
fixed by the right action of $W_{(k)}$:
\[\mathcal{R}_\infty^{(k)}:=
\{f\in \mathcal{R}_\infty\;|\;
s_i^z(f)=f\quad(\forall i\neq k)
\}.
\]
Since the right action of $W_\infty$ is 
$\ZZ[t]$-linear, $\mathcal{R}_\infty^{(k)}$ is 
a $\ZZ[t]$-subalgebra of $\mathcal{R}_\infty.$
%%%%%%%%%%%%%%%%%%%%%%%%%%%%%%%%%%%%%%%%%
\begin{proposition}\label{prop:basisR^k} We have
$$
\mathcal{R}_\infty^{(k)}=
\bigoplus_{w\in W^{(k)}_\infty}
\ZZ[t]\mathfrak{C}_w.$$
\end{proposition}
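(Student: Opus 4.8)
The plan is to characterize membership in $\mathcal{R}_\infty^{(k)}$ through the right divided difference operators and then read off the coefficients in the $\ZZ[t]$-basis $\{\mathfrak{C}_w\}$ of Theorem \ref{DSPTheorem}.

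First I would reduce $W_{(k)}$-invariance to annihilation by divided differences. By definition $f \in \mathcal{R}_\infty^{(k)}$ iff $s_i^z f = f$ for every $i \neq k$. Since $\partial_i f = (f - s_i^z f)/\omega(\alpha_i)$ with $\partial_i$ mapping $\mathcal{R}_\infty$ into itself and $\omega(\alpha_i)$ a nonzero element of $\mathcal{R}_\infty$, vanishing of the quotient $\partial_i f$ forces $f - s_i^z f = 0$, and conversely. Hence $\partial_i f = 0$ iff $s_i^z f = f$, so $\mathcal{R}_\infty^{(k)} = \{ f \in \mathcal{R}_\infty : \partial_i f = 0 \text{ for all } i \neq k \}$.

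Next I would expand and compute. Writing $f = \sum_{w \in W_\infty} a_w \mathfrak{C}_w$ with $a_w \in \ZZ[t]$ (finitely many nonzero) and using the $\ZZ[t]$-linearity of $\partial_i$ together with the right-hand relation in \eqref{E:dd},
\[
\partial_i f = \sum_{w:\ \ell(ws_i) < \ell(w)} a_w\, \mathfrak{C}_{ws_i}.
\]
The assignment $w \mapsto ws_i$ is a bijection of $\{w : \ell(ws_i) < \ell(w)\}$ onto $\{v : \ell(vs_i) > \ell(v)\}$, so after reindexing $v = ws_i$,
\[
\partial_i f = \sum_{v:\ \ell(vs_i) > \ell(v)} a_{vs_i}\, \mathfrak{C}_v.
\]
Because $\{\mathfrak{C}_v\}$ is $\ZZ[t]$-linearly independent, $\partial_i f = 0$ holds iff $a_{vs_i} = 0$ whenever $\ell(vs_i) > \ell(v)$, i.e.\ iff $a_w = 0$ for every $w$ with $\ell(ws_i) < \ell(w)$.

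Finally I would assemble the two inclusions. Imposing $\partial_i f = 0$ for all $i \neq k$ forces $a_w = 0$ unless $\ell(ws_i) > \ell(w)$ for every $i \neq k$, i.e.\ unless $w$ lies in $W^{(k)}_\infty$ (the minimum-length coset representatives, characterized by $\ell(ws_i) > \ell(w)$ for $i \neq k$); conversely the same computation shows that each $\mathfrak{C}_w$ with $w \in W^{(k)}_\infty$ satisfies $\partial_i \mathfrak{C}_w = 0$ for all $i \neq k$, hence lies in $\mathcal{R}_\infty^{(k)}$. Directness of the sum is inherited from the $\ZZ[t]$-freeness of $\{\mathfrak{C}_w\}$. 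The whole argument is short once the crux, the equivalence ``$s_i^z$-invariant $\Leftrightarrow$ killed by $\partial_i$'', is in place; the only bookkeeping to watch is the reindexing involution $w \mapsto ws_i$ and the matching of the resulting support condition with the ascent characterization of $W^{(k)}_\infty$, which I regard as the main point to state carefully rather than a genuine obstacle.
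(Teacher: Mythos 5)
Your proof is correct and follows essentially the same route as the paper's: both translate $W_{(k)}$-invariance into annihilation by the right divided differences $\partial_i$ ($i\neq k$), expand $f$ in the $\ZZ[t]$-basis $\{\mathfrak{C}_w\}$, and use the relations \eqref{E:dd} together with linear independence and the ascent characterization of $W^{(k)}_\infty$ to conclude both inclusions. The only difference is cosmetic: you spell out the reindexing $v=ws_i$ and the non-zero-divisor justification for ``$\partial_i f=0\Leftrightarrow s_i^z f=f$'', which the paper leaves implicit.
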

%%%%%%%%%%%%%%%%%%%%%%%%%%%%%%%%%%%%%%%%%
\begin{proof} 
In order to prove the inclusion
``$\supset$'',
it is enough to show $\mathfrak{C}_{w}\in \mathcal{R}_\infty^{(k)}$
for all $w\in W^{(k)}_\infty.$
Let $w\in W^{(k)}_\infty.$ Then for any  
$j\neq k$, we have $\ell(ws_j)=\ell(w)+1,$
and hence $\partial_j\mathfrak{C}_{w}=0.$
This is equivalent to $s_j^z\mathfrak{C}_{w}=\mathfrak{C}_{w}$ for $j\neq k.$ Thus we have $\mathfrak{C}_{w}
\in \mathcal{R}_\infty^{(k)}.$
%%%%%%%%%%%%%%%%%%%%%%%%%%%%%%%%%%%%%%%%%
To prove the reverse inclusion ``$\subset$'', we write an arbitrary element $f$ of $\mathcal{R}_\infty^{(k)}$ as $f=\sum_{w\in W_\infty}c_w \mathfrak{C}_w\; (c_w\in \ZZ[t])$. If $v\notin W^{(k)}_\infty$, then there is $i$ such that $i\neq k$ and  $\ell(vs_i)<\ell(v)$. We have $0=\partial_if=\sum_{v\in W_\infty,\; \ell(vs_i)=\ell(v)-1}c_v \mathfrak{C}_{vs_i}.$ It follows that $c_v=0$.
\end{proof}
%%%%%%%%%%%%%%%%%%%%%%%%%%%%%%%%%%
\begin{definition}\label{loc def}
For each $\mu \in \calP_{\infty}^{(k)}$, let $w_{\mu}^{(k)}=v_1v_2\cdots v_k|\overline{\zeta_1}\overline{\zeta_2}\cdots\overline{\zeta_s} u_1u_2\cdots \in W_{\infty}^{(k)}$ be the corresponding signed permutation. Define 
\[
\Loc:  \mathcal{R}_\infty^{(k)} \to \Map(\calP_{\infty}^{(k)}, \ZZ[t])
;  \ \ f \mapsto (\mu \mapsto f|_{\mu}),
\]
where $f|_{\mu}$ is defined by the substitution
\begin{eqnarray*}
&&(z_1,\dots, z_k) \mapsto (t_{v_1},\dots,t_{v_k});\\
&&(x_1,x_2,\dots ) \mapsto (t_{\zeta_1},\dots,t_{\zeta_s},0,0,\dots).
\end{eqnarray*}
\end{definition}
%%%%%%%%%%%%%%%%%%%%%%%%%%%%%%%%%%
\begin{remark}
Note that this is the restriction of the universal localization map defined in \cite[Section 6.1]{DSP}. In particular, we have the following vanishing property:
\begin{equation}\label{van prop}
\frakC_{w_{\lambda}^{(k)}}|_{\mu}=0 \ \ \ \mbox{ unless $w_{\lambda}^{(k)} \leq  w_{\mu}^{(k)}$}.
\end{equation}
Here $\leq$ is the Bruhat-Chevalley order.
\end{remark}
%%%%%%%%%%%%%%%%%%%%%%%%%%%%%%%%%%
\begin{lemma}\label{injectivity} 
The homomorphism 
$\Loc: \mathcal{R}_\infty^{(k)} \to \Map(\calP_{\infty}^{(k)}, \ZZ[t])$ is injective.
\end{lemma}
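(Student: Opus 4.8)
The plan is to run a triangularity argument with respect to the Bruhat order, using the vanishing property (\ref{van prop}) together with the non\-vanishing of the diagonal localization values $\frakC_{w_\mu^{(k)}}|_\mu$. By Proposition \ref{prop:basisR^k}, any $f\in \mathcal{R}_\infty^{(k)}$ admits a finite expansion $f=\sum_{\lambda}c_\lambda\,\frakC_{w_\lambda^{(k)}}$ with $c_\lambda\in\ZZ[t]$, the sum running over $\lambda\in\calP_\infty^{(k)}$. Suppose $\Loc(f)=0$ but $f\neq 0$, so that the set $S=\{\lambda\mid c_\lambda\neq 0\}$ is nonempty; I aim to produce some $\mu$ with $f|_\mu\neq 0$, which is a contradiction.

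First I would choose $\mu\in S$ so that $w_\mu^{(k)}$ is \emph{minimal} in the Bruhat order among $\{w_\lambda^{(k)}\mid\lambda\in S\}$. Evaluating at $\mu$ and applying (\ref{van prop}), every term with $w_\lambda^{(k)}\not\leq w_\mu^{(k)}$ vanishes, so that $f|_\mu=\sum_{\lambda\in S,\;w_\lambda^{(k)}\leq w_\mu^{(k)}}c_\lambda\,\frakC_{w_\lambda^{(k)}}|_\mu$. By minimality of $w_\mu^{(k)}$ within $S$ and the fact that $\lambda\mapsto w_\lambda^{(k)}$ is a bijection, the only surviving index is $\lambda=\mu$, whence $f|_\mu=c_\mu\,\frakC_{w_\mu^{(k)}}|_\mu$.

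The crux, and the step I expect to be the main obstacle, is to verify that the diagonal value $\frakC_{w_\mu^{(k)}}|_\mu$ is a nonzero element of $\ZZ[t]$. Since $\Loc$ is the restriction of the universal localization map of \cite[Section 6.1]{DSP}, this value coincides with the self-restriction of the corresponding equivariant Schubert class, which is a product of root linear forms indexed by the inversions of $w_\mu^{(k)}$. As each factor is a nonzero element of the integral domain $\ZZ[t]$, and $\ZZ[t]$ has no zero divisors, the product is nonzero. I would either cite the explicit self-restriction formula from \cite{DSP}, or, for a self-contained argument, extract it by applying the left divided difference operators $\delta_i$ along a reduced word for $w_\mu^{(k)}$ and tracking the lowest-degree term in the $t$-variables.

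Granting the diagonal non\-vanishing, the identity $c_\mu\,\frakC_{w_\mu^{(k)}}|_\mu=0$ together with the domain property of $\ZZ[t]$ forces $c_\mu=0$, contradicting $\mu\in S$. Hence $S=\emptyset$, so $f=0$, and $\Loc$ is injective as claimed.
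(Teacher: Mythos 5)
Your proposal is correct, but it takes a genuinely different route from the paper's. The paper proves this lemma in one line, by invoking \cite[Lemma 6.5]{DSP}: that argument is a direct symmetric-function computation in $\Gamma[z,t]$ --- one expands $f$ in the Schur $Q$-function basis of $\Gamma$ with coefficients in $\ZZ[z,t]$ and exhibits an explicit localization point at which an extremal term survives, using the linear independence of $Q$-functions under finite-variable specializations; no Schubert classes enter. You instead run a Bruhat-triangularity argument through the basis of Proposition \ref{prop:basisR^k}: expand in the $\frakC_{w_\lambda^{(k)}}$, localize at a Bruhat-minimal element of the support, kill all off-diagonal terms with the vanishing property (\ref{van prop}), and finish with the diagonal nonvanishing $\frakC_{w_\mu^{(k)}}|_\mu \neq 0$. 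This is valid, and your identification of the diagonal nonvanishing as the crux is accurate: it is stated nowhere in this paper, so it must either be cited from the Kostant--Kumar/GKM part of \cite{DSP} (the self-restriction of a Schubert class is the product of the roots in the inversion set of $w_\mu^{(k)}$, hence nonzero in the domain $\ZZ[t]$), or derived in-house; the cleanest internal derivation is not your ``lowest-degree term'' sketch but the recurrence displayed in the proof of Lemma \ref{uniqueness}, which for $w=w_\mu^{(k)}$ and $s_iw_\mu^{(k)}=w_\nu^{(k)}$ gives $\frakC_{w_\mu^{(k)}}|_\mu=\alpha_i\cdot s_i\bigl(\frakC_{w_\nu^{(k)}}|_\nu\bigr)$ (the other term dies by (\ref{van prop})), so the diagonal value is a product of roots by induction on $|\mu|$. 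The trade-off: your route is more conceptual but rests on localization properties of the $\frakC_w$ that, within \cite{DSP}'s own development, are established \emph{downstream} of the injectivity of the universal localization map --- so your argument could not prove the original \cite[Lemma 6.5]{DSP} without circularity, and is legitimate here only because the present paper imports those properties as established facts; the paper's cited argument is self-contained at the symmetric-function level, which is precisely what makes the whole localization package non-circular.
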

\begin{proof}
The proof is identical to the one for \cite[Lemma 6.5]{DSP}.
\end{proof}
%%%%%%%%%%%%%%%%%%%%%%%%%%%%%%%%%%
\begin{lemma}\label{uniqueness}
If a family ${F}_w, w \in W_{\infty}^{(k)}$ of elements of $\calR_{\infty}^{(k)}$ satisfies the following conditions
\begin{eqnarray}\label{lddeq}
\delta_{i} {F}_{w}&=&\begin{cases}
 {F}_{s_{i}w}& \mbox{ if }\ \ell(s_{i}w)<\ell(w),\\
0 & \mbox{ if } \ \ell(s_{i}w)>\ell(w),
\end{cases}
\end{eqnarray}
\begin{eqnarray}\label{vanishingempty}
 {F}_{w}|_{\emptyset} &=& \delta_{w,e}\ ,
\end{eqnarray}
then $ {F}_{w}=\frakC_{w}$ for all $w \in W_{\infty}^{(k)}$.
\end{lemma}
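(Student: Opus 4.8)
The plan is to prove uniqueness by showing that the two conditions determine each $F_w$ recursively on $\ell(w)$, and that the family $\{\frakC_w\}$ already satisfies them, so any two such families coincide. First I would establish the base case: if $\ell(w)=0$ then $w=e$, and condition \eqref{vanishingempty} gives $F_e|_\emptyset = 1$; since $\delta_i F_e = 0$ for all $i\geq 0$ (there is no $i$ with $\ell(s_ie)<\ell(e)$), I want to argue that $F_e$ is a $W_\infty$-invariant element for the left action whose value at $\emptyset$ is $1$, which should force $F_e = 1 = \frakC_e$. The cleanest route is to reduce the whole statement to injectivity of $\Loc$ (Lemma \ref{injectivity}): it suffices to show $F_w|_\mu = \frakC_w|_\mu$ for every $\mu \in \calP_\infty^{(k)}$.

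The key inductive step would compare $F_w$ and $\frakC_w$ through the left divided difference operators. Set $G_w := F_w - \frakC_w \in \calR_\infty^{(k)}$. Both families satisfy the same recursion \eqref{lddeq}/\eqref{E:dd}, so $\delta_i G_w = G_{s_iw}$ when $\ell(s_iw)<\ell(w)$ and $\delta_i G_w = 0$ otherwise, and $G_w|_\emptyset = 0$ for all $w$ including $w=e$. I would induct on $\ell(w)$, assuming $G_v = 0$ for all $v \in W_\infty^{(k)}$ with $\ell(v)<\ell(w)$. For such $w\neq e$ there exists $i\geq 0$ with $\ell(s_iw)<\ell(w)$, and then $\delta_i G_w = G_{s_iw} = 0$ by the inductive hypothesis. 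The goal is to conclude from $\delta_i G_w = 0$ (for at least one suitable $i$) together with $G_w|_\emptyset = 0$ that $G_w = 0$.

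To turn $\delta_i G_w = 0$ into vanishing I would use the localization description directly. The operator $\delta_i$ is built from the left action $s_i^t$, and the values $f|_\mu$ transform under $\delta_i$ in a controlled way via the simple roots $\alpha_i = t_{i+1}-t_i$ (and $\alpha_0 = 2t_1$); concretely, knowing $\delta_i f$ and $s_i^t f$ recovers $f$ up to its $s_i^t$-symmetric part, and iterating over a reduced word for $w$ propagates the single normalization $G_w|_\emptyset=0$ to all $G_w|_\mu$. More precisely, I expect the standard localization argument (exactly as in the proof of \cite[Lemma 6.5]{DSP} underlying Lemma \ref{injectivity}): the value $G_w|_\mu$ is computed by applying a sequence of $\delta_i$'s dictated by a reduced expression of $w_\mu^{(k)}$ and then evaluating at $\emptyset$, so that all localizations are forced to agree once the recursion and the $\emptyset$-normalization match. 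Hence $\Loc(G_w)=0$, and injectivity gives $G_w=0$, i.e.\ $F_w = \frakC_w$.

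The main obstacle will be the last step — extracting $G_w=0$ from $\delta_i G_w=0$ plus $G_w|_\emptyset=0$ — because $\delta_i$ has a kernel (the $s_i^t$-invariants), so a single operator does not suffice and one must exploit the full system over all $i\geq 0$ together with the localization/evaluation mechanism. The care needed is that the recursion only decreases length along descents, so I must verify that for each $\mu$ there is a reduced word for $w_\mu^{(k)}$ along which the repeated application of the $\delta_i$ recursion terminates at $\frakC_e=1$ (equivalently at the $\emptyset$-value), matching \eqref{vanishingempty}; this is where the parabolic structure (Proposition \ref{prop:basisR^k}, that $\calR_\infty^{(k)}$ is spanned by $\frakC_w$, $w\in W_\infty^{(k)}$) and the compatibility of $\delta_i$ with the coset representatives must be invoked so that $s_iw$ stays in $W_\infty^{(k)}$ whenever $\ell(s_iw)<\ell(w)$.
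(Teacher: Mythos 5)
Your proposal is correct in substance and can be completed, but it is organized differently from the paper's own proof, so a comparison is worth recording. Both arguments rest on the same two pillars: injectivity of $\Loc$ (Lemma \ref{injectivity}) and the compatibility of the left action with localization, namely $f|_{\mu} = s_i\bigl((s_i^t f)|_{\nu}\bigr)$ whenever $s_i w_\mu^{(k)} = w_\nu^{(k)}$ with $|\nu|=|\mu|-1$, which unwinds to the recurrence $f|_{\mu} = s_i(f|_{\nu}) + \alpha_i\, s_i\bigl((\delta_i f)|_{\nu}\bigr)$. The paper feeds this recurrence into a \emph{single} induction on $|\mu|$, proving $(F_w-\frakC_w)|_{\mu}=0$ for \emph{all} $w\in W_\infty^{(k)}$ simultaneously: the index $i$ is chosen as a descent of $w_\mu^{(k)}$ (not of $w$), both families satisfy the same two-case recurrence, and the difference vanishes at $\mu$ as soon as it vanishes at $\nu$; no induction on $\ell(w)$ and no invariance statement ever appears. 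You instead run an outer induction on $\ell(w)$: using hypothesis (\ref{lddeq}) at ascents and the inductive hypothesis at descents (together with the standard parabolic fact, which you correctly flag, that $s_iw\in W_\infty^{(k)}$ whenever $w\in W_\infty^{(k)}$ and $\ell(s_iw)<\ell(w)$), you conclude that $G_w:=F_w-\frakC_w$ satisfies $\delta_i G_w=0$, i.e.\ $s_i^t G_w=G_w$, for every $i\geq 0$. Your ``main obstacle'' is then exactly the auxiliary claim that a left-$W_\infty$-invariant element of $\calR_\infty^{(k)}$ whose localization at $\emptyset$ vanishes must be zero; this is true, and it is the $\delta_i$-free case of the recurrence above: $G_w|_{\mu}=s_i(G_w|_{\nu})$, so by induction on $|\mu|$ all localizations propagate along reduced words of $w_\mu^{(k)}$ down to $G_w|_{\emptyset}=0$, and injectivity of $\Loc$ finishes. (Be aware that the precise mechanism is this propagation of values under the action of $s_i$ on $\ZZ[t]$, not literally ``applying a sequence of $\delta_i$'s and evaluating at $\emptyset$,'' and it is not the argument proving \cite[Lemma 6.5]{DSP} itself, only the same localization formalism.) The trade-off: your route isolates a clean, reusable invariance lemma and its inner recurrence carries no $\alpha_i$-term, at the cost of a nested double induction; the paper's route needs only one induction over $|\mu|$, but its recurrence must handle the descent case with the extra $\alpha_i$-term.
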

%%%%%%%%%%%%%%%%%%%%%%%%%%%%%%%
\begin{proof}
First note that the family $\frakC_w, w \in W_{\infty}^{(k)}$ satisfies those relations: (\ref{lddeq}) by Theorem \ref{DSPTheorem}, and (\ref{vanishingempty}) by the vanishing property (\ref{van prop}) and the fact that $\frakC_{e}=1$ (Theorem 2).  By Lemma \ref{injectivity}, it suffices to show that, for each $w \in W_{\infty}^{(k)}$, the localization $\left.( {F}_w-\frakC_w)\right|_{\mu}$ is zero for all $\mu \in \calP_{\infty}^{k}$. We use the induction on $|\mu|$. By (\ref{vanishingempty}), we have $\left.( {F}_w-\frakC_w)\right|_{\emptyset}=0$ for every $w\in W_{\infty}^{(k)}$. Now assume that $\mu\not=\emptyset$ and that for each $w \in W_{\infty}^{(k)}$, the localization $\left.( {F}_w-\frakC_w)\right|_{\mu'}$ is zero for all $\mu' \in \calP_{\infty}^{(k)}$ such that $|\mu'|<|\mu|$. Since $\mu\not=\emptyset$, there is $i\geq 0$ such that 
$s_iw_\mu^{(k)}<w_\mu^{(k)}$. This implies that $s_iw_{\mu}^{(k)}$ is a minimum-length coset representative, \textit{i.e.} an element of $W_{\infty}^{(k)}$. Therefore $s_iw_\mu^{(k)}=w_{\nu}^{(k)}$ for some $\nu\in \calP_\infty^{(k)}$ with $|\nu|=|\mu|-1$. By the definition of $\delta_i$ and the localization at $\mu$, the equations (\ref{lddeq}) implies the following recurrence relation
\[
 {F}_w|_\mu=
\begin{cases}
s_i( {F}_w|_{\nu})+\alpha_i\cdot s_i( {F}_{s_iw}|_{\nu}) & \mbox{ if }\ \ell(s_iw)<\ell(w) ,\\
s_i( {F}_w|_{\nu})& \mbox{ if }\  \ell(s_iw)> \ell(w).
\end{cases}
\]
Since $\frakC_w$ also satisfies the same recurrence relation, the difference $( {F}_w - \frakC_w)|_\mu$ vanishes by the induction hypothesis. Thus $ {F}_w=\frakC_w$ for every $w\in W_{\infty}^{(k)}$.
\end{proof}
%%%%%%%%%%%%%%%%%%%%%%%%%%%%%%%%%%
%%%%%%%%%%%%%%%%%%%%%%%%%%%%%%%%%%
\subsection{Duality of ${SG}_n^k$}\label{sec:dual}
%%%%%%%%%%%%%%%%%%%%%%%%%%%%%%%%%%
%%%%%%%%%%%%%%%%%%%%%%%%%%%%%%%%%%
There is a unique longest element in $W_n^{(k)},$ which we denote by $w_{max}.$ In the one-line notation, it is given by $12\cdots k | \overline{n} \cdots \overline{n-k}.$ For $w\in W_n^{(k)}$, define $w^\vee=ww_{max}.$
We have $w^\vee\in W_n^{(k)}$ and $\ell(w^\vee)=\ell(w_{max})-\ell(w).$ Moreover, we have $w_{max}^2=e,$ and so the operation $w\mapsto w^\vee$ is an involution on the set $W_n^{(k)}.$ Note that this involution {\em does\/} depend on $n.$

\begin{remark} 
If $w=v_1\cdots v_k|\overline{\zeta_1}\cdots \overline{\zeta_s}u_1\cdots u_{n-k-s}$, then 
\[
w^\vee=v_1\cdots v_k|\overline{u_{n-k-s}}\cdots
\overline{u_1}\zeta_s\cdots \zeta_{1}.
\]
In other words,  the involution in terms of Maya diagram is  given by exchanging the vacant boxes and the boxes occupied by ``\;$\bullet$''.
\end{remark}

Let $w,v\in W_n^{(k)}$ and  $i\in \calI:=\{0,1,\ldots,n-1\}.$  We write $w\overset{i}{\rightarrow} v$ if $s_iw=v$ and $\ell(v)=\ell(w)+1.$  The relation is called the covering relation for the weak left Bruhat order (\cite{BjBr}). The {\em weak Bruhat graph\/} is the graph such that the set of vertices as $W_n^{(k)}$ and the (oriented) arrows are the covering relation for the weak left Bruhat order.
%%%%%%%%%%%%%%%%%%%%%%%%%%%%%%%%%%%%%
\begin{example}Let $n=4,\; k=2.$
We can draw the weak Bruhat graph as follows.
The involution is given by reflection 
with respect to the dashed horizontal line.
\setlength{\unitlength}{0.3mm}
\begin{center}
  \begin{picture}(400,350)
    \thicklines
\multiput(0,170)(2,0){180}{\line(1,0){1}}
\put(50,330){\Maya  
\put(2,2){$\circ$}\put(12,2){$\circ$}
}
\put(50,300){\Maya
\put(2,2){$\circ$}\put(22,2){$\circ$}}
\put(10,270){\Maya
\put(2,2){$\circ$}\put(32,2){$\circ$}}
\put(90,270){\Maya
\put(12,2){$\circ$}\put(22,2){$\circ$}}
\put(50,240){\Maya
\put(12,2){$\circ$}\put(32,2){$\circ$}
}
\put(160,240){\Maya
\put(12,2){$\circ$}\put(22,2){$\circ$}\put(2,2){$\bullet$}} 
\put(50,210){\Maya
\put(22,2){$\circ$}\put(32,2){$\circ$}}
\put(120,210){\Maya
\put(2,2){$\bullet$}\put(12,2){$\circ$}\put(32,2){$\circ$}} 
\put(230,210){\Maya
\put(2,2){$\circ$}\put(12,2){$\bullet$}\put(22,2){$\circ$}} 
\put(80,180){\Maya
\put(2,2){$\bullet$}\put(22,2){$\circ$}\put(32,2){$\circ$}}
\put(180,180){\Maya
\put(2,2){$\circ$}\put(12,2){$\bullet$}\put(32,2){$\circ$}} 
\put(290,180){\Maya
\put(2,2){$\circ$}\put(12,2){$\circ$}\put(22,2){$\bullet$}} 
\put(80,150){\Maya
\put(12,2){$\bullet$}\put(22,2){$\circ$}\put(32,2){$\circ$}}
\put(180,150){\Maya
\put(2,2){$\circ$}\put(22,2){$\bullet$}\put(32,2){$\circ$}} 
\put(290,150){\Maya
\put(2,2){$\circ$}\put(12,2){$\circ$}\put(32,2){$\bullet$}
} 
\put(50,120){\Maya
\put(2,2){$\bullet$}\put(12,2){$\bullet$}
\put(22,2){$\circ$}\put(32,2){$\circ$}}
\put(120,120){\Maya
\put(12,2){$\circ$}
\put(22,2){$\bullet$}\put(32,2){$\circ$}} 
\put(230,120){\Maya
\put(2,2){$\circ$}\put(22,2){$\circ$}
\put(32,2){$\bullet$}}
\put(50,90){\Maya
\put(2,2){$\bullet$}\put(12,2){$\circ$}
\put(22,2){$\bullet$}\put(32,2){$\circ$}}
\put(160,90){\Maya
\put(12,2){$\circ$}
\put(22,2){$\circ$}\put(32,2){$\bullet$}} 
\put(10,60){\Maya
\put(2,2){$\circ$}\put(12,2){$\bullet$}
\put(22,2){$\bullet$}\put(32,2){$\circ$}}
\put(90,60){\Maya
\put(2,2){$\bullet$}\put(12,2){$\circ$}
\put(22,2){$\circ$}\put(32,2){$\bullet$}}
\put(50,30){\Maya
\put(2,2){$\circ$}\put(12,2){$\bullet$}
\put(22,2){$\circ$}\put(32,2){$\bullet$}}
\put(50,0){\Maya
\put(2,2){$\circ$}\put(12,2){$\circ$}
\put(22,2){$\bullet$}\put(32,2){$\bullet$}}
\put(65,315){$\downarrow$}
\put(75,315){$\tiny{2}$}
\put(45,285){$\swarrow$}
\put(80,285){$\searrow$}
\put(37,285){$\tiny{3}$}
\put(93,285){$\tiny{1}$}
\put(45,255){$\searrow$}
\put(37,255){$\tiny{1}$}
\put(80,255){$\swarrow$}
\put(93,255){$\tiny{3}$}
\put(140,255){$\searrow$}
\put(131,255){$\tiny{0}$}
\put(65,225){$\downarrow$}
\put(75,225){$\tiny{2}$}
\put(105,225){$\searrow$}
\put(95,225){$\tiny{0}$}
\put(145,225){$\swarrow$}
\put(140,225){$\tiny{3}$}
\put(215,225){$\searrow$}
\put(210,225){$\tiny{1}$}
\put(275,195){$\searrow$}
\put(270,195){$\tiny{2}$}
\put(85,195){$\searrow$}
\put(80,195){$\tiny{0}$}
\put(115,195){$\swarrow$}
\put(110,195){$\tiny{2}$}
\put(165,195){$\searrow$}
\put(160,195){$\tiny{1}$}
\put(215,195){$\swarrow$}
\put(210,195){$\tiny{3}$}
\put(100,165){$\downarrow$}
\put(95,165){$\tiny{1}$}
\put(195,165){$\downarrow$}
\put(190,165){$\tiny{2}$}
\put(305,165){$\downarrow$}
\put(300,165){$\tiny{3}$}
\put(115,135){$\searrow$}
\put(110,135){$\tiny{2}$}
\put(80,135){$\tiny{0}$}
\put(85,135){$\swarrow$}
\put(165,135){$\swarrow$}
\put(160,135){$\tiny{1}$}
\put(215,135){$\searrow$}
\put(210,135){$\tiny{3}$}
\put(275,135){$\swarrow$}
\put(270,135){$\tiny{2}$}%
\put(65,105){$\downarrow$}
\put(75,105){$\tiny{2}$}
\put(95,105){$\swarrow$}
\put(110,105){$\tiny{0}$}
\put(145,105){$\searrow$}
\put(137,105){$\tiny{3}$}
\put(215,105){$\swarrow$}
\put(210,105){$\tiny{1}$}
\put(45,75){$\swarrow$}
\put(80,75){$\searrow$}
\put(140,75){$\swarrow$}
\put(37,75){$\tiny{1}$}
\put(93,75){$\tiny{3}$}
\put(155,75){$\tiny{0}$}
\put(45,45){$\searrow$}
\put(37,42){$\tiny{3}$}
\put(80,45){$\swarrow$}
\put(93,42){$\tiny{1}$}
\put(65,15){$\downarrow$}
\put(75,15){$\tiny{2}$}
      \end{picture}
\end{center}
\end{example}

Let $w\in W_n^{(k)}.$
Define the following sets:
\begin{equation}
\begin{split}
\calI_{-}(w)&:=\{i\in \calI\;|\;\ell(s_iw)=\ell(w)-1\},\\
\calI_{+}(w)&:=\{i\in \calI\;|\;\ell(s_iw)=\ell(w)+1\;\mbox{and} \;
s_iw\in W_n^{(k)}\},\\
\calI_{0}(w)&:=\{i\in \calI\;|\;\ell(s_iw)=\ell(w)+1\;
\mbox{and}\;s_iw\not\in W_n^{(k)}\}.
\end{split}
\end{equation}
Note that if $i\in \calI_{-}(w)$ then $s_iw\in W^{(k)}_n.$

\begin{example}Let $n=4,k=2$. If 
$w=23|\overline{4}1.$ Then $\calI_{-}(w)=\{1,3\},\;
\calI_{+}(w)=\{0\},$ and $\calI_0(w)=\{2\}.$
\end{example}

%%%%%%%%%%%%%%%%%%%%%%%%%%%%%%%%%%%%
\begin{lemma}\label{lem:Poincare} 
Let $w\in W_n^{(k)}.$ Then the following hold.

(1) $\calI_-(w)=\calI_{+}(w^\vee),\;\calI_+(w)=\calI_{-}(w^\vee),$

(2) $\calI_0(w)=\calI_0(w^\vee).$
\end{lemma}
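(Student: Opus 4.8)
The plan is to leverage two structural facts about the involution $w\mapsto w^\vee=ww_{max}$ recorded in \S\ref{sec:dual}: it preserves $W_n^{(k)}$, and it reverses length, namely $\ell(u^\vee)=\ell(w_{max})-\ell(u)$ for every $u\in W_n^{(k)}$. First I would note that, since left multiplication by a simple reflection changes the length of any Coxeter group element by exactly $\pm1$, the index set decomposes as a disjoint union $\calI=\calI_-(w)\sqcup\calI_+(w)\sqcup\calI_0(w)$, and likewise for $w^\vee$. Because of this partition, statement (2) will drop out of (1) by taking complements, so the entire content lies in proving (1).

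The heart of the argument is the identity $s_iw^\vee=(s_iw)^\vee$, valid whenever $s_iw\in W_n^{(k)}$: indeed $s_iw^\vee=s_iww_{max}=(s_iw)w_{max}$, which is precisely $(s_iw)^\vee$ once $s_iw$ lies in $W_n^{(k)}$. Suppose $i\in\calI_-(w)$. Then $\ell(s_iw)=\ell(w)-1$ and $s_iw\in W_n^{(k)}$ (as observed right before the lemma), so the involution applies: $s_iw^\vee=(s_iw)^\vee\in W_n^{(k)}$ and $\ell(s_iw^\vee)=\ell(w_{max})-\ell(s_iw)=\ell(w^\vee)+1$, whence $i\in\calI_+(w^\vee)$. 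This gives $\calI_-(w)\subseteq\calI_+(w^\vee)$. Running the same computation with $i\in\calI_+(w)$ (where $s_iw\in W_n^{(k)}$ by definition and $\ell(s_iw)=\ell(w)+1$) yields $\ell(s_iw^\vee)=\ell(w^\vee)-1$, hence $i\in\calI_-(w^\vee)$, i.e. $\calI_+(w)\subseteq\calI_-(w^\vee)$.

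To promote these inclusions to equalities I would invoke that $\vee$ is an involution, $(w^\vee)^\vee=w$ (a consequence of $w_{max}^2=e$). Applying the inclusion $\calI_-(\,\cdot\,)\subseteq\calI_+\bigl((\,\cdot\,)^\vee\bigr)$ to $w^\vee$ produces $\calI_-(w^\vee)\subseteq\calI_+(w)$, which together with $\calI_+(w)\subseteq\calI_-(w^\vee)$ forces $\calI_+(w)=\calI_-(w^\vee)$; symmetrically $\calI_-(w)=\calI_+(w^\vee)$. This is precisely (1). Then (2) follows by complementation inside $\calI$, since $\calI_0(w^\vee)=\calI\setminus\bigl(\calI_-(w^\vee)\cup\calI_+(w^\vee)\bigr)=\calI\setminus\bigl(\calI_+(w)\cup\calI_-(w)\bigr)=\calI_0(w)$.

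I do not anticipate a genuine obstacle; the only delicate point is checking that the involution really applies in each case, i.e. that $s_iw\in W_n^{(k)}$ whenever $i\in\calI_-(w)\cup\calI_+(w)$, so that writing $s_iw^\vee=(s_iw)^\vee\in W_n^{(k)}$ is legitimate. For $\calI_+(w)$ this membership is part of the definition, and for $\calI_-(w)$ it is exactly the remark preceding the lemma. The $\calI_0$ case never requires the key identity, as it is handled purely by complementation.
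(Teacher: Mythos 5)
Your proof is correct and follows essentially the same route as the paper's: both rest on the identity $s_iw^\vee=(s_iw)^\vee$ (valid once $s_iw\in W_n^{(k)}$), the length-reversal $\ell(u^\vee)=\ell(w_{max})-\ell(u)$, the involution $(w^\vee)^\vee=w$, and complementation in $\calI$ for part (2). The only difference is organizational—you close both equalities in (1) by proving the two forward inclusions and invoking the involution, whereas the paper proves one inclusion together with its reverse and then gets the second equality from the involution—but the key ideas are identical.
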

%%%%%%%%%%%%%%%%%%%%%%%%%%%%%%%%%%%%
\begin{proof}
(1) We will show $\calI_{-}(w) \subset \calI_{+}(w^{\vee}).$ Let $i\in \calI_{-}(w).$  Then $s_iw\in W_n^{(k)}$ as noted above. Since  $(s_iw)^\vee=s_iww_{max}=s_iw^\vee\in W_n^{(k)}$, we have 
\[
\ell(s_iw^\vee)=\ell((s_iw)^\vee)
=\ell(w_{max})-\ell(s_iw)=\ell(w_{max})-(\ell(w)-1)
=\ell(w^\vee)+1.
\] 
Thus $i\in \calI_{+}(w^{\vee}).$ The proof of the opposite inclusion is similar. The second statement follows from the fact $(w^\vee)^\vee=w.$

(2) $\calI_0(w)$ is the complement of $\calI_{-}(w)\cup \calI_{+}(w)$ in $\calI.$ Hence the result follows from (1).
\end{proof}
%%%%%%%%%%%%%%%%%%%%%%%%%%%%%%%%%%%%
\begin{lemma}\label{lem:I0} 
Let $w\in W_n^{(k)}$. If $i\in \calI_0(w)$, then $s_iw=ws_j$ for some $j\;(\neq k).$
\end{lemma}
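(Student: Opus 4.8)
The plan is to turn the membership $i\in\calI_0(w)$ into the existence of a right descent of $s_iw$ away from $s_k$, and then to apply the strong exchange condition to a reduced word for $s_iw$ that begins with $s_i$.

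First I would unpack the hypothesis. By definition $i\in\calI_0(w)$ means $\ell(s_iw)=\ell(w)+1$ and $s_iw\notin W_n^{(k)}$. Since $s_iw\in W_n$, this says $s_iw\notin W_\infty^{(k)}$, so by the description of $W_\infty^{(k)}$ as the minimum-length coset representatives there is an index $j\neq k$ with $\ell(s_iws_j)<\ell(s_iw)$. Because $s_iw\in W_n$ fixes every position $>n$ and has $|s_iw(n)|\le n$, it admits no right descent $s_j$ with $j\ge n$; hence $j\in\{0,1,\dots,n-1\}\setminus\{k\}$. I would then record the two length identities that drive the argument: since $w\in W_n^{(k)}\subset W_\infty^{(k)}$ and $j\neq k$ we have $\ell(ws_j)=\ell(w)+1$, while $\ell(s_iws_j)=\ell(s_iw)-1=\ell(w)$.

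Now set $v:=s_iw$, so $\ell(v)=\ell(w)+1$ and $i$ is a left descent of $v$; choose a reduced word $v=s_is_{a_2}\cdots s_{a_m}$ beginning with $s_i$, where $m=\ell(v)$. Since $j$ is a right descent of $v$, the strong exchange condition (\cite{BjBr}) produces an index $r$ with $vs_j=s_is_{a_2}\cdots\widehat{s_{a_r}}\cdots s_{a_m}$, the hat denoting omission. If $r>1$ the initial letter $s_i$ survives, so $vs_j$ still begins with $s_i$ and therefore $\ell(s_i\cdot vs_j)=\ell(vs_j)-1=\ell(v)-2$; but $s_ivs_j=s_i(s_iw)s_j=ws_j$ has length $\ell(v)$, a contradiction. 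Hence $r=1$, the deleted letter is the initial $s_i$, and $vs_j=s_{a_2}\cdots s_{a_m}=s_iv$. Since $vs_j=s_iws_j$ and $s_iv=w$, this is exactly $s_iws_j=w$, i.e.\ $s_iw=ws_j$ with $j\neq k$, as desired.

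The whole argument is essentially bookkeeping with Coxeter lengths; the one place that requires care, and which I regard as the crux, is establishing the length equality $\ell(s_iws_j)=\ell(w)$. This rests on the two facts that $w\in W_n^{(k)}$ forces $\ell(ws_j)=\ell(w)+1$ for every $j\neq k$, and that the descent $j$ supplied by $s_iw\notin W_\infty^{(k)}$ genuinely lies in $\{0,\dots,n-1\}$. Once these are in hand, the exchange condition mechanically rules out the ``length drops by two'' case and forces $s_iw=ws_j$.
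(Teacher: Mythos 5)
Your proof is correct and takes essentially the same approach as the paper's: both extract from $s_iw\notin W^{(k)}_\infty$ a right descent $s_j$ ($j\neq k$) of $s_iw$, apply the exchange condition to the resulting reduced expressions, and use the fact that $w\in W_n^{(k)}$ has no right descent other than possibly at $k$ to force the deleted letter to be the extremal one, giving $s_iw=ws_j$. The only cosmetic difference is the side on which the exchange is performed: you delete a letter from a reduced word of $s_iw$ beginning with $s_i$ (via right multiplication by $s_j$), while the paper deletes from a reduced word ending with $s_j$ (via left multiplication by $s_i$).
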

%%%%%%%%%%%%%%%%%%%%%%%%%%%%%%%%%%%%
\begin{proof} Because $s_iw\notin W_n^{(k)}$,
there exists $j(\neq k)$ such that 
$\ell(s_iw s_j)=\ell(s_iw)-1.$
This means that $s_i w$ has a reduced
expression of a form 
$s_{i_1}\cdots s_{i_l}s_j$ with $l=\ell(w)$ (\textit{cf.} \cite[Cor. 1.4.6.]{BjBr})
Since $\ell(s_iw)=\ell(w)+1$, $w$ is obtained from the
reduced expression of $s_iw$ by deleting one {\em unique\/}
simple reflection (``exchange condition'' \textit{cf.} \cite[p.117]{Hum}).
Now the right end $s_j$ is the unique one to be deleted since, otherwise, it contradicts with the assumption $w \in W_n^{(k)}$. Thus $w=s_{i_1}\cdots s_{i_l},$
and the lemma follows.
\end{proof}
%%%%%%%%%%%%%%%%%%%%%%%%%%%%%%%%%%%%
% DO NOT ERACE: THE FOLLOWING IS THE DETAIL OF THE PROOF ABOVE
%%%%%%%%%%%%%%%%%%%%%%%%%%%%%%%%%%%%
%{\color{blue}
%\begin{proof}\
%\begin{itemize}
%\item Since $i \in \calI_0(w)$, $s_iw \not\in W_n^{(k)}$ and $\ell(s_iw) > \ell(w)$.
%\item Since $v \not\in W_n^{(k)}$, there exists $j\not=k$ such that $\ell(s_iws_j) < \ell(s_iw)$. Therefore by the Exchange Condition, if $s_iw=s_{i_1}\cdots s_{i_l}s_{i_{l+1}}$ is a reduced word expression, then there is a unique $p$ such that 
%\[s_{i_1}\cdots \widehat{s_p}\cdots s_{i_l}s_{i_{l+1}} = s_iws_j\]
%is a reduced expression. The word $s_{i_1}\cdots \widehat{s_p}\cdots s_{i_l}s_{i_{l+1}}s_j$ is a length $l+1$ expression and this is $s_iw$ whose length is $l+1$ so this is again a reduced word expression.
%\item Therefore $s_iw$ has a reduced expression $s_{j_1}\cdots s_{j_l} s_j$. 
%\item By the Exchange Condition applied to $s_iw=s_{j_1}\cdots s_{j_l} s_j$ and $\ell(s_iw)>\ell(w) $, there is a unique $p$ such that 
%\[s_{j_1}\cdots \widehat{s_p}\cdots s_{j_l}s_j = s_iv=w\]
%is a reduced word expression. 
%\item If this $s_p$ is not $s_j$ on the most right position, then $ws_j$ has  a length $l-1$ expression, \textit{i.e.} $\ell(ws_j)<\ell(w)$. This contradict with the assumption $w \in W_n^{(k)}$. Therefore $w=s_{j_1}\cdots s_{j_l}$. Therefore $s_iw=ws_j$.
%\end{itemize}
%\end{proof}}
%%%%%%%%%%%%%%%%%%%%%%%%%%%%%%%%%%%%
%%%%%%%%%%%%%%%%%%%%%%%%%%%%%%%%%%%%
\subsection{Lemma on left divided difference operators}
%%%%%%%%%%%%%%%%%%%%%%%%%%%%%%%%%%%%
%%%%%%%%%%%%%%%%%%%%%%%%%%%%%%%%%%%%
For $w\in W_\infty,$ we choose  a reduced expression $s_{i_1}\cdots s_{i_l}$ for $w$. Then $\delta_w:=\delta_{i_1}\cdots\delta_{i_l}$ does not depend on the reduced expression. The following fact is well-known (see for example \cite[\S 2]{Man}).
%%%%%%%%%%%%%%%%%%%%%%%%%%%%%%%%%%%%
\begin{lemma}\label{lem:delta_w} 
Let $u,v\in W.$ Then 
\[\delta_u\delta_v=
\begin{cases}
\delta_{uv} &\mbox{if}\;\ell(u)+\ell(v)=
\ell(uv),\\
0 &\mbox{if}\;\ell(u)+\ell(v)>
\ell(uv).
\end{cases}\]
\end{lemma}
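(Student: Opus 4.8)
The plan is to reduce everything to two ingredients: the relation $\delta_i^2=0$, and the already-recorded fact that $\delta_w$ depends only on $w$ and not on the chosen reduced word. First I would verify $\delta_i^2=0$ directly from the definition. Setting $g=\delta_if=(f-s_i^tf)/\alpha_i$ and using that a simple reflection negates its own simple root, $s_i^t\alpha_i=-\alpha_i$, one computes $s_i^tg=g$; hence $\delta_ig=(g-s_i^tg)/\alpha_i=0$, so that $\delta_i^2=0$.

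Next I would isolate the one-generator case: for every $w$,
\[
\delta_i\delta_w=
\begin{cases}
\delta_{s_iw}&\text{if }\ell(s_iw)>\ell(w),\\
0&\text{if }\ell(s_iw)<\ell(w).
\end{cases}
\]
If $\ell(s_iw)>\ell(w)$, then prepending $s_i$ to a reduced word for $w$ produces a reduced word for $s_iw$, and the well-definedness of $\delta_w$ gives $\delta_i\delta_w=\delta_{s_iw}$. If $\ell(s_iw)<\ell(w)$, then $w$ has a reduced word beginning with $s_i$, say $w=s_iw'$ with $\ell(w')=\ell(w)-1$, so $\delta_w=\delta_i\delta_{w'}$ and therefore $\delta_i\delta_w=\delta_i^2\delta_{w'}=0$.

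With these in hand, the general statement follows by induction on $\ell(u)$, the case $u=e$ being trivial. For $\ell(u)\geq1$, I would pick $i$ with $\ell(s_iu)<\ell(u)$ and factor $u=s_iu'$, $\ell(u')=\ell(u)-1$, so $\delta_u=\delta_i\delta_{u'}$. If $\ell(uv)=\ell(u)+\ell(v)$, a short length comparison forces $\ell(u'v)=\ell(u')+\ell(v)$ and $\ell(s_i(u'v))>\ell(u'v)$; the induction hypothesis gives $\delta_{u'}\delta_v=\delta_{u'v}$, and the one-generator case yields $\delta_i\delta_{u'v}=\delta_{s_iu'v}=\delta_{uv}$. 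If instead $\ell(uv)<\ell(u)+\ell(v)$, then either $\ell(u'v)<\ell(u')+\ell(v)$, in which case $\delta_{u'}\delta_v=0$ by induction and we are done, or $\ell(u'v)=\ell(u')+\ell(v)$, in which case $\delta_{u'}\delta_v=\delta_{u'v}$ and the inequality $\ell(uv)<\ell(u'v)+1$ forces $\ell(s_i(u'v))<\ell(u'v)$, so $\delta_i\delta_{u'v}=0$ again.

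The computations are all formal once $\delta_i^2=0$ and the well-definedness of $\delta_w$ are available, so I do not expect a serious obstacle; the only point requiring care is the length bookkeeping in the peeling-off step, namely checking in the cancellation case that removing a single left factor lands the argument exactly where $\delta_i$ annihilates it.
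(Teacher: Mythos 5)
Your proof is correct. The paper itself does not prove this lemma at all: it is stated as a well-known fact with a citation to Manivel's book, so there is no in-paper argument to compare against. Your proposal is exactly the standard argument that such references supply: the identity $\delta_i^2=0$ (your computation via $s_i^t\alpha_i=-\alpha_i$ is valid in this ring, since applying the automorphism $s_i^t$ to $\alpha_i\,\delta_i f=f-s_i^tf$ gives $s_i^t(\delta_i f)=\delta_i f$), the one-generator case deduced from the paper's stated well-definedness of $\delta_w$ together with the fact that $\ell(s_iw)<\ell(w)$ forces a reduced word of $w$ beginning with $s_i$, and then induction on $\ell(u)$. The length bookkeeping in your two subcases is the only delicate point and it holds: from $uv=s_i(u'v)$ and $\ell(u'v)\leq\ell(u')+\ell(v)$ one gets, in the additive case, $\ell(u'v)=\ell(u')+\ell(v)$ with $s_i$ increasing the length of $u'v$, and in the cancellation case with $\ell(u'v)=\ell(u')+\ell(v)$, the strict inequality $\ell(uv)<\ell(u'v)+1$ combined with $\ell(s_iw)=\ell(w)\pm 1$ forces $\ell(s_i(u'v))<\ell(u'v)$, so $\delta_i$ annihilates $\delta_{u'v}$.
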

%%%%%%%%%%%%%%%%%%%%%%%%%%%%%%%%%%%%
The following proposition will be used in the proof of the main theorem.
%%%%%%%%%%%%%%%%%%%%%%%%%%%%%%%%%%%%
\begin{proposition}\label{prop:delta_w2}
Let $w\in W_n^{(k)}.$ We have the following.

(1) If $i\in \calI_{-}(w)$ then $\delta_i \delta_{w^\vee}=\delta_{(s_iw)^\vee}.$ 

(2) If $i\in \calI_{+}(w)$ then $\delta_i \delta_{w^\vee}=0.$ 

(3) If $i\in \calI_{0}(w)$ then there exists $j\not=k$
such that $\delta_i \delta_{w^\vee}
=\delta_{w^\vee}\delta_j.$
\end{proposition}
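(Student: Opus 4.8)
The plan is to prove the three statements by combining the duality relation $w^\vee = w w_{max}$ with Lemma \ref{lem:delta_w} (the multiplicativity of the operators $\delta_w$) and the characterization of $\calI_\pm(w)$ and $\calI_0(w)$ in terms of lengths established in Lemma \ref{lem:Poincare} and Lemma \ref{lem:I0}. The unifying observation is that $\delta_i \delta_{w^\vee} = \delta_{s_i}\delta_{w^\vee}$, so that everything reduces to deciding whether $\ell(s_i w^\vee) = \ell(w^\vee)+1$ or not, and in the first case whether $s_i w^\vee$ can be realized as the dual of a relevant element.

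\textbf{Parts (1) and (2).} First I would handle (1). If $i \in \calI_-(w)$, then by Lemma \ref{lem:Poincare}(1) we have $i \in \calI_+(w^\vee)$, which by definition means $\ell(s_i w^\vee) = \ell(w^\vee)+1$. Hence $\ell(s_i) + \ell(w^\vee) = 1 + \ell(w^\vee) = \ell(s_i w^\vee)$, so Lemma \ref{lem:delta_w} gives $\delta_i \delta_{w^\vee} = \delta_{s_i w^\vee}$. It then remains to identify $s_i w^\vee$ as $(s_i w)^\vee$: since $w^\vee = w w_{max}$, we have $s_i w^\vee = s_i w w_{max} = (s_i w)^\vee$, using the definition of $\vee$. (Here I would note, as in the proof of Lemma \ref{lem:Poincare}, that $s_i w \in W_n^{(k)}$ because $i \in \calI_-(w)$, so $(s_i w)^\vee$ is well-defined.) This yields $\delta_i\delta_{w^\vee} = \delta_{(s_iw)^\vee}$. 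For (2), if $i \in \calI_+(w)$ then Lemma \ref{lem:Poincare}(1) gives $i \in \calI_-(w^\vee)$, so $\ell(s_i w^\vee) = \ell(w^\vee)-1 < \ell(w^\vee)$, whence $\ell(s_i) + \ell(w^\vee) > \ell(s_i w^\vee)$ and Lemma \ref{lem:delta_w} forces $\delta_i \delta_{w^\vee} = 0$.

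\textbf{Part (3).} This is the step I expect to carry the main subtlety, since it is the one genuinely using Lemma \ref{lem:I0} and the parabolic structure. If $i \in \calI_0(w)$, then by Lemma \ref{lem:Poincare}(2) also $i \in \calI_0(w^\vee)$, so $\ell(s_i w^\vee) = \ell(w^\vee)+1$ but $s_i w^\vee \notin W_n^{(k)}$. Applying Lemma \ref{lem:I0} to the element $w^\vee \in W_n^{(k)}$ (for which $i \in \calI_0(w^\vee)$), there exists $j \neq k$ with $s_i w^\vee = w^\vee s_j$ and, from the proof of that lemma, $\ell(w^\vee s_j) = \ell(s_i w^\vee) = \ell(w^\vee)+1$. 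Now I would compute $\delta_i \delta_{w^\vee}$ two ways. On one hand, $\ell(s_i w^\vee) = \ell(w^\vee)+1$ gives $\delta_i \delta_{w^\vee} = \delta_{s_i w^\vee}$ by Lemma \ref{lem:delta_w}. On the other hand, $\ell(w^\vee s_j) = \ell(w^\vee)+1$ gives $\delta_{w^\vee}\delta_j = \delta_{w^\vee s_j}$. Since $s_i w^\vee = w^\vee s_j$ as group elements, the two right-hand sides coincide, yielding $\delta_i \delta_{w^\vee} = \delta_{w^\vee} \delta_j$ with $j \neq k$, as required.

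\textbf{Main obstacle.} The routine parts are the length bookkeeping in (1) and (2), which follow almost formally from Lemma \ref{lem:Poincare} and the multiplicativity in Lemma \ref{lem:delta_w}. The delicate point is part (3): one must be careful that the $j$ produced by Lemma \ref{lem:I0} satisfies $j \neq k$ and that the relevant length additivity $\ell(w^\vee)+\ell(s_j) = \ell(w^\vee s_j)$ holds, so that Lemma \ref{lem:delta_w} applies on the right as well as on the left. I would therefore make sure to extract precisely these two facts from the exchange-condition argument in the proof of Lemma \ref{lem:I0}, rather than merely the bare equality $s_i w^\vee = w^\vee s_j$.
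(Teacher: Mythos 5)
Your proof is correct and follows essentially the same route as the paper: parts (1) and (2) via Lemma \ref{lem:Poincare} and the multiplicativity in Lemma \ref{lem:delta_w}, and part (3) via Lemma \ref{lem:I0} applied to $w^\vee$, checking length-additivity on both sides of $s_i w^\vee = w^\vee s_j$ before invoking Lemma \ref{lem:delta_w}. Your explicit attention to the two delicate points in (3) (that $j\neq k$ and that $\ell(w^\vee s_j)=\ell(w^\vee)+1$) matches exactly what the paper's proof records.
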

\begin{proof}
(1) If $i\in \calI_{-}(w)$, then from Lemma \ref{lem:Poincare},
we have $i\in \calI_{+}(w^\vee)$, \textit{i.e.},
$s_iw^\vee\in W_n^{(k)}$ and 
$\ell(s_iw^\vee)=\ell(w^\vee)+1.$  
Recall that  
$s_iw^\vee=(s_iw)^\vee.$
Then the result follows from Lemma \ref{lem:delta_w}.

(2) If $i\in \calI_{+}(w)$, then from Lemma \ref{lem:Poincare},
we have $i\in \calI_{-}(w^\vee).$ This means that 
$\ell(s_iw^\vee)=\ell(w^\vee)-1.$ Hence 
$\delta_{s_iw^\vee}=0$ by Lemma \ref{lem:delta_w}.

(3) If $i\in \calI_{0}(w)$, then from Lemma \ref{lem:Poincare},
we have $i\in \calI_{0}(w^\vee).$ 
Then from Lemma \ref{lem:I0} there exists some $j\neq k$ such that 
$s_iw^\vee 
=w^\vee s_j,$ where the products in both hand sides 
are length-additive.   
Then the result follows from Lemma \ref{lem:delta_w}.
\end{proof}
%%%%%%%%%%%%%%%%%%%%%%%%%%%%%%%%%%
%%%%%%%%%%%%%%%%%%%%%%%%%%%%%%%%%%
%%%%%%%%%%%%%%%%%%%%%%%%%%%%%%%%%%
\section{Multi Schur-Pfaffian}\label{sec:Pf}
%%%%%%%%%%%%%%%%%%%%%%%%%%%%%%%%%%
%%%%%%%%%%%%%%%%%%%%%%%%%%%%%%%%%%
%%%%%%%%%%%%%%%%%%%%%%%%%%%%%%%%%%
In this section, we recall the multi Schur-Pfaffian due to Kazarian, but in a slightly more general form.

Let $(c^{(1)}, c^{(2)},\dots, c^{(m)}) $ be 
an $m$-tuple such that each $c^{(i)}$ is an infinite sequence of variables $c_r^{(i)}(r\in \ZZ)$. For any $(r_1,\ldots,r_m)\in \ZZ^m$,
the multi Schur-Pfaffian
\[
\Pf[c_{r_1}^{(1)}\ldots c_{r_m}^{(m)}] \in \ZZ[ c_{r}^{(1)},c_{r}^{(2)},\dots,c_r^{(m)}   (r \in \ZZ)]
\]
is defined as follows:
\begin{itemize}
\item for $m=1$, we set 
$
\Pf[c_r ^{(1)}]=c_r^{(1)}.
$
\item for $m=2,$ we set 
$
\Pf[c_{r_1}^{(1)}c_{r_2}^{(2)}]=
c_{r_1}^{(1)}c_{r_2}^{(2)}+2\sum_{s=1}^{r_2}(-1)^s
c_{r_1+s}^{(1)}c_{r_2-s}^{(2)}.
$
\item for any even $m\geq 4$, we set 
$$
\Pf[c_{r_1}^{(1)}\ldots c_{r_m}^{(m)}]
=
\sum_{s=2}^m(-1)^{s}
\Pf[c_{r_1}^{(1)}c_{r_s}^{(s)}]
\cdot
 \Pf[c_{r_2}^{(2)}\ldots\widehat{c_{r_s}^{(s)}} 
 \cdots c_{r_m}^{(m)}].
$$
\item for any odd $m\geq 3$, we set 
$$
\Pf[c_{r_1}^{(1)}\cdots c_{r_m}^{(m)}]
=
\sum_{s=1}^m(-1)^{s-1}
c_{r_s}^{(s)}\cdot
\Pf[c_{r_1}^{(1)}\cdots \widehat{c_{r_s}^{(s)}}
\cdots c_{r_m}^{(m)}].
$$
\end{itemize}

%%%%%%%%%%%%%%%%%%%%%%%%%%%%%%%
\begin{remark}[{Pfaffian in classical literature}]
If we assume $r_i \geq 0$, $c_0^{(i)}=1$ and $\Pf[c_{r_i}^{(i)}c_{r_j}^{(j)}] +\Pf[c_{r_j}^{(j)}c_{r_i}^{(i)}]=0$ hold for all $1\leq i,j \leq m$, then $\Pf[c_{r_1}^{(1)}\ldots c_{r_m}^{(m)}]$ coincides with Kazarian's Pfaffian. In particular, in the case when $m$ is even, then $\Pf[c_{r_1}^{(1)}\cdots c_{r_m}^{(m)}]$ is the classical Pfaffian of the skew symmetric matrix whose $(i,j)$ entry is given by  $\Pf[c_{r_i}^{(i)}c_{r_j}^{(j)}].$  If we further assume that  $c_{r}^{(i)}= c_{r}^{(j)}$ for all $i,j$, then it is due to Schur.
\end{remark}
%%%%%%%%%%%%%%%%%%%%%%%%%%%%%%%
%%%%%%%%%%%%%%%%%%%%%%%%%%%%%%%
\begin{remark} \label{rem: rel to Ik and Ka}
Let $\lambda$ be a strict partition of length $\ell(\lambda)$, then 
\begin{equation}\label{eq:Pf_facQ}
\Pf[{}_0\vt_{\lambda_1}^{(\lambda_1-1)}\cdots {}_0\vt_{\lambda_{\ell(\lambda)}}^{(\lambda_{\ell(\lambda)}-1)}] :=  \left.\Pf[c_{\lambda_1}^{(1)}\cdots c_{\lambda_{\ell(\lambda)}}^{(\ell(\lambda))}]\right|_{c_m^{(i)}={}_0\vt_m^{(\lambda_i-1)}}
\end{equation}
is equal to the factorial $Q$-function $Q_\lambda(x|t)$ defined by Ivanov \cite{Iv}. This expression is obtained in \cite[\S 11]{DSP}, which is also equivalent to  Kazarian's Lagrangian degeneracy loci formula \cite{Ka}. Note that (\ref{eq:Pf_facQ}) is a variant of Ivanov's original Pfaffian formula \cite[Thm 9.1]{Iv}. In particular, $\Pf[Q_{\lambda_1}\cdots Q_{\lambda_{\ell(\lambda)}}]$ is the classical Schur $Q$-function \cite{Schur} where ${}_0\vt^{(0)}_r$ is denoted by $Q_r$ in (\ref{def of Q_r}).
\end{remark}
%%%%%%%%%%%%%%%%%%%%%%%%%%%%%%%%%%%%%%%%%%%%
%%%%%%%%%%%%%%%%%%%%%%%%%%%%%%%%%%%%%%%%%%%%
\begin{remark}
The multi Schur-Pfaffian can be defined in terms of the raising operators (\textit{cf.} \cite{BKT}). This aspect will be postponed until Section \ref{sec: IM-TW}, since we will not use it in the proof of our main theorem.
\end{remark}
%%%%%%%%%%%%%%%%%%%%%%%%%%%%%%%%%%%%%%%%%%%%
Kazarian stated the following properties of $\Pf$ in \cite[\S 1]{Ka}. They follow from the above definition of Pfaffian by the induction on $m$.
\begin{proposition}\label{SQinPf}\
\begin{itemize}
\item[(1)] If $\Pf[c_{r}^{(l)}c_r^{(l)}]=0$, then we have $\Pf[c_{r_1}^{(l_1)}\cdots c_r^{(l)}c_r^{(l)}\cdots c_{r_m}^{(l_m)}]=0$.
\item[(2)] If $\Pf[c^{(l)}_{r}c^{(l)}_{s}]+\Pf[c^{(l)}_{s} c^{(l)}_{r}]=0
$, then we have
\[
\Pf[c_{r_1}^{(l_1)}\ldots c_{r}^{(l)}c_{s}^{(l)} \cdots c_{r_m}^{(l_m)}]+\Pf[c_{r_1}^{(l_1)}\ldots 
c_{s}^{(l)}c_{r}^{(l)} \cdots c_{r_m}^{(l_m)}]=0.
\] 
\end{itemize}
\end{proposition}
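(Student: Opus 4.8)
The plan is to prove the antisymmetry statement (2) by induction on the length $m$ and then recover (1) from it. Fix the word $w=c_{r_1}^{(l_1)}\cdots c_{r_m}^{(l_m)}$ and suppose the distinguished adjacent pair occupies positions $p,p+1$, so that $l_p=l_{p+1}=l$; write $\sigma w$ for the word obtained by transposing these two entries and set $X:=c_{r_p}^{(l)}$, $Y:=c_{r_{p+1}}^{(l)}$. The target is $\Pf[w]+\Pf[\sigma w]=0$, and the base case $m=2$ is exactly the hypothesis $\Pf[XY]+\Pf[YX]=0$. Statement (1) is the specialization $r_p=r_{p+1}$: there $\sigma w=w$, so the conclusion reads $2\Pf[w]=0$, which forces $\Pf[w]=0$ since we work in the polynomial ring $\ZZ[c_r^{(i)}\,(r\in\ZZ)]$, which is torsion-free; alternatively the same induction yields (1) directly, the only difference being that the diagonal term below is killed by $\Pf[c_r^{(l)}c_r^{(l)}]=0$ rather than by antisymmetry.

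For the inductive step I would expand $\Pf[w]$ along its first entry using the defining recursion (odd or even as appropriate), splitting on whether the pair meets position $1$. If $p\ge 2$, then in both recursions every summand whose summation index avoids $\{p,p+1\}$ carries a factor that is a strictly shorter Pfaffian in which the pair is still adjacent; by the inductive hypothesis these shorter Pfaffians are antisymmetric, so those summands cancel between $\Pf[w]$ and $\Pf[\sigma w]$. The two summands whose index is $p$ or $p+1$ produce, after deleting $X$ or $Y$, identical residual Pfaffians multiplied by opposite signs, and hence cancel as well. The case $p=1$ with $m$ odd is entirely analogous: the first two summands cancel directly, and the summands with index $\ge 3$ reduce, by induction, to antisymmetric even Pfaffians in which $XY$ is still at the front.

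The one genuinely delicate case, which I expect to be the main obstacle, is $m$ even with $p=1$, since expanding along the first entry no longer keeps the pair adjacent. Here the even recursion gives $\Pf[w]=\Pf[XY]\,\Pf[R]+\sum_{t\ge 3}(-1)^{t}\Pf[Xc_{r_t}^{(l_t)}]\,\Pf[Y;R\!\setminus\! t]$, where $R=(c_{r_3}^{(l_3)},\ldots,c_{r_m}^{(l_m)})$ and $\Pf[Y;R\!\setminus\! t]$ denotes $Y$ prepended to $R$ with its $t$-th entry deleted, and similarly for $\Pf[\sigma w]$ with $X,Y$ interchanged. The diagonal term $(\Pf[XY]+\Pf[YX])\,\Pf[R]$ vanishes by hypothesis. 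For the remaining sum my plan is to expand each even sub-Pfaffian $\Pf[Y;R\!\setminus\! t]$ and $\Pf[X;R\!\setminus\! t]$ a \emph{second} time along its leading entry, rewriting $\Pf[w]+\Pf[\sigma w]$ as a double sum over ordered pairs $(t,u)$ with $t\neq u$ of a scalar sign times $\Pf[R\!\setminus\!\{t,u\}]$ times the combination $\Pf[Xc_{r_t}^{(l_t)}]\Pf[Yc_{r_u}^{(l_u)}]+\Pf[Yc_{r_t}^{(l_t)}]\Pf[Xc_{r_u}^{(l_u)}]$.

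The crux is then the sign bookkeeping. I expect to verify that both the residual Pfaffian $\Pf[R\!\setminus\!\{t,u\}]$ and the bracketed combination are symmetric under $t\leftrightarrow u$, while the accumulated sign, the product of the outer $(-1)^{t}$ with the positional sign produced by the second expansion, is antisymmetric under $t\leftrightarrow u$. Granting this, the two ordered contributions of each unordered pair $\{t,u\}$ cancel, the off-diagonal sum vanishes, and the induction closes. For (1) the identical double expansion applies with $X=Y$: the off-diagonal sum again cancels by the same sign antisymmetry (now using only commutativity of the scalar factors), leaving the single diagonal term, which is annihilated by $\Pf[c_r^{(l)}c_r^{(l)}]=0$. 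The only part requiring care is confirming the exact positional signs in the second expansion; everything else is a formal consequence of the recursion.
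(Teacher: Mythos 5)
Your proof is correct and is essentially the paper's own approach: the paper gives no details at all, merely asserting that both properties ``follow from the above definition of Pfaffian by the induction on $m$,'' which is precisely the induction you carry out (splitting on the parity of $m$ and on whether the adjacent pair meets the expansion position). Your treatment of the one delicate case (even $m$, pair in positions $1,2$) also checks out: for $t<u$ the accumulated signs of the ordered contributions are $(-1)^{t}(-1)^{u-2}$ and $(-1)^{u}(-1)^{t-1}$, which are opposite, while $\Pf[R\setminus\{t,u\}]$ and the combination $\Pf[Xc_{r_t}^{(l_t)}]\Pf[Yc_{r_u}^{(l_u)}]+\Pf[Yc_{r_t}^{(l_t)}]\Pf[Xc_{r_u}^{(l_u)}]$ are symmetric in $t\leftrightarrow u$, so the off-diagonal terms cancel and the diagonal term dies by hypothesis.
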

%%%%%%%%%%%%%%%%%%%%%%%%%%%%%%%%%%%%%%%%%%%%
%%%%%%%%%%%%%%%%%%%%%%%%%%%%%%%%%%%%%%%%%%%%
%%%%%%%%%%%%%%%%%%%%%%%%%%%%%%%%%%%%%%%%%%%%%
%%%%%%%%%%%%%%%%%%%%%%%%%%%%%%%%%%
%%%%%%%%%%%%%%%%%%%%%%%%%%%%%%%%%%
%%%%%%%%%%%%%%%%%%%%%%%%%%%%%%%%%%
\section{Double theta polynomials}\label{sec:theta}
In this section, first we list basic formulas for the double theta polynomials. In particular,  Proposition \ref{delta-i-on-Pf} is essential for computing the double Schubert polynomials via the divided difference operators in Section \ref{sec:proof}. In Section \ref{sec:Chern}, we give the geometric interpretation of those polynomials in terms of the Chern classes of vector bundles, although we will not use these facts in the proof of the main theorem.
%%%%%%%%%%%%%%%%%%%%%%%%%%%%%%%%%%
%%%%%%%%%%%%%%%%%%%%%%%%%%%%%%%%%%
%%%%%%%%%%%%%%%%%%%%%%%%%%%%%%%%%%
\subsection{${}_k\vt_r^{(l)}(x,z\, |\, t)$}
%%%%%%%%%%%%%%%%%%%%%%%%%%%%%%%%%%
%%%%%%%%%%%%%%%%%%%%%%%%%%%%%%%%%%
Recall Definition \ref{def-theta} of the double theta polynomial ${}_k\vt_r^{(l)}(x,z\, |\, t)$. We denote the generating function by 
\begin{equation}\label{gen fcn f}
{}_kf_l(u)=\sum_{r=0}^\infty{}_k \vt_r^{(l)}(x,z\, |\, t)\cdot u^r.
\end{equation}
%%%%%%%%%%%%%%%%%%%%%%%%%%%%%%%%%%
\begin{proposition}\label{prop:theta_is_invariant} We have 
${}_k\vt_r^{(l)}(x,z\, |\, t)\in \mathcal{R}_\infty^{(k)}.$
\end{proposition}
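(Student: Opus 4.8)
The plan is to verify directly that the generating function ${}_kf_l(u)$ introduced just above this proposition is fixed by each generator of the right action of $W_{(k)}$, namely by $s_i^z$ for every $i\geq 0$ with $i\neq k$. Since each $s_i^z$ is a $\ZZ[t]$-algebra automorphism and ${}_kf_l(u)$ is the power series whose coefficient of $u^r$ is ${}_k\vt_r^{(l)}$, invariance of the whole series is equivalent to invariance of every coefficient, which is exactly the assertion ${}_k\vt_r^{(l)}\in\mathcal{R}_\infty^{(k)}$. So it suffices to treat the three families of generators separately. Throughout, the $t$-factor (either $\prod_{i=1}^l(1-t_iu)$ or $\prod_{i=1}^l(1+t_iu)^{-1}$, according to the sign of the upper index) involves only the $t_j$ and is therefore fixed by every $s_i^z$.

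First I would treat $1\leq i\leq k-1$: here $s_i^z$ interchanges $z_i$ and $z_{i+1}$, both among the first $k$ variables, fixes every other $z_j$, and fixes each $Q_r(x)$. The factor $\prod_{i=1}^k(1+z_iu)$ is symmetric in $z_1,\dots,z_k$, while $\prod_{i=1}^\infty\frac{1+x_iu}{1-x_iu}=\sum_r Q_r(x)u^r$ does not involve the $z_j$ at all, so ${}_kf_l(u)$ is fixed. Next, for $i\geq k+1$ the operator $s_i^z$ interchanges $z_i,z_{i+1}$, neither of which appears in $\prod_{i=1}^k(1+z_iu)$, and again fixes $Q_r(x)$, so ${}_kf_l(u)$ is fixed. (When $k=0$ the first family is empty, the second is all of $i\geq 1$, and the $z$-product is trivial, so the statement is immediate in that case.)

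The only case requiring computation is $i=0$, which arises precisely when $k\geq 1$ (for $k=0$ the reflection $s_0$ is not a generator of $W_{(0)}$). Here $s_0^z$ sends $z_1\mapsto -z_1$, fixes $z_i$ for $i\geq 2$, and acts on the $Q$-factor by $s_0^z\!\left(\prod_{i=1}^\infty\frac{1+x_iu}{1-x_iu}\right)=\frac{1+z_1u}{1-z_1u}\prod_{i=1}^\infty\frac{1+x_iu}{1-x_iu}$. Since $s_0^z\!\left(\prod_{i=1}^k(1+z_iu)\right)=(1-z_1u)\prod_{i=2}^k(1+z_iu)$, multiplying the two transformed factors yields
\begin{equation*}
\frac{1+z_1u}{1-z_1u}\,(1-z_1u)\prod_{i=2}^k(1+z_iu)=\prod_{i=1}^k(1+z_iu),
\end{equation*}
so the factor $(1-z_1u)$ produced by the sign change of $z_1$ exactly cancels the denominator introduced by the modification of the $Q$-generating function. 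Together with the invariance of the $t$-factor this gives $s_0^z\,{}_kf_l(u)={}_kf_l(u)$.

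The argument is really just bookkeeping of which factors each generator touches, and the single point where anything genuinely happens is the $s_0^z$ cancellation above; that is the step I would write out in full, the others being immediate. Once all generators fix ${}_kf_l(u)$, extracting the coefficient of $u^r$ gives $s_i^z\,{}_k\vt_r^{(l)}={}_k\vt_r^{(l)}$ for all $i\neq k$, i.e.\ ${}_k\vt_r^{(l)}\in\mathcal{R}_\infty^{(k)}$.
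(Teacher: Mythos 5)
Your proof is correct and follows essentially the same route as the paper's: invariance under $s_i^z$ for $i\geq 1$, $i\neq k$ is immediate from symmetry of the factor $\prod_{i=1}^k(1+z_iu)$, and the only real computation is the $s_0^z$ cancellation $\frac{1+z_1u}{1-z_1u}\cdot(1-z_1u)=(1+z_1u)$, which you carry out exactly as the paper does. Your observation that the $t$-factor is fixed by every $s_i^z$ plays the same role as the paper's reduction to $l=0$ via $\ZZ[t]$-linearity, so the two arguments coincide in substance.
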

%%%%%%%%%%%%%%%%%%%%%%%%%%%%%%%%%%
\begin{proof}
We check the invariance of ${}_k\vt_r^{(l)}(x,z\, |\, t)$ under the action of $s_i^z\;(i\geq 0,\;i\neq k)$. Since ${}_k\vt_r^{(l)}$  is a symmetric polynomial in $z_1,\ldots,z_k$, it is obvious when $i \geq 1, i\not=k$. To show $s_0^z ({}_k\vt_r^{(l)}(x,z\, |\, t))={}_k\vt_r^{(l)}(x,z\, |\, t)$, it suffices to consider the case $l=0$, since  $s_0^z$ is $\ZZ[t]$-linear. The action of $s_0^z$ is given by substitutions $(x_1,x_2,\ldots) \mapsto (z_1,x_1,x_2,\ldots)$ and $z_1\mapsto-z_1.$
Thus $s_0^z({}_kf_0(u))$ is 
\[
\frac{1+z_1u}{1-z_1u} \prod_{i=1}^\infty \frac{1+x_iu}{1-x_iu} \cdot  (1-z_1u) \prod_{i=2}^k(1+z_iu).
\]
Clearly this is equal to ${}_kf_0(u).$
\end{proof}
%%%%%%%%%%%%%%%%%%%%%%%%%%%%%%%%%%
We fix $k\geq 0$ throughout this section and denote $ {}_k\vt_{r}^{(l)}(x,z\, |\, t)$ by $\vt_r^{(l)}$ and  ${}_kf_l(u)$ by $f_l(u)$.
%%%%%%%%%%%%%%%%%%%%%%%%%%%%%%%%%%
\begin{lemma}\label{square-relation0}
Suppose $l \geq 0$. Then we have
\[
 \vt_r^{(l)}\cdot  \vt_r^{(l)} +    2\sum_{j=1}^r (-1)^j   \vt_{r+j}^{(l)}  \cdot  \vt_{r-j}^{(l)} =
\begin{cases}
\sum_{s= 0}^{r} e_{s}(z_1^2,\ldots,z_k^2)e_{r-s}(t_1^2,\ldots,t_l^2) & \mbox{ if  $r \leq k+l,$}\\
0 & \mbox{ if  $r > k+l$}.
\end{cases}
\]
\end{lemma}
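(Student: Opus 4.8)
The plan is to collapse the whole identity into a single generating-function computation. The key observation is that the left-hand side is, up to an overall sign, precisely the coefficient of $u^{2r}$ in the product $f_l(u)f_l(-u)$, so the entire lemma will follow once this product is evaluated in closed form and coefficients are compared on both sides.

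First I would compute $f_l(u)f_l(-u)$ directly from the defining generating function. Replacing $u$ by $-u$ interchanges the numerator and denominator in each factor $\tfrac{1+x_iu}{1-x_iu}$, so the infinite product over the $x$-variables telescopes to $1$, while the two finite products combine into differences of squares. This gives
\[
f_l(u)f_l(-u)=\prod_{i=1}^k(1-z_i^2u^2)\prod_{i=1}^l(1-t_i^2u^2).
\]
Next I would extract the coefficient of $u^{2r}$ from each side. On the left, writing the product as $\sum_{a,b\ge 0}(-1)^b\vt_a^{(l)}\vt_b^{(l)}u^{a+b}$ and substituting $a=r+j$, $b=r-j$, the sign factor becomes $(-1)^{r-j}=(-1)^r(-1)^j$; pairing the index $j$ with $-j$ (the $\vt$'s commute) separates the diagonal term $\vt_r^{(l)}\vt_r^{(l)}$ from $2\sum_{j=1}^r(-1)^j\vt_{r+j}^{(l)}\vt_{r-j}^{(l)}$, so the coefficient of $u^{2r}$ on the left equals $(-1)^r$ times the left-hand side of the lemma. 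On the right, setting $w=u^2$ turns the product into $\bigl(\sum_{p}(-1)^pe_p(z_1^2,\dots,z_k^2)w^p\bigr)\bigl(\sum_{q}(-1)^qe_q(t_1^2,\dots,t_l^2)w^q\bigr)$, whose coefficient of $w^r$ is $(-1)^r\sum_{s=0}^r e_s(z_1^2,\dots,z_k^2)e_{r-s}(t_1^2,\dots,t_l^2)$. Cancelling the common factor $(-1)^r$ then yields the stated formula, and the dichotomy $r\le k+l$ versus $r>k+l$ is automatic since $e_p(z_1^2,\dots,z_k^2)=0$ for $p>k$ and $e_q(t_1^2,\dots,t_l^2)=0$ for $q>l$, so no summand survives once $r>k+l$.

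I do not expect a genuine obstacle here: the identity is a formal consequence of the product structure of the generating function, and the only points requiring care are the sign bookkeeping when re-indexing $a+b=2r$ by $j$ and the use of commutativity to merge the $j$ and $-j$ contributions into the factor of $2$. The hypothesis $l\ge 0$ is exactly what makes the $t$-factor the honest polynomial $\prod_{i=1}^l(1-t_iu)$ rather than the reciprocal occurring for $l<0$; this is what produces the clean factored form of $f_l(u)f_l(-u)$ above, and the argument would need to be adapted for negative $l$.
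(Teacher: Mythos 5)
Your proposal is correct and takes essentially the same approach as the paper: both evaluate the product $f_l(u)f_l(-u)=\prod_{i=1}^k(1-z_i^2u^2)\prod_{i=1}^l(1-t_i^2u^2)$ and compare coefficients of $u^{2r}$ on the two sides. The paper's proof is simply a terser version of the identical computation, leaving the sign bookkeeping, the elementary-symmetric-function expansion, and the vanishing dichotomy for $r>k+l$ implicit, all of which you spell out correctly.
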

%%%%%%%%%%%%%%%%%%%%%%%%%%%%%%%%%%
\begin{proof}
The claims follow from
\begin{eqnarray*}
&& \prod_{j=1}^k (1 - z_j^2 u^2 ) \prod_{s=1}^{l} (1 - t_s^2 u^2 )  \ \ \ \mbox{ an even degree polynomial in $u$ of degree $\leq 2(k+l)$}\\
&=&   f_l(u)\cdot   f_l(-u)\\
&=&\sum_{r\geq 0} \left(\sum_{s=0}^{2r+1}  (-1)^s  \vt_{2r+1-s}^{(l)} \cdot  \vt_{s}^{(l)} \right)u^{2r+1}     +  \sum_{r\geq 0}   (-1)^r \left(  \vt_r^{(l)} \cdot \vt_r^{(l)} +    2\sum_{j=1}^r (-1)^j   \vt_{r+j}^{(l)} \cdot \vt_{r-j}^{(l)}  \right)u^{2r}.\\
\end{eqnarray*}
\end{proof}
%%%%%%%%%%%%%%%%%%%%%%%%%%%%%%%%%%
\begin{lemma}\label{theta-relation}For all $l > 1$, we have
\[
 \vt_r^{(l)} =  \vt_r^{(l-1)} - t_l  \cdot  \vt_{r-1}^{(l-1)}.
\]
For $l\geq 0$, we have 
\begin{equation}
  \vt_r^{(-l)} =    \vt_r^{(-l-1)}  + t_{l+1} \cdot \vt_{r-1}^{(-l-1)} 
  =\sum_{i=0}^{r} (-t_l)^i  \vt_{r-i}^{(-l+1)}.
  \label{eq:vt+ttv}
\end{equation}
\end{lemma}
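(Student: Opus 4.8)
The plan is to prove all three identities purely formally, by manipulating the generating function $f_l(u)=\sum_{r\geq 0}\vt_r^{(l)}u^r$ of \eqref{gen fcn f} and then reading off the coefficient of $u^r$ on each side; no induction or combinatorial argument is needed. The key observation is that, in Definition \ref{def-theta}, the product defining one superscript differs from that of the neighbouring superscript by a single linear factor in $u$, so the corresponding generating functions differ by multiplication by an elementary power series.

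First I would treat the positive case. The factor $\prod_{i=1}^{l}(1-t_iu)$ occurring in $f_l(u)$ differs from the factor $\prod_{i=1}^{l-1}(1-t_iu)$ in $f_{l-1}(u)$ exactly by $(1-t_lu)$, so
\[
f_l(u)=(1-t_lu)\,f_{l-1}(u).
\]
Extracting the coefficient of $u^r$ gives $\vt_r^{(l)}=\vt_r^{(l-1)}-t_l\vt_{r-1}^{(l-1)}$, which is the first relation. The same idea handles the first equality of the negative case: comparing $f_{-l}(u)$ with $f_{-(l+1)}(u)$, the latter carries the additional factor $\tfrac{1}{1+t_{l+1}u}$, whence
\[
f_{-l}(u)=(1+t_{l+1}u)\,f_{-(l+1)}(u),
\]
and reading off the coefficient of $u^r$ yields $\vt_r^{(-l)}=\vt_r^{(-l-1)}+t_{l+1}\vt_{r-1}^{(-l-1)}$.

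For the remaining equality I would instead compare $f_{-l}(u)$ with $f_{-(l-1)}(u)=f_{-l+1}(u)$: now $f_{-l}(u)$ carries one extra factor $\tfrac{1}{1+t_lu}$, so $f_{-l}(u)=\tfrac{1}{1+t_lu}\,f_{-l+1}(u)$. Expanding $\tfrac{1}{1+t_lu}=\sum_{i\geq 0}(-t_l)^iu^i$ as a formal power series and taking the coefficient of $u^r$ produces $\vt_r^{(-l)}=\sum_{i=0}^{r}(-t_l)^i\vt_{r-i}^{(-l+1)}$, the sum terminating because $\vt_{r-i}^{(-l+1)}=0$ for $i>r$. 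I do not anticipate any serious obstacle, since the content is entirely formal; the only points requiring care are the bookkeeping of indices and signs when extracting coefficients, the convention $\vt_r^{(\cdot)}=0$ for $r<0$ (which makes the right-hand sides finite), and the fact that the geometric expansion of $\tfrac{1}{1+t_lu}$ is legitimate because we work in the ring of formal power series in $u$ with coefficients in $\mathcal{R}_\infty$.
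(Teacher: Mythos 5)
Your proposal is correct and takes essentially the same approach as the paper's proof: both derive all three identities by observing that the generating functions differ by a single linear factor, namely $f_l(u)=f_{l-1}(u)(1-t_lu)$, $f_{-l}(u)=f_{-l-1}(u)(1+t_{l+1}u)$, and $f_{-l}(u)=f_{-l+1}(u)(1+t_lu)^{-1}$, and then extracting the coefficient of $u^r$. In fact your write-up is slightly more careful than the paper's, which misprints the last relation as $f_{-l}(u)=f_{-l+1}(u)(1+t_{l+1}u)^{-1}$ (implicitly for $l\geq 1$), whereas your factor $(1+t_lu)^{-1}$ and its geometric expansion correctly match the sum $\sum_{i=0}^{r}(-t_l)^i\vt_{r-i}^{(-l+1)}$ in the statement.
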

%%%%%%%%%%%%%%%%%%%%%%%%%%%%%%%%%%
\begin{proof}
The first equation is obtained by extracting 
the coefficient of $u^r$ in the equation 
\[
 f_l(u)= f_{l-1}(u)\cdot (1-t_l u)
\] 
which is obvious from the definition of $ \vt_r^{(l)}.$ The second identities are the consequence of the following  equations 
\[
 f_{-l}(u)= f_{-l-1}(u)\cdot (1+t_{l+1} u) = f_{-l+1}(u)(1+t_{l+1}u)^{-1}.
\]
\end{proof}
%%%%%%%%%%%%%%%%%%%%%%%%%%%%%%%%%%
\begin{lemma}\label{action-of-s_i}
We have 
\begin{eqnarray}
s_i^t( \vt_r^{(l)})&=& \vt_r^{(l)} \ \ \ \ \ \ (l \not= \pm i),\label{eq:inv_vt}\\
s_i^t( \vt_r^{(i)})&=&  \vt_r^{(i-1)} - t_{i+1}\cdot \vt^{(i-1)}_{r-1}   \ \ \ \ (i\geq 0),\label{eq:si_vti} \\
s_i^t( \vt_r^{(-i)})&=&  \vt_r^{(-i-1)} + t_i \cdot \vt_{r-1}^{(-i-1)}  \ \ \ \ \ (i>0).\label{eq:si_vt-i}
\end{eqnarray}
\end{lemma}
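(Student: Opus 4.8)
The plan is to use that each $s_i^t$ is a $\ZZ[z]$-algebra automorphism of $\mathcal{R}_\infty$, so that it suffices to apply it to the generating function $f_l(u)=\sum_{r\geq 0}\vt_r^{(l)}u^r$ factor by factor and then read off the coefficient of $u^r$. The first step is to record the action of $s_i^t=\omega s_i^z\omega$ on the generators. For $i\geq 1$, unwinding the conjugation shows that $s_i^t$ interchanges $t_i$ and $t_{i+1}$ and fixes every $z_j$ and every $Q_r(x)$. For $i=0$ one finds $s_0^t(t_1)=-t_1$, $s_0^t(t_j)=t_j$ for $j\geq 2$, $s_0^t(z_j)=z_j$, and $s_0^t Q_r(x)=Q_r(-t_1,x_1,x_2,\ldots)$, the latter recorded in generating-function form as $\sum_{r}s_0^t(Q_r(x))u^r=\frac{1-t_1u}{1+t_1u}\prod_{i\geq 1}\frac{1+x_iu}{1-x_iu}$. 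All of these are immediate from the definitions in Section~\ref{sec:DSP}.

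Next I would feed these into $f_l(u)$, which is a product of the Schur $Q$-factor $\prod_i\frac{1+x_iu}{1-x_iu}$, the factor $\prod_{p=1}^k(1+z_pu)$, and the $t$-factor $\prod_{j=1}^{l}(1-t_ju)$ (for $l\geq 0$) or $\prod_{j=1}^{-l}(1+t_ju)^{-1}$ (for $l<0$). For $i\geq 1$ the first two factors are fixed, so everything reduces to the $t$-factor: swapping $t_i\leftrightarrow t_{i+1}$ leaves the $t$-factor unchanged unless exactly one of $t_i,t_{i+1}$ occurs in it, which happens precisely for $l=i$ and $l=-i$. In those two cases I expect the short computations $s_i^t(f_i(u))=f_{i-1}(u)(1-t_{i+1}u)$ and $s_i^t(f_{-i}(u))=f_{-i-1}(u)(1+t_iu)$; extracting the $u^r$-coefficient then gives \eqref{eq:si_vti} and \eqref{eq:si_vt-i}, while every other $l$ yields invariance, which is \eqref{eq:inv_vt}.

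For $i=0$ the $z$-factor is again fixed, but now the $Q$-factor acquires the extra factor $\frac{1-t_1u}{1+t_1u}$, while the sign change $t_1\mapsto-t_1$ turns $(1-t_1u)$ into $(1+t_1u)$ in the positive $t$-factor and $(1+t_1u)^{-1}$ into $(1-t_1u)^{-1}$ in the negative one. Multiplying, the extra $\frac{1-t_1u}{1+t_1u}$ exactly cancels the change in the $t$-factor whenever $l\neq 0$, giving $s_0^t(f_l(u))=f_l(u)$ and hence \eqref{eq:inv_vt}. When $l=0$ there is no $t$-factor to absorb the extra factor, so $s_0^t(f_0(u))=\frac{1-t_1u}{1+t_1u}f_0(u)=(1-t_1u)f_{-1}(u)$, and reading off coefficients yields \eqref{eq:si_vti} at $i=0$.

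The one genuinely delicate point is the $i=0$ case, where $s_0^t$ acts nontrivially on the Schur $Q$-part instead of merely permuting $t$-variables; here I must combine the generating-function identity for $s_0^t Q_r(x)$ with the sign change in the explicit $t$-factors and verify the cancellation with care. As a sanity check, for $i\geq 1$ one can avoid generating functions altogether: since $\vt_r^{(i-1)}$ involves only $t_1,\ldots,t_{i-1}$ it is fixed by $s_i^t$, so applying $s_i^t$ to the identity $\vt_r^{(i)}=\vt_r^{(i-1)}-t_i\vt_{r-1}^{(i-1)}$ of Lemma~\ref{theta-relation} immediately recovers \eqref{eq:si_vti}, and \eqref{eq:si_vt-i} follows the same way from \eqref{eq:vt+ttv}.
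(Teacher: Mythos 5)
Your proposal is correct and follows essentially the same route as the paper: both act on the generating function $f_l(u)$ factor by factor, using that $s_i^t$ ($i\geq 1$) merely swaps $t_i\leftrightarrow t_{i+1}$ while $s_0^t$ sends $t_1\mapsto -t_1$ and multiplies the Schur $Q$-part by $\frac{1-t_1u}{1+t_1u}$, and then compare coefficients of $u^r$; your explicit cancellation for the $i=0$ invariance is exactly what the paper means by ``the same manner as Proposition \ref{prop:theta_is_invariant}.'' The only addition is your closing cross-check via Lemma \ref{theta-relation}, which is a fine (and correct) redundancy but not a different method.
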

%%%%%%%%%%%%%%%%%%%%%%%%%%%%%%%%%%
\begin{proof}
Since $ \vt^{(l)}_r$ is a polynomial symmetric in $t_1,\ldots,t_{|l|}$, the identity (\ref{eq:inv_vt}) for $i\geq 1$ is obvious. The case when $i=0$, \textit{i.e.}, the invariance of $ \vt_r^{(l)}\;(l\neq 0)$ under $s_0^t$, can be shown in the same manner as in the proof of Proposition \ref{prop:theta_is_invariant}.

For $i\geq 1$, we have 
\[
s_i^t( f_{i}(u))
=  f_0(u) (1-t_1u)\cdots (1-t_{i-1}u) (1-t_{i+1}u)
= f_{i-1}(u)(1-t_{i+1}u).
\]
Thus  the equation (\ref{eq:si_vti}) for $i\geq 1$ is obtained by comparing the coefficients of $u^r.$  The case when $i=0$ is derived from the following equation
\[
s_0^t( f_0(u))
=\frac{1-t_1u}{1+t_1u} \cdot  f_0(u)
=(1-t_1u)\cdot  f_{-1}(u).
\] 
The equation (\ref{eq:si_vt-i}) follows from
\begin{eqnarray*}
s_i^t( f_{-i}(u))
&=& f_0(u)(1+t_1u)^{-1}\cdots(1+t_{i-1}u)^{-1} (1+t_{i+1}u)^{-1}\\
&=& f_0(u)(1+t_1u)^{-1}\cdots(1+t_{i-1}u)^{-1} (1+t_{i}u)^{-1} (1+t_{i+1}u)^{-1}(1+t_iu)\\
&=& f_{-i-1}(u)(1+t_iu).
\end{eqnarray*}
\end{proof}
%%%%%%%%%%%%%%%%%%%%%%%%%%%%%%%%%%
\begin{lemma}\label{action-of-delta}
For all $i\geq 0$, we have
\begin{eqnarray*}
\delta_i \vt_r^{(l)} = \begin{cases}
0 & \mbox{if}\; l\not=\pm i,\\
 \vt_{r-1}^{(l-1)} & \mbox{if}\; l=\pm i.
\end{cases}
\end{eqnarray*}
\end{lemma}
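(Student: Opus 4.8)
My plan is to unwind the definition $\delta_i f = (f - s_i^t f)/\alpha_i$, where $\alpha_0 = 2t_1$ and $\alpha_i = t_{i+1}-t_i$ for $i\geq 1$, and reduce everything to the two families of identities already in hand: the explicit action formulas of Lemma \ref{action-of-s_i} and the one-step recursions of Lemma \ref{theta-relation}. In each case the numerator $\vt_r^{(l)} - s_i^t \vt_r^{(l)}$ will collapse to a scalar multiple of $\vt_{r-1}^{(l-1)}$, and the point of the computation is that this scalar is always exactly $\alpha_i$, so that the division is clean.

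First I would dispose of the case $l \neq \pm i$: here (\ref{eq:inv_vt}) gives $s_i^t \vt_r^{(l)} = \vt_r^{(l)}$, so the numerator vanishes and $\delta_i \vt_r^{(l)} = 0$ with no further work. For $l = i$ with $i \geq 1$, I would substitute (\ref{eq:si_vti}) to get $s_i^t \vt_r^{(i)} = \vt_r^{(i-1)} - t_{i+1}\vt_{r-1}^{(i-1)}$, and rewrite $\vt_r^{(i)} - \vt_r^{(i-1)} = -t_i \vt_{r-1}^{(i-1)}$ using the first recursion of Lemma \ref{theta-relation} (which holds for every $l\geq 1$ by the defining identity $f_l(u)=f_{l-1}(u)(1-t_l u)$). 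The numerator then becomes $(t_{i+1}-t_i)\vt_{r-1}^{(i-1)} = \alpha_i \vt_{r-1}^{(i-1)}$, and dividing by $\alpha_i$ yields $\vt_{r-1}^{(i-1)}$. The case $l = -i$ with $i > 0$ is entirely parallel: (\ref{eq:si_vt-i}) and the second recursion of Lemma \ref{theta-relation} give $s_i^t \vt_r^{(-i)} = \vt_r^{(-i-1)} + t_i \vt_{r-1}^{(-i-1)}$ and $\vt_r^{(-i)} - \vt_r^{(-i-1)} = t_{i+1}\vt_{r-1}^{(-i-1)}$, so the numerator is again $(t_{i+1}-t_i)\vt_{r-1}^{(-i-1)} = \alpha_i \vt_{r-1}^{(-i-1)}$, giving $\vt_{r-1}^{(-i-1)}=\vt_{r-1}^{(l-1)}$.

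The one step that needs genuine care, and which I expect to be the only real obstacle, is $i = 0$ (so $l = 0$), because there $\alpha_0 = 2t_1$ rather than a difference of consecutive $t$'s. Here (\ref{eq:si_vti}) at $i=0$ gives $s_0^t \vt_r^{(0)} = \vt_r^{(-1)} - t_1 \vt_{r-1}^{(-1)}$, while the second recursion of Lemma \ref{theta-relation} at $l=0$ gives $\vt_r^{(0)} - \vt_r^{(-1)} = t_1 \vt_{r-1}^{(-1)}$. Hence the numerator is $t_1\vt_{r-1}^{(-1)} + t_1\vt_{r-1}^{(-1)} = 2t_1\vt_{r-1}^{(-1)} = \alpha_0\,\vt_{r-1}^{(-1)}$; the factor of $2$ built into $\alpha_0$ is precisely matched by the two equal contributions, so dividing gives $\delta_0 \vt_r^{(0)} = \vt_{r-1}^{(-1)} = \vt_{r-1}^{(l-1)}$. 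This confirms the formula in all cases, and the whole argument is a short case check once the $i=0$ bookkeeping is handled correctly.
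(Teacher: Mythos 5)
Your proof is correct and takes essentially the same route as the paper: the same case split ($l\neq\pm i$; $l=\pm i$ with $i\geq 1$; the special case $i=0$ where the factor $2$ in $\alpha_0=2t_1$ is absorbed), and the same key point that the numerator $\vt_r^{(l)}-s_i^t\vt_r^{(l)}$ collapses to $\alpha_i\,\vt_{r-1}^{(l-1)}$ — the only difference being that the paper performs this computation directly on the generating functions, e.g. $f_i(u)-s_i^t(f_i(u))=f_{i-1}(u)(t_{i+1}-t_i)u$, while you combine the coefficient-level statements of Lemmas \ref{action-of-s_i} and \ref{theta-relation}. Your remark that the first recursion of Lemma \ref{theta-relation} holds already at $l=1$ (needed for the case $i=1$, though the paper states it only for $l>1$) is correct and appropriately justified by the identity $f_l(u)=f_{l-1}(u)(1-t_lu)$.
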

%%%%%%%%%%%%%%%%%%%%%%%%%%%%%%%%%%
\begin{proof} 
The case when $l\neq \pm i$ is obvious
from the invariance result (\ref{eq:inv_vt}). Let $i\geq 1.$ The cases when $l=\pm i$ follow from the following equations:
\begin{eqnarray*}
 f_i(u)-s_i^t( f_i(u))
&=& f_{i-1}(u) \left((1-t_{i}u)-(1-t_{i+1}u)\right)\\
&=& f_{i-1}(u)(t_{i+1}-t_i)u,\\
 f_{-i}(u)-s_i^t ( f_{-i}(u))
&=& f_{-i+1}(u)\left((1+t_{i}u)^{-1}-(1+t_{i+1}u)^{-1}\right)\\
&=& f_{-i+1}(u)(1+t_{i}u)^{-1} (1+t_{i+1}u)^{-1}(t_{i+1}-t_i)\\
&=& f_{-i-1}(u)(t_{i+1}-t_i)u.
\end{eqnarray*}
Finally we show $\delta_0 ( \vt_r^{(0)})= \vt_{r-1}^{(-1)}$. This is a consequence of the following equation:
\begin{eqnarray*}
 f_0(u)-s_0^t( f_0(u))= f_0(u)\left(1-\frac{1-t_1u}{1+t_1u}\right)= f_0(u)\frac{2t_1u}{1+t_1u}= f_{-1}(u){2t_1u}.
\end{eqnarray*}
\end{proof}
%%%%%%%%%%%%%%%%%%%%%%%%%%%%%%%%%%
\begin{lemma}\label{delta > sum} For $i>0$, we have
\[
\delta_i( \vt_{r}^{(i)}\cdot \vt_s^{(-i)})  
=  \vt_{r-1}^{(i-1)}\cdot \vt_s^{(-i-1)} +  \vt_r^{(i-1)}\cdot \vt_{s-1}^{(-i-1)}.
\]
\end{lemma}
%%%%%%%%%%%%%%%%%%%%%%%%%%%%%%%%%%
\begin{proof}
By the Leipnitz rule, Lemma \ref{action-of-delta}, 
Equation (\ref{eq:si_vti}), and Equation (\ref{eq:vt+ttv}), we have 
\begin{eqnarray*}
\delta_i( \vt_{r}^{(i)}\cdot \vt_s^{(-i)}) 
&=& \delta_i( \vt_{r}^{(i)}) \vt_s^{(-i)}
+s_i( \vt_{r}^{(i)})\delta_i( \vt_{s}^{(-i)})
\\
&=&  \vt_{r-1}^{(i-1)}\cdot \vt_s^{(-i)} 
+ (  \vt_r^{(i-1)} - t_{i+1}\cdot \vt^{(i-1)}_{r-1} ) \vt_{s-1}^{(-i-1)} \\
&=&  \vt_{r-1}^{(i-1)}( \vt_s^{(-i)}- t_{i+1}\cdot \vt_{s-1}^{(-i-1)}) +  \vt_r^{(i-1)}\cdot \vt_{s-1}^{(-i-1)} \\
&=&  \vt_{r-1}^{(i-1)}\cdot \vt_s^{(-i-1)} +  \vt_r^{(i-1)}\cdot \vt_{s-1}^{(-i-1)}.
\end{eqnarray*}
\end{proof}
%%%%%%%%%%%%%%%%%%%%%%%%%%%%%%%%%%
We use the following notation in the rest of the paper.
%%%%%%%%%%%%%%%%%%%%%%%%%%%%%%%%%%
\begin{definition}
For all $(r_1,\dots, r_m)$, $(l_1,\dots, l_m) \in \ZZ^m$, let
\[
\Pf\left[\vt_{r_1}^{(l_1)} \vt_{r_2}^{(l_2)}\cdots  \vt_{r_m}^{(l_m)}\right] 
:= \left.\Pf\left[c_{r_1}^{(1)}c_{r_2}^{(2)}\cdots c_{r_m}^{(m)}\right]\right|_{c= \!\vt^{(l)}},
\]
where $|_{c=\vt^{(l)}}$ means that we substitute $\vt_{s}^{(l_i)}$ to $c_{s}^{(i)}$ for all $i\in \{1,\dots, m\}$ and $s \in \ZZ$. 
\end{definition}

We emphasize that we substitute theta polynomials after we write the Pfaffian as polynomials in the formal variable $c_{s}^{(i)}$'s. For example, 
\[
\Pf[ \vt_{-1}^{(l_1)} \vt_{1}^{(l_2)}] = \left.   \Pf[c_{-1}^{(1)}c_{1}^{(2)}]    \right|_{c= \vt^{(l)}} =  \left. (c_{-1}^{(1)}c_{1}^{(2)}-2c_{0}^{(1)}c_{0}^{(2)})    \right|_{c= \vt^{(l)}} = -2 \vt_{0}^{(l_1)} \vt_{0}^{(l_2)} =-2.
\]
%%%%%%%%%%%%%%%%%%%%%%%%%%%%%%%%%%%%%%%%%%%%
\begin{remark}\label{rem: n-ind theta}
The following are clear from the definition of Pfaffian and the facts that $\vt_r^{(l)}=0$ for all $r<0$ and $\vt_0^{(l)}=1$.
\begin{eqnarray*}
&&\Pf[\vt_{r_1}^{(l_1)} \cdots \vt_{r_m}^{(l_m)} \vt_{0}^{(l_{m+1})}] = \Pf[\vt_{r_1}^{(1)} \cdots \vt_{r_m}^{(m)}], \\ 
&&\Pf[\vt_{r_1}^{(l_1)} \cdots \vt_{r_m}^{(l_m)} \vt_{r_{m+1}}^{(l_{m+1})}] = 0 \ \ \ \mbox{ if } \ r_{m+1} < 0.
\end{eqnarray*}
\end{remark}
%%%%%%%%%%%%%%%%%%%%%%%%%%%%%%%%%%%%%%%%%%%%
We use the following two propositions from Lemma \ref{square-relation0}, \ref{action-of-delta}, and \ref{delta > sum} and use them in the proofs of our main results.
%%%%%%%%%%%%%%%%%%%%%%%%%%%%%%%%%%
\begin{proposition}\label{prop square-relation0}
Suppose $l \geq 0$ and $r > k+l$. Then
\[
\Pf[ \vt_{r_1}^{(l_1)}  \cdots \vt_{r}^{(l)}\vt_{r}^{(l)} \cdots  \vt_{r_m}^{(l_m)}] = 0.
\]
\end{proposition}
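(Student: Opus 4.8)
The plan is to reduce the statement to the two facts already established: the square relation of Lemma \ref{square-relation0} and the vanishing property of the Pfaffian in Proposition \ref{SQinPf}(1). The key observation that makes this work is that the two adjacent entries $\vt_r^{(l)}\vt_r^{(l)}$ carry the \emph{same} lower index $r$ and the \emph{same} upper index $l$, so that after the formal-variable substitution they correspond to a single repeated ``column'', which is precisely the situation addressed by Proposition \ref{SQinPf}(1).

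First I would unwind the $m=2$ case of the multi Schur-Pfaffian at these two entries. Applying the defining formula for $\Pf[c_{r_1}^{(1)}c_{r_2}^{(2)}]$ with $r_1=r_2=r$ and both sequences specialized to $\vt^{(l)}$, one obtains
\[
\Pf[\vt_r^{(l)}\vt_r^{(l)}] = \vt_r^{(l)}\cdot\vt_r^{(l)} + 2\sum_{j=1}^{r}(-1)^j\,\vt_{r+j}^{(l)}\cdot\vt_{r-j}^{(l)},
\]
which is exactly the left-hand side appearing in Lemma \ref{square-relation0}. Since we are assuming $l\geq 0$ and $r>k+l$, that lemma shows this expression equals $0$.

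Next I would invoke Proposition \ref{SQinPf}(1): because the two middle entries are identical (same $r$, same $l$) and their $2$-Pfaffian vanishes, the full Pfaffian with these two adjacent equal entries must vanish as well. To be careful, Proposition \ref{SQinPf} is formulated for the formal variables $c^{(i)}$, so I would apply it after the substitution $c\mapsto\vt^{(l)}$, using that this specialization is a ring homomorphism under which the hypothesis $\Pf[c_r^{(l)}c_r^{(l)}]=0$ becomes $\Pf[\vt_r^{(l)}\vt_r^{(l)}]=0$, which we have just verified. This immediately yields $\Pf[\vt_{r_1}^{(l_1)}\cdots\vt_r^{(l)}\vt_r^{(l)}\cdots\vt_{r_m}^{(l_m)}]=0$.

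I do not expect any real obstacle here; the only point requiring a little care is the bookkeeping of the substitution convention, namely that the prescription ``compute the Pfaffian formally in the $c_s^{(i)}$, then substitute'' is compatible with transporting the conditional relation of Proposition \ref{SQinPf}(1) through the specialization $c_s^{(i)}\mapsto\vt_s^{(l_i)}$. Once this is observed, the conclusion is immediate from Lemma \ref{square-relation0} and Proposition \ref{SQinPf}(1).
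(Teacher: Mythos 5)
Your proposal is correct and is essentially identical to the paper's own proof: the paper likewise observes that Lemma \ref{square-relation0} gives $\Pf[\vt_r^{(l)}\vt_r^{(l)}]=0$ when $l\geq 0$ and $r>k+l$, and then concludes by Proposition \ref{SQinPf}(1). Your extra care about expanding the $m=2$ Pfaffian and transporting the hypothesis through the substitution $c\mapsto\vt^{(l)}$ just makes explicit what the paper leaves implicit.
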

%%%%%%%%%%%%%%%%%%%%%%%%%%%%%%%%%%
\begin{proof}
Lemma \ref{square-relation0} states that $\Pf[\vt_{r}^{(l)}\vt_{r}^{(l)}]=0$ if $l \geq 0$ and $r > k+l$. Thus the claim follows from Proposition \ref{SQinPf} (1).
\end{proof}
%%%%%%%%%%%%%%%%%%%%%%%%%%%%%%%%%%
\begin{proposition}\label{delta-i-on-Pf}\
\begin{itemize}
\item[(a)] Let $i\geq 0$. If $l_p\not=\pm i$ for all $p \in \{1,\dots,m\}$, then  $\delta_i\Pf[\vt_{r_1}^{(l_1)} \cdots \vt_{r_m}^{(l_m)}]=0$.
\item[(b)] Let $i\geq  0$. Suppose that $l_p \in \{\pm i\}$ for some $p \in \{1,\dots,m\}$ and that $l_q\not\in \{\pm i\}$ for all $q \not=p$. 
\begin{eqnarray*}
&&\delta_i\Pf[ \vt_{r_1}^{(l_1)} \cdots \vt_{r_{\!p}}^{(l_p)}\cdots  \vt_{r_m}^{(l_m)}] = \Pf[ \vt_{r_1}^{(l_1)} \cdots  \vt_{r_{\!p}-1}^{(l_p-1 )}\cdots  \vt_{r_m}^{(l_m)}].
\end{eqnarray*}
\item[(c)] Let $i>0$. Suppose that $l_p=i$ and $l_q=-i$ for some $p<q$ and that $l_s\not=\pm i$ for all $s\not\in\{p,q\}$. Then we have
\begin{eqnarray*}
&&\delta_i\Pf[ \vt_{r_1}^{(l_1)} \cdots  \vt_{r_p}^{(i)}\cdots  \vt_{r_{\!q}}^{(-i)}\cdots  \vt_{r_m}^{(l_m)}]\\
&=&\Pf[ \vt_{r_1}^{(l_1)} \cdots  \vt_{r_{\!p}-1}^{(i-1)}\cdots  \vt_{r_{\!q}}^{(-i-1)}\cdots  \vt_{r_{m}}^{(l_{m})}]
+\Pf[ \vt_{r_1}^{(l_1)} \cdots  \vt_{r_{\!p}}^{(i-1)}\cdots  \vt_{r_{\!q}-1}^{(-i-1)}\cdots  \vt_{r_{m}}^{(l_{m})}].
\end{eqnarray*}
\end{itemize}
\end{proposition}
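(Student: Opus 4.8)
The organizing tool I would use is the twisted Leibniz rule for the left divided difference operator: since $s_i^t$ is a ring automorphism of $\mathcal{R}_\infty$ and $\delta_i f=(f-s_i^t f)/\alpha_i$, one has $\delta_i(fg)=(\delta_i f)\,g+(s_i^t f)(\delta_i g)$, and hence for a product of several factors
\[
\delta_i(g_1\cdots g_m)=\sum_{j=1}^m (s_i^t g_1)\cdots (s_i^t g_{j-1})(\delta_i g_j)\,g_{j+1}\cdots g_m.
\]
Because each theta-Pfaffian is by definition the polynomial $\Pf[c_{r_1}^{(1)}\cdots c_{r_m}^{(m)}]$ with $c_s^{(j)}$ replaced by $\vt_s^{(l_j)}$, the plan is to apply this rule monomial by monomial, the upper index $l_j$ staying attached to slot $j$ throughout. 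All the needed input on individual factors is already recorded: $\delta_i\vt_s^{(l)}=0$ and $s_i^t\vt_s^{(l)}=\vt_s^{(l)}$ when $l\neq\pm i$ (Lemma \ref{action-of-delta} and (\ref{eq:inv_vt})), $\delta_i\vt_s^{(\pm i)}=\vt_{s-1}^{(\pm i-1)}$ (Lemma \ref{action-of-delta}), together with the nontrivial twists (\ref{eq:si_vti}) and (\ref{eq:vt+ttv}).

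Part (a) is then immediate: if no $l_p$ equals $\pm i$, every factor $\vt_s^{(l_p)}$ is $s_i^t$-invariant, so the whole Pfaffian lies in the $s_i^t$-fixed subring and $\delta_i$ annihilates it. For part (b), where exactly one slot $p$ carries $l_p=\pm i$, only the $j=p$ summand of the Leibniz expansion survives: the factors $\vt^{(l_q)}$ with $q\neq p$ are killed by $\delta_i$, and those sitting to the left of slot $p$ are $s_i^t$-invariant, so the twist is trivial. Differentiating the slot-$p$ factor sends $\vt_{s_p}^{(l_p)}$ to $\vt_{s_p-1}^{(l_p-1)}$, i.e.\ it lowers the slot-$p$ index by one and decrements the upper index; summing over the monomials of the Pfaffian, this should reassemble into $\Pf$ with the slot-$p$ entry replaced by $\vt_{r_p-1}^{(l_p-1)}$.

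Part (c) is the substantial case. With $l_p=i$ and $l_q=-i$ and $p<q$, both the $j=p$ and $j=q$ summands survive. The $j=p$ term differentiates slot $p$ with trivial twist on its left, producing $\vt_{s_p-1}^{(i-1)}$ while leaving slot $q$ as $\vt_{s_q}^{(-i)}$, which I would split via (\ref{eq:vt+ttv}) as $\vt_{s_q}^{(-i-1)}+t_{i+1}\vt_{s_q-1}^{(-i-1)}$. The $j=q$ term differentiates slot $q$ but now twists slot $p$ by $s_i^t\vt_{s_p}^{(i)}=\vt_{s_p}^{(i-1)}-t_{i+1}\vt_{s_p-1}^{(i-1)}$ from (\ref{eq:si_vti}). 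Adding the two, the $t_{i+1}$-terms cancel and one is left with $\vt_{s_p-1}^{(i-1)}\vt_{s_q}^{(-i-1)}+\vt_{s_p}^{(i-1)}\vt_{s_q-1}^{(-i-1)}$ in each monomial; this is precisely the two-factor identity of Lemma \ref{delta > sum} carried out inside the Pfaffian, and summing over monomials produces the two Pfaffians on the right-hand side of (c).

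The step that I expect to be the main obstacle is the final reassembly in (b) and (c). The monomial-wise output is literally the operator ``lower the slot-$p$ (resp.\ slot-$q$) index of $\Pf[c_{r_1}^{(1)}\cdots c_{r_m}^{(m)}]$ by one,'' and I must identify this with the Pfaffian whose slot \emph{reference} index has been lowered, namely $\Pf[\cdots c_{r_p-1}^{(p)}\cdots]$. These two formal polynomials do not agree on the nose: already the expansion $\Pf[c_{r_1}^{(1)}c_{r_2}^{(2)}]=c_{r_1}^{(1)}c_{r_2}^{(2)}+2\sum_{s\geq 1}(-1)^s c_{r_1+s}^{(1)}c_{r_2-s}^{(2)}$ shows that lowering a reference index changes the summation range, so the two differ by boundary monomials carrying a negatively indexed $c$. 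I would establish by induction on $m$, using the recursive definition of the multi Schur-Pfaffian from Section \ref{sec:Pf}, that this discrepancy always consists of monomials containing a factor $c_s^{(p)}$ with $s<0$. Since $\vt_s^{(l)}=0$ for $s<0$ (Remark \ref{rem: n-ind theta}), every such correction term vanishes after substitution, which is exactly what makes the clean statements of (b) and (c) hold.
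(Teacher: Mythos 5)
Your proposal is correct, and its ingredients are the same as the paper's (the twisted Leibniz rule for $\delta_i$, Lemmas \ref{action-of-delta}, \ref{action-of-s_i}, \ref{theta-relation}/\ref{delta > sum}, and the vanishing $\vt_r^{(l)}=0$ for $r<0$), but the architecture is genuinely different. The paper proves (a) and (b) by induction on $m$ through the recursive definition of the multi Schur-Pfaffian, and proves (c) by the same kind of induction: the base case $m=2$ is done monomial-by-monomial via Lemma \ref{delta > sum} (exactly your computation), while the inductive step expands the Pfaffian by its defining recursion, applies Leibniz to each product of a prefactor with a sub-Pfaffian, invokes (a), (b) and the induction hypothesis on the sub-Pfaffians, and recombines using Lemma \ref{theta-relation} together with the multilinearity of the Pfaffian in each slot. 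You instead carry the computation monomial-by-monomial for \emph{every} $m$ --- which works precisely because at most two slots carry upper index $\pm i$, so the $s_i^t$-twists are completely controlled --- and you isolate the only genuinely formal issue as a separate shift lemma: ``lowering the slot-$p$ index in every monomial'' and ``the Pfaffian with reference index $r_p-1$'' differ only by monomials containing a negatively indexed $c^{(p)}$, which die after substituting theta functions. That lemma is true and is proved exactly as you indicate, by induction on $m$ via the recursive definition; the discrepancies arise only from the pair-Pfaffians $\Pf[c_{r_j}^{(j)}c_{r_p}^{(p)}]$ with $j<p$, whose boundary term is $2(-1)^{r_p}c_{r_j+r_p}^{(j)}c_{-1}^{(p)}$, and they then just get multiplied along through the recursion. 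What your route buys is transparency on a point the paper leaves implicit: the paper's recombination via ``natural multilinearity'' also silently uses this boundary-term vanishing (substituting $r\mapsto\vt_{r-1}^{(-i-1)}$ into slot $q$ is \emph{not} literally the Pfaffian with reference index $r_q-1$), whereas you name the discrepancy and kill it explicitly. What the paper's route buys is economy in the inductive step: by quoting the statement for smaller $m$ on whole sub-Pfaffians, it never has to track coefficients of individual monomials for $m>2$.
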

%%%%%%%%%%%%%%%%%%%%%%%%%%%%%%%%%%%
\begin{proof}
We can prove (a) and (b) from Lemma \ref{action-of-delta} by induction on $m$ in the axioms of Pfaffian. We show how to prove (c). If $m=2$, the claim follows by applying Lemma \ref{delta > sum} to each monomial in the definition of Pfaffian. 
Suppose that $m$ is odd $>2$, and that the claim holds for all $m'<m$. By the definition of Pfaffian, Leibnitz rule, and Lemma \ref{action-of-s_i}, together with (a) and (b) above, we can compute
\begin{eqnarray*}
&&\delta_i\Pf[ \vt_{r_1}^{(l_1)} \cdots  \vt_{r_m}^{(l_m)}]\\
&=& \sum_{s=1}^{m} (-1)^{s-1} \delta_i\left( \vt_{r_s}^{(l_s)} \right)\Pf[ \vt_{r_1}^{(l_1)} \cdots\widehat{ \vt_{r_s}^{(l_s)}}\cdots  \vt_{r_m}^{(l_m)}]
 + (-1)^{s-1} s_i\left( \vt_{r_s}^{(l_s)} \right) \delta_i\Pf[ \vt_{r_1}^{(l_1)} \cdots\widehat{ \vt_{r_s}^{(l_s)}}\cdots  \vt_{r_m}^{(l_m)}]\\
&=& (-1)^{p-1} \vt_{r_p-1}^{(i-1)} \Pf[ \vt_{r_1}^{(l_1)} \cdots\widehat{ \vt_{r_p}^{(i)}}\cdots \vt_{r_q}^{(l_q)}\cdots \vt_{r_m}^{(l_m)}]\\
&& + (-1)^{p-1} \left(\vt_{r_p}^{(i-1)} - t_{i+1}\vt_{r_p-1}^{(i-1)}  \right) \Pf[ \vt_{r_1}^{(l_1)} \cdots\widehat{ \vt_{r_p}^{(i)}}\cdots  \vt_{r_{\!q}-1}^{(-i-1)}\cdots  \vt_{r_m}^{(l_m)}]\\
&& +  (-1)^{q-1} \vt_{r_q-1}^{(-i-1)} \Pf[ \vt_{r_1}^{(l_1)} \cdots \vt_{r_p}^{(i)} \cdots\widehat{ \vt_{r_q}^{(-i)}}\cdots  \vt_{r_m}^{(l_m)}]\\
&& + (-1)^{q-1} \left(\vt_{r_q}^{(-i-1)} + t_{i} \vt_{r_q-1}^{(-i-1)}\right) \Pf[ \vt_{r_1}^{(l_1)} \cdots  \vt_{r_{\!p}-1}^{(i-1)}\cdots\widehat{ \vt_{r_q}^{(-i)}}\cdots  \vt_{r_m}^{(l_m)}]\\
&& + \sum_{s\in \{1,\dots,m\}\backslash \{p,q\}}^{m} (-1)^{s-1}  \vt_{r_s}^{(l_s)} \Pf[ \vt_{r_1}^{(l_1)} \cdots\widehat{ \vt_{r_s}^{(l_s)}}\cdots  \vt_{r_{\!p}-1}^{(i-1)}\cdots  \vt_{r_{\!q}}^{(-i-1)}\cdots  \vt_{r_{m}}^{(l_{m})}]\\
&& + \sum_{s\in \{1,\dots,m\}\backslash \{p,q\}}^{m} (-1)^{s-1}  \vt_{r_s}^{(l_s)} \Pf[ \vt_{r_1}^{(l_1)} \cdots \widehat{ \vt_{r_s}^{(l_s)}}\cdots \vt_{r_{\!p}}^{(i-1)}\cdots  \vt_{r_{\!q}-1}^{(-i-1)}\cdots  \vt_{r_{m}}^{(l_{m})}].
\end{eqnarray*}
By Lemma \ref{theta-relation} and the natural  multilinearity of Pfaffian, 
the sum of the first and second terms is
\begin{eqnarray*}
&&(-1)^{p-1} \vt_{r_p-1}^{(i-1)} \Pf[ \vt_{r_1}^{(l_1)} \cdots\widehat{ \vt_{r_p}^{(i)}}\cdots \vt_{r_q}^{(-i-1)}\cdots \vt_{r_m}^{(l_m)}]
+(-1)^{p-1} \vt_{r_p}^{(i-1)} \Pf[ \vt_{r_1}^{(l_1)} \cdots\widehat{ \vt_{r_p}^{(i)}}\cdots  \vt_{r_{\!q}-1}^{(-i-1)}\cdots  \vt_{r_m}^{(l_m)}].
\end{eqnarray*}
Similarly, the sum of the third and fourth terms is
\begin{eqnarray*}
&& (-1)^{q-1} \vt_{r_q-1}^{(-i-1)} \Pf[ \vt_{r_1}^{(l_1)} \cdots \vt_{r_p}^{(i-1)} \cdots\widehat{ \vt_{r_q}^{(-i)}}\cdots  \vt_{r_m}^{(l_m)}]
 + (-1)^{q-1} \vt_{r_q}^{(-i-1)} \Pf[ \vt_{r_1}^{(l_1)} \cdots  \vt_{r_{\!p}-1}^{(i-1)}\cdots\widehat{ \vt_{r_q}^{(-i)}}\cdots  \vt_{r_m}^{(l_m)}].
\end{eqnarray*}
Thus by the definition of Pfaffian again, we obtain the desired formula. The case when $m$ is even can be proved similarly.
\end{proof}
%%%%%%%%%%%%%%%%%%%%%%%%%%%%%%%%%%%
%%%%%%%%%%%%%%%%%%%%%%%%%%%%%%%%%%%
\subsection{Supplementary results on $\vt$-functions}
%%%%%%%%%%%%%%%%%%%%%%%%%%%%%%%%%%%
%%%%%%%%%%%%%%%%%%%%%%%%%%%%%%%%%%%
In this section, we generalize Proposition \ref{prop square-relation0} and Proposition \ref{delta-i-on-Pf}, which hold in $\calR_{\infty}$. We will use them only in Section \ref{sec: beyond grassmannian}.
%%%%%%%%%%%%%%%%%%%%%%%%%%%%%%%%%%%
\begin{definition}
For each $(r_1,\dots, r_m)$, $(l_1,\dots, l_m) \in \ZZ^m$ and $(k_1,\dots, k_m) \in (\ZZ_{\geq0})^m$, let
\[
\Pf\left[{}_{k_1}\!\vt_{r_1}^{(l_1)}{}_{k_2}\!\vt_{r_2}^{(l_2)}\cdots {}_{k_m}\!\vt_{r_m}^{(l_m)}\right] 
:= \left.\Pf\left[c_{r_1}^{(1)}c_{r_2}^{(2)}\cdots c_{r_m}^{(m)}\right]\right|_{c={}_k\!\vt^{(l)}},
\]
where $|_{c={}_k\!\vt^{(l)}}$ means that we substitute ${}_{k_i}\!\vt_{s}^{(l_i)}$ to $c_{s}^{(i)}$ for all $i \in \{1,\dots,m\}$ and $s \in \ZZ$. 
\end{definition}
%%%%%%%%%%%%%%%%%%%%%%%%%%%%%%%%%%%
\begin{remark}\label{rem: n-ind theta general}
By the definition of Pfaffian and the fact that ${}_k\!\vt_r^{(l)}=0$ for all $r<0$ and ${}_{k}\!\vt_0^{(l)}=1$, we have
\begin{eqnarray*}
&&\Pf[{}_{k_1}\!\vt_{r_1}^{(l_1)} \cdots {}_{k_m}\!\vt_{r_m}^{(l_m)} {}_{k_{m+1}}\!\vt_{0}^{(l_{m+1})}] = \Pf[{}_{k_1}\!\vt_{r_1}^{(1)} \cdots {}_{k_m}\!\vt_{r_m}^{(m)}], \\ 
&&\Pf[{}_{k_1}\!\vt_{r_1}^{(l_1)} \cdots {}_{k_m}\!\vt_{r_m}^{(l_m)} {}_{k_{m+1}}\!\vt_{r_{m+1}}^{(l_{m+1})}] = 0 \ \ \ \mbox{ if } \ r_{m+1} < 0.
\end{eqnarray*}
\end{remark}
%%%%%%%%%%%%%%%%%%%%%%%%%%%%%%%%%%%
Now by Lemma \ref{square-relation0} with the help of Proposition \ref{SQinPf} (1), we have
the following.
%%%%%%%%%%%%%%%%%%%%%%%%%%%%%%%%%%%
\begin{proposition}\label{square-relation0general}
Suppose $l \geq 0$ and $r > k+l$. Then
\[
\Pf[{}_{k_1}\!\vt_{r_1}^{(l_1)}\cdots {}_{k}\!\vt_{r}^{(l)} {}_{k}\!\vt_{r}^{(l)} \cdots {}_{k_m}\!\vt_{r_m}^{(l_m)}] = 0.
\]
\end{proposition}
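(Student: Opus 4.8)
The plan is to reduce this to the single-$k$ case already settled in Proposition \ref{prop square-relation0}, since that argument carries over with essentially no change. The only new feature of the statement is that the surrounding entries now carry possibly distinct subscripts $k_1,\dots,k_m$; the point I would stress is that these are irrelevant, because Proposition \ref{SQinPf}(1) is a \emph{formal} identity for the multi Schur-Pfaffian whose hypothesis concerns only the two-term Pfaffian formed by the repeated adjacent pair. Crucially, in the statement both repeated entries are ${}_k\vt_r^{(l)}$ with the \emph{same} $k$, $l$, and $r$, matching the $k$ that appears in the inequality $r>k+l$.

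First I would observe that, under the substitution convention $|_{c={}_k\vt^{(l)}}$, the two adjacent repeated positions both specialize to the single sequence ${}_k\vt^{(l)}$ attached to that fixed $k$. Hence the two-term multi Schur-Pfaffian of this pair specializes to
\[
\Pf[{}_k\vt_r^{(l)}\,{}_k\vt_r^{(l)}]
= {}_k\vt_r^{(l)}\cdot{}_k\vt_r^{(l)}
+ 2\sum_{j=1}^{r}(-1)^j\,{}_k\vt_{r+j}^{(l)}\cdot{}_k\vt_{r-j}^{(l)},
\]
which is exactly the left-hand side of Lemma \ref{square-relation0}. Since $l\geq 0$ and $r>k+l$, that lemma gives $\Pf[{}_k\vt_r^{(l)}\,{}_k\vt_r^{(l)}]=0$.

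Next I would invoke Proposition \ref{SQinPf}(1): regarding the two repeated adjacent slots as a common abstract sequence $c$ with $c_r=0$-pair vanishing, the vanishing of their two-term Pfaffian forces the vanishing of the full multi Schur-Pfaffian $\Pf[c_{r_1}^{(1)}\cdots c_r c_r\cdots c_{r_m}^{(m)}]$, and this conclusion holds irrespective of how the remaining positions are specialized. Substituting ${}_{k_i}\vt^{(l_i)}$ into the other positions then yields the claimed identity.

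I expect no genuine obstacle here; the single subtlety worth spelling out is the bookkeeping in the previous paragraph, namely that although the two repeated entries are nominally distinct slots of the $m$-tuple, they receive the identical specialization ${}_k\vt^{(l)}$, so Proposition \ref{SQinPf}(1)—whose hypothesis is phrased for one repeated sequence—applies once those two slots are treated as carrying the same abstract sequence. Everything else is immediate from Lemma \ref{square-relation0}, exactly as in the proof of Proposition \ref{prop square-relation0}.
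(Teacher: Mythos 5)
Your proposal is correct and coincides with the paper's own proof: the paper likewise observes that the repeated adjacent pair specializes to the two-term Pfaffian handled by Lemma \ref{square-relation0} (which vanishes for $l\geq 0$, $r>k+l$), and then invokes the formal Proposition \ref{SQinPf}(1), whose conclusion is insensitive to how the remaining slots are specialized. The bookkeeping point you highlight — that both repeated slots carry the same sequence ${}_k\vt^{(l)}$, so the hypothesis of Proposition \ref{SQinPf}(1) applies — is exactly what makes the paper's one-line argument work.
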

%%%%%%%%%%%%%%%%%%%%%%%%%%%%%%%%%%%
The proofs of the following lemma and proposition are identical to the ones of Lemma \ref{delta > sum} and Propsition \ref{delta-i-on-Pf}.
%%%%%%%%%%%%%%%%%%%%%%%%%%%%%%%%%%%
\begin{lemma}\label{delta > sum general} For $i>0$, we have
\[
\delta_i({}_{k_1}\!\vt_{r}^{(i)}\cdot{}_{k_2}\!\vt_s^{(-i)}) 
= {}_{k_1}\!\vt_{r-1}^{(i-1)}\cdot{}_{k_2}\!\vt_s^{(-i-1)} + {}_{k_1}\!\vt_r^{(i-1)}\cdot{}_{k_2}\!\vt_{s-1}^{(-i-1)}.
\]
\end{lemma}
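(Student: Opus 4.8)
The plan is to repeat the computation proving Lemma \ref{delta > sum} verbatim, the only difference being that the two factors now carry independent subscripts $k_1$ and $k_2$ in place of a common $k$. The essential point is that every auxiliary identity used in that proof --- the evaluation of $\delta_i$ in Lemma \ref{action-of-delta}, the formula (\ref{eq:si_vti}) for $s_i^t({}_k\vt_r^{(i)})$, and the relation (\ref{eq:vt+ttv}) --- was established for an \emph{arbitrary} fixed $k\geq 0$. Hence each of them may be invoked with $k=k_1$ on the first factor and with $k=k_2$ on the second. Since for $i>0$ the operators $\delta_i$ and $s_i^t$ distribute over a product by the Leibniz rule $\delta_i(fg)=\delta_i(f)\,g+s_i^t(f)\,\delta_i(g)$ and act factorwise, the presence of two distinct subscripts creates no interference.

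Concretely, I would compute
\begin{align*}
\delta_i({}_{k_1}\!\vt_r^{(i)}\cdot {}_{k_2}\!\vt_s^{(-i)})
&= \delta_i({}_{k_1}\!\vt_r^{(i)})\,{}_{k_2}\!\vt_s^{(-i)} + s_i^t({}_{k_1}\!\vt_r^{(i)})\,\delta_i({}_{k_2}\!\vt_s^{(-i)})\\
&= {}_{k_1}\!\vt_{r-1}^{(i-1)}\cdot {}_{k_2}\!\vt_s^{(-i)} + \big({}_{k_1}\!\vt_r^{(i-1)} - t_{i+1}\,{}_{k_1}\!\vt_{r-1}^{(i-1)}\big)\,{}_{k_2}\!\vt_{s-1}^{(-i-1)},
\end{align*}
where the second equality uses Lemma \ref{action-of-delta} (applied with $k=k_1$ to the first factor and with $k=k_2$ to the second) together with (\ref{eq:si_vti}) (applied with $k=k_1$). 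Regrouping the two terms carrying ${}_{k_1}\!\vt_{r-1}^{(i-1)}$ and then substituting the relation ${}_{k_2}\!\vt_s^{(-i)}={}_{k_2}\!\vt_s^{(-i-1)}+t_{i+1}\,{}_{k_2}\!\vt_{s-1}^{(-i-1)}$ from (\ref{eq:vt+ttv}) collapses ${}_{k_2}\!\vt_s^{(-i)}-t_{i+1}\,{}_{k_2}\!\vt_{s-1}^{(-i-1)}$ to ${}_{k_2}\!\vt_s^{(-i-1)}$, leaving exactly
\[
{}_{k_1}\!\vt_{r-1}^{(i-1)}\cdot {}_{k_2}\!\vt_s^{(-i-1)} + {}_{k_1}\!\vt_r^{(i-1)}\cdot {}_{k_2}\!\vt_{s-1}^{(-i-1)},
\]
as claimed.

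There is no real obstacle to speak of; the only thing demanding care is the bookkeeping of subscripts. One must ensure that in the Leibniz step $s_i^t$ lands on the \emph{first} factor, so that the expansion (\ref{eq:si_vti}) is used with subscript $k_1$, while the simplifying identity (\ref{eq:vt+ttv}) is the one carrying subscript $k_2$. Once the subscripts are attached to the correct factors the manipulation is line-for-line that of Lemma \ref{delta > sum}, and it recovers the $k_1=k_2$ statement as a special case. This is precisely why the proof may be declared identical.
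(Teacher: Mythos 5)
Your proposal is correct and is essentially the paper's own proof: the paper simply declares Lemma \ref{delta > sum general} to have a proof identical to that of Lemma \ref{delta > sum}, which is exactly the Leibniz-rule computation you carry out, with the only addition being the (correct) observation that Lemma \ref{action-of-delta}, (\ref{eq:si_vti}) and (\ref{eq:vt+ttv}) hold for each fixed subscript and so may be applied with $k_1$ on the first factor and $k_2$ on the second.
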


\begin{proposition}\label{delta-i-on-Pf general} Let $i\geq 0$.
\begin{itemize}
\item[(a)] If $l_p\not=\pm i$ for all $p$, then  $\delta_i\Pf[{}_{k_1\!}\!\vt_{r_1}^{(l_1)} \cdots {}_{k_m\!}\!\vt_{r_m}^{(l_m)}]=0$.
\item[(b)] Suppose that $l_p\in \{\pm i\}$ for some $p$ and that $l_q\notin \{\pm i\}$ for all $q\not=p$. Then we have
\begin{eqnarray*}
&&\delta_i\Pf[{}_{k_1}\!\vt_{r_1}^{(l_1)} \cdots {}_{k_p}\!\vt_{r_{\!p}}^{(\pm i)}\cdots {}_{k_m}\!\vt_{r_m}^{(l_m)}] 
= \Pf[{}_{k_1}\!\vt_{r_1}^{(l_1)} \cdots {}_{k_p}\!\vt_{r_{\!p}-1}^{(\pm i -1 )}\cdots {}_{k_m}\!\vt_{r_m}^{(l_m)}].
\end{eqnarray*}
\item[(c)] Suppose that $i\not=0$ and that $l_p=i$ and $l_q=-i$ for some $p<q$ and that $l_s\in \{\pm i\}$ for all $s\not\in\{p,q\}$. Then we have
\begin{eqnarray*}
&&\delta_i\Pf[{}_{k_1}\!\vt_{r_1}^{(l_1)} \cdots {}_{k_p}\!\vt_{r_p}^{(i)}\cdots {}_{k_q}\!\vt_{r_{\!q}}^{(-i)}\cdots {}_{k_m}\!\vt_{r_m}^{(l_m)}]\\
&=&\Pf[{}_{k_1}\!\vt_{r_1}^{(l_1)} \cdots {}_{k_p}\!\vt_{r_{\!p}-1}^{(i-1)}\cdots {}_{k_q}\!\vt_{r_{\!q}}^{(-i-1)}\cdots {}_{k_m}\!\vt_{r_{m}}^{(l_{m})}]
+\Pf[{}_{k_1}\!\vt_{r_1}^{(l_1)} \cdots {}_{k_p}\!\vt_{r_{\!p}}^{(i-1)}\cdots {}_{k_q}\!\vt_{r_{\!q}-1}^{(-i-1)}\cdots {}_{k_m}\!\vt_{r_{m}}^{(l_{m})}].
\end{eqnarray*}
\end{itemize}
\end{proposition}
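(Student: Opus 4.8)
The plan is to mirror the proofs of Lemma \ref{delta > sum} and Proposition \ref{delta-i-on-Pf}, the only new ingredient being the presence of possibly distinct subscripts $k_1,\dots,k_m$. The essential observation is that every elementary identity used there remains valid uniformly in $k$. Indeed, the left operators $s_i^t$, and hence the divided differences $\delta_i$, act only on the $t$-variables, while the $k$-dependent factor $\prod_{i=1}^k(1+z_iu)$ in the generating function ${}_kf_l(u)$ is fixed by $s_i^t$. Consequently Lemma \ref{action-of-delta}, the formulas \eqref{eq:inv_vt}--\eqref{eq:si_vt-i} of Lemma \ref{action-of-s_i}, and the recursion of Lemma \ref{theta-relation} all hold with ${}_k\vt$ in place of $\vt$, carrying the same subscript $k$ on both sides; likewise Lemma \ref{delta > sum general} is proved exactly as Lemma \ref{delta > sum}. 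Thus each $k_i$ behaves as an \emph{inert} label attached to the $i$-th slot, never interacting under $\delta_i$ or $s_i^t$.

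Granting this, parts (a) and (b) follow by induction on $m$ through the recursive definition of $\Pf$ in \S\ref{sec:Pf}, using the twisted Leibniz rule $\delta_i(fg)=\delta_i(f)\,g+s_i^t(f)\,\delta_i(g)$. For (a), every factor ${}_{k_p}\vt_{r_p}^{(l_p)}$ with $l_p\neq\pm i$ is fixed by $s_i^t$ and annihilated by $\delta_i$ (Lemma \ref{action-of-delta} and \eqref{eq:inv_vt}), so $\delta_i$ annihilates the whole Pfaffian. For (b), the same induction isolates the unique slot with $l_p=\pm i$, on which $\delta_i$ acts by $\delta_i({}_{k_p}\vt_{r_p}^{(\pm i)})={}_{k_p}\vt_{r_p-1}^{(\pm i-1)}$, yielding the stated single-term answer.

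The substance is in part (c), which I would prove by induction on $m$ exactly as Proposition \ref{delta-i-on-Pf}(c). The base case $m=2$ is precisely Lemma \ref{delta > sum general}. For the inductive step I expand $\Pf$ by its recursion, apply the twisted Leibniz rule, and invoke parts (a), (b), the $s_i^t$-action \eqref{eq:si_vti}--\eqref{eq:si_vt-i}, and Lemma \ref{theta-relation} (all with the appropriate $k_p,k_q$). As in the homogeneous case, this produces four ``boundary'' terms attached to the slots $p$ and $q$; using Lemma \ref{theta-relation} slot-by-slot together with the multilinearity of $\Pf$, the four terms collapse in pairs into the two terms on the right-hand side of (c), while the remaining sum over $s\notin\{p,q\}$ reassembles into the two Pfaffians by the recursion. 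The even-$m$ case is handled identically.

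The only real obstacle is the sign- and label-bookkeeping in the inductive step of (c): one must check that the $(-1)^{s-1}$ signs, the hatted omissions $\widehat{{}_{k_s}\vt_{r_s}^{(l_s)}}$, and the shifted subscripts combine to give exactly the same cancellations as in Proposition \ref{delta-i-on-Pf}(c). Because the $k_i$ never mix---each appears only in its own slot and is untouched by $\delta_i$ and $s_i^t$---no new interaction arises beyond those already verified there, so the computation is genuinely identical and no further argument is needed.
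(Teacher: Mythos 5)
Your proposal is correct and follows essentially the same route as the paper, which simply declares the proofs of Lemma \ref{delta > sum general} and Proposition \ref{delta-i-on-Pf general} to be identical to those of Lemma \ref{delta > sum} and Proposition \ref{delta-i-on-Pf}. Your additional observation that the labels $k_1,\dots,k_m$ are inert---because $\delta_i$ and $s_i^t$ fix the $z$-variables, so each identity from Lemmas \ref{theta-relation}, \ref{action-of-s_i}, and \ref{action-of-delta} holds slot-by-slot with its own subscript $k_p$ and no mixing occurs in the cancellations---is exactly the justification the paper leaves implicit.
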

\subsection{Double theta polynomials as equivariant Chern classes}\label{sec:Chern}
%%%%%%%%%%%%%%%%%%%%%%%%%%%%%%%%%%
%%%%%%%%%%%%%%%%%%%%%%%%%%%%%%%%%%
In this section, we show that the double theta polynomials ${}_k\vt_r^{(l)}$ correspond to the Chern classes of vector bundles. The result is not used in the proof of 
the main theorem.

%%%%%%%%%%%%%%%%%%%%%%%%%%%%%%%%%%
Let $\mathcal{E}$ be the trivial vector bundle of rank $2n$ over $\mathcal{F}l_n$, and $\mathcal{L}_i, \mathcal{L}_i^* \subset \mathcal{E}$ the subbundles whose fibers are $\Span(\ee_i), \Span(\ee_i^*)$ respectively. Let $T=(\CC^{\times})^{n}$ be the $n$-dimensional torus and let $t_1,\cdots, t_n$ be the standard basis of $\frakt^*_{\ZZ}$. Then $T$ acts on $\mathcal{L}_i$ with the weight $-t_i$ and $\mathcal{L}_i^*$ with the weight $t_i$. Note that 
$t_i=-c_1^T(\mathcal{L}_i).$ Let
\[
\mathcal{L} = \bigoplus_{i=1}^n \mathcal{L}_i,\quad
\mathcal{L}^* = \bigoplus_{i=1}^n \mathcal{L}_i^*,
\]
and hence $\mathcal{E}=\mathcal{L}\oplus \mathcal{L}^*$. Let 
$
\mathcal{V}_n \subset \cdots \subset \mathcal{V}_1=\mathcal{V} \subset \mathcal{E}
$
be the tautological flag of vector bundles over the complete flag variety $\mathcal{F}l_n$ of isotropic subspaces of $V$ where $\mathrm{rank} \,\mathcal{V}_{i} = n-i+1$. 
Let $
\pmb{z}_i=c_1^T(\mathcal{V}_i/\mathcal{V}_{i+1}).
$
Note that $\mathcal{V}_{k+1}$ is the pullback of the tautological subbundle of rank $n-k$ on ${SG}_n^k$ along the natural projection $\sfp: \mathcal{F}l_n \to {SG}_n^k$.
Note that $\mathcal{Q}:= \mathcal{E}/\mathcal{V}_{k+1}$ is the 
universal quotient bundle of ${SG}_n^k.$ 
%%%%%%%%%%%%%%%%%%%%%%%%%%%%%%%%%%

From the geometric construction of $\pi_n:\mathcal{R}_\infty
\rightarrow H_T^*(\mathcal{F}l_n)$ (\cite[\S 10]{DSP}), we have 
 $\pi_n(Q_r(x)) = c_r(\mathcal{L}^* - \mathcal{V}_1)= c_r(\mathcal{V}^*_1 - \mathcal{L}) $. In other words,
\begin{equation}
\pi_n: \prod_{i=0}^{\infty}  \frac{1+x_iu}{1-x_iu}  \mapsto \prod_{i=1}^{n}  \frac{1+ t_iu  }{1 + \pmb{z}_i u }
=\prod_{i=1}^{n}  \frac{1-\pmb{z}_i u }{1-t_iu  }. \label{eq:pin}
\end{equation}
Define
\[
\mathcal{U}_l:= \bigoplus_{i=l}^n \mathcal{L}_i \quad
\mbox{ if}\; l > 0, \quad \ \ \ 
\mathcal{U}_{-l}:= \mathcal{L} \oplus \bigoplus_{i=1}^{l+1} \mathcal{L}_i^* \ \ \ \ \  \mbox{if }l \geq  0.
\]

%%%%%%%%%%%%%%%%%%%%%%%%%%%%%%%%%%%%%%%%%%%%%%%%%%%%%%%%%%%%%%%%
\begin{proposition}\label{theta geometry}
For all $-n \leq l \leq n-1$, we have 
\[\pi_n\left(
_k\vt_r^{(l)}(x,z\, |\, t)\right)= c^T_r(\mathcal{E} - \mathcal{V}_{k+1} - \mathcal{U}_{l+1}).
\]In particular, we have 
\[
c^T_r(\mathcal{E} - \mathcal{V}_{k+1} - \mathcal{U}_{r-k}) =
[\Omega_r]_T\quad (1\leq r\leq n+k). 
\]
\end{proposition}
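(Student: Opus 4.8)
The plan is to verify the identity at the level of the generating function (\ref{gen fcn f}): I would apply $\pi_n$ to the defining product of ${}_kf_l(u)$ and match the result against the total equivariant Chern class $\sum_{r\ge0}c^T_r(\mathcal{E}-\mathcal{V}_{k+1}-\mathcal{U}_{l+1})\,u^r$, then extract the coefficient of $u^r$. Three inputs feed in: from the construction of $\pi_n$ in \cite{DSP} one has $\pi_n(z_i)=\pmb{z}_i:=c^T_1(\mathcal{V}_i/\mathcal{V}_{i+1})$, so $\pi_n(\prod_{i=1}^k(1+z_iu))=\prod_{i=1}^k(1+\pmb{z}_iu)$; the symmetric factor is handled by (\ref{eq:pin}); and the $t_i$ are fixed by $\pi_n$. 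The decisive structural fact is the relation
\[
\prod_{i=1}^n(1-t_i^2u^2)=\prod_{i=1}^n(1-\pmb{z}_i^2u^2)
\]
in $H_T^*(\mathcal{F}l_n)$, which holds because $\mathcal{E}=\mathcal{L}\oplus\mathcal{L}^*$ while the symplectic form gives $\mathcal{E}/\mathcal{V}_1\cong\mathcal{V}_1^*$, so that both products equal the total Chern class of $\mathcal{E}$.

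First I would treat the case $l\ge0$. Applying $\pi_n$ and using the three inputs yields
\[
\pi_n({}_kf_l(u))=\prod_{i=1}^n\frac{1-\pmb{z}_iu}{1-t_iu}\cdot\prod_{i=1}^k(1+\pmb{z}_iu)\cdot\prod_{i=1}^l(1-t_iu).
\]
On the geometric side, $\mathcal{V}_{k+1}$ has Chern roots $\pmb{z}_{k+1},\dots,\pmb{z}_n$ and $\mathcal{U}_{l+1}=\bigoplus_{i=l+1}^n\mathcal{L}_i$ has roots $-t_{l+1},\dots,-t_n$, so
\[
\sum_{r\ge0}c^T_r(\mathcal{E}-\mathcal{V}_{k+1}-\mathcal{U}_{l+1})u^r=\frac{\prod_{i=1}^n(1-t_i^2u^2)}{\prod_{i=k+1}^n(1+\pmb{z}_iu)\cdot\prod_{i=l+1}^n(1-t_iu)}.
\]
Cancelling the $t$-factors in the first expression and substituting the displayed relation into the numerator of the second, I expect both to collapse to
\[
\frac{\prod_{i=1}^k(1+\pmb{z}_iu)\,\prod_{i=1}^n(1-\pmb{z}_iu)}{\prod_{i=l+1}^n(1-t_iu)},
\]
which settles this case.

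For $l<0$, writing $l=-m$ the extra factor is $\prod_{i=1}^m(1+t_iu)^{-1}$ and $\mathcal{U}_{l+1}=\mathcal{L}\oplus\bigoplus_{i=1}^m\mathcal{L}_i^*$ has roots $-t_1,\dots,-t_n,t_1,\dots,t_m$; the same reduction, now using $\prod_{i=1}^n(1-t_i^2u^2)=\prod_{i=1}^n(1-t_iu)\prod_{i=1}^n(1+t_iu)$, identifies $\pi_n({}_kf_{-m}(u))$ with the corresponding Chern-class series. This covers exactly the range $-n\le l\le n-1$ on which $\mathcal{U}_{l+1}$ is defined, and taking the coefficient of $u^r$ proves the first assertion. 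The ``in particular'' statement is then immediate: Wilson's theorem (\ref{eq:vtr}) gives ${}_k\vt_r^{(r-k-1)}=\mathfrak{C}_{w_r^{(k)}}$, hence $\pi_n({}_k\vt_r^{(r-k-1)})=[\Omega_r]_T$; putting $l=r-k-1$ (so $l+1=r-k$, and $-k\le l\le n-1$ whenever $1\le r\le n+k$) yields $c^T_r(\mathcal{E}-\mathcal{V}_{k+1}-\mathcal{U}_{r-k})=[\Omega_r]_T$.

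The Chern-class bookkeeping above is routine once everything is in place, so the hard part will be fixing the conventions so that the two generating functions coincide on the nose rather than differing by a sign or an index shift: the exact sign in $\pi_n(z_i)=\pmb{z}_i$, the direction of $\mathcal{E}/\mathcal{V}_1\cong\mathcal{V}_1^*$ that produces the relation $\prod(1-t_i^2u^2)=\prod(1-\pmb{z}_i^2u^2)$, and the correct reading of $\mathcal{U}_{l+1}$ as a subbundle of $\mathcal{L}$ for $l\ge0$ versus a bundle containing $\mathcal{L}$ for $l<0$ across the jump at $l=0$. Keeping these compatible with (\ref{eq:pin}) is the one genuinely delicate point.
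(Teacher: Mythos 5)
Your proposal is correct and follows essentially the same route as the paper: apply $\pi_n$ to the generating function ${}_kf_l(u)$, use the relation (\ref{eq:pin}) together with $\pi_n(z_i)=\pmb{z}_i$, and match the resulting product against the Chern-class series of $\mathcal{E}-\mathcal{V}_{k+1}-\mathcal{U}_{l+1}$, deducing the second statement from Wilson's identity (\ref{eq:vtr}). The only cosmetic difference is that you isolate and geometrically justify the identity $\prod_i(1-t_i^2u^2)=\prod_i(1-\pmb{z}_i^2u^2)$ via $\mathcal{E}/\mathcal{V}_1\cong\mathcal{V}_1^*$, whereas the paper leaves this implicit in the equality of the two forms of (\ref{eq:pin}).
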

\begin{proof}
In view of the relation (\ref{eq:pin}), the proposition can be
shown by the following formal calculations. 
For $l \geq 0$, we have
\[
c^T(\mathcal{E} - \mathcal{V}_{k+1} - \mathcal{U}_{l+1}) = 
\frac{\prod_{i=1}^n(1 - t_i^2u^2)}{\prod_{i=k+1}^{n}(1+\pmb{z}_iu) \prod_{i=l+1}^n(1-t_iu) } = \prod_{i=1}^{n}  \frac{1+ t_i u }{1 + \pmb{z}_i u }  \prod_{i=1}^k (1+\pmb{z}_i u) \prod_{i=1}^{l}(1-t_i u),
\]
and for $l > 0$,  we have 
\begin{eqnarray*}
c^T(\mathcal{E} - \mathcal{V}_{k+1} - \mathcal{U}_{-l+1})
&=&\frac{\prod_{i=1}^n
(1 - t_i^2u^2)}{\prod_{i=k+1}^{n}(1+\pmb{z}_iu) \prod_{i=1}^n(1-t_iu) \prod_{i=1}^{l} (1 + t_iu) } \\
&=&\prod_{i=1}^{n}  \frac{1+ t_iu  }{1 + \pmb{z}_iu  }  \prod_{i=1}^k (1+\pmb{z}_iu) \prod_{i=1}^{l}\frac{1}{1+t_iu}.
\end{eqnarray*}
The second statement follows from 
the result (\ref{eq:vtr}) due to Wilson.
\end{proof}

%%%%%%%%%%%%%%%%%%%%%%%%%%%%%%%%%%
%%%%%%%%%%%%%%%%%%%%%%%%%%%%%%%%%%
%%%%%%%%%%%%%%%%%%%%%%%%%%%%%%
\section{Proof of the main theorem}\label{sec:proof}
%%%%%%%%%%%%%%%%%%%%%%%%%%%%%%%%%%
%%%%%%%%%%%%%%%%%%%%%%%%%%%%%%%%%%
%%%%%%%%%%%%%%%%%%%%%%%%%%%%%%%%%%
Fix $k\geq 0$. We omit $k$ in this section and the next, \textit{i.e.} we denote $\vt_r^{(l)}={}_k\vt_r^{(l)}$.

Recall that for $w\in W_n^{(k)}$ we defined $w^\vee \in W_n^{(k)}$ in \S \ref{sec:dual}. 
%%%%%%%%%%%%%%%%%%%%%%%%%%%%%%%%%%
\begin{definition}
Let
\[
\Theta_{max}^{(n,k)}(x,z\, |\, t):=\Pf[\vt_{n+k}^{(n-1)}\vt_{n+k-1}^{(n-2)}\cdots \vt_{2k+1}^{(k)}].
\]
For each $\lambda \in \mathcal{P}_n^{(k)}$, define 
\begin{equation}
\Theta_\lambda^{(n,k)}(x,z\, |\, t):=\delta_{(w_\lambda^{(k)})^\vee} \Theta_{max}^{(n,k)}(x,z\, |\, t).
\end{equation}
\end{definition}
%%%%%%%%%%%%%%%%%%%%%%%%%%%%%%%%%%
\begin{theorem}[Pfaffian sum formula for $\Theta_\lambda$]\label{MainProposition}
Let $\lambda\in\mathcal{P}_n^{(k)}.$ We have 
\begin{equation}\label{main pf sum}
\Theta_{\lambda}^{(n,k)}(x,z|t)=\sum_{I\subset {D}(\lmd)}\Pf\left[\vt_{\lambda_1+a^I_1}^{(\chi_1)} \cdots  \vt_{\lambda_{n-k}+a^I_{n-k}}^{(\chi_{n-k})}\right],
\end{equation}
where $I$ runs over all subsets  of $D(\lambda)$ and  $a_s^I=\#\{j\;|\;(s,j)\in I\}-\#\{i\;|\;(i,s)\in I\}.$
\end{theorem}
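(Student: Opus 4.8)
The plan is to prove the equivalent statement $\Theta_\lambda^{(n,k)} = \Xi_\lambda$, where I write $\Xi_\lambda := \sum_{I\subset D(\lambda)} \Pf[\vt_{\lambda_1+a^I_1}^{(\chi_1)} \cdots \vt_{\lambda_{n-k}+a^I_{n-k}}^{(\chi_{n-k})}]$ for the right-hand side of (\ref{main pf sum}), by downward induction on the codimension, i.e.\ on $d:=\ell(w_{max})-|\lambda|=\ell((w_\lambda^{(k)})^\vee)$. For the base case $d=0$ one has $\lambda=\rho$, the unique longest $k$-strict partition, with $w_\rho^{(k)}=w_{max}$ and $(w_\rho^{(k)})^\vee=e$, so $\Theta_\rho^{(n,k)}=\Theta_{max}^{(n,k)}$. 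A direct reading of the one-line notation $12\cdots k\,|\,\bar n\cdots\overline{k+1}$ gives $\chi_\rho=(n-1,n-2,\dots,k)$ and $\rho=(n+k,\dots,2k+1)$; since every $\chi_j\geq k\geq 0$ we get $D(\rho)=\emptyset$, so $\Xi_\rho$ is the single Pfaffian $\Pf[\vt_{n+k}^{(n-1)}\cdots\vt_{2k+1}^{(k)}]=\Theta_{max}^{(n,k)}$, as required.

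For the inductive step, suppose $\lambda\neq\rho$. Since $w_{max}$ is the unique maximal element of the graded weak left Bruhat order on $W_n^{(k)}$, the set $\calI_+(w_\lambda^{(k)})$ is nonempty; I choose $i$ in it and set $w_\kappa^{(k)}:=s_iw_\lambda^{(k)}$, so $\kappa\in\calP_n^{(k)}$ with $|\kappa|=|\lambda|+1$ and $i\in\calI_-(w_\kappa^{(k)})$. Proposition \ref{prop:delta_w2}(1) then gives $\delta_i\delta_{(w_\kappa^{(k)})^\vee}=\delta_{(w_\lambda^{(k)})^\vee}$, hence $\Theta_\lambda^{(n,k)}=\delta_i\Theta_\kappa^{(n,k)}$. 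As $d(\kappa)=d(\lambda)-1$, the induction hypothesis gives $\Theta_\kappa^{(n,k)}=\Xi_\kappa$, and the whole step reduces to the key claim that $\delta_i\Xi_\kappa=\Xi_\lambda$ whenever $i\in\calI_-(w_\kappa^{(k)})$ and $w_\lambda^{(k)}=s_iw_\kappa^{(k)}$.

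To prove this claim I would apply $\delta_i$ to $\Xi_\kappa$ termwise via Proposition \ref{delta-i-on-Pf}. The descents $i\in\calI_-(w_\kappa^{(k)})$ are classified by Lemmas \ref{LGLEMMA} and \ref{LGLEMMA0} into the four types (L0)--(L3), and for each type one reads off from the one-line notation of $w_\kappa^{(k)}$ exactly which entries of $\chi_\kappa$ lie in $\{\pm i\}$. In (L0) (with $i=0$), (L2), (L3) there is a single such entry; passing to $\lambda$ lowers that entry by one and decreases the corresponding part of $\kappa$ by one, matching case (b) of Proposition \ref{delta-i-on-Pf}. In (L1) there is one entry equal to $i$, at the position $p$ of the barred letter $i+1$, and one equal to $-i$, at the position $s+q$ of the unbarred letter $i$; passing to $\lambda$ lowers both by one, matching case (c).

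The main obstacle, and the combinatorial heart of the argument, will be to track how the index set $D$ changes under $\kappa\rightsquigarrow\lambda$ and to see that this is compatible with (b) and (c). I would check by a short computation with $\chi$ that in (L0), (L2), (L3) the single lowered entry cannot create any pair of negative sum — the only candidate partner would force a value already occupied elsewhere in the one-line notation — so $D(\lambda)=D(\kappa)$, and then case (b) sends the $I$-term of $\Xi_\kappa$ to the $I$-term of $\Xi_\lambda$, giving the claim termwise. In (L1) the same computation shows $D(\lambda)=D(\kappa)\sqcup\{(p,s+q)\}$: the pair $(p,s+q)$ has $\chi$-sum $0$ in $\kappa$ and $-2$ in $\lambda$, while no other pair changes status. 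The two Pfaffians produced by case (c) from the $I$-term are then exactly the $J$-terms of $\Xi_\lambda$ for $J=I$ and $J=I\cup\{(p,s+q)\}$, since adjoining $(p,s+q)$ shifts $a^I_p\mapsto a^I_p+1$ and $a^I_{s+q}\mapsto a^I_{s+q}-1$; summing over $I\subset D(\kappa)$ and using that every subset of $D(\lambda)$ arises uniquely this way yields $\delta_i\Xi_\kappa=\Xi_\lambda$. I expect the bookkeeping of parts (ensuring $\lambda_j+a^I_j=\kappa_j+a^I_j$ on the unchanged rows while $\lambda_p=\kappa_p-1$) to be the most error-prone piece, but purely mechanical once the four descent types are fixed; parts (2) and (3) of Proposition \ref{prop:delta_w2} are not needed for this argument.
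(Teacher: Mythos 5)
Your proposal is correct and is essentially the paper's own proof: the same downward induction on $\ell((w_\lambda^{(k)})^\vee)$, the same reduction $\Theta_\lambda^{(n,k)}=\delta_i\Theta_\kappa^{(n,k)}$ via Proposition \ref{prop:delta_w2}(1), and the same case analysis (L0)--(L3) from Lemmas \ref{LGLEMMA} and \ref{LGLEMMA0} feeding into Proposition \ref{delta-i-on-Pf}(b)/(c), including the identities $D(\lambda)=D(\kappa)$ in cases (L0), (L2), (L3) and $D(\lambda)=D(\kappa)\sqcup\{(p,s+q)\}$ in case (L1). The only cosmetic difference is that you spell out the base case and sketch why $D$ changes exactly as claimed (facts the paper asserts without proof), and your appeal to $w_{max}$ being the unique weak-order maximum could be replaced by the one-line argument $\calI_+(w_\lambda^{(k)})=\calI_-((w_\lambda^{(k)})^\vee)\neq\emptyset$ from Lemma \ref{lem:Poincare}.
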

%%%%%%%%%%%%%%%%%%%%%%%%%%%%%%%%%%
\begin{proof}
For simplicity of notation, we drop the superscript $(k)$ from $w_\lambda^{(k)}.$
We proceed by induction on  $\ell(w_\lambda^\vee).$ If $\ell(w_\lambda^\vee)=0$, then $w_\lambda=w_{max}$, and the result is obvious from the definition. Suppose that $\ell(w_\lambda^\vee)>0$. There is a strict $k$-partition $\lambda'$ and $i \in \{0,1,\dots, n-1\}$ such that $w_{\lambda'}\in W^{(k)}_n$, $s_iw_{\lambda'}=w_{\lambda}$, and $\ell(w_\lambda)=\ell(w_{\lambda'}) - 1$. By Lemma \ref{LGLEMMA} and \ref{LGLEMMA0}, $w_{\lambda'}$ is in one of the cases $L1, L2, L3$ and $L0$. Let $\chi_{\lambda}=(\chi_1,\dots,\chi_{n-k})$ and $\chi_{\lambda'}=(\chi_1',\ldots,\chi_{n-k}')$ be the characteristic indices of $\lambda$ and $\lambda'$ respectively. 
By the induction hypothesis we have
\[
\delta_{w_{\lambda'}^\vee}\Pf[\vt_{n+k}^{(n-1)}\vt_{n+k-1}^{(n-2)}\cdots \vt_{2k+1}^{(k)}]=\sum_{I\subset {D}(\lmd')}\Pf\left[\vt_{\lambda_1'+a^I_1}^{(\chi_1')} \cdots  \vt_{\lambda_{n-k}'+a^I_{n-k}}^{(\chi_{n-k}')}\right].
\]

%%%%%%%%%%%%%%%%%%%%%%%%%%%%%%%%%%
In the cases $L2$, $L3$, or $L0$, we have $D(\lambda')=D(\lambda)$. Furthermore, for some $p$,  $\chi_p'=\pm i=\chi_p+1$ and $\lambda_p'=\lambda_p+1$; $\chi'_q=\chi_q$ and $\lambda_q=\lambda_q'$ for all $q\not=p$. Thus by Proposition \ref{delta-i-on-Pf}, we can compute
\begin{eqnarray*}
&&\delta_{w_{\lambda}^\vee}\Pf[\vt_{n+k}^{(n-1)}\vt_{n+k-1}^{(n-2)}\cdots \vt_{2k+1}^{(k)}]\\
&=&\delta_i\delta_{w_{\lambda'}^\vee}\Pf[\vt_{n+k}^{(n-1)}\vt_{n+k-1}^{(n-2)}\cdots \vt_{2k+1}^{(k)}]\\
&=&\sum_{I\subset {D}(\lmd')}\delta_i\Pf\left[\vt_{\lambda_1'+a^I_1}^{(\chi_1')} \cdots\vt_{\lambda_a'+a^I_p}^{(\pm i)}\cdots \cdots \vt_{\lambda_{n-k}'+a^I_{n-k}}^{(\chi_{n-k}')}\right]\\
&=&\sum_{I\subset {D}(\lmd)}\Pf\left[\vt_{\lambda_1'+a^I_1}^{(\chi_1')} \cdots\vt_{\lambda_p'-1+a^I_p}^{(\pm i-1)}\cdots  \vt_{\lambda_{n-k}'+a^I_{n-k}}^{(\chi_{n-k}')}\right]\\
&=&\sum_{I\subset {D}(\lmd)}\Pf\left[\vt_{\lambda_1+a^I_1}^{(\chi_1)} \cdots\vt_{\lambda_p+a^I_p}^{(\chi_p)}\cdots  \vt_{\lambda_{n-k}+a^I_{n-k}}^{(\chi_{n-k})}\right],
\end{eqnarray*}
where the first equality follows from Proposition \ref{prop:delta_w2}, the second is the induction hypothesis, and the third follows by Proposition \ref{delta-i-on-Pf}. 

%%%%%%%%%%%%%%%%%%%%%%%%%%%%%%%%%%
In the case $L1$, we have $D(\lambda)= D(\lambda') \sqcup \{(p,q)\}$. Furthermore $\chi'_p=i=\chi_p+1$, $\chi'_q=-i=\chi_q+1$, $\lambda_p=\lambda_p'-1$ and $\lambda_q'=\lambda_q$ for some $p$ and $q$;  $\chi_r'=\chi_r$ and $\lambda_r=\lambda_r'$ for all $r\not=p,q$. Here note that $\lambda_p'\not=0$ so that $\lambda_p\geq 0$. The claim now follows from the computation:
\begin{eqnarray*}
&&\delta_{w_{\lambda}^\vee}\Pf[\vt_{n+k}^{(n-1)}\vt_{n+k-1}^{(n-2)}\cdots \vt_{2k+1}^{(k)}]\\
&=&\delta_i\delta_{w_{\lambda'}^\vee}\Pf[\vt_{n+k}^{(n-1)}\vt_{n+k-1}^{(n-2)}\cdots \vt_{2k+1}^{(k)}]\\
&=&\sum_{I\subset {D}(\lmd')}
\delta_i\Pf\left[\vt_{\lambda_1'+a^I_1}^{(\chi_1')} \cdots\vt_{\lambda_p'+a^I_p}^{(i)}\cdots\vt_{\lambda_q'+a^I_q}^{(-i)}\cdots \cdots \vt_{\lambda_{n-k}'+a^I_{n-k}}^{(\chi_{n-k}')}\right]\\
&=&\sum_{I\subset {D}(\lmd')}\left(
\Pf\left[\vt_{\lambda_1'+a^I_1}^{(\chi_1')} \cdots\vt_{\lambda_p'+a^I_p-1}^{(i-1)}\cdots\vt_{\lambda_q'+a^I_q}^{(-i-1)}\cdots \cdots \vt_{\lambda_{n-k}'+a^I_{n-k}}^{(\chi_{n-k}')}\right]\right.\\
&&\ \ \ \ \ \ \ \ \ \ \ \ \ \ \ \ + \left.
\Pf\left[\vt_{\lambda_1'+a^I_1}^{(\chi_1')} \cdots\vt_{\lambda_p'+a^I_p}^{(i-1)}\cdots\vt_{\lambda_q'+a^I_q-1}^{(-i-1)}\cdots \cdots \vt_{\lambda_{n-k}'+a^I_{n-k}}^{(\chi_{n-k}')}\right]\right)\\
&=&\sum_{I\subset {D}(\lmd')}
\Pf\left[\vt_{\lambda_1+a^I_1}^{(\chi_1)} \cdots\vt_{\lambda_p+a^I_p}^{(\chi_p)}\cdots\vt_{\lambda_q+a^I_q}^{(\chi_q)}\cdots \cdots \vt_{\lambda_{n-k}+a^I_{n-k}}^{(\chi_{n-k})}\right]\\
&&\ \ \ \ \ \ \ \ \ \ \ \ \ \ \ \ + \sum_{I\subset {D}(\lmd')}
\Pf\left[\vt_{\lambda_1+a^I_1}^{(\chi_1)} \cdots\vt_{\lambda_p+a^I_p+1}^{(\chi_p)}\cdots\vt_{\lambda_q+a^I_q-1}^{(\chi_q)}\cdots \cdots \vt_{\lambda_{n-k}+a^I_{n-k}}^{(\chi_{n-k})}\right]\\
&=&\sum_{I\subset {D}(\lmd')}
\Pf\left[\vt_{\lambda_1+a^I_1}^{(\chi_1)} \cdots\vt_{\lambda_p+a^I_p}^{(\chi_p)}\cdots\vt_{\lambda_q+a^I_q}^{(\chi_q)}\cdots \cdots \vt_{\lambda_{n-k}+a^I_{n-k}}^{(\chi_{n-k})}\right]\\
&&\ \ \ \ \ \ \ \ \ \ \ \ \ \ \ \ + \sum_{(p,q)\in I\subset {D}(\lmd)}
\Pf\left[\vt_{\lambda_1+a^I_1}^{(\chi_1)} \cdots\vt_{\lambda_p+a^I_p}^{(\chi_p)}\cdots\vt_{\lambda_q+a^I_q}^{(\chi_q)}\cdots \cdots \vt_{\lambda_{n-k}+a^I_{n-k}}^{(\chi_{n-k})}\right]\\
&=&\sum_{I\subset {D}(\lmd)}
\Pf\left[\vt_{\lambda_1+a^I_1}^{(\chi_1)} \cdots\vt_{\lambda_p+a^I_p}^{(\chi_p)}\cdots\vt_{\lambda_q+a^I_q}^{(\chi_q)}\cdots \cdots \vt_{\lambda_{n-k}+a^I_{n-k}}^{(\chi_{n-k})}\right],
\end{eqnarray*}
where the first equality follows from Proposition \ref{prop:delta_w2}, the second is the induction hypothesis, the third follows by Proposition \ref{delta-i-on-Pf}, and the second last equality holds, since, for each $I \in D(\lambda')$ and $J:=I \cup \{(p,q)\} \in D(\lambda)$, $a^I$ and $a^J$ are related by
\[
a_p^I + 1=a_p^{J} , \ \ \  a_q^I - 1 = a_q^J, \ \ \ \mbox{ and } \ \ \ a_r^I = a_r^J \ \forall r \not=p,q.
\]
\end{proof}
%%%%%%%%%%%%%%%%%%%%%%%%%%%%%%%%%%
\begin{remark}
%We owe to Naruse that we learned from him, in the early stage of this work, that $\mathfrak{C}_{w_{max}}$ has 
%a Pfaffian expression.
We owe H. Naruse for pointing out, in the early stage of this work, that $\mathfrak{C}_{w_{max}}$ has a Pfaffian expression.
\end{remark}
%%%%%%%%%%%%%%%%%%%%%%%%%%%%%%%%%%
\begin{remark}\label{rem:n-indep}
The expression of the right hand side of Theorem \ref{MainProposition} is essentially independent of $n.$ More precisely,  if $\lambda\in \mathcal{P}_n^{(k)}$, then we have obviously $\lambda\in \mathcal{P}_{n+1}^{(k)}$ and all nonzero Pfaffians appearing in the formulas for $\Theta_{\lambda}^{(n,k)}$ and $\Theta_{\lambda}^{(n+1,k)}$ naturally coincide. We have $\Theta_{\lambda}^{(n,k)}=\Theta_{\lambda}^{(n+1,k)}$, in particular. This fact can be checked by using Remark \ref{rem: n-ind theta}. In fact, one can check that the lower indexes (degree) of the right end $\vt$ in the Pfaffians appearing in the formula of $ \Theta_{\lambda}^{(n+1,k)}$ are less than or equal to zero.
\end{remark}
%%%%%%%%%%%%%%%%%%%%%%%%%%%%%%%%%%
\begin{proposition}[Stability of $\Theta_\lambda$] 
Let $\lambda\in\mathcal{P}_n^{(k)}.$ For all $m\geq n$, we have  \[\Theta_\lambda^{(m,k)}(x,z\, |\, t) =\Theta_\lambda^{(n,k)}(x,z\, |\, t).\]
\end{proposition}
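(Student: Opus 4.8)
The plan is to deduce the stability of $\Theta_\lambda$ from the Pfaffian sum formula of Theorem \ref{MainProposition}, by showing that the right-hand side of that formula does not change when $n$ is replaced by $n+1$; the general case $m\geq n$ then follows by iteration. Concretely, since $\Theta_\lambda^{(n,k)}$ and $\Theta_\lambda^{(n+1,k)}$ are both given by $\sum_{I}\Pf[\cdots]$, it suffices to match the two sums term by term. This is exactly the assertion sketched in Remark \ref{rem:n-indep}, so the work is to make that bookkeeping precise.

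The key combinatorial step is to track how the data $(\chi_\lambda, D(\lambda),\lambda)$ behave under the inclusion $W_n^{(k)}\hookrightarrow W_{n+1}^{(k)}$. Writing $w_\lambda^{(k)}=\sfv_1\cdots\sfv_k\,|\,\overline{\zeta_1}\cdots\overline{\zeta_s}\,\sfu_1\cdots\sfu_{n-k-s}$, I would first observe that the same element viewed in $W_{n+1}$ simply appends the positive value $n+1$ at the end of its one-line notation (all $\sfv_j,\zeta_j,\sfu_j$ are undisturbed). By the definition (\ref{eq:upperindex}) of the characteristic index, this extends $\chi_\lambda$ by the single entry $\chi_{n-k+1}=-(n+1)$, while $\lambda$ merely gains a zero part $\lambda_{n-k+1}=0$ (recall $\ell(\lambda)\leq n-k$). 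Every entry of $\chi_\lambda$ satisfies $\chi_i\leq n-1$ (the positive entries are of the form $\zeta_j-1\leq n-1$), so $\chi_i+\chi_{n-k+1}=\chi_i-(n+1)<0$ for all $i\leq n-k$; hence, writing $D_n(\lambda)$ and $D_{n+1}(\lambda)$ for the two versions, I would verify directly from the definition of $D(\lambda)$ that
\[
D_{n+1}(\lambda)=D_n(\lambda)\ \sqcup\ \{(i,n-k+1)\ |\ 1\leq i\leq n-k\}.
\]

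With this in hand, I would split $\sum_{I\subset D_{n+1}(\lambda)}$ according to whether $I$ meets the new pairs. If $I\subset D_n(\lambda)$, then $a^I_{n-k+1}=0$ (there is no index larger than $n-k+1$, and no pair $(i,n-k+1)$ lies in $I$), so the rightmost entry of the $(n+1)$-level Pfaffian is $\vt_0^{(\chi_{n-k+1})}=1$; the first identity of Remark \ref{rem: n-ind theta} collapses it to the corresponding $n$-level Pfaffian, and the remaining exponents $a^I_i$ ($i\leq n-k$) agree with those at level $n$. If instead $I$ contains at least one pair $(i,n-k+1)$, then $a^I_{n-k+1}<0$, so the rightmost entry is $\vt_r^{(\chi_{n-k+1})}$ with $r=\lambda_{n-k+1}+a^I_{n-k+1}<0$, and the second identity of Remark \ref{rem: n-ind theta} forces the whole Pfaffian to vanish. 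Thus the level-$(n+1)$ sum reproduces the level-$n$ sum exactly, giving $\Theta_\lambda^{(n+1,k)}=\Theta_\lambda^{(n,k)}$, and induction on $m$ finishes the argument.

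The main obstacle is the combinatorial bookkeeping of the second paragraph: one must be careful that passing from $W_n^{(k)}$ to $W_{n+1}^{(k)}$ changes the one-line notation only by appending $n+1$, so that $\chi_\lambda$ is genuinely merely extended by $-(n+1)$ and the new index $n-k+1$ always sits at the \emph{far right} of every Pfaffian. That last point is what lets the two identities of Remark \ref{rem: n-ind theta}, which are stated for the terminal entry, apply verbatim. Everything after this reduction is a direct substitution.
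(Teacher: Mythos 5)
Your proof is correct and follows essentially the same route as the paper: the paper's proof simply cites Theorem \ref{MainProposition} together with Remark \ref{rem:n-indep}, whose sketch (trailing $\vt_0^{(\chi_{n-k+1})}=1$ entries collapse, trailing entries of negative degree kill the Pfaffian, via Remark \ref{rem: n-ind theta}) is exactly the argument you carry out. You have merely made explicit the bookkeeping of $\chi_\lambda$, the zero part $\lambda_{n-k+1}=0$, and the decomposition $D_{n+1}(\lambda)=D_n(\lambda)\sqcup\{(i,n-k+1)\}$ that the paper leaves to the reader, and all of these details check out.
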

%%%%%%%%%%%%%%%%%%%%%%%%%%%%%%%%%%
\begin{proof}
This is a consequence of the Pfaffian sum formula  in Theorem \ref{MainProposition} (see Remark \ref{rem:n-indep}).
\end{proof}
%%%%%%%%%%%%%%%%%%%%%%%%%%%%%%%%%%
By the above proposition and Proposition \ref{prop:theta_is_invariant},
 for each $\lambda\in \mathcal{P}^{(k)}_\infty,$ we can define  $\Theta^{(k)}_\lambda(x,z|t)$ to be 
 the element of $\calR_{\infty}^{(k)}$
 such that 
  $\Theta_{\lambda}^{(k)}(x,z|t)
  =\Theta^{(n,k)}_\lambda(x,z|t)$  for any $n$ such that $\lambda\in \mathcal{P}_n^{(k)}.$
  %%%%%%%%%%%%%%%%%%%%%%%%%%%%%%%%%%
\begin{lemma}\label{lem:VanishDelta} 
We have 
$\delta_j\Theta_{max}^{(n,k)}=0$ for $j\neq k.$
\end{lemma}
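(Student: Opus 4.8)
The plan is to analyze the action of $\delta_j$ on the single Pfaffian $\Theta_{max}^{(n,k)}$ directly through Proposition \ref{delta-i-on-Pf}, reading off its entries by position. Writing the factors of $\Theta_{max}^{(n,k)}=\Pf[\vt_{n+k}^{(n-1)}\cdots\vt_{2k+1}^{(k)}]$ as $\vt_{n+k-p+1}^{(n-p)}$ for $p=1,\dots,n-k$, the upper indices run through the set $\{k,k+1,\dots,n-1\}$, each value occurring exactly once. All of these are $\geq k\geq 0$, so for $j\geq 1$ the value $-j$ is never an upper index; this observation rules out case (c) of Proposition \ref{delta-i-on-Pf} entirely and means we are always in case (a) or (b).

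First I would dispose of the easy cases. If neither $j$ nor $-j$ equals any upper index, Proposition \ref{delta-i-on-Pf}(a) gives $\delta_j\Theta_{max}^{(n,k)}=0$ immediately. Since $j\neq k$, this covers $j=0$ when $k\geq 1$ (as $0\notin\{k,\dots,n-1\}$) as well as every $j$ with $1\leq j<k$ or $j>n-1$. (When $k=0$ the constraint $j\neq k$ forces $j\geq 1$, so $j=0$ need not be treated.) The only remaining possibility is $k+1\leq j\leq n-1$, which in particular requires $n\geq k+2$.

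In that remaining range, exactly one factor has upper index $j$, namely the one at position $p=n-j$, which is $\vt_{k+j+1}^{(j)}$; all other upper indices differ from $\pm j$. Thus Proposition \ref{delta-i-on-Pf}(b) applies and replaces this single factor $\vt_{k+j+1}^{(j)}$ by $\vt_{k+j}^{(j-1)}$, leaving all others unchanged. The key observation is then a direct index computation: the neighbouring factor at position $p+1=n-j+1$ (which exists since $j\geq k+1$ forces $p+1\leq n-k$) is already $\vt_{k+j}^{(j-1)}$. Hence after applying $\delta_j$ the Pfaffian has two \emph{adjacent equal} entries $\vt_{k+j}^{(j-1)}$. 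Since $j-1\geq 0$ and $k+j>k+(j-1)$, Lemma \ref{square-relation0} gives $\Pf[\vt_{k+j}^{(j-1)}\vt_{k+j}^{(j-1)}]=0$, so Proposition \ref{prop square-relation0} forces the whole Pfaffian to vanish. This yields $\delta_j\Theta_{max}^{(n,k)}=0$ in the last case as well, completing the argument.

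The only real content is the bookkeeping in the last paragraph: verifying that the entry produced by $\delta_j$ from position $n-j$ coincides exactly with the pre-existing entry at position $n-j+1$, and that the resulting repeated entry satisfies the degree inequality $r>k+l$ needed for the square relation. I expect this index-matching — rather than any conceptual difficulty — to be the main thing to get right, together with the clean separation of cases ensured by the fact that the upper indices are distinct and nonnegative.
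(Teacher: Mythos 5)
Your proposal is correct and follows essentially the same route as the paper: the paper's (very terse) proof likewise observes that $j\neq k$ puts one in case (a) or (b) of Proposition \ref{delta-i-on-Pf}, with case (a) giving vanishing immediately and case (b) producing an adjacent repeated entry killed by Lemma \ref{square-relation0} via Proposition \ref{SQinPf}. Your write-up simply makes explicit the bookkeeping the paper leaves implicit — the range $k+1\leq j\leq n-1$ for case (b), the identification of the new entry $\vt_{k+j}^{(j-1)}$ with its right neighbour, and the inequality $k+j>k+(j-1)$ needed for the square relation — all of which checks out.
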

\begin{proof}
If $j\neq k$, then we are in the situation of (a) or (b) in Proposition \ref{delta-i-on-Pf}. If (b) is the case, then the claim follows from Proposition \ref{SQinPf} (together with Lemma \ref{square-relation0}).
\end{proof}
%%%%%%%%%%%%%%%%%%%%%%%%%%%%%%%%%%
\begin{proposition}\label{prop:divdif}
Let $\lambda \in \calP^{(k)}_\infty$ and $w_{\lambda}^{(k)}$ the corresponding element in $W_\infty^{(k)}.$ 
We have
\[
\delta_i\Theta_{\lambda}^{(k)}=\begin{cases}
\Theta_{\mu}^{(k)}& \mbox{if}\;  %\ell(s_iw_\lambda^{(k)})=\ell(w_\lambda^{(k)})-1,\;\mbox{and}\;
s_iw_\lambda^{(k)}=w_\mu^{(k)}\;(\mu\in\calP^{(k)}_\infty,\;\mu\subset\lambda,\;|\mu|=|\lambda|-1),
\\
0 & \mbox{otherwise}.
\end{cases}
\]
%where we denote $\Theta_{\lambda}$ by $\Theta_{w_{\lambda}^{(k)}}$.
\end{proposition}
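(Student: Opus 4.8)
The plan is to fix $i\geq 0$ and pass to a fixed level $n$. Choose $n$ large enough that both $\lambda\in\calP_n^{(k)}$ and $i\in\{0,1,\dots,n-1\}$. By the preceding stability proposition we may identify $\Theta_\lambda^{(k)}$ with $\Theta_\lambda^{(n,k)}=\delta_{(w_\lambda^{(k)})^\vee}\Theta_{max}^{(n,k)}$, so that $\delta_i\Theta_\lambda^{(k)}=\delta_i\delta_{(w_\lambda^{(k)})^\vee}\Theta_{max}^{(n,k)}$. Writing $w=w_\lambda^{(k)}\in W_n^{(k)}$, the index $i$ lies in exactly one of $\calI_{-}(w)$, $\calI_{+}(w)$, $\calI_{0}(w)$, and I will treat these three cases separately, feeding the trichotomy of Proposition \ref{prop:delta_w2} into the computation and invoking Lemma \ref{lem:VanishDelta} at the end.

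First, suppose $i\in\calI_{-}(w)$. Then $s_iw\in W_n^{(k)}$, so $s_iw=w_\mu^{(k)}$ for a unique $\mu$, and $|\mu|=\ell(s_iw)=\ell(w)-1=|\lambda|-1$. Proposition \ref{prop:delta_w2}(1) gives $\delta_i\delta_{w^\vee}=\delta_{(s_iw)^\vee}=\delta_{(w_\mu^{(k)})^\vee}$, whence $\delta_i\Theta_\lambda^{(n,k)}=\Theta_\mu^{(n,k)}=\Theta_\mu^{(k)}$. It remains to record that $\mu\subset\lambda$; this is the one genuinely combinatorial point, and it follows from the explicit covering moves of Lemmas \ref{LGLEMMA} and \ref{LGLEMMA0}: each of the cases $L0,L1,L2,L3$ removes exactly one box from the $k$-strict partition, precisely as tabulated in the case analysis inside the proof of Theorem \ref{MainProposition} (where the effect of each move on $\chi$ and on $\lambda$ is written out). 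Hence $\mu$ is obtained from $\lambda$ by deleting a single corner box, giving $\mu\subset\lambda$.

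Second, if $i\in\calI_{+}(w)$, then $s_iw=w_\nu^{(k)}$ with $|\nu|=\ell(s_iw)=|\lambda|+1$, and Proposition \ref{prop:delta_w2}(2) yields $\delta_i\delta_{w^\vee}=0$, so $\delta_i\Theta_\lambda^{(k)}=0$. Third, if $i\in\calI_{0}(w)$, then $s_iw\notin W_\infty^{(k)}$, and Proposition \ref{prop:delta_w2}(3) furnishes some $j\neq k$ with $\delta_i\delta_{w^\vee}=\delta_{w^\vee}\delta_j$; since $\delta_j\Theta_{max}^{(n,k)}=0$ for $j\neq k$ by Lemma \ref{lem:VanishDelta}, we again obtain $\delta_i\Theta_\lambda^{(k)}=0$. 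The three outcomes are independent of the auxiliary $n$ by stability, so they descend to the stable functions.

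Finally I will check that this assembles into the stated formula, using the bijection $\calP^{(k)}_\infty\to W^{(k)}_\infty$. The first case is exactly the situation where $s_iw_\lambda^{(k)}=w_\mu^{(k)}$ for some $\mu$ with $|\mu|=|\lambda|-1$ (and, as shown, $\mu\subset\lambda$), producing $\Theta_\mu^{(k)}$. The remaining two cases constitute the ``otherwise'' branch: in the $\calI_{+}$ case the unique $\mu$ with $s_iw_\lambda^{(k)}=w_\mu^{(k)}$ satisfies $|\mu|=|\lambda|+1\neq|\lambda|-1$, while in the $\calI_{0}$ case $s_iw_\lambda^{(k)}$ is not a $k$-Grassmannian element at all, so no such $\mu$ exists; either way the value is $0$. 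The main obstacle is not algebraic—the algebra is a direct bookkeeping of Proposition \ref{prop:delta_w2} against the trichotomy, once stability reduces everything to fixed $n$—but rather the verification $\mu\subset\lambda$ in the first case, which I will handle by quoting the box-removal description of the left covering relation established via Lemmas \ref{LGLEMMA}, \ref{LGLEMMA0} and the proof of Theorem \ref{MainProposition}.
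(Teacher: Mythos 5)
Your proof is correct and takes essentially the same route as the paper: the paper's own one-sentence proof likewise combines the definition $\Theta_\lambda^{(n,k)}=\delta_{(w_\lambda^{(k)})^\vee}\Theta_{max}^{(n,k)}$ with Lemma \ref{lem:VanishDelta} and the trichotomy of Proposition \ref{prop:delta_w2}. You have simply written out the three cases $\calI_{-},\calI_{+},\calI_{0}$ explicitly, together with the check that $\mu\subset\lambda$ (implicit in the covering moves of Lemmas \ref{LGLEMMA} and \ref{LGLEMMA0}), which the paper compresses into ``follows immediately.''
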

%%%%%%%%%%%%%%%%%%%%%%%%%%%%%%%%%%
\begin{proof} 
By the definition of $\Theta_{\lambda}^{(n,k)}$
and the fact $\delta_i\Theta_{max}^{(n,k)}=0$ for $i\neq k$ (Lemma \ref{lem:VanishDelta}), 
the result follows from Proposition \ref{prop:delta_w2}
immediately.
\end{proof}
%%%%%%%%%%%%%%%%%%%%%%%%%%%%%%%%%%
\begin{lemma}\label{localization at empty} 
%Let $\emptyset \in \calP_{\infty}^{(k)}$ be the empty partition. 
We have
\[
\Theta_{\lambda}^{(k)}|_{\emptyset} = \delta_{\lambda, \emptyset},
\]
where the notation $|_{\mu}$ is defined in Definition \ref{loc def}, and $\emptyset$ denotes the empty partition.
\end{lemma}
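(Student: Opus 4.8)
The plan is to compare both sides of the Pfaffian sum formula (Theorem~\ref{MainProposition}, equation~(\ref{main pf sum})) under the localization $|_\emptyset$ of Definition~\ref{loc def}. Since $w_\emptyset^{(k)}=e=12\cdots k\,|\,(k+1)(k+2)\cdots$ has $s=0$, the map $f\mapsto f|_\emptyset$ is the $\ZZ$-algebra homomorphism sending $z_i\mapsto t_i$ ($1\le i\le k$) and $x\mapsto 0$ (that is, $Q_r(x)\mapsto\delta_{r,0}$). Substituting this into the generating function of Definition~\ref{def-theta} kills the factor $\prod_i\frac{1+x_iu}{1-x_iu}$, leaving
\[
\sum_{r\ge 0}\vt_r^{(l)}\big|_\emptyset\,u^{r}
=\prod_{i=1}^{k}(1+t_iu)\cdot
\begin{cases}
\prod_{i=1}^{l}(1-t_iu) & l\ge 0,\\
\prod_{i=1}^{-l}(1+t_iu)^{-1} & l<0.
\end{cases}
\]
Hence $\vt_0^{(l)}|_\emptyset=1$, and for $l\ge -k$ the right-hand side is a polynomial in $u$ of degree $k+l$, so $\vt_r^{(l)}|_\emptyset=0$ whenever $r>k+l$.

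For $\lambda=\emptyset$ I would argue from the definition of $\Theta_\emptyset^{(k)}$ by choosing the smallest admissible $n$, namely $n=k$. Then $n-k=0$, the defining expression $\Theta_{max}^{(k,k)}$ is the empty Pfaffian, which equals $1$, and $\emptyset$ is the unique (hence maximal) element of $\mathcal P_k^{(k)}$, so $\Theta_\emptyset^{(k)}=\Theta_{max}^{(k,k)}=1$; therefore $\Theta_\emptyset^{(k)}|_\emptyset=1=\delta_{\emptyset,\emptyset}$.

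For $\lambda\ne\emptyset$ I would show that each individual Pfaffian in~(\ref{main pf sum}) already vanishes under $|_\emptyset$. The decisive numerical input is the first characteristic index: combining~(\ref{eq:p=n-chi}) with $p_1(\lambda)=n+k+1-\lambda_1$ gives $\chi_1=\lambda_1-k-1$, so $\chi_1\ge -k$ (as $\lambda_1\ge 1$) and $k+\chi_1=\lambda_1-1$. The degree bound above then yields $\vt_r^{(\chi_1)}|_\emptyset=0$ for every $r\ge\lambda_1$. On the other hand, the recursion of \S~\ref{sec:Pf} shows, by induction on $m$, that in any $\Pf[c_{r_1}^{(1)}\cdots c_{r_m}^{(m)}]$ every monomial carries a unique factor from the first slot whose lower index is always $\ge r_1$ (the two-row Pfaffian $\Pf[c_{r_1}^{(1)}c_{r_s}^{(s)}]$ only raises the first index). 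Since $|_\emptyset$ is a ring homomorphism, it commutes with forming the Pfaffian; applying it to the term indexed by $I\subset D(\lambda)$, whose first slot is $\vt_{\lambda_1+a_1^I}^{(\chi_1)}$ with $a_1^I=\#\{j\,|\,(1,j)\in I\}\ge 0$, every first-slot factor has lower index $\ge\lambda_1+a_1^I\ge\lambda_1$ and hence localizes to $0$. Consequently each summand of~(\ref{main pf sum}) vanishes at $\emptyset$, giving $\Theta_\lambda^{(k)}|_\emptyset=0=\delta_{\lambda,\emptyset}$.

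The crux, and the only delicate step, is the simultaneous bookkeeping that produces $\chi_1=\lambda_1-k-1$ together with $a_1^I\ge 0$ for every $I$, so that the first-slot lower index $\lambda_1+a_1^I$ uniformly exceeds the vanishing threshold $k+\chi_1=\lambda_1-1$. Once this is secured, the structural fact that a multi Schur-Pfaffian never lowers its first index upgrades the single-entry vanishing $\vt_{r}^{(\chi_1)}|_\emptyset=0$ ($r\ge\lambda_1$) to the vanishing of the entire Pfaffian, and the lemma follows.
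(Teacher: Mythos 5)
Your proof is correct and follows essentially the same route as the paper's: localize by $z_i\mapsto t_i$, $x\mapsto 0$, observe that the specialized generating function is a polynomial of degree $k+l$ (so $\vt_m^{(l)}|_\emptyset=0$ for $m>k+l$, $m>0$), combine $\chi_1+k=\lambda_1-1$ with $a_1^I\ge 0$, and use exactly the structural fact the paper invokes, namely that every monomial of $\Pf[c_{r_1}^{(1)}\cdots c_{r_m}^{(m)}]$ contains one first-slot factor of index $\ge r_1$. The only divergence is the base case $\lambda=\emptyset$: the paper gets $\Theta_\emptyset^{(k)}=1$ from $\vt_0^{(l)}=1$ and $\vt_m^{(l)}=0$ ($m<0$) applied to the Pfaffian sum, whereas you take $n=k$ and the empty Pfaffian, which works for $k\ge 1$ but not for $k=0$ (the paper requires $n$ to be a positive integer); there you would instead take $n=1$ and compute $\Theta_\emptyset^{(1,0)}=\delta_0\Pf[\vt_1^{(0)}]=\vt_0^{(-1)}=1$.
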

%%%%%%%%%%%%%%%%%%%%%%%%%%%%%%%%%%
\begin{proof}
We have $\Theta_{\emptyset}^{(k)}=1$ since ${}_k\vt_{0}^{(\ell)} = 1$ and ${}_k\vt_{m}^{(\ell)} = 0$ for all $m<0$, and hence $\left.\Theta_{\emptyset}^{(k)}\right|_{\emptyset}=1$. 

Now assume $\lambda\not=\emptyset$. For each $g\in \calR_{\infty}^{(k)}$, the polynomial $g|_{\emptyset} \in \ZZ[t]$ is given by the specializations
\begin{eqnarray*}
(z_1,\dots,z_k) \mapsto (t_1,\dots, t_k)\ \ \ \mbox{ and } \ \ \  (x_1,x_2,\dots)\mapsto (0,0,\dots ).
\end{eqnarray*}
The generating function of $\vt$-functions in Definition \ref{def-theta} becomes a polynomial in $u$ of degree $k+l$ after we specialize it as above. Thus, by the degree reason, we have
\begin{equation}\label{van k+l<m>0}
\left.{}_k\vt_m^{(l)}\right|_{\emptyset} = 0 \ \ \mbox{ if $\ell + k < m$ and $0 < m$}. 
\end{equation}
We can expand the right hand side of (\ref{main pf sum})  as a polynomial in terms of the $\vt$-functions ${}_k\vt_m^{(l)}$ by using the definition of $\Pf$ in Section \ref{sec:Pf}, in such a way that each monomial contains ${}_k\vt_{\lambda_1+a_1^I + j}^{(\chi_1)}$ (the first factor) for some $j\geq 0$.  Note that $a_1^I\geq 0$ since $\{i \ |\ (i,1) \in I\}$ is always empty. Since $0< \lambda_1\leq \lambda_1+a_1^I + j$ and $\chi_1 + k = \lambda_1 - 1< \lambda_1$ (see (\ref{lambda_i+s})),  we have $\left.{}_k\vt_{\lambda_1+a_1^I + j}^{(\chi_1)}\right|_{\emptyset} = 0$ by (\ref{van k+l<m>0}). Therefore $\left.\Theta_{\lambda}^{(k)}\right|_{\emptyset}=0$ for all $\lambda \in \calP_{\infty}^{(k)}$.
\end{proof}
%%%%%%%%%%%%%%%%%%%%%%%%%%%%%%%%%%
\begin{theorem}\label{prop:Theta=C} 
Let $\lambda\in \mathcal{P}^{(k)}_\infty.$ We have $$\mathfrak{C}_{w_{\lambda}^{(k)}}(z,t;x)= \Theta_\lambda^{(k)}(x,z\, |\, t).$$
\end{theorem}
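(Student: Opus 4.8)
The plan is to deduce the identity directly from the uniqueness criterion of Lemma \ref{uniqueness}, thereby bypassing any further computation. I would define a family $\{F_w \mid w \in W_\infty^{(k)}\}$ of elements of $\calR_\infty^{(k)}$ by setting $F_{w_\lambda^{(k)}} := \Theta_\lambda^{(k)}(x,z\,|\,t)$, using the bijection $\lambda \mapsto w_\lambda^{(k)}$ between $\calP_\infty^{(k)}$ and $W_\infty^{(k)}$. This is legitimate because $\Theta_\lambda^{(k)}$ lies in $\calR_\infty^{(k)}$, by Proposition \ref{prop:theta_is_invariant} together with the stability statement that precedes Lemma \ref{lem:VanishDelta}. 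It then suffices to check that this family satisfies the two defining hypotheses (\ref{lddeq}) and (\ref{vanishingempty}) of the uniqueness lemma; the lemma will force $F_w = \frakC_w$ for every $w \in W_\infty^{(k)}$, which is exactly the asserted equality $\frakC_{w_\lambda^{(k)}} = \Theta_\lambda^{(k)}$.

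First I would verify the left divided-difference relation (\ref{lddeq}), which is precisely the content of Proposition \ref{prop:divdif} once the two indexings are reconciled. Indeed, if $\ell(s_i w_\lambda^{(k)}) < \ell(w_\lambda^{(k)})$, then $s_i w_\lambda^{(k)}$ is again a minimum-length coset representative — the same fact already used inside the proof of Lemma \ref{uniqueness} — so $s_i w_\lambda^{(k)} = w_\mu^{(k)}$ for a unique $\mu$ with $|\mu| = |\lambda| - 1$ and $\mu \subset \lambda$, and Proposition \ref{prop:divdif} gives $\delta_i \Theta_\lambda^{(k)} = \Theta_\mu^{(k)} = F_{s_i w_\lambda^{(k)}}$. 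Conversely, when $\ell(s_i w_\lambda^{(k)}) > \ell(w_\lambda^{(k)})$, the hypothesis $|\mu| = |\lambda|-1$ of the proposition cannot be met, so $\delta_i \Theta_\lambda^{(k)} = 0$. This matches (\ref{lddeq}) in both cases. The normalization (\ref{vanishingempty}), namely $F_w|_\emptyset = \delta_{w,e}$, is exactly Lemma \ref{localization at empty}, since $w_\lambda^{(k)} = e$ if and only if $\lambda = \emptyset$.

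With both hypotheses in hand, Lemma \ref{uniqueness} yields $F_w = \frakC_w$ for all $w \in W_\infty^{(k)}$, i.e. $\Theta_\lambda^{(k)}(x,z\,|\,t) = \frakC_{w_\lambda^{(k)}}(z,t;x)$ for all $\lambda \in \calP_\infty^{(k)}$, as desired. Since all of the analytic and combinatorial difficulty has already been absorbed into Proposition \ref{prop:divdif} (which rests on Proposition \ref{prop:delta_w2} and Lemma \ref{lem:VanishDelta}) and into Lemma \ref{localization at empty}, I expect no genuine obstacle at this final step. The only point requiring care is the bookkeeping that translates the partition-indexed statement of Proposition \ref{prop:divdif} into the Weyl-group-indexed hypothesis (\ref{lddeq}); in particular, one must invoke that a length-decreasing left multiplication preserves membership in $W_\infty^{(k)}$, so that the two cases of (\ref{lddeq}) are exhausted by, respectively, the length-decreasing and the strictly length-increasing cases of Proposition \ref{prop:divdif}. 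This is the verification I would write out most explicitly, although it is entirely routine.
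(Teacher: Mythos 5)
Your proposal is correct and takes essentially the same route as the paper: the paper's proof likewise notes $\Theta_\lambda^{(k)}\in\calR_\infty^{(k)}$ (via Proposition \ref{prop:theta_is_invariant} and the Pfaffian sum formula), checks hypotheses (\ref{lddeq}) and (\ref{vanishingempty}) using Proposition \ref{prop:divdif} and Lemma \ref{localization at empty}, and concludes by the uniqueness Lemma \ref{uniqueness}. The case-matching you flag as needing care (that a length-decreasing left multiplication lands in $W_\infty^{(k)}$, with the resulting $\mu\subset\lambda$, $|\mu|=|\lambda|-1$, so the two cases of (\ref{lddeq}) align with those of Proposition \ref{prop:divdif}) is exactly the bookkeeping the paper leaves implicit, and it does hold by the case analysis in Lemmas \ref{LGLEMMA} and \ref{LGLEMMA0}.
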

%%%%%%%%%%%%%%%%%%%%%%%%%%%%%%%%%%
\begin{proof}
By Proposition \ref{prop:theta_is_invariant} and Proposition \ref{MainProposition}, we have $\Theta_{\lambda}^{(k)} \in \calR_{\infty}^{(k)}$. By Proposition \ref{prop:divdif} and Lemma \ref{localization at empty}, the family $\Theta_{\lambda}^{(k)}, \lambda \in \calP_{\infty}^{(k)}$ satisfies the equations (\ref{lddeq}), (\ref{vanishingempty}) in Lemma \ref{uniqueness}. Therefore, by the uniqueness, $\frakC_{w_\lambda^{(k)}}$ must coincide with $\Theta_{\lambda}^{(k)}$ for all $\lambda \in \calP_{\infty}^{(k)}$. \end{proof}
%%%%%%%%%%%%%%%%%%%%%%%%%%%%%%%%%%
%\begin{remark}
%Note that the dependence of $w_\lambda$ on $k$ is implicit.
%\end{remark}
%%%%%%%%%%%%%%%%%%%%%%%%%%%%%%%%%%
\begin{corollary}\label{rem:generated by theta}
The ring $\mathcal{R}_\infty^{(k)}$ is generated by  ${}_k\vt_r^{(0)}\;(r\geq 1)$ as an algebra over $\ZZ[t]$.
\end{corollary}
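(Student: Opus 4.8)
The plan is to prove the corollary by establishing the two inclusions between $\ZZ[t][\vt_1,\vt_2,\ldots]$ and $\mathcal{R}_\infty^{(k)}$, where as above $\vt_r={}_k\vt_r^{(0)}$. The inclusion $\ZZ[t][\vt_1,\vt_2,\ldots]\subseteq\mathcal{R}_\infty^{(k)}$ is immediate from Proposition \ref{prop:theta_is_invariant}, which gives ${}_k\vt_r^{(l)}\in\mathcal{R}_\infty^{(k)}$ for every $l$; in particular each $\vt_r$ lies in $\mathcal{R}_\infty^{(k)}$, and since $\mathcal{R}_\infty^{(k)}$ is a $\ZZ[t]$-subalgebra of $\mathcal{R}_\infty$ it contains the subalgebra these elements generate.

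For the reverse inclusion I would invoke Proposition \ref{prop:basisR^k}, which states $\mathcal{R}_\infty^{(k)}=\bigoplus_{w\in W_\infty^{(k)}}\ZZ[t]\mathfrak{C}_w$; hence it suffices to show that each double Schubert polynomial $\mathfrak{C}_{w_\lambda^{(k)}}$ with $\lambda\in\mathcal{P}_\infty^{(k)}$ lies in $\ZZ[t][\vt_1,\vt_2,\ldots]$. By Theorem \ref{prop:Theta=C} we have $\mathfrak{C}_{w_\lambda^{(k)}}=\Theta_\lambda^{(k)}$, and the Pfaffian sum formula (Theorem \ref{MainProposition}) writes $\Theta_\lambda^{(k)}$ as a finite $\ZZ$-linear combination of multi Schur-Pfaffians $\Pf[\vt_{r_1}^{(\chi_1)}\cdots\vt_{r_{n-k}}^{(\chi_{n-k})}]$. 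By the recursive definition of the multi Schur-Pfaffian in Section \ref{sec:Pf}, each such expression is a $\ZZ$-polynomial in its entries ${}_k\vt_r^{(l)}$, so $\mathfrak{C}_{w_\lambda^{(k)}}$ is a $\ZZ[t]$-polynomial in the various ${}_k\vt_r^{(l)}$.

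The one remaining point is to clear the upper indices, and this is exactly Lemma \ref{theta-relation}. For $l\geq1$ the relation $\vt_r^{(l)}=\vt_r^{(l-1)}-t_l\,\vt_{r-1}^{(l-1)}$ lowers $l$ by one, and iterating down to $l=0$ expresses $\vt_r^{(l)}$ as a $\ZZ[t]$-linear combination of the $\vt_s$; for negative upper index the second identity in (\ref{eq:vt+ttv}) raises the index toward $0$ and, upon iteration, accomplishes the same reduction. Substituting these expressions into each Pfaffian shows $\mathfrak{C}_{w_\lambda^{(k)}}\in\ZZ[t][\vt_1,\vt_2,\ldots]$, which finishes the reverse inclusion and hence the proof. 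I do not expect a genuine obstacle here: granting the main theorem, the only content is the elementary recursion of Lemma \ref{theta-relation} together with the trivial observation that a polynomial in elements of a $\ZZ[t]$-subalgebra remains in that subalgebra.
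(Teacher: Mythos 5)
Your proof is correct and follows essentially the same route as the paper: invoke Theorem \ref{prop:Theta=C} together with the Pfaffian sum formula to write each $\mathfrak{C}_{w_\lambda^{(k)}}$ as a $\ZZ[t]$-polynomial in the ${}_k\vt_r^{(l)}$, reduce the upper indices to $0$, and conclude via Proposition \ref{prop:basisR^k}. The only difference is that you spell out the index-clearing step via Lemma \ref{theta-relation} and the easy inclusion via Proposition \ref{prop:theta_is_invariant}, both of which the paper leaves implicit.
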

\begin{proof}
By Theorem \ref{prop:Theta=C}, each $\frakC_{w_{\lambda}^{(k)}}$ is an element of the ring generated by ${}_k\vt_r^{(l)} \ (r, l\in \ZZ)$. Since each ${}_k\vt_r^{(l)}$ is in the algebra generated by ${}_k\vt_r^{(0)}\;(r\geq 1)$ over $\ZZ[t]$, the claim follows from Proposition \ref{prop:basisR^k}. 
\end{proof}
%%%%%%%%%%%%%%%%%%%%%%%%%%%%%%%%%%
%%%%%%%%%%%%%%%%%%%%%%%%%%%%%%%%%%
\section{Raising operators and Wilson's conjecture}\label{sec: IM-TW}
%%%%%%%%%%%%%%%%%%%%%%%%%%%%%%%%%%
%%%%%%%%%%%%%%%%%%%%%%%%%%%%%%%%%%
%%%%%%%%%%%%%%%%%%%%%%%%%%%%%%%%%%
\subsection{Basics on raising operators}
%%%%%%%%%%%%%%%%%%%%%%%%%%%%%%%%%%
%%%%%%%%%%%%%%%%%%%%%%%%%%%%%%%%%%
Let $R_{ij}, 1\leq i<j \leq m$ be the operator that act on $\ZZ^m$ by
\[
R_{ij}: (a_1,\dots,a_i, \dots, a_j, \dots, a_m ) \mapsto (a_1,\dots,a_i+1, \dots, a_j-1, \dots, a_m ).
\]
Let $(c^{(1)}, c^{(2)}, \dots ,c^{(m)})$ be an $m$-tuple such that each $c^{(i)}$
is an infinite sequence of variables $c_r^{(i)}\;(r\in \ZZ)$.
%Let $c_r^{(1)}, \dots, c_r^{(m)}, r\in \ZZ$ be an ordered set of $m$ sequences of infinite variables. 
The action of $R_{ij}$ on a degree $m$ monomial  $c_{r_1}^{(1)}\cdots c_{r_m}^{(m)}$ is defined by
\[
R_{ij}(c_{r_1}^{(1)}\cdots c_{r_i}^{(i)}\cdots c_{r_j}^{(j)} \cdots c_{r_m}^{(m)}) = c_{r_1}^{(1)}\cdots c_{r_i+1}^{(i)}\cdots c_{r_j-1}^{(j)} \cdots c_{r_m}^{(m)}.
\]
Let  $\calA$ be  the set of all $\ZZ$-linear combinations of monomials $c_{r_1}^{(1)}\cdots c_{r_m}^{(m)}, (r_1,\dots, r_m) \in \ZZ^m$. The action of any polynomial in $R_{ij}$ on $\calA$ is naturally defined. For example,  
\[
(1+R_{ij})(c_{r_1}^{(1)} \cdots c_{r_m}^{(m)}) = c_{r_1}^{(1)}\cdots c_{r_i}^{(i)}\cdots c_{r_j}^{(j)} \cdots c_{r_m}^{(m)}+c_{r_1}^{(1)}\cdots c_{r_i+1}^{(i)}\cdots c_{r_j-1}^{(j)} \cdots c_{r_m}^{(m)}.
\]
Since the actions of the operators $R_{ij}, i<j$ commute, they are extended to the action of the polynomial ring $\ZZ[R_{ij}, 1\leq i<j\leq m]$. For example, take $(1 + R_{ij})(1-R_{i'j'}) \in \ZZ[R_{ij}, 1\leq i<j\leq m]$
\begin{eqnarray*}
(1 + R_{ij})(1-R_{i'j'})(c_{r_1}^{(1)}\cdots c_{r_m}^{(m)}) &=& (1-R_{i'j'})(1 + R_{ij})(c_{r_1}^{(1)}\cdots c_{r_m}^{(m)}) \\
&=& (1-R_{i'j'} + R_{ij} - R_{i'j'}R_{ij})(c_{r_1}^{(1)}\cdots c_{r_m}^{(m)}) .
\end{eqnarray*}

%%%%%%%%%%%%%%%%%%%%%%%%%%%%%%%%%%%%%%%%%
Consider a formal power series $F = \sum_{s=0}^{\infty} F_s$ where each $F_{s}$ is a homogeneous polynomial in $R_{ij}$ of degree $s$, regarding $R_{ij}$'s as formal variables of degree one. Each $F_s$ acts on $\calA$ and  so $F_s(c_{r_1}^{(1)} \cdots c_{r_m}^{(m)})$ is in $\calA$. Thus we obtain the following formal series of $c_{r_1}^{(1)}\dots, c_{r_m}^{(m)}, (r_1,\dots, r_m) \in \ZZ^m$
\[
F(c_{r_1}^{(1)} \cdots c_{r_m}^{(m)}):= \sum_{s=0}^{\infty} F_s(c_{r_1}^{(1)} \cdots c_{r_m}^{(m)}).
\]
It is well-defined since the coefficient of each $c_{s_1}^{(1)}\cdots c_{s_m}^{(m)}$ in the sum is finite. Indeed, the degree $s$ of the operator $F_s$ that creates a particular monomial $c_{s_1}^{(1)}\cdots c_{s_m}^{(m)}$ is bounded. It also has the property that the only appearing terms are such that $s_1+\cdots + s_m = r_1 + \cdots + r_m$. Considering those properties, we can conclude that 
\[
\left.F(c_{r_1}^{(1)} \cdots c_{r_m}^{(m)})\right|_{\geq 0}
\]
is a polynomial where $|_{\geq 0}$ denotes the substitution $c_r^{(i)}=0$ for all $r<0$ and all $i$.  If $F_1$ and $F_2$ are two formal power series as above, then the product $F_1F_2$ is also such a formal power series and therefore we have
\[
(F_1F_2)(c_{r_1}^{(1)} \cdots c_{r_m}^{(m)}) = F_1(F_2(c_{r_1}^{(1)} \cdots c_{r_m}^{(m)})).
\]
The RHS of this identity is well-defined, \textit{i.e.} it is a formal power series such that the coefficient of each $c_{s_1}^{(1)}\cdots c_{s_m}^{(m)}, (s_1,\dots, s_m) \in \ZZ^m$ is finite.  
%%%%%%%%%%%%%%%%%%%%%%%%%%%%%%%%%%%%%%%%%
\begin{example}\label{exm:R-Pf}
The following formal power series is important for our purpose:
\[
F:=\frac{1-R_{12}}{1+R_{12}} = 1 - 2R_{12} + 2R_{12}^2 - \cdots = 1 + \sum_{s=1}^{\infty} (-1)^s 2 R_{12}^s.
\]
For example, we have 
$F(c_{-2}^{(1)}c_1^{(2)}) = c_{-2}^{(1)}c_1^{(2)} -2c_{-1}^{(1)}c_0^{(2)} +2 c_{0}^{(1)}c_{-1}^{(2)} -2c_{1}^{(1)}c_{-2}^{(2)} + \cdots$, and hence 
we have
$\left.F(c_{-2}^{(1)}c_1^{(2)})\right|_{\geq 0} = 0,$
while
$\left.F(c_{-1}^{(1)}c_3^{(2)})\right|_{\geq 0} = -2c_{0}^{(1)}c_2^{(2)} +2 c_{1}^{(1)}c_1^{(2)} -2 c_{2}^{(1)}c_0^{(2)}.
$
\end{example}
%%%%%%%%%%%%%%%%%%%%%%%%%%%%%%%%%%%%%%%%%
The following lemma is obvious from the definition.
%%%%%%%%%%%%%%%%%%%%%%%%%%%%%%%%%%%%%%%%%
\begin{lemma}\label{product operator}
Let $I(F)$ be the set of $i$'s such that $R_{ij}$ or $R_{ji}$ appear in $F$. If $I(F_1) \cap I(F_2) = \varnothing$, then we have the following well-defined identity of formal power series
\[
(F_1F_2)(c_{r_1}^{(1)} \cdots c_{r_m}^{(m)}) = \left(\prod_{i \not\in I(F_1) \cup I(F_2)} c_{r_i}^{(i)}\right)\cdot F_1\left( \prod_{i\in I(F_1)} c_{r_i}^{(i)}\right)\cdot F_2\left( \prod_{i\in I(F_2)} c_{r_i}^{(i)}\right).
\]
\end{lemma}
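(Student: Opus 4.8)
The plan is to reduce the statement to the elementary \emph{locality} of a single raising operator and then to track the disjointness hypothesis $I(F_1)\cap I(F_2)=\varnothing$ through the composition. First I would record the basic observation that for any subset $S\subseteq\{1,\dots,m\}$ and any raising operator $R_{ij}$ with $i,j\in S$, one has
\[
R_{ij}\bigl(c_{r_1}^{(1)}\cdots c_{r_m}^{(m)}\bigr)=\Bigl(\prod_{p\notin S}c_{r_p}^{(p)}\Bigr)\cdot R_{ij}\Bigl(\prod_{p\in S}c_{r_p}^{(p)}\Bigr),
\]
because $R_{ij}$ alters only the indices $r_i$ and $r_j$ and leaves all factors indexed outside $S$ untouched. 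By an immediate induction on the number of operators this factorization holds for any monomial, hence any polynomial, in the $R_{ij}$ with $i,j\in S$; passing to the homogeneous components and summing, it holds for any formal power series all of whose operators carry indices in $S$. I will refer to this as the locality property for $S$.

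Next I would use the identity $(F_1F_2)(X)=F_1(F_2(X))$ established before the lemma, so that it suffices to apply $F_2$ and then $F_1$ to $X:=c_{r_1}^{(1)}\cdots c_{r_m}^{(m)}$. Applying locality with $S=I(F_2)$ gives
\[
F_2(X)=\Bigl(\prod_{p\notin I(F_2)}c_{r_p}^{(p)}\Bigr)\cdot F_2\Bigl(\prod_{p\in I(F_2)}c_{r_p}^{(p)}\Bigr),
\]
and every monomial occurring in the series $F_2\bigl(\prod_{p\in I(F_2)}c_{r_p}^{(p)}\bigr)$ involves only the slots indexed by $I(F_2)$. Since $I(F_1)\cap I(F_2)=\varnothing$, those slots—and hence every such monomial—lie outside $I(F_1)$; so when I apply $F_1$ and invoke locality with $S=I(F_1)$, the only factor that $F_1$ modifies is $\prod_{p\in I(F_1)}c_{r_p}^{(p)}$, all remaining factors being inert. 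Splitting $\prod_{p\notin I(F_2)}c_{r_p}^{(p)}=\bigl(\prod_{p\in I(F_1)}c_{r_p}^{(p)}\bigr)\bigl(\prod_{p\notin I(F_1)\cup I(F_2)}c_{r_p}^{(p)}\bigr)$ and pulling the inert factors out of $F_1$ then yields exactly the claimed identity.

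The only point requiring care is that all of these are formal power series rather than polynomials, so the rearrangements—pulling factors outside the operators, and interchanging the $F_1$- and $F_2$-sums—must be justified termwise rather than by finite algebra. This is guaranteed by the grading discussion preceding the lemma: each homogeneous piece contributes finitely, the raising operators preserve the total index $\sum_p r_p$, and the coefficient of any fixed monomial $c_{s_1}^{(1)}\cdots c_{s_m}^{(m)}$ is a finite sum, so every reordering is legitimate and both sides are well-defined as formal power series. I expect this bookkeeping of well-definedness to be the only real content; beyond it, the identity is indeed immediate from the locality of the raising operators.
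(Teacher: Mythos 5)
Your proof is correct, and it takes the only natural route: the paper itself gives no argument for this lemma (it is introduced with ``The following lemma is obvious from the definition''), and your write-up via the locality of a single $R_{ij}$, induction to monomials and series in the operators, the composition identity $(F_1F_2)(X)=F_1(F_2(X))$, and the finiteness/grading bookkeeping is precisely the careful spelling-out of what the authors treat as immediate. In particular, your use of the disjointness $I(F_1)\cap I(F_2)=\varnothing$ to make every term of $F_2\bigl(\prod_{i\in I(F_2)}c_{r_i}^{(i)}\bigr)$ inert under $F_1$ is exactly the point on which the lemma rests.
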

%%%%%%%%%%%%%%%%%%%%%%%%%%%%%%%%%%%%%%%%%
%%%%%%%%%%%%%%%%%%%%%%%%%%%%%%%%%%%%%%%%%
\subsection{Pfaffians in terms of raising operators}
We have the following description of Pfaffians.
%%%%%%%%%%%%%%%%%%%%%%%%%%%%%%%%%%%%%%%%%
Let $\Delta_m:=\{(i,j)\in \ZZ^2\;|\;1\leq i<j\leq m\}.$
\begin{proposition}\label{prop:R-Pf}We have
\[
\left.\Pf[c_{r_1}^{(1)}\cdots c_{r_m}^{(m)}] \right|_{\geq 0}= \left. \left(\prod_{(i,j)\in \Delta_m}\frac{1-R_{ij}}{1+R_{ij}} \;c_{r_1}^{(1)} \cdots c_{r_m}^{(m)}  \right)\right|_{\geq 0}.
\]
\end{proposition}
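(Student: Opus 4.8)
The plan is to prove the identity by induction on $m$, showing that the truncated right-hand side satisfies the very recursion that defines the multi Schur-Pfaffian in Section~\ref{sec:Pf}. For a set $S$ of pairs $(i,j)$ with $i<j$ write $P_S:=\prod_{(i,j)\in S}\frac{1-R_{ij}}{1+R_{ij}}$ for the corresponding operator, so that the claim concerns $P_{\Delta_m}$. The cases $m=1$ (where $\Delta_1=\varnothing$ and $P_{\Delta_1}$ is the identity) and $m=2$ (the computation in Example~\ref{exm:R-Pf}, which recovers the defining formula for $\Pf[c_{r_1}^{(1)}c_{r_2}^{(2)}]$) are immediate. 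The device driving the induction is a translation of the action of $P_S$ on a single monomial into Laurent-coefficient extraction. Under the substitution $R_{ij}\mapsto z_j/z_i$, so that $\frac{1-R_{ij}}{1+R_{ij}}\mapsto\frac{z_i-z_j}{z_i+z_j}$ expanded for $|z_1|>\cdots>|z_m|$, one checks directly from the shift $R^{a}\colon (r_i)\mapsto (r_i+\beta_i)$, with $\beta_i=\sum_{j>i}a_{ij}-\sum_{i'<i}a_{i'i}$, that
\[
P_S\bigl(c_{r_1}^{(1)}\cdots c_{r_m}^{(m)}\bigr)=\sum_{\gamma}\Bigl[z^{\gamma}\Bigr]\Bigl(\prod_{(i,j)\in S}\frac{z_i-z_j}{z_i+z_j}\Bigr)\,c_{r_1-\gamma_1}^{(1)}\cdots c_{r_m-\gamma_m}^{(m)}.
\]
The point is that collecting operator monomials by their net shift turns the integer coefficients of $P_S$ into the Laurent coefficients of the associated rational function.

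For the inductive step (even $m$) the input is the cofactor expansion of the Schur Pfaffian. Schur's identity \cite{Schur} gives $\prod_{1\le i<j\le m}\frac{z_i-z_j}{z_i+z_j}=\Pf\bigl(\frac{z_i-z_j}{z_i+z_j}\bigr)$, and expanding this Pfaffian along its first row yields
\[
\prod_{1\le i<j\le m}\frac{z_i-z_j}{z_i+z_j}=\sum_{s=2}^{m}(-1)^{s}\,\frac{z_1-z_s}{z_1+z_s}\prod_{\substack{2\le i<j\le m\\ i,j\neq s}}\frac{z_i-z_j}{z_i+z_j}.
\]
Extracting the coefficient of $z^{\gamma}$ and reading it back through the displayed dictionary gives the formal-power-series identity
\[
P_{\Delta_m}\bigl(c_{r_1}^{(1)}\cdots c_{r_m}^{(m)}\bigr)=\sum_{s=2}^{m}(-1)^{s}\Bigl[\tfrac{1-R_{1s}}{1+R_{1s}}\bigl(c_{r_1}^{(1)}c_{r_s}^{(s)}\bigr)\Bigr]\cdot P_{\{2,\dots,m\}\setminus\{s\}}\bigl(c_{r_2}^{(2)}\cdots\widehat{c_{r_s}^{(s)}}\cdots c_{r_m}^{(m)}\bigr),
\]
where the factorization of the $s$-th summand into two disjoint-index pieces is exactly Lemma~\ref{product operator}, since $\{1,s\}$ and $\{2,\dots,m\}\setminus\{s\}$ partition $\{1,\dots,m\}$. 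Applying $|_{\geq 0}$, which is an algebra homomorphism and hence distributes over each disjoint-variable product, then using the base case $m=2$ on the first factor and the induction hypothesis on the second, turns the right-hand side into $\sum_{s=2}^{m}(-1)^{s}\Pf[c_{r_1}^{(1)}c_{r_s}^{(s)}]\,\Pf[c_{r_2}^{(2)}\cdots\widehat{c_{r_s}^{(s)}}\cdots c_{r_m}^{(m)}]\big|_{\geq 0}$, which is precisely the recursion defining $\Pf[c_{r_1}^{(1)}\cdots c_{r_m}^{(m)}]$ for even $m$. For odd $m$ I would run the identical scheme with the cofactor expansion along a single unpaired index, or equivalently adjoin a phantom variable $z_{m+1}\to 0$ (a factor $c_0^{(m+1)}=1$) to reduce to the even case; this reproduces the odd recursion $\sum_{s=1}^{m}(-1)^{s-1}c_{r_s}^{(s)}\Pf[\cdots]$.

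The step I expect to be the main obstacle is making the operator-to-rational-function translation rigorous. The subtlety is that the assignment $R^{a}\mapsto\beta(a)$ is far from injective once $m\ge 3$, so the full operator $P_{\Delta_m}$ is genuinely \emph{not} equal, as an operator, to the first-row-expanded sum of shorter operators; the two agree only after being applied to a fixed monomial, where the action depends on the operator coefficients solely through their net shift and thus collapses to the Laurent coefficients of equal rational functions. Consequently the whole argument must be carried out at the level of ``$P_S$ applied to a monomial,'' and one must fix the expansion region $|z_1|>\cdots>|z_m|$ throughout, so that every geometric expansion $\frac{z_i-z_j}{z_i+z_j}=1+\sum_{s\ge 1}(-1)^{s}2\,(z_j/z_i)^{s}$ (for $i<j$) is matched with the expansion of $\frac{1-R_{ij}}{1+R_{ij}}$ fixed in Example~\ref{exm:R-Pf}. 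Keeping this bookkeeping consistent across the full product, each pair-factor, and the truncation $|_{\geq0}$ is where the care lies.
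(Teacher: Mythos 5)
Your proposal is correct and takes essentially the same route as the paper's proof: induction on $m$ with the $m=1,2$ base cases read off from the definition, Schur's first-row (cofactor) expansion of $\prod_{i<j}\frac{z_i-z_j}{z_i+z_j}$ transferred to the raising-operator level, and then matching the defining recursion of the multi Schur-Pfaffian. Your Laurent-coefficient ``dictionary'' (and your remark that the identity fails in the free polynomial ring in the $R_{ij}$) is just a careful spelling-out of the paper's one-sentence justification that, because $R_{ij}R_{js}=R_{is}$, the operator identities reduce to rational-function identities going back to Schur.
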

\begin{proof}
We proceed by induction on $m$.
The cases $m=1$ is obvious.
For $m=2$, the identity
\[
\left.\Pf[c_{r_1}^{(1)}c_{r_2}^{(2)}]\right|_{\geq 0}= \left.\left(\frac{1-R_{12}}{1+R_{12}}  (c_{r_1}^{(1)}c_{r_2}^{(2)})  \right)\right|_{\geq 0}
\]
follows clearly from the definition (\textit{cf.} Example \ref{exm:R-Pf}).
The general case can be deduced from the following identity of formal series: for $m$  even,  
\[
\prod_{(i,j)\in \Delta_m} \frac{1-R_{ij}}{1+R_{ij}} 
%(c_{r_1}^{(1)} \cdots c_{r_m}^{(m)})
= \sum_{s=2}^{m} (-1)^s \frac{1-R_{1s}}{1+R_{1s}} \prod_{(i,j) \in \Delta_m \atop{i, j \in \{1,\dots,m\}\backslash\{1, s\}}} \frac{1-R_{ij}}{1+R_{ij}}, 
%(c_{r_1}^{(1)} \cdots c_{r_m}^{(m)}).
\]
and, for $m$ odd,
\[
\prod_{(i,j)\in \Delta_m}\frac{1-R_{ij}}{1+R_{ij}} 
%(c_{r_1}^{(1)} \cdots c_{r_m}^{(m)}) 
= \sum_{s=1}^{m} (-1)^{s-1} \prod_{(i,j) \in \Delta_m \atop{i,j\in \{1,\dots,m\} \backslash \{s\}}} \frac{1-R_{ij}}{1+R_{ij}}.
% (c_{r_1}^{(1)} \cdots c_{r_m}^{(m)}).
\]
Since $R_{ij}R_{js}= R_{is}$, the proof of 
these equations 
can be reduced to showing the equations for the  
rational functions obtained from replacing 
$R_{ij}$ with $y_i/y_j.$
Such equations goes back to Schur (\cite[p.226]{Schur}).
\end{proof}
%%%%%%%%%%%%%%%%%%%%%%%%%%%%%%
%%%%%%%%%%%%%%%%%%%%%%%%%%%%%%%%%%%%%%%%%
%%%%%%%%%%%%%%%%%%%%%%%%%%%%%%%%%%%%%%%%%

%%%%%%%%%%%%%%%%%%%%%%%%%%%%%%%%%%%%%%%%%
%%%%%%%%%%%%%%%%%%%%%%%%%%%%%%%%%%%%%%%
%%%%%%%%%%%%%%%%%%%%%%%%%%%%%%%%%%%%%%%%%
%%%%%%%%%%%%%%%%%%%%%%%%%%%%%%%%%%%%%%%%%
\subsection{Pfaffian sum formula and Wilson's conjecture}
%%%%%%%%%%%%%%%%%%%%%%%%%%%%%%%%%%%%%%%%%
%%%%%%%%%%%%%%%%%%%%%%%%%%%%%%%%%%%%%%%%%
\begin{definition}
Let $\lambda$ be a $k$-strict partition contained in the $(n-k)\times (n+k)$ rectangle and $\chi \in \ZZ^{n-k}$ the corresponding characteristic index. Let $\left(\cdot\right)|_{c=\vt^{(\chi)}}$ be the substitution of $\vt_{r_i}^{(\chi_i)}$ to $c_{r_i}^{(i)}$ for each $r_i \in \ZZ$ and $i=1,\dots, n-k$. Define
\[
R_{\lambda}[\vt_{\lambda_1}^{(\chi_1)}\cdots \vt_{\lambda_{n-k}}^{(\chi_{n-k})}]:=\left.
\left(  \prod_{(i,j)\in D(\lambda)^c} \frac{1-R_{ij}}{1+R_{ij}} \prod_{(i,j)\in D(\lambda)}(1-R_{ij})(c_{\lambda_1}^{(1)}\cdots c_{\lambda_{n-k}}^{(n-k)})\right)\right|_{c=\vt^{(\chi)}}.
\]
\end{definition}
By Lemma \ref{lem:chi-lambda} and the remark below, this function coincides with the one defined in Wilson's thesis \cite[Definition 10]{W}. Note that if $D(\lambda)=\Delta_{n-k}$, it is a single determinant.
In fact, the argument in 
\cite[\S 1]{T} shows
\[
\left.
\left(  \prod_{(i,j)\in \Delta_{n-k}}(1-R_{ij})(c_{\lambda_1}^{(1)}\cdots c_{\lambda_{n-k}}^{(n-k)})\right)\right|_{c=\vt^{(\chi)}}
=\mathrm{Det}[\vt_{\lambda_1}^{(\chi_1)}\cdots \vt_{\lambda_{n-k}}^{(\chi_{n-k})}].
\]
If $D(\lambda)=\emptyset$,  then the definition gives a single Pfaffian
$\mathrm{Pf}[\vt_{\lambda_1}^{(\chi_1)}\cdots \vt_{\lambda_{n-k}}^{(\chi_{n-k})}]$ by Proposition \ref{prop:R-Pf}.
%%%%%%%%%%%%%%%%%%%%%%%%%%%%%%%%%%%%%%%%%

%%%%%%%%%%%%%%%%%%%%%%%%%%%%%%%%%%%%%%%%%
\begin{remark}\label{theta of TW and IM}
Let $\theta_p^r[j]$ be the function defined at Definition 7 in \cite{W}, then
\[
\theta_p^r[j] = \begin{cases}
{}_k\vt_{p+j}^{r} &\mbox{if}\;  k<p \mbox{ and } r \leq p-k-1,\\
{}_k\vt_{p+j}^{-r} & \mbox{if}\; k\geq p.\\
\end{cases}
\]
\end{remark}
%%%%%%%%%%%%%%%%%%%%%%%%%%%%%%%%%%%%%%%%%

Finally the following proposition shows that Theorem \ref{thm:Theta=C} is equivalent to Conjecture 1 in \cite{W}, and therefore Corollary \ref{Pf-Det} follows.
%%%%%%%%%%%%%%%%%%%%%%%%%%%%%%%%%%%%%%%%%
\begin{proposition} Let $\lambda$ be a $k$-strict partition in $\mathcal{P}_n^{(k)}$ and $\chi$ the corresponding characteristic index. We have
\[
\sum_{I\subset {D}(\lmd)}\Pf\left[\vt_{\lambda_1+a^I_1}^{(\chi_1)} \cdots  \vt_{\lambda_{n-k}+a^I_{n-k}}^{(\chi_{n-k})}\right] = R_{\lambda}[\vt_{\lambda_1}^{(\chi_1)}\cdots \vt_{\lambda_{n-k}}^{(\chi_{n-k})}].
\]
\end{proposition}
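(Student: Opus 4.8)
The plan is to reduce both sides to one and the same raising operator applied to the monomial $c_{\lambda_1}^{(1)}\cdots c_{\lambda_{n-k}}^{(n-k)}$ and then to invoke Proposition \ref{prop:R-Pf}. Write $m=n-k$. The starting point is the elementary factorization $1-R_{ij}=\frac{1-R_{ij}}{1+R_{ij}}(1+R_{ij})$, valid as an identity of formal power series in the commuting variables $R_{ij}$. Applying it to every factor indexed by $D(\lambda)$ in the definition of $R_\lambda[\,\cdot\,]$ and recombining with the factors indexed by $D(\lambda)^c=\Delta_m\setminus D(\lambda)$ recovers the complete product $\prod_{(i,j)\in\Delta_m}\frac{1-R_{ij}}{1+R_{ij}}$, leaving behind the polynomial factor $\prod_{(i,j)\in D(\lambda)}(1+R_{ij})$. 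Using the composition law $(F_1F_2)(\cdot)=F_1(F_2(\cdot))$ recorded in the preceding subsection, I would thereby rewrite
\[
R_{\lambda}[\vt_{\lambda_1}^{(\chi_1)}\cdots\vt_{\lambda_m}^{(\chi_m)}]
=\left.\left(\prod_{(i,j)\in\Delta_m}\frac{1-R_{ij}}{1+R_{ij}}\,\prod_{(i,j)\in D(\lambda)}(1+R_{ij})\,(c_{\lambda_1}^{(1)}\cdots c_{\lambda_m}^{(m)})\right)\right|_{c=\vt^{(\chi)}}.
\]

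The first key step is the combinatorial identity
\[
\prod_{(i,j)\in D(\lambda)}(1+R_{ij})\,(c_{\lambda_1}^{(1)}\cdots c_{\lambda_m}^{(m)})
=\sum_{I\subset D(\lambda)}c_{\lambda_1+a_1^I}^{(1)}\cdots c_{\lambda_m+a_m^I}^{(m)}.
\]
First I would expand the finite product over $D(\lambda)$ into the sum over subsets $I\subset D(\lambda)$ of the monomials $\prod_{(i,j)\in I}R_{ij}$; since the $R_{ij}$ commute, applying such a monomial to $c_{\lambda_1}^{(1)}\cdots c_{\lambda_m}^{(m)}$ shifts the index of the $s$-th factor by exactly $\#\{j\mid(s,j)\in I\}-\#\{i\mid(i,s)\in I\}=a_s^I$, which is precisely the defining formula for $a_s^I$. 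As $D(\lambda)$ is finite, this is a genuine finite sum and no convergence question arises here.

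The second key step is to pull the remaining operator $\prod_{(i,j)\in\Delta_m}\frac{1-R_{ij}}{1+R_{ij}}$ through this finite sum by linearity and to apply Proposition \ref{prop:R-Pf} to each summand, obtaining $\Pf[c_{\lambda_1+a_1^I}^{(1)}\cdots c_{\lambda_m+a_m^I}^{(m)}]|_{\geq0}$ for each $I$. Here I would stress that the substitution $|_{c=\vt^{(\chi)}}$ automatically incorporates the truncation $|_{\geq0}$, because ${}_k\vt_r^{(l)}=0$ for $r<0$ by Definition \ref{def-theta}; hence substituting $\vt^{(\chi)}$ into the formal series is legitimate and agrees with first truncating and then substituting. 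Performing the substitution term by term then yields $\sum_{I\subset D(\lambda)}\Pf[\vt_{\lambda_1+a_1^I}^{(\chi_1)}\cdots\vt_{\lambda_m+a_m^I}^{(\chi_m)}]$, which is exactly the left-hand side of the proposition.

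The main obstacle is not any single hard estimate but the careful handling of the formal-power-series formalism: I must check that the factorization $1-R_{ij}=\frac{1-R_{ij}}{1+R_{ij}}(1+R_{ij})$ and the subsequent regrouping are valid as identities of operators acting on $\calA$, and that interchanging the infinite operator $\prod\frac{1-R_{ij}}{1+R_{ij}}$ with both the finite sum over $I$ and the substitution $c=\vt^{(\chi)}$ preserves well-definedness. All of this is underwritten by the well-definedness and composition properties established in the preceding subsection, in particular the finiteness of the coefficient of each monomial $c_{s_1}^{(1)}\cdots c_{s_m}^{(m)}$ and the law $(F_1F_2)(\cdot)=F_1(F_2(\cdot))$, so the difficulty is one of rigor and bookkeeping rather than of genuine depth.
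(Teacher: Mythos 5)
Your proof is correct and follows essentially the same approach as the paper's: the paper's own proof runs the identical chain of identities in the opposite direction, starting from the Pfaffian sum and using Proposition \ref{prop:R-Pf} plus linearity, then the subset expansion $\prod_{(i,j)\in D(\lambda)}(1+R_{ij})(c_{\lambda_1}^{(1)}\cdots c_{\lambda_{n-k}}^{(n-k)})=\sum_{I\subset D(\lambda)}c_{\lambda_1+a_1^I}^{(1)}\cdots c_{\lambda_{n-k}+a_{n-k}^I}^{(n-k)}$, and finally the regrouping $\frac{1-R_{ij}}{1+R_{ij}}(1+R_{ij})=1-R_{ij}$ to land on $R_{\lambda}$. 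Your explicit observation that the substitution $c=\vt^{(\chi)}$ subsumes the truncation $|_{\geq 0}$ (since $\vt_r^{(l)}=0$ for $r<0$) is a correct point of rigor that the paper leaves implicit.
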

%%%%%%%%%%%%%%%%%%%%%%%%%%%%%%%%%%%%%%%%%
\begin{proof} We have
\begin{eqnarray*}
&&\sum_{I\subset {D}(\lmd)}\Pf\left[\vt_{\lambda_1+a^I_1}^{(\chi_1)} \cdots  \vt_{\lambda_{n-k}+a^I_{n-k}}^{(\chi_{n-k})}\right] \\
&=&\left. \left( \prod_{(i,j) \in \Delta_{n-k}} \frac{1-R_{ij}}{1+R_{ij}} \sum_{I\subset {D}(\lambda)}c_{\lambda_1+a^I_1}^{(1)} \cdots  c_{\lambda_{n-k}+a^I_{n-k}}^{({n-k})}\right)\right|_{c=\vt^{(\chi)}}\\
&=&\left.\left( \prod_{(i,j) \in \Delta_{n-k}} \frac{1-R_{ij}}{1+R_{ij}} \cdot \prod_{(i,j)\in D(\lambda)}(1 + R_{ij}) (c_{\lambda_1}^{(1)}\cdots c_{\lambda_{n-k}}^{(n-k)}) \right)\right|_{c=\vt^{(\chi)}}\\
&=& \left.\left(   \prod_{(i,j) \in D(\lambda)^c} \frac{1-R_{ij}}{1+R_{ij}}     \cdot \prod_{(i,j) \in D(\lambda)} (1 - R_{ij}) (c_{\lambda_1}^{(1)}\cdots c_{\lambda_{n-k}}^{(n-k)}) \right)\right|_{c=\vt^{(\chi)}}\\
&=& R_{\lambda}[\vt_{\lambda_1}^{(\chi_1)}\cdots \vt_{\lambda_{n-k}}^{(\chi_{n-k})}],
\end{eqnarray*}
where the first equality follows from the linearity of $(\cdot)|_{c=\vt^{(\chi)}}$ and the operators, the second follows from the definition of $a^I$, the third follows from Lemma \ref{product operator}, and the last is the definition of $R_{\lambda}$.
\end{proof}
%%%%%%%%%%%%%%%%%%%%%%%%%%%%%%%%%%
%%%%%%%%%%%%%%%%%%%%%%%%%%%%%%%%%%
%%%%%%%%%%%%%%%%%%%%%%%%%%%%%%%%%%
%%%%%%%%%%%%%%%%%%%%%%%%%%%%%%%%%%
%%%%
%%%%%%%%%%%%%%%%%%%%%%%%%%%%%%%%%%
%%%%%%%%%%%%%%%%%%%%%%%%%%%%%%%%%%
\section{Pfaffian sum formula beyond Grassmannians}\label{sec: beyond grassmannian}
%%%%%%%%%%%%%%%%%%%%%%%%%%%%%%%%%%
%%%%%%%%%%%%%%%%%%%%%%%%%%%%%%%%%%
In this section, we show that our technique of deriving the Pfaffian sum formula can be applied beyond the $k$-Grassmannian elements. Let $G={Sp}_{2n}(\CC)$ as before. First we derive a single Pfaffian formula for the polynomial corresponding to the top class of a symplectic partial flag variety. Then we introduce a certain group of signed permutations, called \emph{pseudo $k$-Grassmannian elements}, and show the Pfaffian sum formula for each polynomial corresponding to those. We conclude by remarking the possibility of extending our computation further with the example of \emph{all} signed permutations when $n=3$.
%%%%%%%%%%%%%%%%%%%%%%%%%%%%%%%%%%
%%%%%%%%%%%%%%%%%%%%%%%%%%%%%%%%%%
\subsection{The longest elements for partial flag varieties}\label{subsec: pseudo}
%%%%%%%%%%%%%%%%%%%%%%%%%%%%%%%%%%
%%%%%%%%%%%%%%%%%%%%%%%%%%%%%%%%%%
For $J \subset \{0,\dots, n-1\}$, let $W_{J}$ be the subgroup of $W_{\infty}$ generated by $s_i, i\not\in J$. The set of the minimum-length coset representatives of $W_{\infty}/W_{J}$ is 
\[
W^{J} := \{  s \in W_{\infty} \ |\ \ell(w) > \ell(ws_i) \ \ \forall i\geq 0, i\not\in J\}.
\]
The Schubert varieties of the generalized flag variety $G/P_{J}$ are indexed by $W_n^{J}:=W_n \cap W^{J}$ where $G={Sp}_{2n}(\CC)$ as before and $P_{J}$ is the parabolic subgroup associated to $J$.
%%%%%%%%%%%%%%%%%%%%%%%%%%%%%%%%%%
\begin{lemma}
Let $J=\{k_1<\dots<k_p\}$. The elements of $W_n^{J}$ are the signed permutations $w=(w_1\cdots w_n)$ such that
\begin{equation}\label{p step flag perm}
\begin{cases}
w_1<\cdots <w_{k_2}, w_{k_2+1}<\cdots < w_{k_3}, \cdots, w_{k_p+1} < \cdots < w_n &\mbox{if}\;   k_1=0,\\
0<w_1<\cdots <w_{k_1}, w_{k_1+1}<\cdots < w_{k_2}, \cdots, w_{k_p+1} < \cdots < w_n    &\mbox{if}\;  k_1\not=0.
\end{cases}
\end{equation}
Furthermore, the longest element $w$ in $W_n^{J}$ is given by
\[\begin{cases}
w=\overline{k_2}\,\overline{k_2-1}\,\dots\, \overline{1}\,|\, \overline{k_3}\,\overline{k_3-1}\cdots \overline{k_2+1}\,|\,\cdots\,|\,\overline{n}\,\overline{n-1}\,\cdots\,\overline{k_p+1}  &\mbox{if}\;  k_1=0,\\
w= 12\cdots k_1\,|\,\overline{k_2}\,\overline{k_2-1}\,\dots\, \overline{k_1+1}\,|\,\overline{k_3}\,\overline{k_3-1}\cdots \overline{k_2+1}\,|\,\cdots\,|\,\overline{n}\,\overline{n-1}\,\cdots\,\overline{k_p+1} & \mbox{if}\;   k_1\not=0.
\end{cases}\]
%%%%%%%%%%%%%%%%%%%%%%%%%%%%%%%%%%
\end{lemma}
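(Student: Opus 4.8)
The proof splits into the characterization (\ref{p step flag perm}) and the identification of the longest element.

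For the first part, I would start from the standard description of minimal coset representatives: $w\in W_n^{J}=W_n\cap W^{J}$ if and only if $w$ has no right descent $s_i$ with $i\in\{0,1,\dots,n-1\}\setminus J$, i.e. $\ell(ws_i)>\ell(w)$ for all such $i$ (see \cite{BjBr}). Note that the conditions for $i\ge n$ are automatic, since any $w\in W_n$ fixes every integer exceeding $n$, so they impose nothing. I then translate each right descent into one-line notation via the elementary facts that for $i\ge 1$ one has $\ell(ws_i)<\ell(w)$ iff $w_i>w_{i+1}$, and $\ell(ws_0)<\ell(w)$ iff $w_1<0$. Demanding the absence of a descent at every $i\notin J$ forces $w$ to be strictly increasing on each block of consecutive positions delimited by $J$, and forces $w_1>0$ precisely when $0\notin J$, i.e. when $k_1\neq 0$. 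This is exactly (\ref{p step flag perm}); the step is routine bookkeeping.

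For the longest element I would exploit that $W_n$ is finite. By the previous paragraph the conditions at $i\ge n$ are vacuous, so $W_n^{J}$ is precisely the set of minimal-length representatives of $W_n/W_{J,n}$, where $W_{J,n}:=W_J\cap W_n=\langle s_i\mid i\in\{0,\dots,n-1\}\setminus J\rangle$. For a finite Coxeter group the unique longest element of such a quotient is $w_0w_{0,J}$, with $w_0$ the longest element of $W_n$ and $w_{0,J}$ the longest element of $W_{J,n}$: indeed, for each $s_i\in W_{J,n}$ one has $\ell(w_{0,J}s_i)<\ell(w_{0,J})$, whence $\ell(w_0w_{0,J}s_i)=\ell(w_0)-\ell(w_{0,J})+1>\ell(w_0w_{0,J})$, so $w_0w_{0,J}\in W_n^{J}$; and any $u\in W_n^{J}$ satisfies $\ell(u)+\ell(w_{0,J})=\ell(uw_{0,J})\le\ell(w_0)$, giving $\ell(u)\le\ell(w_0)-\ell(w_{0,J})=\ell(w_0w_{0,J})$.

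It then remains to compute $w_{0,J}$ and the product explicitly. I would decompose the subdiagram on $\{0,\dots,n-1\}\setminus J$ into connected components: the type-$A$ pieces supported on the blocks $[k_i+1,k_{i+1}]$, on which $w_{0,J}$ acts by reversal, together with—when $k_1\neq 0$—one extra type-$C_{k_1}$ component supported on $[1,k_1]$ and containing $s_0$, on which $w_{0,J}$ acts by $i\mapsto -i$. Using $w_0(j)=-j$, a position $i$ in a reversal block $[a,b]$ gives $(w_0w_{0,J})(i)=-(a+b-i)=i-a-b$, so that block is filled with $\overline{b},\overline{b-1},\dots,\overline{a}$; applied to $[k_i+1,k_{i+1}]$ this produces exactly the barred segments in the asserted formula, while on the first block $[1,k_1]$ (when $k_1\neq 0$) $w_{0,J}(i)=-i$ gives $(w_0w_{0,J})(i)=i$, hence the initial segment $12\cdots k_1$. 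A final check that this element satisfies (\ref{p step flag perm}) confirms it lies in $W_n^{J}$. The step I expect to require the most care is the case distinction $k_1=0$ versus $k_1\neq 0$: whether the node $s_0$ survives in the complement of $J$ decides whether the leading component is type $C$ (a full sign-change block $12\cdots k_1$) or type $A$ (a reversal block), and this is exactly what yields the two displayed shapes.
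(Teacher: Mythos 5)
Your proof is correct, but it takes a genuinely different route from the paper's. The paper argues with the explicit inversion-count length formula for signed permutations (\cite[Prop.~8.1.1]{BjBr}): for the first claim it compares, term by term, the three inversion statistics ($\mathrm{inv}$, $\mathrm{neg}$, $\mathrm{nsp}$) of the blockwise-increasing element with those of an arbitrary element of the same coset, and for the second claim it observes that the displayed element maximizes the inversion count among all blockwise-increasing elements, computing that count explicitly (which is how the paper also records the length of the longest element, i.e.\ the dimension of the partial flag variety, as a byproduct). You instead invoke general Coxeter theory: the right-descent characterization of minimal-length coset representatives, translated into one-line notation (descent at $s_i$, $i\geq 1$, iff $w_i>w_{i+1}$; descent at $s_0$ iff $w_1<0$), which gives the first claim immediately; and, for the second claim, the standard fact that the longest element of a parabolic quotient of a finite Coxeter group is $w_0w_{0,J}$, which you prove in two lines from $\ell(w_0u)=\ell(w_0)-\ell(u)$ and $\ell(uv)=\ell(u)+\ell(v)$ for $u$ a minimal representative and $v$ in the parabolic subgroup, and then evaluate by splitting the complement of $J$ into its type-$A$ components plus (when $k_1\neq 0$) the type-$C_{k_1}$ component containing $s_0$, whose longest element acts by $-\mathrm{id}$ on positions $1,\dots,k_1$. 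Your route is more structural and replaces the paper's ``it is clear that such $w$ realizes the largest number of inversions'' by a clean length computation; the paper's route stays entirely inside elementary signed-permutation combinatorics and yields the explicit value of $\ell(w)$. One point you rightly did not gloss over is that $W_n^{J}=W_n\cap W^{J}$ coincides with the set of minimal-length representatives of $W_n/(W_J\cap W_n)$ because the descent conditions at $i\geq n$ are vacuous for $w\in W_n$; with that in place both arguments are complete.
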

%%%%%%%%%%%%%%%%%%%%%%%%%%%%%%%%%%
Here the vertical lines in one line notation are just only to emphasize the descents. For example, the longest element of $W_7^{\{0,3,5\}}$ is $w=\bar3\bar2\bar1|\bar5\bar4|\bar7\bar6$, and the longest element of $W_9^{\{2,6,8\}}$ is $w = 12|\bar6\bar5\bar4\bar3|\bar8\bar7|\bar9$.
%%%%%%%%%%%%%%%%%%%%%%%%%%%%%%%%%%
\begin{proof}
Since $W_{n,J}:=W_{J} \cap W_n$ is generated by $s_i, i\in J$, it is obvious that those permutations are coset representatives for both cases $0\in J$ and $0\not\in J$. Recall that the length of a signed permutation is the ``number of inversions" (Proposition 8.1.1 \cite{BjBr})
\[
\inv(w) := \inv(w_1,\dots,w_n) + \mbox{neg}(w_1,\dots,w_n) + \mbox{nsp}(w_1,\dots,w_n),
\]
where $\mbox{neg}$ is the number of negative numbers and $\mbox{nsp}$ is the number of pairs whose sums are negative. Let $w$ be the one at (\ref{p step flag perm}) and let $v$ be an arbitrary element in the coset $w W_{J}^n$. The first part of the claim follows, just by observing that each term in $\inv(v)$ is greater than or equal to the corresponding term of $\inv(w)$. For the second part, it is clear that such $w$ realizes the largest number of inversions, which can be actually computed from the definition of inversions:
\begin{equation}\label{dim of partial flag}
\inv(w)=
\begin{cases}
\sum_{i=2}^p k_i(n-k_i) + n + \frac{n(n+1)}{2}&\mbox{if}\;  k_1=0,\\
\sum_{i=1}^p k_i(n-k_i) + n-k_1 + \frac{(n-k_1)(n-k_1+1)}{2} + \frac{k_1(n-k_1)}{2} &\mbox{if}\;  k_1\not=0.
\end{cases}
\end{equation}
\end{proof}
%%%%%%%%%%%%%%%%%%%%%%%%%%%%%%%%%%
\begin{corollary}\label{red ex}
Let $w_0$ be the longest element in $W_n$.  Let $J=\{0=k_1<\dots<k_p\}$. Let $k_{p+1}=n$. For each $i=1,\dots, p$, let
\[
\underline{v_i}:= (k_{i+1}-1, k_{i+1}-2, \dots, k_i+2, k_i+1, k_{i+1}-1, k_{i+1}-2, \dots, k_i+2, k_{i+1}-1, \dots,  k_{i+1}-1, k_{i+1}-2, k_{i+1}-1),
\]
where we set $\underline{v_i} = \emptyset$ if $k_{i+1}=k_i+1$.
\begin{itemize} 
\item[(a)] Let $w$ be the longest element  in $W_n^{J}$. Let $(i_1,\dots, i_r)$ be a reduced word of $w$, \textit{i.e.} $r:=\ell(w)$ and $w=s_{i_1}\cdots s_{i_r}$. Then
\[
(\underline{v_1},\dots, \underline{v_p},  i_1,\dots, i_r)
\]
is a reduced word for $w_0$. 
\item[(b)] Let $w'$ be the longest element  in $W_n^{J\backslash\{k_1\}}$ and $(i_1',\dots, i_r')$ a reduced word of $w'$. Let $v$ be the longest element of $W_{k_2}^{(0)}$ and $(j_1,\dots, j_{s})$ a reduced word for $v$. Then
\[
(\underline{v_1},\dots, \underline{v_p}, j_1,\dots, j_{s},  i_1',\dots, i_r')
\]
is a reduced word for $w_0$.
\end{itemize}
\end{corollary}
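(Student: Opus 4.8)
The plan is to deduce both statements from the standard parabolic factorization of the longest element, combined with the fact that in type $C_n$ the longest element $w_0$ of $W_n$ acts as $-\mathrm{id}$ and is therefore central. Throughout I read a word $(a_1,\dots,a_N)$ as the product $s_{a_1}\cdots s_{a_N}$ and use repeatedly that if $\ell(xy)=\ell(x)+\ell(y)$, then concatenating a reduced word for $x$ with one for $y$ gives a reduced word for $xy$. The first (routine) observation is that $\underline{v_i}$ is exactly the standard reduced word for the longest element $v_i$ of the symmetric group $\langle s_{k_i+1},\dots,s_{k_{i+1}-1}\rangle$, namely the reversal of the $i$-th block $\{k_i+1,\dots,k_{i+1}\}$, of length $\binom{k_{i+1}-k_i}{2}$. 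Since for distinct $i$ these generating sets are mutually non-adjacent in the Dynkin diagram, the $v_i$ commute, and $(\underline{v_1},\dots,\underline{v_p})$ is a reduced word for their product.

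For part (a), set $K=\{1,\dots,n-1\}\setminus\{k_2,\dots,k_p\}$, so $W_{n,J}:=W_J\cap W_n=\langle s_i:i\in K\rangle = S_{k_2}\times S_{k_3-k_2}\times\cdots\times S_{n-k_p}$; its longest element is $w_{0,J}=v_1\cdots v_p$ with reduced word $(\underline{v_1},\dots,\underline{v_p})$. The standard factorization gives $w_0=w\,w_{0,J}$ with $w$ the longest element of $W_n^{J}$ and $\ell(w_0)=\ell(w)+\ell(w_{0,J})$. Right-multiplying by $w_{0,J}$ yields $w=w_0 w_{0,J}$, and then, using that $w_{0,J}$ is an involution and $w_0$ is central,
\[
w_{0,J}\,w=w_{0,J}w_0 w_{0,J}=w_0 w_{0,J}w_{0,J}=w_0 ,
\]
so $w_0=w_{0,J}\,w$ is again length-additive. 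Concatenating $(\underline{v_1},\dots,\underline{v_p})$ with the given reduced word $(i_1,\dots,i_r)$ for $w$ then produces a reduced word for $w_0$, which is the assertion.

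For part (b), put $J'=J\setminus\{0\}=\{k_2,\dots,k_p\}$ and $K'=K\cup\{0\}$, so $W_K\subset W_{K'}$ and $W_{n,J'}=\langle s_i:i\in K'\rangle = W_{k_2}^{C}\times S_{k_3-k_2}\times\cdots\times S_{n-k_p}$, where the first factor $W_{k_2}^{C}=\langle s_0,\dots,s_{k_2-1}\rangle$ is now of type $C_{k_2}$. Exactly as in (a), the central-involution argument gives the length-additive factorization $w_0=w_{0,J'}\,w'$ with $w'$ the longest element of $W_n^{J'}$. It remains to exhibit the displayed reduced word for $w_{0,J'}$. Since the first block is disjoint from and non-adjacent to the others, $w_{0,J'}=c\cdot(v_2\cdots v_p)$ length-additively, with $c$ the longest element of $W_{k_2}^{C}$. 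Inside $W_{k_2}^{C}$, whose longest element $c$ is again central (it is $-\mathrm{id}$ on the first block), the same argument applied to the type-$A$ parabolic $\langle s_1,\dots,s_{k_2-1}\rangle=S_{k_2}$ gives $c=v_1\,v$ length-additively, where $v_1$ is the block reversal (reduced word $\underline{v_1}$) and $v$ is the longest element of $W_{k_2}^{(0)}$ (reduced word $(j_1,\dots,j_s)$). Finally $v$, built from $s_0,\dots,s_{k_2-1}$, commutes with $v_2,\dots,v_p$, built from reflections of index $\ge k_2+1$, so
\[
w_{0,J'}=v_1 v\,(v_2\cdots v_p)=v_1 v_2\cdots v_p\cdot v ,
\]
still length-additively, with reduced word $(\underline{v_1},\dots,\underline{v_p},j_1,\dots,j_s)$. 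Concatenating with the reduced word $(i_1',\dots,i_r')$ for $w'$ gives the claim.

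The identification of $\underline{v_i}$ with the type-$A$ longest word and the commutation of reflections from non-adjacent blocks are routine. The only genuinely delicate point is the reordering used in both parts: the parabolic factorization a priori places $w_{0,J}$ (respectively the local factor $v$) on the side opposite to where the statement needs it, and moving the block-reversal prefix $(\underline{v_1},\dots,\underline{v_p})$ to the front requires commuting it past $w$ (respectively moving $v$ to the end) without destroying reducedness. This is precisely where the centrality of $w_0=-\mathrm{id}$ in type $C$ is indispensable; without it the two orders of the factorization would differ and the stated word need not be reduced. I would therefore isolate, as a small lemma, the statement that for any standard parabolic $W_L\subseteq W_n$ one has both length-additive factorizations $w_0=w_0^{L}\,w_{0,L}=w_{0,L}\,w_0^{L}$, and invoke it twice: once for $K$ in part (a), and in nested form (for $W_{k_2}^{C}$ and its parabolic $S_{k_2}$) inside part (b). As a consistency check, the resulting word lengths can be matched against the dimension count in (\ref{dim of partial flag}).
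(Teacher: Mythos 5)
Your proposal is correct, and in fact the paper offers no proof of this corollary at all: it is stated immediately after the lemma describing the longest element of $W_n^{J}$ and is followed directly by an example, so it is treated as an immediate consequence of that lemma together with standard facts about parabolic factorizations. Your argument supplies exactly the missing reasoning, and it does so along what is surely the intended route: the length-additive factorization $w_0 = w_0^{L}\,w_{0,L}$ for a standard parabolic $W_L$, upgraded to the reversed ordering $w_0 = w_{0,L}\,w_0^{L}$ needed by the statement. You are right to single out the ordering as the one non-routine point; it is precisely here that the type-$C$ feature $w_0=-\mathrm{id}$ (hence $w_0$ central, so $w_0$ and $w_{0,L}$ commute) is used, and your computation $w_{0,J}w = w_{0,J}w_0w_{0,J} = w_0$, with reducedness then forced by the length count, is complete. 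The identification of $\underline{v_i}$ with the staircase reduced word of the block longest element, the elementwise commutation of factors supported on non-adjacent blocks, and the nested application inside $W_{k_2}^{C}$ (with its own central longest element and the type-$A$ parabolic $S_{k_2}$, yielding $c=v_1 v$ and then $w_{0,J'}=v_1v_2\cdots v_p\cdot v$) are all handled correctly, and they are consistent with the paper's examples, e.g.\ $\ell(w_0)=\ell(w')+13$ for $J=\{3,6,7\}$, $n=9$.
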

%%%%%%%%%%%%%%%%%%%%%%%%%%%%%%%%%%
\begin{example}
Let $J=\{0\}$ and $n=4$. We have $\underline{v_1}=(3,2,1,3,2,3)$, $w_0 = s_3s_2s_1s_3s_2s_3 w$ and $\ell(w_0) = 6 + \ell(w)$.

Let $J=\{0,3,6,7\}$ and $n=9$. We have $\underline{v_1}= (2,1,2)$, $\underline{v_2}= (5,4,5)$, $\underline{v_3}= \emptyset$, and $\underline{v_4}= (8)$. Thus $w_0=(s_2s_1s_2)(s_5s_4s_5)s_8 w$ and $\ell(w_0) = \ell(w) + 7$.

Let $J=\{3,6,7\}$ and $n=9$. Then $w_0 = (s_2s_1s_2)(s_5s_4s_5)s_8  (s_0s_1s_2s_0s_1s_0)w'$ and $\ell(w_0) = \ell(w') + 13$. where $w'$ is the longest element of $W_9^{\{3,6,7\}}$ and $(0,1,2,0,1,0)$ is a reduced word of the longest element of $W_3^{(0)}$.
\end{example}
%%%%%%%%%%%%%%%%%%%%%%%%%%%%%%%%%%
\subsection{Pfaffian formula for the longest elements for partial flag varieties}
%%%%%%%%%%%%%%%%%%%%%%%%%%%%%%%%%%
%%%%%%%%%%%%%%%%%%%%%%%%%%%%%%%%%%
The key fact in this section is that the top function $\frakC_{w_0}$ is written in a Pfaffian form. Then we can operate $\delta_i$'s on $\frakC_{w_0}$ while keeping the Pfaffian form to some extent. Recall from Remark \ref{rem: rel to Ik and Ka} that, for a strict partition $\lambda=(\lambda_1,\dots,\lambda_n) \in  \calP_{\infty}^{(0)}$,  we have
\[
Q_{\lambda}(x|t) = \Pf[{}_0\vt_{\lambda_1}^{\lambda_1-1}{}_0\vt_{\lambda_2}^{\lambda_2-1}\cdots {}_0\vt_{\lambda_n}^{\lambda_n-1}].
\]
Let $\rho = (2n-1,2n-3, \dots, 3,1)$. Ikeda-Mihalcea-Naruse \cite[Theorem 1.2]{DSP} proved that for the longest element $w_0$ in $W_n$ we have
\[
\frakC_{w_{0}} = Q_{\rho}(x|t)\big|_{(t_1,t_2,t_3,t_4,\dots)=(t_1,-z_1,t_2,-z_2,\dots)}.
\]
By observing $Q_{\rho}(x|t)=\Pf[{}_0\vt_{2n-1}^{2n-2}\,{}_0\vt_{2n-3}^{2n-4} \cdots {}_0\vt_{3}^{2}\,{}_0\vt_{1}^{0}]$ and 
\[
{}_0\vt_{r}^{2i}(x, z|t)\big|_{(t_1,t_2,t_3,t_4,\dots)=(t_1,-z_1,t_2,-z_2,\dots)} = {}_i\vt_{r}^{i}(x,z|t),
\]
we have the following result. 
%%%%%%%%%%%%%%%%%%%%%%%%%%%%%%%%%%
\begin{theorem}[Ikeda-Mihalcea-Naruse]\label{topDSP}
Let $w_0\in W_n$ be the longest element. Then
\[
\frakC_{w_0} = \Pf\left[{}_{n-1}\vt_{2n-1}^{(n-1)}{}_{n-2}\vt_{2n-3}^{(n-2)} \cdots {}_1\vt_{3}^{(1)}{}_0\vt_{1}^{(0)}\right].
\] 
\end{theorem}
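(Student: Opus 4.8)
The plan is to reduce the claim to the Ikeda--Mihalcea--Naruse evaluation of $\frakC_{w_0}$ recalled just above, and then to push the factorial-$Q$ Pfaffian of Remark \ref{rem: rel to Ik and Ka} through the substitution $(t_1,t_2,t_3,t_4,\dots)=(t_1,-z_1,t_2,-z_2,\dots)$. First I would take $\rho=(2n-1,2n-3,\dots,3,1)$ and start from $\frakC_{w_0}=Q_\rho(x|t)\big|_{\mathrm{sub}}$, which is \cite[Theorem 1.2]{DSP}. Applying Remark \ref{rem: rel to Ik and Ka} to the strict partition $\rho$, and using $\rho_j=2n-2j+1$, $\rho_j-1=2(n-j)$ for $1\le j\le n$, gives the presentation $Q_\rho(x|t)=\Pf[{}_0\vt_{2n-1}^{(2n-2)}{}_0\vt_{2n-3}^{(2n-4)}\cdots{}_0\vt_3^{(2)}{}_0\vt_1^{(0)}]$ prior to substitution.

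The crux is the slot-wise identity ${}_0\vt_r^{(2i)}(x,z|t)\big|_{\mathrm{sub}}={}_i\vt_r^{(i)}(x,z|t)$, which I would check at the level of generating functions from Definition \ref{def-theta}. For $k=0$ one has
\[
\sum_{r\geq 0}{}_0\vt_r^{(2i)}(x,z|t)\,u^r=\prod_{m\geq 1}\frac{1+x_mu}{1-x_mu}\prod_{j=1}^{2i}(1-t_ju),
\]
and under $t_{2j-1}\mapsto t_j$, $t_{2j}\mapsto -z_j$ (for $1\le j\le i$) the finite product becomes $\prod_{j=1}^{i}(1-t_ju)(1+z_ju)$, which is exactly the $z$- and $t$-factor in the generating function of ${}_i\vt_r^{(i)}$. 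Comparing coefficients of $u^r$ yields the identity for all $r$.

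To conclude, I would use that the substitution is a ring homomorphism and hence commutes with the $\ZZ$-linear structure of the multi Schur-Pfaffian. Expanding $\Pf[{}_0\vt_{2n-1}^{(2n-2)}\cdots{}_0\vt_1^{(0)}]$ as a $\ZZ$-combination of monomials in the formal symbols $c_s^{(j)}$, the $j$-th slot carries the fixed upper index $2(n-j)$ in every monomial, so the substitution replaces each factor ${}_0\vt_\bullet^{(2(n-j))}$ uniformly by ${}_{n-j}\vt_\bullet^{(n-j)}$. This turns $Q_\rho(x|t)\big|_{\mathrm{sub}}$ into $\Pf[{}_{n-1}\vt_{2n-1}^{(n-1)}{}_{n-2}\vt_{2n-3}^{(n-2)}\cdots{}_1\vt_3^{(1)}{}_0\vt_1^{(0)}]$, which by the first step equals $\frakC_{w_0}$, as desired.

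The computation is mostly bookkeeping, and I expect the main obstacle to be purely notational: keeping the three running indices consistent (the slot position $j$, the parameter $i=n-j$ governing the substitution, and the strict-partition datum $\rho_j$), and confirming that the constant upper index along each Pfaffian slot is precisely $2(n-j)$ so that it maps to $(n-j)$ with $k=n-j$. Since the $t$-substitution acts independently within each factor, no interaction between slots arises and the per-slot identity suffices.
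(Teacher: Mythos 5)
Your proof is correct and follows essentially the same route as the paper: both start from the Ikeda--Mihalcea--Naruse evaluation $\frakC_{w_0}=Q_\rho(x|t)\big|_{(t_1,t_2,t_3,t_4,\dots)=(t_1,-z_1,t_2,-z_2,\dots)}$, rewrite $Q_\rho$ via the factorial-$Q$ Pfaffian of Remark \ref{rem: rel to Ik and Ka}, and push the substitution through the Pfaffian slot by slot using ${}_0\vt_{r}^{(2i)}\big|_{\mathrm{sub}}={}_i\vt_{r}^{(i)}$. The only difference is one of detail: you verify the slot-wise generating-function identity and the compatibility with the $\ZZ$-linear expansion of the multi Schur-Pfaffian explicitly, whereas the paper simply asserts these ``by observing''.
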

The $\vt$-functions in the Pfaffian above can be regarded as equivariantly modified special Schubert classes from various isotropic Grassmannians.
%%%%%%%%%%%%%%%%%%%%%%%%%%%%%%%%%%

To apply $\delta_i$'s systematically to the top function $\frakC_{w_0}$, we need Proposition \ref{square-relation0general}, Proposition \ref{delta-i-on-Pf general}, and the following lemma. 
%%%%%%%%%%%%%%%%%%%%%%%%%%%%%%%%%%
\begin{lemma}\label{k-l updown}
If $l\geq 0$ and $k>0$, \
\begin{eqnarray*}
{}_k\vt_{r}^{(l)} &=& {}_{k-1}\vt_{r}^{(l+1)} + (t_{l+1}+z_k) \cdot {}_{k-1}\vt_{r-1}^{(l)}.
\end{eqnarray*}
Moreover, if $l_p\geq 0$ and $k_p >0$, then
\begin{eqnarray*}
&&\Pf[{}_{k_1}\!\vt_{r_1}^{(l_1)}\cdots {}_{k_p}\!\vt_{r_p}^{(l_p)} \cdots {}_{k_m}\!\vt_{r_m}^{(l_m)}]\\
&=& 					\Pf[{}_{k_1}\!\vt_{r_1}^{(l_1)} \cdots {}_{k_p-1}\vt_{r_p}^{(l_p+1)} \cdots {}_{k_m}\!\vt_{r_m}^{(l_m)}]
+ (t_{l_p+1} + z_{k_p})	\Pf[{}_{k_1}\!\vt_{r_1}^{(l_1)} \cdots {}_{k_p-1}\vt_{r_p-1}^{(l_p)} \cdots {}_{k_m}\!\vt_{r_m}^{(l_m)}].
\end{eqnarray*}
\end{lemma}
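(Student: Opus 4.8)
The lemma has two parts: a generating-function identity for the ${}_k\vt$-functions and its Pfaffian consequence. The plan for the first identity is to compare the generating functions ${}_kf_l(u)=\sum_{r\ge 0}{}_k\vt_r^{(l)}u^r$ of Definition \ref{def-theta}. Writing $P(u):=\prod_{i=1}^{\infty}\frac{1+x_iu}{1-x_iu}\prod_{i=1}^{k-1}(1+z_iu)\prod_{i=1}^{l}(1-t_iu)$ for the factor common to the three relevant series, one has ${}_kf_l(u)=P(u)(1+z_ku)$, ${}_{k-1}f_{l+1}(u)=P(u)(1-t_{l+1}u)$, and ${}_{k-1}f_l(u)=P(u)$. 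The elementary scalar identity $1+z_ku=(1-t_{l+1}u)+(t_{l+1}+z_k)u$ then yields
\[
{}_kf_l(u)={}_{k-1}f_{l+1}(u)+(t_{l+1}+z_k)\,u\cdot{}_{k-1}f_l(u),
\]
and extracting the coefficient of $u^r$ gives the first formula. I would note that it holds for all $r\in\ZZ$: for $r<0$ every term vanishes, and for $r=0$ it reduces to $1=1$.

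For the Pfaffian identity the plan is to feed the first identity, valid for every index $s$, into the $p$-th family of the multi-Schur-Pfaffian. Because in the recursive definition of $\Pf$ each monomial of $\Pf[c_{r_1}^{(1)}\cdots c_{r_m}^{(m)}]$ carries exactly one factor from each family $c^{(i)}$, the Pfaffian is $\ZZ[z,t]$-linear in every family. Substituting ${}_{k_p}\vt_s^{(l_p)}={}_{k_p-1}\vt_s^{(l_p+1)}+(t_{l_p+1}+z_{k_p}){}_{k_p-1}\vt_{s-1}^{(l_p)}$ into family $p$ and splitting off the scalar gives
\[
\Pf[\cdots {}_{k_p}\vt_{r_p}^{(l_p)}\cdots]=\Pf[\cdots {}_{k_p-1}\vt_{r_p}^{(l_p+1)}\cdots]+(t_{l_p+1}+z_{k_p})\,\Pf[\cdots \widehat{c}_{r_p}\cdots],
\]
where in the last Pfaffian the $p$-th family is the shifted sequence $\widehat{c}_s:={}_{k_p-1}\vt_{s-1}^{(l_p)}$ placed at index $r_p$. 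The first summand is already the first term of the claim, so it remains to identify the last Pfaffian with $\Pf[\cdots {}_{k_p-1}\vt_{r_p-1}^{(l_p)}\cdots]$.

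I expect this last identification to be the one delicate point, because feeding the once-shifted sequence at index $r_p$ is \emph{not}, as a formal polynomial, equal to placing the unshifted sequence at index $r_p-1$: the two differ by monomials carrying a factor ${}_{k_p-1}\vt_j^{(l_p)}$ with $j<0$. The clean way I would settle it is through the raising-operator description of Proposition \ref{prop:R-Pf}. Since all families here are theta functions, which vanish in negative degree, the truncation $|_{\ge 0}$ is vacuous, and each Pfaffian coincides with the action of $\mathcal{O}:=\prod_{(i,j)\in\Delta_m}\frac{1-R_{ij}}{1+R_{ij}}$ on the corresponding monomial. The index-shift operator $T_p$ that raises the $p$-th index by one commutes with every $R_{ij}$, hence with $\mathcal{O}$; therefore $\mathcal{O}(c_{r_1}^{(1)}\cdots c_{r_p-1}^{(p)}\cdots)=T_p^{-1}\mathcal{O}(c_{r_1}^{(1)}\cdots c_{r_p}^{(p)}\cdots)$, and applying $T_p^{-1}$ lowers the $p$-th index of every output monomial by one, which is exactly the effect of feeding the shifted family $\widehat{c}$ at index $r_p$. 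Substituting the theta functions then produces the desired equality and completes the Pfaffian identity.

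As an alternative to the raising-operator argument, I would point out that the shift identification can also be carried out by induction on $m$ directly from the recursive definition of $\Pf$, checking the base cases $m=1,2$ by hand and tracking that every discrepancy term carries a vanishing factor ${}_{k_p-1}\vt_{<0}^{(l_p)}=0$, exactly in the spirit of the proof of Proposition \ref{delta-i-on-Pf}. Either way, once the shift identification is established the remaining steps are purely formal, so this identification is the part I would allocate the most care to.
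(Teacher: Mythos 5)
Your proof is correct, and for the first identity it is essentially the paper's own argument: the paper derives ${}_k\vt_r^{(l+1)}={}_k\vt_r^{(l)}-t_{l+1}{}_k\vt_{r-1}^{(l)}={}_{k-1}\vt_r^{(l+1)}+z_k{}_{k-1}\vt_{r-1}^{(l+1)}$ from the factorizations of ${}_kf_{l+1}(u)$ and combines them, which is just a repackaging of your scalar identity $1+z_ku=(1-t_{l+1}u)+(t_{l+1}+z_k)u$ applied to ${}_{k-1}f_l(u)$. Where you genuinely add something is the Pfaffian statement: the paper disposes of it with the single sentence ``the second claim follows from the multilinearity of Pfaffian,'' whereas you correctly observe that multilinearity alone produces, for the second summand, the formal Pfaffian with the \emph{shifted} family ${}_{k_p-1}\vt_{s-1}^{(l_p)}$ fed in at index $r_p$, and that this differs from $\Pf[\cdots{}_{k_p-1}\vt_{r_p-1}^{(l_p)}\cdots]$ as a formal expression by boundary monomials carrying a factor ${}_{k_p-1}\vt_j^{(l_p)}$ with $j<0$ (already visible for $m=2$, where the discrepancy is the single term $2(-1)^{r_p}c_{r_q+r_p}^{(q)}c_{-1}^{(p)}$). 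Your resolution is sound either way you propose it: via Proposition \ref{prop:R-Pf}, since the index-shift operator commutes with all raising operators $R_{ij}$ and theta substitution annihilates every negative-index term, or by induction on $m$ in the recursive definition as in the proof of Proposition \ref{delta-i-on-Pf}. So your write-up follows the paper's route but makes explicit, and justifies, the one step the paper treats as immediate; this is a strengthening rather than a divergence.
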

\begin{proof}
The equation ${}_kf_{l+1}(u) = {}_kf_{l}(u)\cdot (1-t_{l+1}u) = {}_{k-1}f_{l+1}(u)\cdot (1+z_ku)$ implies
\[
{}_k\vt_{r}^{(l+1)} = {}_k\vt_{r}^{(l)} - t_{l+1}\cdot {}_k\vt_{r-1}^{(l)} = {}_{k-1}\vt_{r}^{(l+1)} + z_k \cdot {}_{k-1}\vt_{r-1}^{(l+1)}.
\]
The first claim follows from using this formula twice. Indeed, we have
\begin{eqnarray*}
&& {}_k\vt_{r}^{(l)} - t_{l+1}\cdot ({}_{k-1}\vt_{r-1}^{(l)} + z_{k}\cdot{}_{k-1}\vt_{r-2}^{(l)})  = {}_{k-1}\vt_{r}^{(l+1)} + z_k \cdot ({}_{k-1}\vt_{r-1}^{(l)} - t_{l+1}\cdot{}_{k-1}\vt_{r-2}^{(l)} ).
\end{eqnarray*}
This gives the desired equality. The second claim follows from the multilinearity of Pfaffian.% (see Remark \ref{linearity}).
\end{proof}
%%%%%%%%%%%%%%%%%%%%%%%%%%%%%%%%%%

We obtain the next lemma by the consecutive application of Proposition \ref{delta-i-on-Pf general} (b), Lemma \ref{k-l updown}, and Proposition \ref{square-relation0general} in this order.
\begin{lemma}\label{updown trick}
Suppose $l_{p+1}+1=l_p=i>0$ and $l_q\not\in\{\pm i\}$ for all $q\not=p$. If $(k_{p+1}, r_{p+1})=(k_p-1, r_p-2)$, and $r_{p}>l_{p} + k_{p}$, then
\begin{eqnarray*}
\delta_i \Pf[{}_{k_1}\!\vt_{r_1}^{(l_1)}\cdots {}_{k_p}\!\vt_{r_p}^{(i)}{}_{k_{p+1}}\!\vt_{r_{p+1}}^{(i-1)} \cdots {}_{k_m}\!\vt_{r_m}^{(l_m)}]
&=& \Pf[{}_{k_1}\!\vt_{r_1}^{(l_1)}\cdots {}_{k_p-1}\vt_{r_p-1}^{(i)}{}_{k_{p+1}}\!\vt_{r_{p+1}}^{(i-1)} \cdots {}_{k_m}\!\vt_{r_m}^{(l_m)}].
\end{eqnarray*}
\end{lemma}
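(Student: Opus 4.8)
The plan is to follow the three-step recipe flagged just before the statement: first collapse $\delta_i$ onto the single entry carrying the index $i$ via Proposition \ref{delta-i-on-Pf general}(b), then re-expand that entry with Lemma \ref{k-l updown}, and finally discard the leftover term using the ``square relation'' Proposition \ref{square-relation0general}. The key preliminary observation is that among all upper indices only $l_p=i$ lies in $\{\pm i\}$: the hypothesis gives $l_q\notin\{\pm i\}$ for $q\neq p$, and in particular $l_{p+1}=i-1$ is neither $i$ nor $-i$ because $i>0$ forces $i-1\geq 0>-i$ and $i-1\neq i$. Hence Proposition \ref{delta-i-on-Pf general}(b) applies verbatim (with $\pm i=i$, so $\pm i-1=i-1$) and produces
\[
\delta_i\Pf[\cdots {}_{k_p}\!\vt_{r_p}^{(i)}\,{}_{k_{p+1}}\!\vt_{r_{p+1}}^{(i-1)}\cdots]=\Pf[\cdots {}_{k_p}\!\vt_{r_p-1}^{(i-1)}\,{}_{k_{p+1}}\!\vt_{r_{p+1}}^{(i-1)}\cdots],
\]
in which only the $p$-th entry has changed.

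Next I would rewrite the new $p$-th entry ${}_{k_p}\!\vt_{r_p-1}^{(i-1)}$ by the Pfaffian form of Lemma \ref{k-l updown}. This is legitimate since its relevant superscript is $i-1\geq 0$ and its lower-left index is $k_p\geq 1$ (from $k_{p+1}=k_p-1\geq 0$). Taking $l=i-1$, so that $t_{l+1}=t_i$, and using multilinearity of the Pfaffian in that slot, I obtain
\begin{align*}
\Pf[\cdots {}_{k_p}\!\vt_{r_p-1}^{(i-1)}\,{}_{k_{p+1}}\!\vt_{r_{p+1}}^{(i-1)}\cdots]
&=\Pf[\cdots {}_{k_p-1}\!\vt_{r_p-1}^{(i)}\,{}_{k_{p+1}}\!\vt_{r_{p+1}}^{(i-1)}\cdots]\\
&\quad+(t_i+z_{k_p})\,\Pf[\cdots {}_{k_p-1}\!\vt_{r_p-2}^{(i-1)}\,{}_{k_{p+1}}\!\vt_{r_{p+1}}^{(i-1)}\cdots].
\end{align*}
Since $k_{p+1}=k_p-1$ and $r_{p+1}=r_p-2$, the first term on the right is already exactly the asserted answer.

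It then remains to kill the second term. In that Pfaffian the slots $p$ and $p+1$ both carry the identical entry ${}_{k_p-1}\!\vt_{r_p-2}^{(i-1)}$, so Proposition \ref{square-relation0general} applies with $k=k_p-1$, $r=r_p-2$, $l=i-1\geq 0$; its vanishing hypothesis $r>k+l$ reads $r_p-2>(k_p-1)+(i-1)=k_p+i-2$, i.e. $r_p>k_p+i$, which is precisely the standing assumption $r_p>l_p+k_p=i+k_p$. Hence the second term is zero, and combining the three displays yields the lemma. I do not expect a genuine obstacle here; the only real care lies in the index bookkeeping (tracking the simultaneous shifts $k_p\mapsto k_p-1$, $r_p\mapsto r_p-1$ and the identification $\pm i=i$), and in confirming that each of the three cited results is invoked under its exact numerical side conditions, which the hypotheses supply.
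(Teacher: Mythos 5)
Your proof is correct and follows exactly the paper's own argument: apply Proposition \ref{delta-i-on-Pf general}(b), then Lemma \ref{k-l updown} with multilinearity, then kill the extra term (with its two identical adjacent entries) via Proposition \ref{square-relation0general} using $r_p>l_p+k_p$. Your side-condition checks are all accurate, and your coefficient $t_i+z_{k_p}$ is in fact the right one (the paper's displayed $t_{i+1}+z_{k_p}$ is a harmless typo, immaterial since that term vanishes).
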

%%%%%%%%%%%%%%%%%%%%%%%%%%%%%%%%%%
\begin{proof}
We have
\begin{eqnarray*}
&&\delta_i \Pf[{}_{k_1}\!\vt_{r_1}^{(l_1)}\cdots {}_{k_p}\!\vt_{r_p}^{(i)}{}_{k_{p+1}}\!\vt_{r_{p+1}}^{(i-1)} \cdots {}_{k_m}\!\vt_{r_m}^{(l_m)}]\\
&=& \Pf[{}_{k_1}\!\vt_{r_1}^{(l_1)}\cdots {}_{k_p-1}\vt_{r_p-1}^{(i)}{}_{k_{p+1}}\!\vt_{r_{p+1}}^{(i-1)} \cdots {}_{k_m}\!\vt_{r_m}^{(l_m)}] + (t_{i+1} +z_{k_p}) \Pf[{}_{k_1}\!\vt_{r_1}^{(l_1)}\cdots {}_{k_p-1}\vt_{r_p-2}^{(i-1)}{}_{k_{p+1}}\!\vt_{r_{p+1}}^{(i-1)} \cdots {}_{k_m}\!\vt_{r_m}^{(l_m)}]\\
&=& \Pf[{}_{k_1}\!\vt_{r_1}^{(l_1)}\cdots {}_{k_p-1}\vt_{r_p-1}^{(i)}{}_{k_{p+1}}\!\vt_{r_{p+1}}^{(i-1)} \cdots {}_{k_m}\!\vt_{r_m}^{(l_m)}].
\end{eqnarray*}
\end{proof}
%%%%%%%%%%%%%%%%%%%%%%%%%%%%%%%%%%
\begin{example}\label{exm0} The following computation demonstrates the content of the above lemma.
\begin{eqnarray*}
\delta_3\Pf[{}_3\vt_{7}^{(3)}{}_2\vt_{5}^{(2)}]
&=&\Pf[{}_3\vt_{6}^{(2)}{}_2\vt_{5}^{(2)}]\\
&=&\Pf[{}_2\vt_{6}^{(3)}{}_2\vt_{5}^{(2)}]+ (t_3+z_3) \Pf[{}_2\vt_{5}^{(2)}{}_2\vt_{5}^{(2)}]\\
&=&\Pf[{}_2\vt_{6}^{(3)}{}_2\vt_{5}^{(2)}],
\end{eqnarray*}
where the first equality follows from Proposition \ref{delta-i-on-Pf general} (b), the second equality follows from Lemma \ref{k-l updown}, the third is by Proposition \ref{square-relation0general}.
\end{example}
%%%%%%%%%%%%%%%%%%%%%%%%%%%%%%%%%%
\begin{example}\label{longest W^0}
The above lemma allows us to find the Pfaffian formula for the longest element $w$ of $W_n^{(0)}$. By Corollary \ref{red ex} where $J=\{0\}$, we have
\[
\frakC_w=(\delta_{n-1})(\delta_{n-2}\delta_{n-1}) \cdots(\delta_2\delta_3\cdots \delta_{n-2}\delta_{n-1})(\delta_{1}\delta_2 \cdots \delta_{n-2}\delta_{n-1})\frakC_{w_0}.
\]
Observe that Lemma \ref{updown trick} applies to the action of each $\delta_i$, and we have the known formula
\[
\frakC_w = \Pf[ {}_0\vt_n^{(n-1)}{}_0\vt_{n-1}^{(n-2)} \cdots {}_0\vt_2^{(1)}{}_0\vt_1^{(0)}].
\]
For example, we compute $\frakC_{\bar4\bar3\bar2\bar1}$ (the longest element in $W_4^{(0)}$) from the top function $\frakC_{\bar1\bar2\bar3\bar4}$. 
\begin{eqnarray*}
\frakC_{\bar1\bar2\bar3\bar4}&=&\Pf[{}_3\vt_{7}^{(3)}{}_2\vt_{5}^{(2)}{}_1\vt_{3}^{(1)}{}_0\vt_{1}^{(0)}]\\
\stackrel{\delta_3}{\longrightarrow} \ \ 
\frakC_{\bar1\bar2\bar4\bar3}&=&\Pf[{}_2\vt_{6}^{(3)}{}_2\vt_{5}^{(2)}{}_1\vt_{3}^{(1)}{}_0\vt_{1}^{(0)}]\\
\stackrel{\delta_2}{\longrightarrow} \ \ 
\frakC_{\bar1\bar3\bar4\bar2}&=&\Pf[{}_2\vt_{6}^{(3)}{}_1\vt_{4}^{(2)}{}_1\vt_{3}^{(1)}{}_0\vt_{1}^{(0)}]\\
\stackrel{\delta_1}{\longrightarrow} \ \ 
\frakC_{\bar2\bar3\bar4\bar1}&=&\Pf[{}_2\vt_{6}^{(3)}{}_1\vt_{4}^{(2)}{}_0\vt_{2}^{(1)}{}_0\vt_{1}^{(0)}]\\
\stackrel{\delta_3}{\longrightarrow} \ \ 
\frakC_{\bar2\bar4\bar3\bar1}&=&\Pf[{}_1\vt_{5}^{(3)}{}_1\vt_{4}^{(2)}{}_0\vt_{2}^{(1)}{}_0\vt_{1}^{(0)}]\\
\stackrel{\delta_2}{\longrightarrow} \ \ 
\frakC_{\bar3\bar4\bar2\bar1}&=&\Pf[{}_1\vt_{5}^{(3)}{}_0\vt_{3}^{(2)}{}_0\vt_{2}^{(1)}{}_0\vt_{1}^{(0)}]\\
\stackrel{\delta_3}{\longrightarrow} \ \ 
\frakC_{\bar4\bar3\bar2\bar1}&=&\Pf[{}_0\vt_{4}^{(3)}{}_0\vt_{3}^{(2)}{}_0\vt_{2}^{(1)}{}_0\vt_{1}^{(0)}].
\end{eqnarray*}
\end{example}
%%%%%%%%%%%%%%%%%%%%%%%%%%%%%%%%%%

In general, we can find the Pfaffian formula for the longest element of $W_n^{J}$. First we introduce the following notation for simplicity.
%%%%%%%%%%%%%%%%%%%%%%%%%%%%%%%%%%
\begin{definition}
For each $B=(\kappa_1,\dots, \kappa_b; r_1,\dots, r_b; l_1,\dots,l_b) \in \ZZ^{3b}$, we formally denote
\[
\vt_{B}:={}_{\kappa_b}\vt_{r_b}^{(l_b)} \cdots {}_{\kappa_1}\vt_{r_1}^{(l_1)}.
\]
\end{definition}
%%%%%%%%%%%%%%%%%%%%%%%%%%%%%%%%%%
By choosing any integer sequence $0\leq k_1<k_2<\cdots < k_p<n=:k_{p+1}$, we can write 
\[
\frakC_{w_0} =\Pf[\vt_{B_p} \cdots  \vt_{B_1}],
\]
where, for each $i=1,\dots,p$, 
\[
B_i:= (k_i,k_i+1, \dots,k_{i+1}-1\ ; \  2k_i+1,2k_i+3,\dots, 2k_{i+1}-1\ ; \  k_i,k_i+1, \dots,k_{i+1}-1) \in \ZZ^{3(k_{i+1}-k_i)}.
\]
%%%%%%%%%%%%%%%%%%%%%%%%%%%%%%%%%%
\begin{theorem}\label{top block k-Pf}
Let $w$ be the longest element in $W_n^{J}$ where $J=\{k_1<\cdots<k_p\}$. Let $k_{p+1}:=n$. Then 
\begin{equation}\label{block}
\frakC_{w} =\Pf[\vt_{\widetilde{B}_p} \cdots  \vt_{\widetilde{B}_2}\vt_{\widetilde{B}_1}],
\end{equation}
where for each $i=1,\dots,p$, 
\[
\widetilde{B}_i:=(k_i, \dots,k_i\ ;\   2k_i+1,2k_i+2,\dots, k_i + k_{i+1}\ ;\  k_i, k_i+1, \dots, k_{i+1}-1).
\]
\end{theorem}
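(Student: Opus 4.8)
The plan is to compute $\frakC_w$ by descending from the top class $\frakC_{w_0}$ of $W_n$ via left divided differences, keeping everything in Pfaffian form throughout. First I would record, using Theorem \ref{topDSP} and the block notation introduced above, that for the block decomposition attached to a sequence beginning at $0$,
\[
\frakC_{w_0}=\Pf[\vt_{B_p}\cdots\vt_{B_1}],
\]
so that the blocks $B_i$ cover all upper indices $0,\dots,n-1$. Next, by Corollary \ref{red ex}(a), a reduced word for $w_0$ is obtained by prepending the words $\underline{v_1},\dots,\underline{v_p}$ to a reduced word of $w$; hence, by the defining property of the double Schubert polynomials (Theorem \ref{DSPTheorem}), $\frakC_w$ is obtained from $\frakC_{w_0}$ by applying the operators $\delta_i$ prescribed by the letters of $\underline{v_1},\dots,\underline{v_p}$, read from left to right. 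The whole proof then reduces to tracking how this specific string of $\delta_i$'s transforms the entries of the Pfaffian.

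The engine is Lemma \ref{updown trick}: each letter $i$ occurring in $\underline{v_{i'}}$ acts on the unique entry with upper index $l=i$ and replaces ${}_{\kappa}\vt_{r}^{(i)}$ by ${}_{\kappa-1}\vt_{r-1}^{(i)}$, lowering the pair $(\kappa,r)$ by $(1,1)$ while fixing $l$. Within block $i'$ (of width $d=k_{i'+1}-k_{i'}$) the letter $j$, with $k_{i'}+1\le j\le k_{i'+1}-1$, occurs exactly $j-k_{i'}$ times, so the entry of upper index $l=k_{i'}+a$ receives exactly $a$ of these moves. Thus the initial entry ${}_{k_{i'}+a}\vt_{2(k_{i'}+a)+1}^{(k_{i'}+a)}$ of $B_{i'}$ becomes ${}_{k_{i'}}\vt_{2k_{i'}+1+a}^{(k_{i'}+a)}$, which is precisely the entry of $\widetilde{B}_{i'}$. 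Since the letters used in distinct $\underline{v_{i'}}$ are disjoint and each move is local (it only reads the acted entry and its right neighbour), the blocks are converted independently, exactly as in the worked case Example \ref{longest W^0}.

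The heart of the verification is that the hypotheses of Lemma \ref{updown trick} persist at every one of the many steps, and I expect this bookkeeping to be the main obstacle. Here I would exploit the invariant $r-l-\kappa=1$, which holds in $\frakC_{w_0}$ and is preserved by the move $(\kappa,r)\mapsto(\kappa-1,r-1)$; it yields at once the required strict inequality $r>l+\kappa$. The adjacency condition $(\kappa_{p+1},r_{p+1})=(\kappa_p-1,r_p-2)$ is equivalent to the entry $l=i$ and its right neighbour $l=i-1$ having undergone the same number of previous moves; this synchronisation is forced by the shape of $\underline{v_{i'}}$, whose runs $(k_{i'+1}-1,\dots,k_{i'}+m)$ process positions from high to low. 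Moreover all upper indices stay in $\{0,\dots,n-1\}$ and pairwise distinct, so the side condition $l_q\notin\{\pm i\}$ for $q\neq p$ is automatic, and the squared-entry vanishing of Proposition \ref{square-relation0general} is never triggered; one also checks $i=k_{i'}+a>0$ throughout, so Lemma \ref{updown trick} (which requires $i>0$) indeed applies.

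Finally, for $k_1\neq 0$ I would deduce the statement from the case $0\in J$. Applying Corollary \ref{red ex}(b) to $\{0\}\cup J$, the block conversion above first produces $\Pf[\vt_{\widetilde{B}_p}\cdots\vt_{\widetilde{B}_1}\vt_{\widetilde{B}_0}]$, where $\vt_{\widetilde{B}_0}$ is the top Pfaffian of $W_{k_1}^{(0)}$. The remaining reduced word is that of the longest element $v$ of $W_{k_1}^{(0)}$, so the corresponding operator satisfies $\delta_v\frakC_v=\frakC_e=1$; since its simple reflections $s_0,\dots,s_{k_1-1}$ act only on the entries of $\widetilde{B}_0$, this collapses the lowest block, its entries reaching $\vt_0^{(l)}=1$ and dropping out by Remark \ref{rem: n-ind theta general}, leaving exactly $\Pf[\vt_{\widetilde{B}_p}\cdots\vt_{\widetilde{B}_1}]$. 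The only additional subtlety in this last step is that the collapse now involves $s_0$-moves rather than the $\kappa$-lowering move of Lemma \ref{updown trick}, which I would handle with Proposition \ref{delta-i-on-Pf general}(b).
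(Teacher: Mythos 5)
Your proposal is correct and follows essentially the same route as the paper: for $0\in J$ you descend from $\frakC_{w_0}$ along the word of Corollary \ref{red ex}(a) applying Lemma \ref{updown trick} at each letter (your bookkeeping via the invariant $r-l-\kappa=1$ and the run-by-run synchronisation is in fact more detailed than the paper's one-line justification), and for $k_1\neq 0$ you reduce to the previous case via Corollary \ref{red ex}(b) and collapse the extra block using Proposition \ref{delta-i-on-Pf general} and Remark \ref{rem: n-ind theta general}, exactly as the paper does. The only small correction concerns that final collapse: after the first $s_0$-move an entry acquires a negative upper index, so a subsequent $\delta_i$ with $i\geq 1$ sees entries with upper indices $i$ and $-i$ simultaneously and therefore needs part (c) of Proposition \ref{delta-i-on-Pf general}, not just part (b), the extra terms produced by (c) being killed by Remark \ref{rem: n-ind theta general} (this is precisely what happens in the paper's Example \ref{exm3}).
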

%%%%%%%%%%%%%%%%%%%%%%%%%%%%%%%%%%
\begin{proof}
Suppose $k_1=0$. By Corollary \ref{red ex}, we have
\[
\frakC_w= \delta_{v_1^c} \cdots \delta_{v_p^c} \frakC_{w_0} ,
\]
where 
\[
\delta_{v_i^c} = \delta_{k_{i+1}-1}  (\delta_{k_{i+1}-2}  \delta_{k_{i+1}-1})  \cdots (\delta_{k_i+2}\cdots \delta_{k_{i+1}-2}\delta_{k_{i+1}-1})(\delta_{k_i+1}\delta_{k_i+2}\cdots \delta_{k_{i+1}-2}\delta_{k_{i+1}-1}).
\]
Lemma \ref{updown trick} applies to the action of each $\delta_i$ above, and we obtain the claim. 

Instead of considering the case when $k_1\not=0$, we can assume $J=\{k_2<\cdots <k_p\}$ where $k_2\not=0$ and $w'$ is the longest element in $W_n^{J}$. We need to show that $\frakC_{w'} =\Pf[\vt_{\widetilde{B}_p} \cdots  \vt_{\widetilde{B}_2}]$. By Corollary \ref{red ex}, 
\begin{equation}\label{ldd on w'}
\frakC_{w'} = \delta_{j_s}\cdots\delta_{j_1} \frakC_{w},
\end{equation}
where $(j_1,\dots, j_s)$ is a reduced word for the longest element of $W_{k_2}^{(0)}$ and $w$ is the longest element in $W_n^{J \cup \{k_1=0\}}$. From the previous case, we have
\[
\frakC_{w}=\Pf[\vt_{\widetilde{B}_p} \cdots  \vt_{\widetilde{B}_2}\vt_{\widetilde{B}_1}].
\]
The divided difference operators  in (\ref{ldd on w'}) act only on the part $\vt_{\widetilde{B}_1}$ through Proposition \ref{delta-i-on-Pf general}. Together with Remark \ref{rem: n-ind theta general}, we obtain $\frakC_{w'} =\Pf[\vt_{\widetilde{B}_p} \cdots  \vt_{\widetilde{B}_2}]$.
\end{proof}
%%%%%%%%%%%%%%%%%%%%%%%%%%%%%%%%%%
\begin{example}\label{exm3}  In the above proof, we operate $\delta_i$'s as in Example \ref{longest W^0} to each $\vt_{B_i}$ of the top function $\frakC_{w_0}$ to obtain the Pfaffian formula for the longest element of $W_n^{J}$ for  $0 \in J$. For example, we compute $\frakC_{\bar3\bar2\bar1|\bar6\bar5\bar4|\bar7\bar8\bar9}$ (the longest element in $W_9^{\{0,3,6,7\}}$) from the top function $\frakC_{\bar1\bar2\bar3\bar4|\bar5\bar6\bar7\bar8\bar9}$. 
\begin{eqnarray*}
\frakC_{\bar1\bar2\bar3\bar4\bar5\bar6\bar7\bar8\bar9}&=&\Pf[{}_8\vt_{17}^{(8)}{}_7\vt_{15}^{(7)}{}_6\vt_{13}^{(6)}{}_5\vt_{11}^{(5)}{}_4\vt_{9}^{(4)}{}_3\vt_{7}^{(3)}{}_2\vt_{5}^{(2)}{}_1\vt_{3}^{(1)}{}_0\vt_{1}^{(0)}]\\
\xrightarrow{\delta_2\delta_1\delta_2} \ \ \ 
\frakC_{\bar3\bar2\bar1|\bar4\bar5\bar6\bar7\bar8\bar9}&=&\Pf[{}_8\vt_{17}^{(8)}{}_7\vt_{15}^{(7)}{}_6\vt_{13}^{(6)}{}_5\vt_{11}^{(5)}{}_4\vt_{9}^{(4)}{}_3\vt_{7}^{(3)}|	{}_0\vt_{3}^{(2)}{}_0\vt_{2}^{(1)}{}_0\vt_{1}^{(0)}]\\
\xrightarrow{\delta_5\delta_4\delta_5} \ \ \ 
\frakC_{\bar3\bar2\bar1|\bar6\bar5\bar4|\bar7\bar8\bar9}&=&\Pf[{}_8\vt_{17}^{(8)}{}_7\vt_{15}^{(7)}		{}_6\vt_{13}^{(6)}|	{}_3\vt_{9}^{(5)}{}_3\vt_{8}^{(4)}{}_3\vt_{7}^{(3)}	|	{}_0\vt_{3}^{(2)}{}_0\vt_{2}^{(1)}{}_0\vt_{1}^{(0)}]\\
\stackrel{\delta_8}{\longrightarrow} \ \ \ 
\frakC_{\bar3\bar2\bar1|\bar6\bar5\bar4|\bar7|\bar9\bar8}&=&\Pf[{}_7\vt_{16}^{(8)}{}_7\vt_{15}^{(7)}	|	{}_6\vt_{13}^{(6)}|		{}_3\vt_{9}^{(5)}{}_3\vt_{8}^{(4)}{}_3\vt_{7}^{(3)}	|	{}_0\vt_{3}^{(2)}{}_0\vt_{2}^{(1)}{}_0\vt_{1}^{(0)}].
\end{eqnarray*}
From this we can obtain the formula for the longest element of $W_n^{J\backslash\{0\}}$, by further applying $\delta_i$'s, using Proposition \ref{delta-i-on-Pf general} and Remark \ref{rem: n-ind theta general}. For example, we obtain $\frakC_{123|\bar6\bar5\bar4|\bar7|\bar9\bar8}$ (the longest element in $W_9^{\{3,6,7\}}$) from $\frakC_{\bar3\bar2\bar1|\bar6\bar5\bar4|\bar7|\bar9\bar8}$:
\begin{eqnarray*}
\frakC_{\bar3\bar2\bar1|\bar6\bar5\bar4|\bar7|\bar9\bar8}&=&\Pf[{}_7\vt_{16}^{(8)}{}_7\vt_{15}^{(7)}	|	{}_6\vt_{13}^{(6)}|		{}_3\vt_{9}^{(5)}{}_3\vt_{8}^{(4)}{}_3\vt_{7}^{(3)}	|	{}_0\vt_{3}^{(2)}{}_0\vt_{2}^{(1)}{}_0\vt_{1}^{(0)}]\\
\xrightarrow{\delta_0\delta_1\delta_0\delta_2\delta_1\delta_0}  \ \ \ 
\frakC_{123|\bar6\bar5\bar4|\bar7|\bar9\bar8}&=&\Pf[{}_7\vt_{16}^{(8)}{}_7\vt_{15}^{(7)}	|	{}_6\vt_{13}^{(6)}|		{}_3\vt_{9}^{(5)}{}_3\vt_{8}^{(4)}{}_3\vt_{7}^{(3)}	|	{}_0\vt_{0}^{(-1)}{}_0\vt_{0}^{(-2)}{}_0\vt_{0}^{(-3)}]\\
&=&\Pf[{}_7\vt_{16}^{(8)}{}_7\vt_{15}^{(7)}	|	{}_6\vt_{13}^{(6)}|		{}_3\vt_{9}^{(5)}{}_3\vt_{8}^{(4)}{}_3\vt_{7}^{(3)}	].
\end{eqnarray*}
The last equality follows from Remark \ref{rem: n-ind theta general}.
\end{example}
%%%%%%%%%%%%%%%%%%%%%%%%%%%%%%%%%%
%%%%%%%%%%%%%%%%%%%%%%%%%%%%%%%%%%
%%%%%%%%%%%%%%%%%%%%%%%%%%%%%%%%%%
\subsection{Pseudo $k$-Grassmannian elements}
%%%%%%%%%%%%%%%%%%%%%%%%%%%%%%%%%%
%%%%%%%%%%%%%%%%%%%%%%%%%%%%%%%%%%
\begin{definition}\label{def: pseudo Gr}
Let $k$ be a non-negative integer and $J=\{k=k_1<\cdots <k_p\}$.  A \emph{pseudo $k$-Grassmannian  element} is a signed permutation $w$ in $W_n^{J}$ such that its one line notation is of the form
\[
w=w_1w_2\cdots w_{k_2}\,|\, \overline{k_3}\,\overline{k_3-1}\cdots \overline{k_2+1}\,|\,\cdots\,|\,\overline{n}\,\overline{n-1}\,\cdots\,\overline{k_p+1}.
\]
\end{definition}
Note that in this case the first $k_2$ letters $w_1,\cdots, w_{k_2}$ form a $k$-Grassmannian permutation. Also if we replace them by  $\overline{k_2}\,\overline{k_2-1}\,\dots\, \overline{k+1}$ ($k=0$) or $12\cdots k | \overline{k_2}\,\overline{k_2-1}\,\dots\, \overline{k+1}$ $(k\not=0)$, then we obtain the longest element in $W_n^{J}$. For example, $(\bar2 1|\bar4\bar3)$ is pseudo $0$-Grassmannian in $W_4^{\{0,2\}}$ and $(2|\bar3\bar1|\bar4)$ is pseudo $1$-Grassmannian in $W_4^{\{1,3\}}$. On the other hand, $(\bar31|\bar4\bar2)$ and $(3|\bar4\bar1|\bar2)$ are in $W_4^{\{0,2\}}$ and $W_4^{\{1,3\}}$ respectively but they are not pseudo $k$-Grassmannian.

\vspace{0.2in}
%%%%%%%%%%%%%%%%%%%%%%%%%%%%%%% 

Now the following theorem follows from  the same argument in proof of Theorem \ref{MainProposition}, just by operating on the part $\vt_{\widetilde{B}_1}$ in Theorem \ref{top block k-Pf} (use Proposition \ref{delta-i-on-Pf general}). 
%%%%%%%%%%%%%%%%%%%%%%%%%%%%%%% 
\begin{theorem}\label{NEW THEOREM INTRO}
Let $w$ be a pseudo $k$-Grassmannian element in $W_n^{J}$ where $J=\{k_1<\cdots < k_p\}$. Let $k=k_1$ and $m=k_2$. Let $\tilde{B}_i$ be as in Theorem \ref{top block k-Pf}.
We have
\begin{equation}\label{new formula}
\frakC_{w} = \sum_{I\subset {D}(\lambda)} \Pf\left[ \vt_{\widetilde{B}_p} \cdots  \vt_{\widetilde{B}_2} {}_k\vt_{\lambda_1+a^I_1}^{(\chi_1)} \cdots  {}_k\vt_{\lambda_{m-k}+a^I_{m-k}}^{(\chi_{m-k})}\right],
\end{equation}
where $\lambda$ is the $k$-strict partition in $\mathcal{P}_m^{(k)}$ associated to the signed permutation $(w_1\cdots w_m)$ consisting of the first $m$ letters of $w$, $\chi$ is the associated characteristic index, $I$ runs over all subsets  of $D(\lambda)$, and $a_s^I= \#\{j\;|\;(s,j)\in I\} -\#\{i\;|\;(i,s)\in I\}.$
\end{theorem}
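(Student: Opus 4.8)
The plan is to reduce the statement to the induction already performed in the proof of Theorem~\ref{MainProposition}, by isolating the first block of the single Pfaffian attached to the longest element. First I would record that for the longest element $w_{\mathrm{top}}$ of $W_n^{J}$, Theorem~\ref{top block k-Pf} gives $\frakC_{w_{\mathrm{top}}}=\Pf[\vt_{\widetilde{B}_p}\cdots\vt_{\widetilde{B}_2}\vt_{\widetilde{B}_1}]$, and that the last block is
\[
\vt_{\widetilde{B}_1}={}_k\vt_{k+m}^{(m-1)}\,{}_k\vt_{k+m-1}^{(m-2)}\cdots {}_k\vt_{2k+1}^{(k)},
\]
which is precisely the string of entries defining $\Theta_{max}^{(m,k)}$ with $m=k_2$. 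The remaining blocks $\widetilde{B}_2,\dots,\widetilde{B}_p$ contribute factors ${}_{k_i}\vt^{(l)}$ whose upper indices satisfy $l\ge k_2=m$.

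Next I would identify the left divided difference operator that carries $\frakC_{w_{\mathrm{top}}}$ to $\frakC_{w}$. The first $m$ letters of $w_{\mathrm{top}}$ form the longest element of $W_m^{(k)}$, while those of $w$ form $w_\lambda^{(k)}\in W_m^{(k)}$; since the reflections $s_0,\dots,s_{m-1}$ permute only the values $\{1,\dots,m\}\cup\{\bar1,\dots,\bar m\}$, left multiplication by them fixes the frozen tail $\overline{k_3}\cdots\,|\,\cdots\,|\,\overline{k_p+1}$ of $w$. Running the length-additivity and covering-relation bookkeeping of \S\ref{sec:dual} and Proposition~\ref{prop:delta_w2} inside $W_m^{(k)}$ rather than $W_n^{(k)}$, one obtains $\frakC_{w}=\delta_{(w_\lambda^{(k)})^\vee}\,\frakC_{w_{\mathrm{top}}}$, where $\vee$ denotes the involution of $W_m^{(k)}$ and every simple reflection occurring in a reduced word for $(w_\lambda^{(k)})^\vee$ has index $\le m-1$.

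The core of the argument is then a verbatim repetition of the induction on $\ell\big((w_\lambda^{(k)})^\vee\big)$ from the proof of Theorem~\ref{MainProposition}, now applied to $\frakC_{w_{\mathrm{top}}}$ and using the generalized statements Propositions~\ref{delta-i-on-Pf general} and~\ref{square-relation0general} in place of Propositions~\ref{delta-i-on-Pf} and~\ref{prop square-relation0}. The decisive observation is that each operator $\delta_i$ arising here has $0\le i\le m-1$, whereas every factor coming from $\widetilde{B}_2,\dots,\widetilde{B}_p$ has upper index $\ge m$; thus in Proposition~\ref{delta-i-on-Pf general}(b),(c) these high-index factors always play the role of the untouched slots $l_q\notin\{\pm i\}$, and $\delta_i$ acts only on the trailing $m-k$ entries descending from $\vt_{\widetilde{B}_1}$. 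The four cases $L0$--$L3$ of Lemmas~\ref{LGLEMMA} and~\ref{LGLEMMA0}, together with the manipulation of the exponents $a^I$ over $I\subset D(\lambda)$, are then identical to those in Theorem~\ref{MainProposition}, and assemble into the sum~\eqref{new formula}.

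I expect the main obstacle to be the clean transfer of the $\vee$-involution and the divided-difference recursion of Proposition~\ref{prop:delta_w2} from the Grassmannian quotient $W_m^{(k)}$ to the present parabolic quotient $W_n^{J}$: one must check that the reduction from $w_{\mathrm{top}}$ to $w$ is genuinely length-additive and uses only reflections $s_i$ with $i\le m-1$, so that the blocks $\widetilde{B}_2,\dots,\widetilde{B}_p$ remain inert throughout and the tail of $w$ is never disturbed. Once this inertness is secured, every remaining step is a direct transcription of the computation already carried out for the $k$-Grassmannian elements.
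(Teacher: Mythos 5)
Your proposal is correct and takes essentially the same route as the paper: its proof of this theorem is exactly to start from the single Pfaffian $\frakC_{w_{\mathrm{top}}}=\Pf[\vt_{\widetilde{B}_p}\cdots\vt_{\widetilde{B}_1}]$ of Theorem \ref{top block k-Pf}, let the left divided differences $\delta_i$ with $0\le i\le k_2-1$ act only on the block $\vt_{\widetilde{B}_1}$ (the entry string of $\Theta_{max}^{(k_2,k)}$), and rerun the induction of Theorem \ref{MainProposition} with Propositions \ref{square-relation0general} and \ref{delta-i-on-Pf general} in place of their Grassmannian versions. The points you single out for verification — that left multiplication by $s_i$, $i\le k_2-1$, fixes the frozen tail so the descent from $w_{\mathrm{top}}$ to $w$ is governed by $W_{k_2}^{(k)}$, and that the blocks $\widetilde{B}_2,\dots,\widetilde{B}_p$ remain inert because their upper indices are at least $k_2$ — are precisely the (implicit) justifications the paper's one-line proof relies on.
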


%%%%%%%%%%%%%%%%%%%%%%%%%%%%%%%%%%
%%%%%%%%%%%%%%%%%%%%%%%%%%%%%%%%%%
\subsection{All $n=3$ signed permutations}
%%%%%%%%%%%%%%%%%%%%%%%%%%%%%%%%%%
%%%%%%%%%%%%%%%%%%%%%%%%%%%%%%%%%%
We can go further from Theorem \ref{NEW THEOREM INTRO}. For example, let's look at
\[
\frakC_{\bar2\bar1|\bar4\bar3}=\Pf[{}_2\vt_{6}^{(3)}{}_2\vt_{5}^{(2)}|{}_0\vt_{2}^{(1)}{}_0\vt_{1}^{(0)}]
\ \ \stackrel{\delta_2}{\longrightarrow}\ \ 
\frakC_{\bar3\bar1|\bar4\bar2}=\Pf[{}_2\vt_{6}^{(3)}{}_2\vt_{4}^{(1)}  |  {}_0\vt_{2}^{(1)}{}_0\vt_{1}^{(0)}].
\]
The upper indices collide at $1$, and we don't have a systematic technique to apply $\delta_1$ without breaking the Pfaffian form. However this shows that the Pfaffian sum formula exists beyond the pseudo $k$-Grassmannian permutations, since $\bar3\bar1|\bar4\bar2$ is not a pseudo $k$-Grassmannian.  We can further apply $\delta_3$ and $\delta_0$ to $\frakC_{\bar3\bar1|\bar4\bar2}$, keeping the Pfaffian form. For example, we can easily compute all 48 signed permutation of $W_3$.  Among them, there are 16 permutations that are not pseudo $k$-Grassmannian permutations.  It turns out that all of them are written as (sum of) Pfaffians, \emph{except} 
$\frakC_{\bar32\bar1},  \frakC_{\bar321},  \frakC_{\bar 231}$ and $\frakC_{\bar132}$.
\newpage
\section{Example: $(n,k)=(5,2), (5,3)$}\label{sec:exm}
%%%%%%%%%%%%%%%%%%%%%%%%%%%%%%%%%%
%%%%%%%%%%%%%%%%%%%%%%%%%%%%%%%%%%
%%%%%%%%%%%%%%%%%%%%%%%%%%%%%%%%%%
{\bf Case $(n,k)=(5,3)$}

{\tiny
\[
123|45
\]
\[
(00) \ \ \Det[\vt_0^{-4}\vt_0^{-5}]
\]

\[
124|35
\]
\[
(10) \ \ \Det[\vt_1^{-3}\vt_0^{-5}]
\]

\[
134|25\ \ \ \ 	\ \ \ \  \ \ \ \  \ \ \ \ 	125|34
\]
\[
(20)\ \ \Det[\vt_2^{-2}\vt_0^{-5}]\ \ \ \ \ \ 	(11)\ \  \Det[\vt_1^{-3}\vt_1^{-4}]
\]%2

\[
234|15\ \ 	\	\ \ \ \ \  \ \ \ \  \ \ \ \ \ \ 	135|24
\]
\[
(30)\ \ \Det[\vt_3^{-1}\vt_0^{-5}]\ \ \ \ 	\ \ 	(21) \ \  \Det[\vt_2^{-2}\vt_1^{-4}]
\]%3

\[
234|\bar15 \ \ 	\ \ \ \ \ \ \ \ 	 \ \ \ \  \ \ \ \ 235|14\ \ 		\ \ \ \ \ \  \ \ \ \  \ \ \ \ \ \ 	145|23
\]
\[
(40)\ \ \Det[\vt_4^0\vt_0^{-5}]\ \ \ \ \ 	(31)\ \ \Det[\vt_3^{-1}\vt_1^{-4}]\ \ \ \ \ \ 	(22) \ \  \Det[\vt_2^{-2}\vt_2^{-3}]
\]%4

\[
134|\bar25\ \  \ \ \ \  \ \ \ \ \ \ \ 	\ \ \ \ \ \ \  	235|\bar14\ \ 	\ \ \  \ \ \ \  \ \ \ \ \ \ 	\ \ \ \ \ 245|13
\]
\[
(50)\ \ \Det[\vt_5^1\vt_0^{-5}]\ \ \ \ \ \ 	(41)\ \ \Det[\vt_4^0\vt_1^{-4}]\ \ \ \ \ \  	(32)\ \  \Det[\vt_3^{-1}\vt_2^{-3}]
\]%5

\[
124|\bar35\ \ 	\ \ 	 \ \ \ \  \ \ \ \  \ \ \ \  \ \ \ \ 135|\bar24\ \ 	\ \  \ \ \ \  \ \ \ \  \ \ \ \  \ \ \ \ 	245|\bar13\ \  \ \ \ \  \ \ \ \  \ \ \ \ 	\ \  \ \ \ \ 	345|12
\]
\[
(60)\ \ \Det[\vt_6^2\vt_0^{-5}]\ \ \ \ \   \ \ \ 	(51)\ \ \Det[\vt_5^1\vt_1^{-4}]\ \ \ \ \  \ \ \ 	(42)\ \ \Det[\vt_4^0\vt_2^{-3}]\ \ \	\ \ \ \ \ \ 	(33) \ \ \Det[\vt_3^{-1}\vt_3^{-2}]
\]%6

\[
123|\bar45\ \ 	\ \  \ \ \ \ 	 \ \ \ \ \ \ \ \   \ \ \ \ 125|\bar34\ \ 	\ \  \ \ \ \ 	 \ \ \ \  \ \ \ \  \ \ \ \ 145|\bar23\ \ 	\ \  \ \ \ \ \ \ \ \  \ \ \ \   \ \ \ \ 	345|\bar12
\]
\[
(70) \ \ \Det[\vt_7^3\vt_0^{-5}]\ \ \ \ \ \ 	\ \ 	(61) \ \ \Det[\vt_6^2\vt_1^{-4}]\ \ \ \ \ \ 		(52)\ \ \Det[\vt_5^1\vt_2^{-3}]\ \ \ \ \ \ 	\ \ 	(43)\ \ \Det[\vt_4^0\vt_3^{-2}]
\]%7

\[
123|\bar54\ \ 	\ \  \ \ \ \ 	 \ \ \ \  \ \ \ \  \ \ \ \ 125|\bar43\ \ \ \  \ \ \ \ 	 \ \ \ \  \ \ \ \  \ \ \ \  \ \ \ \ 	145|\bar32\ \ \ \ 	 \ \ \ \  \ \ \ \  \ \ \ \  \ \ \ \ 	345|\bar21
\]
\[
(80) \ \ \Pf[\vt_8^4\vt_0^{-4}] \ \ \ \ \ \ 	\ \ 	(71) \ \ \Pf[\vt_7^3\vt_1^{-3}]\ \ \ \ \ \ 	\ \ 	(62)\ \ \Pf[\vt_6^2\vt_2^{-2}]\ \ \ \ \ \ 	\ \ 	(53)\ \ \Pf[\vt_5^1\vt_3^{-1}]
\]%8

\[
124|\bar53\ \ 	\ \  \ \ \ \ 	 \ \ \ \  \ \ \ \  \ \ \ \ 135|\bar42\ \ 	\ \ 	 \ \ \ \  \ \ \ \  \ \ \ \  \ \ \ \ 245|\bar31\ \  \ \ \ \ 	\ \  \ \ \ \  \ \ \ \  \ \ \ \ 	345|\bar2\bar1
\]
\[
(81) \ \ \Pf[\vt_8^4\vt_1^{-3}]\ \ \ \ \ \ 	\ \ 	(72)\ \ \Pf[\vt_7^3\vt_2^{-2}]\ \ \ \ \ \ 	\ \ 	(63)\ \ \Pf[\vt_6^2\vt_3^{-1}]\ \ \ \ \ \ 	\ \ 	(54)\ \ \Pf[\vt_5^1\vt_4^0]
\]%9

\[
134|\bar52\ \ 	\ \  \ \ \ \ 	 \ \ \ \  \ \ \ \ 235|\bar41\ \ 	\ \  \ \ \ \  \ \ \ \  \ \ \ \ 	245|\bar3\bar1
\]
\[
(82) \ \ \Pf[\vt_8^4\vt_2^{-2}]\ \ \ \ \ \ 	\ \ 	(73)\ \ \Pf[\vt_7^3\vt_3^{-1}]\ \ \ \ \ \ 	\ \ 	(64)\ \ \Pf[\vt_6^2\vt_4^0]
\]%10

\[
234|\bar51 \ \ 	\ \  \ \ \ \  \ \ \ \  \ \ \ \ 	235|\bar4\bar1\ \ \ \  \ \ \ \ 	 \ \ \ \  \ \ \ \ 145|\bar3\bar2
\]
\[
(83)\ \ \Pf[\vt_8^4\vt_3^{-1}]\ \ \ \ \ \ 	\ \ 	(74)\ \ \Pf[\vt_7^3\vt_4^0]\ \ \ \ \ \ \ \ 		(65)\ \ \Pf[\vt_6^2\vt_5^1]
\]%11

\[
234|\bar5\bar1\ \ \ \ 	 \ \ \ \  \ \ \ \  \ \ \ \ 135|\bar4\bar2
\]
\[
(84)\ \ \Pf[\vt_8^4\vt_4^0]\ \ \ \ \ \ 	\ \ 	(75)	\ \ \Pf[\vt_7^3\vt_5^1]
\]%12

\[
134|\bar5\bar2 \ \  \ \ \ \  \ \ \ \  \ \ \ \ 	\ \ 125|\bar4\bar3
\]
\[
(85)	\ \ \Pf[\vt_8^4\vt_5^1] \ \ \ \ 	\ \ 	\ \ 	(76)	\ \ \Pf[\vt_7^3\vt_6^2] 
\]%13

\[
124|\bar5\bar3 
\]
\[
(86)  \ \ \Pf[\vt_8^4\vt_6^2] 
\]%14

\[
123|\bar5\bar4
\]
\[
(87)	\ \ \Pf[\vt_8^4\vt_7^3]
\]%15
}

%%%%%%%%%%%%%%%%%%%%%%%%%%%%%%%%%%
\newpage
{\small
{\bf Case $(n,k)=(5,2)$.}

{\tiny
\[
12|345
\]
\[
(000)
\]
\[
\Det[\vt_0^{-3}\vt_0^{-4}\vt_0^{-5}]
\]

\[
13|245
\]
\[
(100)
\]
\[
\Det[\vt_1^{-2}\vt_0^{-4}\vt_0^{-5}]
\]

\[
23|145\ \ \ \ \ \ \ \ \ \ \ \ \   14|235
\]
\[
(200)\ \ \ \ \ \ \ \ \ \ \ \ \ \ \   (110)
\]
\[
\Det[\vt_2^{-1}\vt_0^{-4}\vt_0^{-5}]\ \ \ \ \ \ 
\Det[\vt_1^{-2}\vt_1^{-3}\vt_0^{-5}]
\]

\[
23|\bar145\ \ \ \ \ \ \ \  \ \ \ \ \  \ \ \ \ \  24|135\ \ \ \ \ \ \ \ \ \ \ \ \  \ \ \ \ \   15|234 
\]
\[
(300)\ \ \ \ \  \ \ \ \ \     \ \ \ \ \  \ \ \ \ \    (210)\ \ \ \ \ \ \ \ \ \  \ \ \ \ \  \ \ \ \ \  (111)
\]%3
\[
\Det[\vt_3^0\vt_0^{-4}\vt_0^{-5}]\ \ \ \ \ \ 
\Pf[\vt_2^{-1}\vt_1^{-3}\vt_0^{-5}]\ \ \ \ \ \ 
\Det[\vt_1^{-2}\vt_1^{-3}\vt_1^{-4}]
\]

\[
13|\bar245\ \ \ \ \ \ \ \  \ \ \ \ \  \ \  24|\bar135\ \ \ \ \ \ \ \  \ \ \ \ \ \ \  \ \ \ \ \  34|125\ \ \  \ \ \ \ \  \ \ \ \ \ \ \ \ \  \ \   25|134
\]
\[
(400)\ \ \ \ \   \ \ \ \ \  \ \ \ \ \   \ \   (310)\ \ \ \ \   \ \ \ \ \  \ \ \ \ \   \ \ \ \  \ \ \   (220)\ \ \ \ \ \ \ \ \ \  \ \ \ \   \ \ \ \ \ \ \   (211)
\]%4
\[
\Pf[\vt_4^1\vt_0^{-4}\vt_0^{-5}]\ \ \ \ \ \ 
\Det[\vt_3^0\vt_1^{-3}\vt_0^{-5}]\ \ \ \ \ \ 
\Det[\vt_2^{-1}\vt_2^{-2}\vt_0^{-5}] \ \ \ \ \ \ 
\Det[\vt_2^{-1}\vt_1^{-3}\vt_1^{-4}]
\]

\[
\ \ 12|\bar345\ \ \ \ \ \ \ \  \ \ \ \ \  \ \ \   14|\bar235\ \ \ \ \ \ \ \  \ \ \ \ \  \ \ \ \  34|\bar125\ \ \ \ \ \ \ \  \ \ \ \ \  \ \ \ \ \    \ \ 25|\bar134\ \ \ \ \ \ \ \  \ \ \ \ \  \ \ \ \ \ 35|124\ \ \ \ \  
\]
\[
\ \ (500)\ \ \ \ \  \ \ \ \ \  \ \ \ \ \  \  \ \    (410)\ \ \ \ \  \ \ \ \ \  \ \ \ \ \  \ \ \ \     (320)\ \ \ \ \  \ \ \ \ \  \ \ \ \ \  \ \ \ \ \      \ \ (311)\ \ \ \ \ \ \ \ \ \  \ \ \ \ \  \ \ \ \ \     (221) \ \ \ \ \  
\]%5
\[
\Det[\vt_5^2\vt_0^{-4}\vt_0^{-5}]\ \ \ \ \ \ 
{\Det[\vt_4^1\vt_1^{-2}\vt_0^{-5}]}\ \ \ \ \ \ 
{\Det[\vt_3^0\vt_2^{-2}\vt_0^{-5}]}\ \ \ \ \ \ \ 
{\Pf[\vt_3^0\vt_1^{-3}\vt_1^{-4}]\atop{+\Pf[\vt_4^0\vt_0^{-3}\vt_1^{-4}] \atop{+ \Det[\vt_4^0\vt_1^{-3}\vt_0^{-4}]\atop{+\Det[\vt_5^0\vt_0^{-3}\vt_0^{-4}]}}}}\ \ \ \ \ \ 
\Det[\vt_2^{-1}\vt_2^{-2}\vt_1^{-4}]
\]

\[
12|\bar435\ \ \ \ \ \ \ \  \ \ \ \ \  \  \   \ 14|\bar325\ \ \ \ \ \ \ \  \ \ \ \ \ \ \  34|\bar215\ \ \ \ \ \ \ \   \ \ \ \ \  \ \ \ \ \    15|\bar234\ \ \ \ \ \ \ \  \ \ \ \ \ \    \ \ \ \ \     35|\bar124\ \ \ \ \  \ \ \ \ \ \ \ \ \  \ \ \ \ \ \       45|123 
\]
\[
(600)\ \ \ \ \  \ \ \ \ \  \ \ \ \ \  \       \  (510)\ \ \ \ \  \ \ \ \ \  \ \ \ \ \   \         (420)\ \ \ \ \  \ \ \ \ \   \ \ \ \ \  \ \ \ \ \      (411)\ \ \ \ \  \ \ \ \ \  \ \ \ \ \    \ \ \ \ \        (321)\ \ \ \ \   \ \ \ \ \   \ \ \ \ \  \ \ \ \ \   \     (222)   
\]%6
\[
\Det[\vt_6^3\vt_0^{-3}\vt_0^{-5}]\ \ \ \ \ \ 
\Pf[\vt_5^2\vt_1^{-2}\vt_0^{-5}]\ \ \ \ \ \ 
\Pf[\vt_4^1\vt_2^{-1}\vt_0^{-5}]\ \ \ \ \ \ 
{\Pf[\vt_4^1\vt_1^{-3}\vt_1^{-4}]\atop{+\Pf[\vt_5^1\vt_0^{-3}\vt_1^{-4}] \atop{+ \Det[\vt_5^1\vt_1^{-3}\vt_0^{-4}]\atop{+\Det[\vt_6^1\vt_0^{-3}\vt_0^{-4}]}}}}\ \ \ \ \ \ 
{\Pf[\vt_3^0\vt_2^{-2}\vt_1^{-4}]\atop{+\Pf[\vt_4^0\vt_1^{-2}\vt_1^{-4}]\atop{+ \Det[\vt_4^0\vt_2^{-2}\vt_0^{-4}]\atop{+\Det[\vt_5^0\vt_1^{-2}\vt_0^{-4}]}}}}\ \ \ \ \ \ \ 
\Det[\vt_2^{-1}\vt_2^{-2}\vt_2^{-3}]
\]

\[
\ \ \ \ \ \ 12|\bar534\ \ \ \ \ \ \ \  \ \ \ \ \ \ \ \  \ \ \ \ \ \ 13|\bar425\ \ \ \ \ \ \ \  \ \ \ \ \  \ \ \ \ \  \ \ \ \ \ 24|\bar315\ \ \ \ \ \ \ \  \ \ \ \ \ \ \ \ \ \ \ \ \ \   34|\bar2\bar15\ \ \ \ \ \ \ \  \ \ \ \ \  \ \ \ \ \ \ \ \ \ \  \  15|\bar324\ \ \ \ \  \ \ \ \ \ \ \ \  \ \ \ \ \  \ \ \ \ \  \  35|\bar214\ \ \ \ \ \ \ \  \ \ \ \ \  \ \ \ \ \  \ \ \ \ \  \ \ \ \ \  45|\bar123\ \ \ \ \ \  \ \ \ \ \  \ \ \ \ 
\]
\[
\ \ \ \ \ \ (700)\ \ \ \ \  \ \ \ \ \  \ \ \ \ \     \ \ \ \ \ \ \ \ \      (610)\ \ \ \ \ \ \ \ \ \  \ \ \ \ \  \ \ \ \ \      \ \ \ \ \     (520)\ \ \ \ \    \ \ \ \ \  \ \ \ \ \  \ \ \ \ \  \ \ \ \   \    (430)\ \ \ \ \ \ \ \ \ \  \ \ \ \ \  \ \ \ \ \   \ \ \ \ \      \ \     (511)\ \ \ \ \ \ \ \ \ \  \ \ \ \ \ \  \ \ \ \ \   \  \ \ \ \ \     (421)\ \ \ \ \  \ \ \ \ \  \ \ \ \ \        \ \ \ \ \  \ \ \ \ \   \ \ \ \ \          \  (322)\ \ \ \ \  \ \ \ \ \   \ \ \ \ \  
\]%7
\[
\Det[\vt_7^4\vt_0^{-3}\vt_0^{-4}]\ \ \ \ \ \ 
\Pf[\vt_6^3\vt_1^{-2}\vt_0^{-5}]\ \ \ \ \ \ 
\Pf[\vt_5^2\vt_2^{-1}\vt_0^{-5}]\ \ \ \ \ \ 
\Pf[\vt_4^1\vt_3^0\vt_0^{-5}]\ \ \ \ \ \ 
{\Pf[\vt_5^2\vt_1^{-2}\vt_1^{-4}] \atop{+ \Det[\vt_6^2\vt_1^{-2}\vt_0^{-4}]}}\ \ \ \ \ \ 
{\Pf[\vt_4^1\vt_2^{-1}\vt_1^{-4}] \atop{+ \Det[\vt_5^1\vt_2^{-1}\vt_0^{-4}]}}\ \ \ \ \ \ \  
\Det[\vt_3^0\vt_2^{-2}\vt_2^{-3}]\ \ \ \ \ \ \ \ \ \ \ \ \ \ \ \ \ \ %{\Pf[\vt_3^0\vt_2^{-2}\vt_2^{-3}] + \Pf[\vt_4^0\vt_1^{-2}\vt_2^{-3}] + \Pf[\vt_4^0\vt_2^{-2}\vt_1^{-3}]+ \Pf[\vt_5^0\vt_1^{-2}\vt_1^{-3}]  \atop{+\Pf[\vt_3^0\vt_3^{-2}\vt_1^{-3}] +\Pf[\vt_4^0\vt_2^{-2}\vt_1^{-3}] + \Pf[\vt_4^0\vt_3^{-2}\vt_0^{-3}] + \Pf[\vt_5^0\vt_2^{-2}\vt_0^{-3}] }}
\]
\

\[
\ \ \ \ \ 13|\bar524\ \ \ \ \ \ \ \  \ \ \ \ \ \ \  23|\bar415\ \ \ \ \ \ \ \  \ \ \ \ \  \   24|\bar3\bar15\ \ \ \ \ \ \ \  \ \ \ \ \ \   \  15|\bar423\ \ \ \ \ \ \ \ \ \  \ \ \ \ \  \ 25|\bar314\ \ \ \ \  \ \ \ \ \ \ \ \  \ \ \ \ \   35|\bar2\bar14\ \ \ \ \ \ \ \  \ \ \ \ \  \ \ \ \ \     45|\bar213
\]
\[
\ \ \ \ \ \ (710)\ \ \ \ \  \ \ \ \ \  \ \ \ \ \    \      (620)\ \ \ \ \ \ \ \ \ \  \ \ \ \ \            (530)\ \ \ \ \    \ \ \ \ \  \ \ \ \ \  \    \   (611)\ \ \ \ \ \ \  \ \ \ \ \  \ \ \ \ \   \   (521)\ \ \ \ \ \ \ \ \ \  \ \ \ \ \  \ \ \ \ \       (431)\ \ \ \ \  \ \ \ \ \  \ \ \ \ \        \ \ \ \ \       (422)
\]%8
\[
\Pf[\vt_7^4\vt_1^{-2}\vt_0^{-4}]\ \ \ \ \ \ 
\Pf[\vt_6^3\vt_2^{-1}\vt_0^{-5}]\ \ \ \ \ \ 
\Pf[\vt_5^2\vt_3^0\vt_0^{-5}]\ \ \ \ \ \ 
{\Pf[\vt_6^3\vt_1^{-2}\vt_1^{-3}] \atop{+\Pf[\vt_6^3\vt_2^{-2}\vt_0^{-3}]}}\ \ \ \ \ \ 
{\Pf[\vt_5^2\vt_2^{-1}\vt_1^{-4}] \atop{+ \Det[\vt_6^2\vt_2^{-1}\vt_0^{-4}]}}\ \ \ \ \ \ 
{\Pf[\vt_4^1\vt_3^0\vt_1^{-4}] \atop{+  \Det[\vt_5^1\vt_3^0\vt_0^{-4}] }}\ \ \ \ \ \ 
{\Pf[\vt_4^1\vt_2^{-1}\vt_2^{-3}] \atop{+ \Pf[\vt_5^1\vt_2^{-1}\vt_1^{-3}]  \atop{+\Pf[\vt_4^1\vt_3^{-1}\vt_1^{-3}] \atop{+ \Pf[\vt_5^1\vt_3^{-1}\vt_0^{-3}] }}}}
\]

\[
\ \ \ \ \ \ 23|\bar514\ \ \ \ \ \ \ \ \ \ \ \ \ \ \   \ \ \ \ 23|\bar4\bar15\ \ \ \ \ \ \ \ \ \ \ \ \ \ \ \ \ 14|\bar3\bar25\ \ \ \ \    \  \ \ \ \ \ \ \ \ \ \ \ \ \  \ 14|\bar523\ \ \ \ \  \ \  \ \ \ \ \ \ \ \ \ \ \ \  \ \ \ \  \ 25|\bar413\ \ \ \ \  \ \ \ \ \ \ \ \ \ \   \ \   \ \ \ \     \ \ 25|\bar3\bar14\ \ \ \ \ \ \ \ \   \ \ \ \ \  \ \ \ \ \ \ \ \  45|\bar312 \ \ \ \ \ \ \ \ \ \ \ \ \ \ \ \ \ \ \ \ \ \ \ \ \ \ \  45|\bar2\bar13  \ \ \ \ \ \ \ \ \ \ \ \ \ \ \ \ \ \ \ \ \
\]
\[
\ \ \ \ \ \ \ (720)\ \ \ \ \ \ \ \ \ \ \ \ \ \ \   \ \ \ \  \ \  (630)\ \ \ \ \ \ \ \ \ \ \ \ \ \ \ \  \   \ \   (540)\ \ \ \ \      \ \ \ \ \ \ \ \ \ \   \ \    \   \ \ \ \  (711)\ \ \ \ \  \ \  \ \ \ \ \ \ \ \ \ \ \ \  \ \ \ \ \  \ \   (621)\ \ \ \ \  \ \ \ \ \ \ \ \ \ \   \ \   \ \ \ \    \ \ \ \ \   (531)    \ \ \ \ \ \ \ \ \   \ \ \ \ \  \ \ \ \ \    \ \ \ \ \       (522)\ \ \ \ \ \ \ \ \ \ \ \ \ \ \ \ \ \ \ \ \ \ \ \ \ \ \ \ \ \  (432) \ \ \ \ \ \ \ \ \ \ \ \ \ \ \ \ \ \ \ \ 
\]%9
\[
\Pf[\vt_7^4\vt_2^{-1}\vt_0^{-4}]\ \ \ \  
\Pf[\vt_6^3\vt_3^0\vt_0^{-5}]\ \ \ \   
\Pf[\vt_5^2\vt_4^1\vt_0^{-5}]\ \ \ \   
{\Pf[\vt_7^4\vt_1^{-2}\vt_1^{-3}] \atop{+\Pf[\vt_7^4\vt_2^{-2}\vt_0^{-3}]}}\ \ \ \  
{\Pf[\vt_6^3\vt_2^{-1}\vt_1^{-3}] \atop{+\Pf[\vt_6^3\vt_3^{-1}\vt_0^{-3}]}}\ \ \ \   
{\Pf[\vt_5^2\vt_3^0\vt_1^{-4}] \atop{+  \Det[\vt_6^2\vt_3^0\vt_0^{-4}] }}\ \ \ \   
{\Pf[\vt_5^2\vt_2^{-1}\vt_2^{-2}] \atop{+\Pf[\vt_5^2\vt_3^{-1}\vt_1^{-2}]}} \ \ \ \  
{\Pf[\vt_4^1\vt_3^0\vt_2^{-3}] \atop{+ \Pf[\vt_5^1\vt_3^0\vt_1^{-3}] \atop{+\Pf[\vt_4^1\vt_4^0\vt_1^{-3}] \atop+ \Pf[\vt_5^1\vt_4^0\vt_0^{-3}] }}}
\]

\[
23|\bar5\bar14\ \ \ \ \ \ \ \ \ \ \ \ \  \ \ \ 13|\bar4\bar25\ \ \ \ \ \ \ \ \ \ \ \ \ \ \ \ \ \ \ \ \ 24|\bar513\ \ \ \ \ \ \ \ \ \ \ \ \ \ \ \ \ \ \ 25|\bar4\bar13\ \ \ \ \ \ \ \ \ \ \ \ \ \ \ \ \ \ \ 15|\bar3\bar24\ \ \ \ \ \ \ \ \ \ \ \ \ \ \ \ \ \ \ \ \ 35|\bar412\ \ \ \ \ \ \ \ \ \ \ \ \ \ \ \ 45|\bar3\bar12
\]
\[
(730)\ \ \ \ \ \ \ \ \ \ \ \ \ \ \    \ \ \            (640)\ \ \ \ \    \ \ \ \ \ \ \ \ \ \ \ \ \ \ \     \ \ \       (721)\ \ \ \ \ \ \ \ \ \ \ \ \ \ \   \ \ \  \ \ \   (631)\ \ \ \ \  \ \ \   \ \ \ \ \ \ \ \ \ \      \ \ \         (541)\ \ \ \ \   \ \ \ \ \ \ \ \ \ \ \ \ \ \ \     \ \ \     (622)\ \ \ \ \ \ \ \ \ \ \ \ \ \ \    \ \ \     (532)
\]%10
\[
\Pf[\vt_7^4\vt_3^0\vt_0^{-4}]\ \ \ \ \ \ \ \ \ 
\Pf[\vt_6^3\vt_4^1\vt_0^{-5}]\ \ \ \ \ \ \
{\Pf[\vt_7^4\vt_2^{-1}\vt_1^{-3}] \atop{+\Pf[\vt_7^4\vt_3^{-1}\vt_0^{-3}]}}\ \ \ \ \ \ \
{\Pf[\vt_6^3\vt_3^0\vt_1^{-3}] \atop{+\Pf[\vt_6^3\vt_4^0\vt_0^{-3}]}} \ \ \ \ \ \ \
{\Pf[\vt_5^2\vt_4^1\vt_1^{-4}] \atop{+  \Det[\vt_6^2\vt_4^1\vt_0^{-4}] }}\ \ \ \ \ \ \
{\Pf[\vt_6^3\vt_2^{-1}\vt_2^{-2}] \atop{+\Pf[\vt_6^3\vt_3^{-1}\vt_1^{-2}]}} \ \ \ \ \ \ \ \ \ 
{\Pf[\vt_5^2\vt_3^0\vt_2^{-2}] \atop{+\Pf[\vt_5^2\vt_4^0\vt_1^{-2}]}}
\]

\newpage

\[
\ \ \ \ 13|\bar5\bar24\ \ \ \ \ \ \ \ \ \ \ \ \ \ \  12|\bar4\bar35\ \ \ \ \ \ \ \ \ \ \ \ \ \ \ \ \ \ \ \ 24|\bar5\bar13\ \ \ \ \ \ \ \ \ \ \ \ \ \ \ \  \ \ 15|\bar4\bar23\ \ \ \ \ \ \ \ \ \ \ \ \ \ \ \ \  \ 34|\bar512\ \ \ \ \  \ \ \ \ \ \ \ \ \ \ \ \ \ \ \ 35|\bar4\bar12\ \ \ \ \ \  \ \ \ \ \ \ \ \ \ 45|\bar3\bar21     \ \ \ \ 
\]
\[
\ \ \ \ (740)\ \ \ \ \ \ \ \ \ \ \ \ \ \ \    \ \           (650)\ \ \ \ \   \ \ \ \ \ \ \ \ \ \ \ \ \ \      \ \ \         (731)\ \ \ \ \  \ \ \ \ \ \ \ \ \ \        \  \  \ \ \      (641)\ \ \ \ \    \ \ \ \ \ \ \ \ \ \   \ \    \ \ \    (722)\ \ \ \ \     \ \ \ \ \ \ \ \ \ \ \ \ \ \   \ \ \   (632)\ \ \ \ \   \ \  \ \    \ \ \ \ \   \ \ \       (542)\ \ \ \  
\]%11
\[
\Pf[\vt_7^4\vt_4^1\vt_0^{-4}]\ \ \ \ \ \ \ \ 
\Pf[\vt_6^3\vt_5^2\vt_0^{-5}]\ \ \ \ \ \ 
{\Pf[\vt_7^4\vt_3^0\vt_1^{-3}] \atop{+\Pf[\vt_7^4\vt_4^0\vt_0^{-3}]}}\ \ \ \ \ \ 
{\Pf[\vt_6^3\vt_4^1\vt_1^{-3}] \atop{+\Pf[\vt_6^3\vt_5^1\vt_0^{-3}]}}\ \ \ \ \ \ 
{\Pf[\vt_7^4\vt_2^{-1}\vt_2^{-2}] \atop{+\Pf[\vt_7^4\vt_3^{-1}\vt_1^{-2}]}}\ \ \ \ \ \  
{\Pf[\vt_6^3\vt_3^0\vt_2^{-2}] \atop{+\Pf[\vt_6^3\vt_4^0\vt_1^{-2}]}} \ \ \ \ \ \ \ \ 
\Pf[\vt_5^2\vt_4^1\vt_2^{-1}]
\]

\[
12|\bar5\bar34\ \ \ \ \ \ \ \ \ \ \ \ \ 14|\bar5\bar23\ \ \ \ \ \ \ \ \ \ \ \ \ \ \ \ \ \ 15|\bar4\bar32\ \ \ \ \ \ \ \ \ \ \ \ \ 34|\bar5\bar12\ \ \ \ \ \ \ \ \ \ \ \ \ \ \ \ \ \ 35|\bar4\bar21\ \ \ \ \ \ \ \ \ \ \ \ \ 45|\bar3\bar2\bar1
\]
\[
(750)\ \ \ \ \ \ \ \ \ \ \ \ \ \ \                 (741)\ \ \ \ \ \ \ \ \ \ \ \ \ \ \ \ \ \ \ \              (651)\ \ \ \ \ \ \ \ \ \ \ \ \ \ \            (732)\ \ \ \ \  \ \ \ \ \ \ \ \ \ \  \ \ \ \ \           (642)\ \ \ \ \ \ \ \ \ \ \ \ \ \ \              (543)
\]%12
\[
\Pf[\vt_7^4\vt_5^2\vt_0^{-4}]\ \ \ \ \ \ 
{\Pf[\vt_7^4\vt_4^1\vt_1^{-3}] \atop{+\Pf[\vt_7^4\vt_5^1\vt_0^{-3}]}}\ \ \ \ \ \ 
\Pf[\vt_6^3\vt_5^2\vt_1^{-2}]\ \ \ \ \ \ 
{\Pf[\vt_7^4\vt_3^0\vt_2^{-2}] \atop{+\Pf[\vt_7^4\vt_4^0\vt_1^{-2}]}}\ \ \ \ \ \ 
\Pf[\vt_6^3\vt_4^1\vt_2^{-1}]\ \ \ \ \ \ 
\Pf[\vt_5^2\vt_4^1\vt_3^0]
\]

\[
12|\bar5\bar43\ \ \ \ \ \ \ \  \ \ \ \ \ \ \ \ \ \    14|\bar5\bar32\ \ \ \ \ \ \ \  \ \ \ \ \  34|\bar5\bar21\ \ \ \ \ \ \ \  \ \ \ \ \  25|\bar4\bar31\ \ \ \ \ \ \ \  \ \ \ \ \  \ \ \ \ \  35|\bar4\bar2\bar1
\]
\[
(760)\ \ \ \ \ \ \ \ \ \  \ \ \ \ \ \ \ \ \ \            (751)\ \ \ \ \  \ \ \ \ \  \ \ \ \ \                     (742)\ \ \ \ \  \ \ \ \ \  \ \ \ \ \             (652)\ \ \ \ \   \ \ \ \ \  \ \ \ \ \  \ \ \ \ \                (643)                  
\]%13
\[
\Pf[\vt_7^4\vt_6^3\vt_0^{-3}]\ \ \ \ \ \ \ \ \ \ 
\Pf[\vt_7^4\vt_5^2\vt_1^{-2}]\ \ \ \ \ \ 
\Pf[\vt_7^4\vt_4^1\vt_2^{-1}]\ \ \ \ \ \ 
\Pf[\vt_6^3\vt_5^2\vt_2^{-1}]\ \ \ \ \ \ \ \ \ \ 
\Pf[\vt_6^3\vt_4^1\vt_3^0]
\]

\[
13|\bar5\bar42\ \ \ \ \ \ \ \ \ \ \ \ \ 24|\bar5\bar31\ \ \ \ \ \ \ \ \ \ \ \ \ 34|\bar5\bar2\bar1\ \ \ \ \ \ \ \ \ \ \ \ \ 25|\bar4\bar3\bar1
\]
\[
(761)\ \ \ \ \   \ \ \ \ \ \ \ \ \ \            (752)\ \ \ \ \    \ \ \ \ \ \ \ \ \ \            (743)\ \ \ \ \     \ \ \ \ \ \ \ \ \ \               (653)                    
\]%14
\[
\Pf[\vt_7^4\vt_6^3\vt_1^{-2}]\ \ \ \ \ \ \Pf[\vt_7^4\vt_5^2\vt_2^{-1}]\ \ \ \ \ \ \Pf[\vt_7^4\vt_4^1\vt_3^0]\ \ \ \ \ \ \Pf[\vt_6^3\vt_5^2\vt_3^0]
\]

\[
23|\bar5\bar41\ \ \ \ \ \ \ \        \ \  \ \ \ \ \ \ \ \ \   24|\bar5\bar3\bar1\ \ \ \ \ \ \ \     \ \ \ \ \ \ \ \ \ \ \ \     15|\bar4\bar3\bar2
\]
\[
(762)\ \ \ \ \  \ \ \ \ \        \ \ \    \ \ \ \ \ \ \ \             (753)\ \ \ \ \ \ \ \ \ \            \ \ \ \ \ \ \ \ \ \ \ \ \                  (654)               
\]%15
\[
\Pf[\vt_7^4\vt_6^3\vt_2^{-1}]\ \ \ \ \ \ \ \  \ \ \ \ \ \    \Pf[\vt_7^4\vt_5^2\vt_3^0]\ \ \ \ \  \ \ \ \ \  \ \ \  \ \Pf[\vt_6^3\vt_5^2\vt_4^1]
\]

\[
23|\bar5\bar4\bar1\ \ \ \ \ \ \ \      \ \ \ \ \ \ \ \     14|\bar5\bar3\bar2
\]
\[
(763)\ \ \ \ \  \ \ \ \ \             \ \ \ \ \ \ \ \               (754)           
\]%16
\[
\Pf[\vt_7^4\vt_6^3\vt_3^0]\ \ \  \ \ \ \ \ \ \Pf[\vt_7^4\vt_5^2\vt_4^1]
\]

\[
13|\bar5\bar4\bar2
\]
\[
(764)               
\]%17
\[
\Pf[\vt_7^4\vt_6^3\vt_4^1]
\]

\[
12|\bar5\bar4\bar3
\]
\[
(765)           
\]%18
\[
\Pf[\vt_7^4\vt_6^3\vt_5^2]
\]
}
}

%%%%%%%%%%%%%%%%%%%%%%%%%%%%%%%%%%
%%%%%%%%%%%%%%%%%%%%%%%%%%%%%%%%%%
%%%%%%%%%%%%%%%%%%%%%%%%%%%%%%%%%%
%%%%%%%%%%%%%%%%%%%%%%%%%%%%%%%%%%

%%%%%%%%%%%%%%%%%%%%%%%%%%%%%%%%%%%%%%%%%%%
%%%%%%%%%%%%%%%%%%%%%%%%%%%%%%%%%%%%%%%%%%%

%%%%%%%%%%%%%%%%%%%%%%%%%%%%%%%%%%%%%%%%%%%
%%%%%%%%%%%%%%%%%%%%%%%%%%%%%%%%%%%%%%%%%%%
%%%%%%%%%%%%%%%%%%%%%%%%%%%%%%%%%%%%%%%%%%%
%%%%%%%%%%%%%%%%%%%%%%%%%%%%%%%%%%%%%%%%%%%
%%%%%%%%%%%%%%%%%%%%%%%%%%%%%%%%%%%%%%%%%%%
%%%%%%%%%%%%%%%%%%%%%%%%%%%%%%%%%%%%%%%%%%%
%%%%%%%%%%%%%%%%%%%%%%%%%%%%%%%%%%%%%%%%%%%
%%%%%%%%%%%%%%%%%%%%%%%%%%%%%%%%%%%%%%%%%%%

\begin{thebibliography}{11}

\bibitem{And} D. ~Anderson, 
 Introduction to equivariant cohomology in algebraic geometry. In Contributions to algebraic geometry, 
 EMS Ser. Congr. Rep. Eur. Math. Soc., Z\"{u}rich, 2012, 71--92.

\bibitem{AF} D.~Anderson and W.~Fulton, Degeneracy loci, Pfaffians and vexillary permutations in types B,C, and D, available on ar$\chi$iv:1210:2066. 


\bibitem{BjBr}
A.\:Bj\"orner and F.\:Brenti,
Combinatorics of Coxeter groups,
Graduate Texts in Math. {\bf 231}, Springer 2005.

\bibitem{BKT}
A.\:S.\:Buch, A.\:Kresch, and H.\:Tamvakis,
A Giambelli formula for isotropic Grassmanians, available on ar$\chi$iv: 0811.2781.

\bibitem{BKT2}
A.\:S.\:Buch, A.\:Kresch, and H.\:Tamvakis,
A Giambelli formula for even orthogonal Grassmannians, to appear in J. reine angew. Math. %, available on ar$\chi$iv: 1109.6669.

\bibitem{BKT3}
A.\:S.\:Buch, A.\:Kresch, and H.\:Tamvakis,
Quantum Giambelli formulas for isotropic Grassmannians, Math. Annalen {\bf 354} (2012), 801--812.


\bibitem{FP}
W.\:Fulton and P.\:Pragacz,
\textit{Schubert varieties and degeneracy loci}, Lecture Notes in Math. {\bf 1689}, Springer-Verlag, Berlin, 1998.


\bibitem{G}
G.\:Z.\:Giambelli, Risoluzione del problema degli spazi secanti, Mem.\:R.\:Accad.\:Sci.\:Torino (2) {\bf 52} (1902), 171--211.


\bibitem{Hum}J.~E.~Humphreys,
{\em Reflection Groups and Coxeter Groups\/} (Cambridge Studies in Advanced Mathematics, No. 29).

\bibitem{Ik}
T.\:Ikeda,
Schubert classes in the equivariant
cohomology of the Lagrangian Grassmannian,
Adv. Math. {\bf 215} (2007), 1--23.


\bibitem{DSP}T.\:Ikeda, L.\:Mihalcea, and H.\:Naruse,
Double Schubert polynomials for the classical groups, 
Adv. Math. {\bf 226} (2011), 840--886.



\bibitem{IN}
T.\:Ikeda and H.\:Naruse,
Excited Young diagrams 
and equivariant Schubert calculus, Trans. Amer. Math. Soc. {\bf 361} (2009), 5193--5221.


\bibitem{Iv}
V.\:N.\:Ivanov, Interpolation analogue of Schur
$Q$-functions, 
Zap. Nauc. Sem. S.-Peterburg. Otdel. Mat. Inst.
Steklov. {\bf 307} (2004), 99--119.


\bibitem{KeLa}
G.\:Kempf and D.\:Laksov, 
The determinantal formula of Schubert calculus. 
Acta Math.
{\bf 132} (1974), 153--162.


\bibitem{Ka}
M.\:Kazarian,
On Lagrange and symmetric degeneracy loci,
preprint. available at: 
http://www.newton.
cam.ac.uk/preprints2000.html.

\bibitem{KT}
A.\:Kresch and H.\:Tamvakis, Double Schubert
polynomials and degeneracy loci for the classical
groups, Annales de l'institut Fourier, {\bf 52} no. 6 (2002),1681--1727.


\bibitem{Mac}
I.\:G.\:Macdonald, \textit{Symmetric functions and Hall polynomials},
2nd edition, Oxford Univ. Press, Oxford 1995.

\bibitem{Man}
L.\:Manivel, {\em Symmetric Functions, Schubert Polynomials and Degeneracy Loci\/},
SMF/AMS TEXT and MONOGRAPHS, vol. {\bf 6}, 2001.


\bibitem{N}
H.\:Naruse, private communication.

\bibitem{Pr}
P.\:Pragacz, Algebro-geometric applications of Schur $S$- and $Q$-polynomials, in \textit{S\'eminaire d'Alg\`ebre Dubreil-Malliavin 1989-1990}, Springer Lecture Notes in Math. {\bf 1478} (1991), 130--191.

\bibitem{PR}
P.\:Pragacz and J.\:Ratajski, Formulas for Lagrangian and orthogonal degeneracy loci; $\tilde{Q}$-polynomial approach, Compositio Math. {\bf 107} (1997), no. 1, 11--87.

\bibitem{Schur}
I.\:Schur,
\"Uber die Darstellung der symmetrischen und der alternierenden Gruppe durch gebrochene lineare Substitutionen, J. reine angew. Math. {\bf 139} (1911), 155--250.

\bibitem{T}
H.\:Tamvakis,
Giambelli and degeneracy locus formulas for 
classical $G/P$ spaces, available on ar$\chi$iv: 1305.3543.

\bibitem{T2} 
H.\:Tamvakis, A Giambelli formula for classical $G/P$ spaces. J. Algebraic Geom. {\bf 23} (2014), no. 2, 245--278. 


\bibitem{TW}
H.\:Tamvakis, E.\:Wilson. Double theta polynomials and equivariant Giambelli formulas, available on ar$\chi$iv:1410.8329.

\bibitem{W}
E.\:Wilson, Equivariant Giambelli Formulae for Grassmannians, Ph.D. Thesis, University of Maryland (2010).
\end{thebibliography}
\end{document}